\DeclareFontFamily{U}{matha}{\hyphenchar\font45}
\DeclareFontShape{U}{matha}{m}{n}{
      <5> <6> <7> <8> <9> <10> gen * matha
      <10.95> matha10 <12> <14.4> <17.28> <20.74> <24.88> matha12
      }{}
\DeclareSymbolFont{matha}{U}{matha}{m}{n}
\DeclareMathSymbol{\varleftrightarrow}{3}{matha}{"D8}
\DeclareMathSymbol{\nvarleftrightarrow}{3}{matha}{"DC}
\newtheorem*{lemma*}{Lemma}
\newtheorem{theorem}{Theorem}[section]
\newtheorem{lemma}[theorem]{Lemma}
\newtheorem{proposition}[theorem]{Proposition}
\newtheorem{remark}[theorem]{Remark}
\newtheorem{assumption}{Assumption}
\theoremstyle{definition}
\newtheorem{definition}[theorem]{Definition}
\renewcommand{\P}{\mathbb{P}}
\newcommand{\E}{{\mathbb{E}}}
\newcommand{\Z}{\mathbb{Z}}
\newcommand{\e}{\varepsilon}
\newcommand{\extr}{\textup{ext}}
\newcommand{\cable}{\widetilde{\Z}^d}
\newcommand{\B}{\mathbb{B}}
\newcommand{\N}{\mathbb{N}}
	\renewcommand{\P}{\mathbb{P}}
\newcommand{\ble}{\textup{\textsf{BLE}}}
\newcommand{\cC}{\mathcal{C}}
\newcommand{\cL}{\mathcal{L}}
\newcommand{\cN}{\mathcal{N}}
\newcommand{\wt}{\widetilde}
\newcommand{\ar}{\leftrightarrow}
\newcommand{\lbij}{\longleftrightarrow}
\renewcommand{\emptyset}{\varnothing}
\renewcommand{\setminus}{\backslash}
\newcommand{\IIC}{\textup{IIC}}
\def\ba{\begin{align}}
\def\ea{\end{align}}
\def\bs{\begin{split}}
\def\es{\end{split}}
\begin{document}

\title[Critical level set percolation for the GFF in $d>6$]{Critical level set percolation for the GFF in $d>6$: comparison principles and some consequences}

\author{Shirshendu Ganguly and Kaihao Jing}

\begin{abstract} The intrinsic geometry of  
the critical percolation cluster induced by the level set of the metric Gaussian free field on $\Z^d$ has been the subject of much recent activity. In \cite{lupu2016loop} Lupu established that the critical percolation cluster has the same law as that in a Poisson loop soup where the loop intensity is dictated by the Green's function of the usual random walk. Throughout the mean field regime $d>6$, a sharp Euclidean one arm exponent was proven recently in \cite{cai2023one}. Subsequently in \cite{ganguly2024ant}, other results about the chemical one arm exponent, volume growth and the  Alexander-Orbach conjecture were established for all $d>20.$ 

In this article, we introduce new methods to obtain several sharp estimates about the intrinsic geometry which hold for all $d>6$. We develop two primary comparison methods. The first involves a comparison of the extrinsic (Euclidean) and intrinsic metrics allowing estimates about the former to be transferrable to the latter. The second compares intrinsic geodesics to a class of modified paths which are non-backtracking in a certain loop sense. The latter is amenable to analysis using more classical methods of bond percolation. We finally prove that such non-backtracking paths are not much longer than geodesics allowing us to establish comparable estimates for the geodesic. As applications of such methods, we establish, for all $d>6,$ the chemical one-arm exponent, as well as an averaged version of a conjecture of Werner \cite{werner2021clusters} which asserts that deletion of loops with diameter larger than $r^{6/d}$ does not affect the connection probabilities between points at Euclidean distance $r.$  An important ingredient of independent interest is a local connectivity estimate which asserts that the connection probability between two points at distance $r$ remains, up to constants, unchanged even if the connecting path is confined to a ball of diameter comparable to $r.$
\end{abstract}

\address{Department of Statistics, UC Berkeley, USA}
\email{sganguly@berkeley.edu}

\address{Department of Statistics, UC Berkeley, USA}
\email{khjing@berkeley.edu}

\maketitle


\tableofcontents



\section{Introduction}

Percolation was introduced by Broadbent and Hammersley in 1957 to study how the random properties of a medium influence the passage of a fluid through it. The simplest version is bond percolation on the Euclidean lattice $\Z^d$ where every edge appears independently with probability $p \in (0,1)$.

As the reader is perhaps already aware, bond percolation has been extensively studied. The first fundamental result that it exhibits a non-trivial phase transition as $p$ varies, was proved by \cite{broadbent1957percolation} Broadbent and Hammersley. More precisely, for any $d\geq 2$, there exists $p_{c}(d) \in (0,1)$ such that when $p > p_{c}$, the model percolates (i.e. contains an infinite cluster) almost surely and does not percolate when $p<p_{c}$. Subsequently, the critical setting (i.e. the case $p=p_{c}$) has been the subject of central interest of the bond percolation. Kesten \cite{kesten1980critical} proved that for $\mathbb{Z}^{2}$, $p_{c}=\frac{1}{2}$ and this combined with Harris' work \cite{harris1960lower} {implies that at criticality the system doesn't percolate almost surely}. This phenomenon has been termed as the continuity of phase transitions. The continuity was then proved for $d\geq 19$ and $d>6$ on a ``spread-out'' version of $\mathbb{Z}^{d}$ by Hara and Slade \cite{hara1990mean} using the powerful technique of lace-expansion. The bound on the dimension was subsequently lowered to  $d\geq 11$ by Fitzner and van der Hofstad \cite{fitzner2017mean}. We refer readers to \cite{duminil2018sixty} for a comprehensive introduction. 
While proving the absence of percolation at criticality remains a major open problem for the remaining dimensions we now move on to behavior of critical percolation cluster. It is believed that in high enough dimensions, percolation clusters at criticality should exhibit properties similar to that on a tree, a critical branching process. A particularly interesting manifestation of this is the Alexander-Orbach conjecture \cite{alexander1982density} which predicts that the random walk on such clusters (termed as the ant in the labyrinth) exhibits certain critical exponents. These exponents in the case of the critical tree were rigorously established in the seminal work of Kesten \cite{kesten1986subdiffusive}. Subsequently, there's been a series of impressive developments aimed at establishing the same for the lattice models. 
While the AO conjecture is known to be false in low dimensions \cite{chayes1987upper}, it was shown to hold in critical oriented percolation for dimensions greater than $19$ in \cite{barlow2008random}. Finally, in a breakthrough in \cite{kozma2009alexander}, the case of critical bond percolation was addressed. 

More recently, motivated by field theory and random geometry considerations, there has been a significant interest in analyzing percolation models  whose correlation functions decay as a power law.
Examples include random interlacements, 
loop soup percolation \cite{chang2016phase,le2013markovian,lupu2016loop}, and level-set percolation of random fields.
While random fields considered in the literature include  Gaussian ensembles such as  
randomized spherical harmonics (Laplace eigenfunctions) \cite{lap1,lap2,lap3} at high frequencies,
 the object of focus in this article will be the massless Gaussian free field; we point the interested reader to the very recent survey recording some of the noteworthy developments \cite{universality}.

The level set percolation of GFF, where one considers all points with GFF value bigger than a threshold $h$, will be the primary model of consideration in this article. This was originally investigated by Lebowitz and Saleur in \cite{ls}    
 as a percolation model with algebraic decay of correlations.  The model has witnessed some particularly intense activity recently. {An important progress was made by  Rodriguez-Sznitman in \cite{gff2}, where a non-trivial phase transition for the level set percolation of GFF was established in all dimensions $d\ge 3$.
  In a recent impressive development, Duminil-Copin, Goswami,
Rodriguez and Severo 
 \cite{gff5}  established the
  equality of parameters dictating the onset of various forms of sub-critical to super-critical transitions. 
  Subsequently, across the works of Drewitz,   
R\'ath and Sapozhnikov  \cite{drs},   
Popov and R\'ath  \cite{pr}, Popov and Teixeira  \cite{pt}, Goswami,
Rodriguez and Severo  \cite{grs}, a rather detailed picture of both the super-critical and sub-critical regimes have emerged.  
The critical regime also has witnessed a recent explosion of activity.

 Lupu \cite{lupu2016loop} proved that the critical value $h_{*}$ equals to 0. Moreover, it was also shown that at the critical value $h=0$, the level set does not percolate. 
The key observation in \cite{lupu2016loop} was that the signed clusters of the
  Gaussian free field on $\widetilde \Z^d$ (the metric graph of $\Z^d$ obtained 
  by including the edges as one dimensional segments, often termed as the {cable graph} in the literature)
is given by the clusters in a loop soup model (a Poisson process on the space of loops) introduced earlier in \cite{le1}. 
This connection allows one to simply consider the loop soup model which is what we will do for the majority of this article. 

Based on Lupu's coupling, in \cite{werner2021clusters}, Werner had initiated the study of high dimensional level set
 percolation for the GFF.  Drawing parallel 
 with the mean field bond percolation picture established in high enough dimensions, he put forth 
 a variety of conjectures as well as arguments. In particular,  Werner asserted that in high dimensions (i.e. $d > 6$), the GFF on
$\Z^d$ becomes 
asymptotically independent and thus shares similar behavior with
bond percolation.

Along these lines, there has been a flurry of activity understanding refined properties of the critical percolation cluster. We list below some notable developments which are of particular relevance to this paper.

An observable of particular interest is the arm probability 
\begin{equation}\label{onearm}\pi_1(n):= \P\big(0 \lbij \partial\B_n \big),
\end{equation}
i.e., the probability that the percolation cluster of the origin (henceforth denoted by $\widetilde\cC(0):=\{x\in \widetilde \Z^{d}: 0\overset{\tilde{E}^{\ge 0}}{\lbij}x  \}$) extends to the boundary of {$\B_n$, the Euclidean box $[-n,n]^d,$}  which was already shown in \cite{lupu2016loop} to converge to $0$ as $n\to \infty,$ i.e. there is no percolation at criticality. 
Ding and Wirth \cite{ding2020percolation} employed
a martingale argument and proved various polynomial bounds for the one-arm probability in various dimensions.  Subsequently, in  work by  Drewitz-Pr\'evost-Rodriguez \cite{dpr}, a detailed analysis of the near-critical behavior for the level set percolation of GFF on  general transient graphs was carried out.
The sharp one-arm probability was finally obtained for all $d>6$ in \cite{cai2023one},  building on the earlier work of  Kozma and Nachmias \cite{kozma2011arm} in the bond percolation case.
In particular it was shown that $\pi_1(n) \sim n^{-2}$ for dimensions $d>6$.
In \cite{drewitz2023arm}, the right leading order was established for a general class of graphs which include the Euclidean lattices of dimensions $3$ and $4$. More recently, in \cite{cai2024one}, the up-to-constant estimates for $\pi_{1}(n)$ were obtained in all dimensions except $d=6$, while in $d=6$, the correct leading order was determined.}  



Very recently in \cite{ganguly2024ant}, the first named author and Kyeognsik Nam established the mean field exponents for the intrinsic geometry of the percolation cluster for dimensions bigger than 20 and in particular verified the validity of the Alexander-Orbach conjecture about the spectral dimension of the random walk on the IIC (the incipient infinite cluster, i.e. the critical percolation cluster conditioned to survive). A key tool to tackle the long range nature of the model introduced in \cite{ganguly2024ant} was the concept of a big loop ensemble $\mathsf{BLE}$ which, informally, denotes an ensemble of three loops, with points on them whose euclidean distances are smaller than the size of the loops, making them unlikely. It turns out that long range connections in the loop percolation model occur via occurrences of $\ble$ which allows to bound the probabilities of various events. However, union bound estimates render $\ble$s not useful in $d\le 8$. This and other related constraints manifest in the $d>20$ assumption in the main result in \cite{ganguly2024ant}.  

Even more recently, in \cite{cai2024incipient,cai2024one,cai2024quasi,drewitz2023arm,drewitz2024cluster,drewitz2024critical} the intermediate dimensions were treated,  while in \cite{cai2024incipient} which has overlaps with the themes of this present paper, the existence of the IIC was established for all dimensions but six.\\

\noindent
\textbf{Our contributions.} In this article we develop new arguments and estimates  which hold all the way down to $d>6$.  We primarily develop two geometric techniques.\\

\noindent
(1)  The first is a  comparison of the chemical distance to Euclidean distance. This will allow us to use extrinsic estimates  to assert intrinsic conclusions. \\

\noindent
(2) The second perspective adopted in this paper which we believe could be a powerful approach to study such long range  models involves comparison of geodesics induced by the percolation geometry to certain `simplified' versions of such paths which essentially don't backtrack in a `loop' sense. While the term non-backtracking is often used to describe paths on a graph that do not ever traverse the same edge in succession in opposite orientations, here we use it to mean a path such that for any loop, once the path exits it, it never returns to the same later in its journey unlike geodesics which on account of length minimization may return to the same loop multiple times. We will not elaborate on this definition further here; see Definition \ref{nonbacktracking def} for the formal description. The advantage is that  arguments that apply in the bond case but fail in the loop model due to the presence of long loops can often be made to work for such modified non-backtracking paths. Finally, we establish comparison principles between the lengths of the geodesics and the simplified paths showing that the latter cannot be much longer, allowing us to transfer back the estimates to the original geodesic.
\\

We next offer a quick informal glimpse of the results in this paper.
\\

\noindent
$\bullet$ 
We start with the following local connectivity estimate which while being of independent interest will serve as a key input in several subsequent results in this article.
The statement asserts that local connectivity is comparable to global connectivity. More precisely, the two point function $\P(0\ar x)$ is comparable to the restricted two point function $\P(0\stackrel{\mathbb{B}_{\beta r}} \ar x)$, i.e., when the connecting path is not allowed to leave $\B_{\beta r}$, for a large but universal constant $\beta.$ While we expect that this is indeed true for all $x,$ we prove that this is true on average which will suffice for many of our applications.
$$
\sum_{y \in \mathbb{B}_{2 r}}\P(0\stackrel{\mathbb{B}_{\beta r}}{\longleftrightarrow} y) \geq c r^{2}.
$$
Surprisingly, we could not spot an estimate of this form in the literature even in the case of bond percolation. Shortly before this article was posted, a more sophisticated version of a similar statement appeared in \cite{cai2024quasi} which was then used in the construction of the IIC for all dimensions excluding the critical one $d=6$. However, the methods in this paper are quite different and rely less on the exact properties of the GFF.
\\

\noindent
$\bullet$ {Our next result is about a comparison of the chemical distance to Euclidean distance showing a quadratic relation. This is consistent with the fractal dimension of the geodesics in the $\IIC$ being $2.$} A version of this for the usual bond percolation in high dimensions was proven in \cite{chatterjee2023subcritical}.
\\

\noindent
$\bullet$ The theme of comparing the extrinsic and the intrinsic metrics as well as true and simplified geodesics will both feature crucially in the proof of the intrinsic/chemical one arm exponent matching the mean field prediction for all $d>6$. 
\\

\noindent
$\bullet$ As a final application of the local connectivity estimate, we rigorously establish the validity of a heuristic presented by Werner in \cite{werner2021clusters} asserting that in the loop percolation model, removal of loops with diameter bigger than $r^{6/d}$ does not affect the two point function between points at Euclidean distance of order $r$.  We prove that an averaged version of this statement holds even on the removal of all loops of diameter $r^{2/(d-4)}$. Note that $6/d >2/(d-4)$ since $d>6.$

\subsection{Models and main results.}

We now move on to a more formal description of the percolation model we study to set up the framework for the statements of our main results.

To define the GFF on $\cable$ (defined formally shortly), we first  introduce the Discrete Gaussian Free Field (DGFF) on $\Z^d$. For each point $x \in \mathbb{Z}^{d}$, we consider a continuous-time simple random walk on $\mathbb{Z}^{d}$ starting from $x$ with jump rate $\frac{1}{2d}$ in each direction. Let $\mathbb{P}_{x},$  $\mathbb{E}_{x}$ be the measure of this simple random walk, denoted by $\{S_{t}\}_{t \geq 0}$. Then the Green's function is defined as 
$$
G(x,y):= \mathbb{E}_{x}\left[\int_{0}^{\infty} \mathds{1}_{S_{t}=y} d t\right].
$$
The DGFF $\{\phi_{x}\}_{x\in \mathbb{Z}^{d}}$ is a mean-zero Gaussian field on $\mathbb{Z}^{d}$ with covariance structure 
$$
\text{Cov} (\phi_{x},\phi_{y}) = G(x,y) ,\; \forall x,y \in \mathbb{Z}^{d}.
$$
Next, we define the cable graph $\cable$. 
 For each edge $e \in E(\Z^d),$ say with endpoints $x$ and 
 $y$, let $I_e$ be the line segment joining $x$ and $y$, 
 scaled by $d,$ thus a compact interval of length $d$ which
  will often also be denoted by $[x,y]$ or interchangeably $[e]$. Then  $\cable:= \cup_{e\in E(\Z^d)}I_e.$

%
The GFF $\{\tilde{\phi}_{x}\}_{x\in \cable}$ on the metric graph $\cable$ is a generalization of the DGFF. Given a DGFF $\{\phi_{x}\}_{x\in \mathbb{Z}^{d}}$, set $\tilde{\phi}_{v}= \phi_{v}$ for $v \in \mathbb{Z}^{d}$, and for each interval $I_{\{x,y\}}$, $\{\tilde{\phi}_{x}\}_{x\in I_{\{x,y\}}}$ is an independent Brownian bridge of length $d$ with variance 2 at time 1, condition on $\tilde{\phi}_{x} = \phi_{x}$ and $\tilde{\phi}_{y} = \phi_{y}$; see \cite{lupu2016loop} for more details of the construction of GFF on the metric graphs. 
The GFF level-set, as a site percolation model, is defined as $E^{\geq h}= \{x \in \tilde{\mathbb{Z}^{d}}:\; \tilde{\phi}_{x} \geq h\}$ ($h\in \mathbb{R}$). We denote by {$\widetilde{\mathbb{P}}$, $\widetilde{\mathbb{E}}$}  the law of GFF on $\tilde{\mathbb{Z}}^{d}$ and the associated expectation respectively.

As alluded to already, in \cite{lupu2016loop} Lupu also established an isomorphism theorem for this model connecting it to a loop percolation model which we will define shortly.
  Relying on this, in \cite{lupu2016loop} he had further established the following all important two-point estimate: There exist $c,C>0$ such that for any $x,y\in \Z^d$,
\begin{align} \label{two point0}
   c |x-y|^{2-d} \le  \wt \P(x \ar y) \le C |x-y|^{2-d},
\end{align}
{where $x\overset{\tilde{E}^{\ge 0}}{\lbij} y$,  or simply $x \ar y$,
 will be the notation to denote that $x,y$ belong to the same connected component of {$\tilde{E}^{\ge 0}$}. }

The proof of the above was based on an exact description of the percolation cluster in terms of a Poisson loop soup. While defining it precisely will involve some preparation and hence is deferred to Section \ref{pre}, we introduce some basic objects that will allow us to state our results.  It can be often simpler to think of loops as rooted at various points. The intensity of a loop rooted at a given point  of length in $[\ell, 2\ell]$ is approximately $\frac{1}{\ell^{d/2}}$ corresponding to the Green's function of the usual random walk on $\Z^d.$ One should informally then think of the loop soup as a union of independent Poisson processes, one for each vertex with the process corresponding to a vertex supported on loops rooted at that vertex. We will use  $\wt \cL$ to denote the random collection of all loops in the union of the point processes. $\wt \cL_{\le \ell}$ will denote the set of all loops in $\wt\cL$ of length at most $\ell.$ For precise definitions of all these notions refer to Section \ref{pre}.

Finally, we introduce some notations to allow us to state our results formally.

\subsection{Notations}
For $A_1,A_2\subseteq \Z^d$, define $d^\extr(A_1,A_2):= \min \{|x_1-x_2| : x_1\in A_1,x_2\in A_2\},$ 
where $|\cdot|$ denotes the Euclidean $\ell^1$-norm.  For $r\in \N$, let $\B(x,r):=B^\extr(x,r) := \{y \in \Z^d:  |x-y| \le r\}$ be a box of center $x$ and radius $r$, 
with $\B_r$ often used as a short hand for $\B(0,r)$. 
For $A\subseteq \cable$, let $|A|$ be the number of lattice points in $A$.
Given a percolation cluster, a subset of $\Z^d$ or $\cable,$ and two points $x,y$ in this cluster, $d(x,y)$ will be the chemical length, i.e., the length of the shortest path connecting $x$ and $y$. We call $d\left(\cdot,\cdot\right)$ the chemical distance. We will denote by  $B(v,r)$  the metric ball of radius $r$ around $v\in \wt \Z^d.$

\subsection{Main results}

\newcommand{\wtp}{\wt \P}

Our first result is a comparison of the Euclidean and chemical distances proving a quadratic relation reflecting the fractal nature of the critical percolation cluster.  A related statement for the bond case appears in \cite{chatterjee2023subcritical}.

\begin{theorem}\label{thm 1.3}

For any $d>6$ the following holds.
\begin{enumerate}
\item \textbf{Point to point case.} There exists constants $c\left(d\right),C\left(d\right) >0$ such that for any $x \in \mathbb{Z}^{d}$, 
\begin{align}\label{1.4}
c|x| ^{2} \leq \widetilde{\mathbb{E}}[d \left(0,x\right) \mid 0 \leftrightarrow x ]\leq C |x| ^{2}.
\end{align}
\item \textbf{Point to boundary case upper bound.}\\
Tightness:  For any $\delta>0$, there exists $M(d,\delta)>0$ and $c=c(\gamma)$ such that for any $N\in \mathbb{N}$, 
\begin{align}\label{7}
{\frac{\# \{N \leq r\leq 2N: \widetilde{\mathbb{P}}(d(0,\partial \mathbb{B}_{r})\geq M r^{2}\mid 0\leftrightarrow \partial \mathbb{B}_{r}) \leq \delta\}}{N} \geq c.}
    \end{align}

\noindent
Expectation bound: For any $\kappa \in (0,1)$, there exists $C(d,\kappa)>0$ such that for any $r \in \mathbb{N}$, 
\begin{align}\label{6}
{\widetilde{\mathbb{E}}[d(0,\partial \mathbb{B}_{\kappa r})\mid 0\leftrightarrow \partial \mathbb{B}_{r}] \leq C r^{2}.}
\end{align}

\item \textbf{Point to boundary case lower bound.} There exists $c(d)>0$ such that for any $r \in \mathbb{N}$,
{
\begin{align} \label{1.5}
\widetilde{\mathbb{E}}[d\left(0,\partial \mathbb{B}_{r}\right) \mid 0 \leftrightarrow  \partial \mathbb{B}_{r}] \geq cr^{2}.
\end{align}
}
\end{enumerate}
\end{theorem}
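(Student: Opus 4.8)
We organize the proof around the two halves of the statement: the two lower bounds, which follow reasonably soon from a sharp restricted two-point estimate, and the two upper bounds, which carry the new ideas and rest on the local connectivity estimate.

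\textbf{Lower bounds: \eqref{1.4} (lower) and \eqref{1.5}.} The deterministic input I would establish first is a sharp \emph{restricted} two-point bound,
\[
\wt\P\big(x\in B(0,\rho)\big)\ \le\ C\,|x|^{2-d}\exp\!\big(-c\,|x|^2/\rho\big),\qquad x\in\Z^d,\ \rho\ge1 ,
\]
i.e.\ connecting $0$ to $x$ within chemical distance $\rho$ is exponentially costly once $\rho\ll|x|^2$. The polynomial factor is \eqref{two point0}; the Gaussian factor is produced by noting that a chemical path of length $\le\rho$ realizing displacement $|x|$ forces an atypically ballistic skeleton, and applying a tree-graph / BK-type inequality for the loop soup (of the kind available via \cite{ganguly2024ant}) to reduce to a product of ordinary two-point functions along the skeleton times a random-walk large-deviation factor. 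Granting this, \eqref{1.4} (lower) is immediate:
\[
\E[d(0,x)\mathbf 1_{0\ar x}]=\sum_{\rho\ge0}\wt\P(d(0,x)>\rho,\ 0\ar x)\ge\sum_{\rho<\e|x|^2}\big(\wt\P(0\ar x)-\wt\P(x\in B(0,\rho))\big)\ge\tfrac12\e|x|^2\,\wt\P(0\ar x)
\]
once $\e$ is small enough that $Ce^{-c/\e}\le\tfrac12 C_1$, and one divides by $\wt\P(0\ar x)$. For \eqref{1.5} the same idea with $\partial\B_r$ in place of $x$ loses a logarithm in a naive union bound over $x\in\partial\B_r$, because $\pi_1(r)\asymp r^{-2}$ (from \cite{cai2023one}) is much smaller than $\sum_{x\in\partial\B_r}\wt\P(x\in B(0,\rho))$. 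I would absorb this over-counting by writing, with $N:=|B(0,\rho)\cap\partial\B_r|$, that $\wt\P(d(0,\partial\B_r)\le\rho)=\E[N]/\E[N\mid N\ge1]$, bounding the numerator by the restricted two-point estimate and the denominator from below by a second-moment argument showing that, once the cluster reaches $\partial\B_r$ at all, it reaches $\asymp r^3$ points of $\partial\B_r$ within chemical distance of order $r^2$ --- a count matching $\sum_{x\in\partial\B_r}\wt\P(0\ar x)/\pi_1(r)$. Taking $\e$ small then gives $\wt\P(d(0,\partial\B_r)\le\e r^2)\le\tfrac12\pi_1(r)$ and hence \eqref{1.5}.

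\textbf{Upper bounds: \eqref{1.4} (upper), \eqref{6}, \eqref{7}.} This is where the extrinsic-to-intrinsic comparison does the work, via the local connectivity estimate $\sum_{y\in\B_{2r}}\wt\P(0\overset{\B_{\beta r}}{\ar}y)\ge cr^2$. The single-scale building block I would prove is: with probability bounded below (and, in the conditioned forms, conditionally on the relevant connection) there exists a vertex $y$ with $0\overset{\B_{\beta r}}{\ar}y$ \emph{and} $d(0,y)=O(r^2)$ --- extracted from the local connectivity estimate by a second-moment / FKG argument. Running this at the single scale $r\asymp|x|$ and arranging that the produced connection reaches $x$ (connect $0$ and $x$ to a common good vertex inside a box of side $O(|x|)$) yields $d(0,x)=O(|x|^2)$ with positive conditional probability given $0\ar x$. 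To pass to the conditional expectation in \eqref{1.4} (upper) I would add a regeneration argument giving $\wt\P(d(0,x)>K|x|^2\mid 0\ar x)\le e^{-cK}$ (roughly independent retries, each succeeding with probability bounded below, so the number needed is geometric). The point-to-boundary bounds \eqref{6} and \eqref{7} then follow by combining this with the one-arm asymptotics of \cite{cai2023one}: conditionally on $0\ar\partial\B_r$, select a vertex of $\partial\B_{\kappa r}$ connected to $0$ through a ``good'' part of the cluster and run the point-to-point construction, the averaging over $r\in[N,2N]$ in \eqref{7} handling the scales at which the one-arm event is atypically structured.

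\textbf{Main obstacle.} I expect the crux to be the upper bound, and within it the step of turning the \emph{averaged} local connectivity estimate into an \emph{iterable} one: showing that conditionally on a long-range connection the cluster reliably supports a short local connection through each relevant box, with probability bounded below uniformly over boxes. This is exactly the content of transferring Euclidean estimates to the chemical metric, and it is where the restriction-to-a-ball formulation of the local connectivity estimate --- and its insensitivity to the fine structure of the GFF --- is essential. The remaining ingredients (the regeneration/tail estimate, and the over-counting correction for \eqref{1.5}) are comparatively routine once this is in hand.
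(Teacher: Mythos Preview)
Your proposal has a fundamental gap in the lower bounds and misidentifies where the difficulty lies.

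\textbf{Lower bounds.} Your starting point is an exponential restricted two-point estimate $\wt\P(x\in B(0,\rho))\le C|x|^{2-d}e^{-c|x|^2/\rho}$, to be obtained ``by applying a tree-graph / BK-type inequality for the loop soup (of the kind available via \cite{ganguly2024ant}).'' This is precisely the step that fails for $6<d\le 20$ and is the whole reason the paper exists. In the loop model, if you follow a chemical path and try to tree-expand around a vertex on it, the same loop can appear on both sides of that vertex, so the events are not disjoint and BKR does not apply; the $\ble$/\textsf{ITE} machinery of \cite{ganguly2024ant} that repairs this only works in high enough dimension (see the discussion in Section~\ref{iop}). The paper's substitute is a new object: \emph{simple paths} (paths admitting a glued-loop sequence with no repeated loop). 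For a simple path one can tree-expand without intrinsic constraints, and the lower bound is then proved in two stages: (i) a lower bound on the simple-geodesic length via an over-counting analysis of points near the simple geodesic (Proposition~\ref{prop 5.2} and, for \eqref{1.5}, Proposition~\ref{prop 5.11}), and (ii) a comparison $d^{\text{simple}}(0,x)\le |\cN_x(K)|+3K\,d(0,x)$ between the simple geodesic and the true geodesic, using that large loops contribute negligibly (Lemmas~\ref{lem 5.3}, \ref{geometric lemma}). Your proposal contains neither the simple-path idea nor any replacement for it.

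\textbf{Upper bounds.} Here you have the roles reversed. The upper bound in \eqref{1.4} is the easy direction: by simplifying any connecting path (Lemma~\ref{simlpe chain lemma}), for each $y$ on a simple path one has the event $\mathcal T_x(y)$, and a direct BKR/first-moment computation with Lemma~\ref{loop tree estimate} gives $\E[d(0,x)\mathbf 1_{0\ar x}]\le C|x|^{4-d}$; \eqref{6} is the same with Lemma~\ref{lem 5.8}. No regeneration, no exponential tails, and no local connectivity is used. The local connectivity estimate (Theorem~\ref{lem 4.4}) enters only in the \emph{lower} bound for the point-to-boundary case, via Lemma~\ref{lem 5.7}. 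Your route through a conditional exponential tail $\wt\P(d(0,x)>K|x|^2\mid 0\ar x)\le e^{-cK}$ would be a much stronger statement than what is needed or proved. Finally, \eqref{7} is obtained from \eqref{6} by a pigeonhole/regularity argument on the ratio $\pi_1((1+\gamma)r)/\pi_1(r)$ (Section~\ref{tail}), which your last sentence gestures at but does not make precise.
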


In words, the first part of the above result states that conditioned on a point $|x|$ to be connected to the origin, the chemical distance $d(0,x)$ scales as $|x|^2$ while the second part states the same for the chemical distance to the boundary of the Euclidean box $\B_r.$

Our next result while being of independent interest will serve as an important input in the proof of the above as well as many of the subsequent arguments while being of independent interest. The statement asserts that local connectivity is comparable to global connectivity. More precisely, it claims that the two point function $\wtp(0\ar x)$ is comparable to the restricted two point function $\wtp(0\stackrel{\mathbb{B}_{\beta r}} \ar x)$ 
for a large but universal constant $\beta.$ While we expect that this is indeed true for all $x,$ the next result says that at least this is true on average.

\begin{theorem} \label{lem 4.4}For $d>6$, there exist constants $c\left(d\right)>0$ and $\beta(d)>1$  such that for any $r\geq 1$

$$
\sum_{y \in \mathbb{B}_{2 r}}\widetilde{\mathbb{P}}(0\stackrel{\mathbb{B}_{\beta r}}{\longleftrightarrow} y) \geq c r^{2}.
$$
\end{theorem}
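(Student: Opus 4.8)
The plan is to compare the restricted sum with the full sum $\sum_{y\in\mathbb{B}_{2r}}\wt\P(0\ar y)$, which by the lower bound in \eqref{two point0} satisfies $\sum_{y\in\mathbb{B}_{2r}}\wt\P(0\ar y)\ge C_1\sum_{y\in\mathbb{B}_{2r}}|y|^{2-d}\ge c_1 r^2$ (since $\sum_{2^k\le|y|\le 2^{k+1}}|y|^{2-d}\asymp 2^{2k}$ and $\sum_{0\le k\le\log_2 r}2^{2k}\asymp r^2$), and to show that confining the cluster to $\mathbb{B}_{\beta r}$ costs only a small fraction of $r^2$ when $\beta=\beta(d)$ is large. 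Since $\{0\stackrel{\mathbb{B}_{\beta r}}{\longleftrightarrow}y\}\subseteq\{0\ar y\}$, it suffices to bound the leakage $L_{\beta,r}:=\sum_{y\in\mathbb{B}_{2r}}\wt\P\big(\{0\ar y\}\setminus\{0\stackrel{\mathbb{B}_{\beta r}}{\longleftrightarrow}y\}\big)$ by $\tfrac12 c_1 r^2$. If $0\ar y$ but $0$ and $y$ lie in different components of $\tilde E^{\ge0}\cap\mathbb{B}_{\beta r}$, then — as $y\in\mathbb{B}_{2r}\subset\mathbb{B}_{\beta r}$ — every path in $\tilde E^{\ge0}$ from $0$ to $y$ must exit $\mathbb{B}_{\beta r}$, hence meet $\partial\mathbb{B}_{\beta r}$, so $0\ar\partial\mathbb{B}_{\beta r}$. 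Therefore $\sum_{y\in\mathbb{B}_{2r}}\1\big[\{0\ar y\}\setminus\{0\stackrel{\mathbb{B}_{\beta r}}{\longleftrightarrow}y\}\big]\le|\wt\cC(0)\cap\mathbb{B}_{2r}|\cdot\1\{0\ar\partial\mathbb{B}_{\beta r}\}$, and taking expectations,
\[
L_{\beta,r}\ \le\ \wt\E\big[\,|\wt\cC(0)\cap\mathbb{B}_{2r}|\ ;\ 0\ar\partial\mathbb{B}_{\beta r}\,\big].
\]

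The heart of the matter is to estimate this expectation. Writing $V:=|\wt\cC(0)\cap\mathbb{B}_{2r}|$, I would truncate at the ``mean-field'' scale $r^4$: for a large constant $M=M(d)$,
\[
\wt\E[V\,;\,0\ar\partial\mathbb{B}_{\beta r}]\ \le\ Mr^4\,\wt\P(0\ar\partial\mathbb{B}_{\beta r})\ +\ \wt\E\big[V\,\1\{V>Mr^4\}\big]\ \le\ Mr^4\,\pi_1(\beta r)\ +\ \frac{\wt\E[V^2]}{Mr^4}.
\]
For the first term I use the sharp one-arm estimate $\pi_1(n)\le C_3 n^{-2}$ of \cite{cai2023one}, so $Mr^4\pi_1(\beta r)\le C_3 M\beta^{-2}r^2$. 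For the second term I need $\wt\E[V^2]\le C_4 r^6$, and this is where $d>6$ enters: I would use the tree-graph ($\mathsf{BKR}$-type) inequality for the loop soup valid in the high-dimensional regime, $\wt\P(0\ar y,\,0\ar w)\le C\sum_{z}\wt\P(0\ar z)\wt\P(z\ar y)\wt\P(z\ar w)$, together with \eqref{two point0} (so all two-point factors are $\asymp|\cdot|^{2-d}$):
\[
\wt\E[V^2]=\sum_{y,w\in\mathbb{B}_{2r}}\wt\P(0\ar y,\,0\ar w)\ \lesssim\ \sum_{z\in\Z^d}|z|^{2-d}\Big(\sum_{y\in\mathbb{B}_{2r}}|z-y|^{2-d}\Big)^{2}.
\]
Splitting the $z$-sum at $|z|=4r$ and using $\sum_{y\in\mathbb{B}_{2r}}|z-y|^{2-d}\lesssim r^2$ for $|z|\le 4r$ and $\lesssim r^{d}|z|^{2-d}$ for $|z|>4r$, together with $\sum_{|z|\le 4r}|z|^{2-d}\lesssim r^2$ and $\sum_{|z|>4r}|z|^{6-3d}\lesssim r^{6-2d}$ (the latter needing only $d>3$), both ranges contribute $\lesssim r^6$, giving $\wt\E[V^2]\lesssim r^6$.

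Combining the three displays, $L_{\beta,r}\le(C_3M\beta^{-2}+C_4/M)r^2$; choosing first $M=4C_4/c_1$ and then $\beta=\beta(d)$ so large that $C_3M\beta^{-2}\le c_1/4$ gives $L_{\beta,r}\le\tfrac12 c_1 r^2$, hence the claim. The main obstacle is the bound on $\wt\E[V\,;\,0\ar\partial\mathbb{B}_{\beta r}]$: attacking $\sum_y\wt\P(0\ar y,\,0\ar\partial\mathbb{B}_{\beta r})$ directly by a tree-graph decomposition — summing over the branch point $z$ and keeping $\wt\P(z\ar\partial\mathbb{B}_{\beta r})$ as a separate factor — is too lossy, because configurations in which $\wt\cC(0)$ reaches close to $\partial\mathbb{B}_{\beta r}$ and then returns toward the origin are badly overcounted and produce a spurious term of order $r^3$. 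It is precisely the truncation at scale $r^4$, which lets the one-arm exponent control the typical cluster while the triangle/second-moment bound $\wt\E[V^2]\lesssim r^6$ controls the atypically large one, that circumvents this and pins down the role of the restriction $d>6$.
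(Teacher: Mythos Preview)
Your argument is correct and takes a genuinely different, more elementary route than the paper. The paper proceeds by first locating (via pigeonhole) a ``regular'' radius $r_0\in[r,2r]$ at which $\pi_1((1+\gamma)r_0)/\pi_1(r_0)\ge 1-\delta$, and then, invoking two substantial inputs imported from \cite{cai2023one} --- the surface-to-annulus volume estimate (Lemma~\ref{lem 4.2}) and the stability of the one-arm probability under removal of large loops (Lemma~\ref{lem 4.1}) --- shows that conditionally on $\{0\ar\partial\mathbb{B}_{(1+\gamma/2)r_0}\}$ the restricted volume $|\wt\cC_{\mathbb{B}_{\theta r}}(0)\cap\mathbb{B}_{2r}|$ exceeds $c\,r^4$ with uniformly positive probability; the expectation bound then follows.

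You bypass Lemmas~\ref{lem 4.1} and~\ref{lem 4.2} entirely. Bounding the leakage by $\wt\E[V;\,0\ar\partial\mathbb{B}_{\beta r}]$ with $V=|\wt\cC(0)\cap\mathbb{B}_{2r}|$, truncating at $V\sim r^4$, and controlling the two pieces by $\pi_1(\beta r)\lesssim(\beta r)^{-2}$ and by $\wt\E[V^2]\lesssim r^6$ respectively is clean and complete. The tree-graph bound you invoke does hold in the loop soup: combining the tree expansion (Lemma~\ref{ete}) with Lemma~\ref{loop three points} and then \eqref{5:53} gives exactly $\wt\P(0\ar y,\,0\ar w)\le C\sum_{z}|z|^{2-d}|z-y|^{2-d}|z-w|^{2-d}$, and your splitting of the $z$-sum at $|z|\sim r$ yields the $r^6$ bound for all $d>6$. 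The paper's longer route produces as a byproduct a conditional cluster-volume lower bound at regular scales, in the spirit of \cite{kozma2011arm,cai2023one}; your route needs only the two-point function, the sharp one-arm exponent, and the tree expansion, all of which are already available, and is a genuine simplification.
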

As already alluded to earlier, we could not find a statement of a similar flavor even in the bond percolation literature. 

\begin{remark}\label{dingresult} Shortly before the posting of this paper, in \cite{cai2024quasi} the authors proved a strengthening of the above result drawing a parallel of connection probabilities to harmonic functions and invoking the method of harmonic averages. On the other hand, our approach is more geometric in nature only using the two point function as input. 
\end{remark}

{Our next result establishes the chemical one-arm exponent.}
\begin{theorem}\label{thm 1.4}
For $d>6$, there exists $c\left(d\right),C\left(d\right)>0$ such that for all $r>0$, 
\begin{align}\label{1.6}
\frac{c}{r} \leq \widetilde{\mathbb{P}}(\partial B(0,r) \neq \emptyset) \leq \frac{C}{r}.
\end{align}
\end{theorem}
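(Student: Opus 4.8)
The plan is to derive both bounds from first- and second-moment control of the intrinsic sphere $Z_r:=|\partial B(0,r)|$ and the intrinsic ball $|B(0,r)|$, following the scheme of Kozma and Nachmias \cite{kozma2009alexander} for bond percolation, with the Euclidean-scale facts --- the two-point estimate \eqref{two point0}, the sharp one-arm $\pi_1(n)\asymp n^{-2}$ of \cite{cai2023one}, and the local connectivity estimate of Theorem \ref{lem 4.4} --- imported to the level of the chemical metric via the extrinsic--intrinsic comparison of Theorem \ref{thm 1.3}. We record the deterministic containment $\{\partial B(0,r)\neq\emptyset\}\subseteq\{|B(0,r)|\geq r\}$ (a geodesic reaching chemical distance $r$ meets every sphere $\partial B(0,k)$, $k\leq r$), which will drive the upper bound.

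\textbf{Lower bound.} To get $\wtp(\partial B(0,r)\neq\emptyset)\geq c/r$ I would apply the second moment method to $W:=\#\{y\in\B_{2\rho}:\ 0\overset{\B_{\beta\rho}}{\ar}y\ \text{and}\ d(0,y)\geq r\}$ with $\rho\asymp\sqrt r$. Since $\{W>0\}\subseteq\{\partial B(0,r)\neq\emptyset\}$, it suffices to show $\wtp(W>0)\geq\E[W]^2/\E[W^2]\geq c/r$, i.e. $\E[W]\geq cr$ and $\E[W^2]\leq Cr^3$. The first bound combines Theorem \ref{lem 4.4}, which gives $\sum_{y\in\B_{2\rho}}\wtp(0\overset{\B_{\beta\rho}}{\ar}y)\geq c\rho^2$, with the fact that for $|y|\asymp\sqrt r$ one has $d(0,y)\geq r$ with probability bounded below given $0\ar y$ (the anti-concentration side of the comparison in Theorem \ref{thm 1.3}, available from its proof); the second is a tree-graph-type estimate obtained by expanding $\E[W^2]$ over a common branch point and using \eqref{two point0}.

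\textbf{Upper bound.} Here the extrinsic--intrinsic comparison is used more heavily. By the containment above, $\wtp(\partial B(0,r)\neq\emptyset)\leq\wtp(|B(0,r)|\geq r)$, but Markov's inequality with the linear bound $\E[|B(0,r)|]\leq Cr$ is by itself too weak; instead I would also establish that reaching chemical distance $r$ forces a large intrinsic ball, $\E[|B(0,r)|\mid\partial B(0,r)\neq\emptyset]\geq cr^2$ (``no thin survivors''), so that $\E[|B(0,r)|]\geq cr^2\,\wtp(\partial B(0,r)\neq\emptyset)$ and hence $\wtp(\partial B(0,r)\neq\emptyset)\leq C/r$. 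The ``no thin survivors'' bound would follow from a spine/size-biasing argument along a geodesic realising $\partial B(0,r)$, showing that for $j\leq r/2$ the sphere $\partial B(0,j)$ carries expected mass $\gtrsim j$, again via Theorem \ref{thm 1.3}. The linear bound $\E[|B(0,r)|]=\sum_x\wtp(0\ar x,\ d(0,x)\leq r)\leq Cr$ I would prove by splitting at Euclidean scale $\sqrt r$: for $|x|\lesssim\sqrt r$ the estimate \eqref{two point0} gives $\sum_{|x|\lesssim\sqrt r}|x|^{2-d}\asymp r$, while for $|x|\gtrsim\sqrt r$ a lower-deviation estimate $\wtp(d(0,x)\leq r\mid 0\ar x)\leq (|x|^{2}/r)^{-1-\delta}$, obtained by a chaining/regeneration argument on top of Theorem \ref{thm 1.3}, makes the remaining sum geometric. (An alternative, self-contained route to the upper bound is the Kozma--Nachmias recursive inequality $1/\wtp(\partial B(0,2r)\neq\emptyset)\geq 1/\wtp(\partial B(0,r)\neq\emptyset)+c$, extracted from the near-critical branching structure of the intrinsic exploration using $\E[Z_r]\asymp1$ and $\E[Z_r^2]\asymp r$.)

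\textbf{Main obstacle.} The moment-method combinatorics are classical; the real work is to establish the tree-graph/BK-type second-moment inequalities --- and the ``no thin survivors'' estimate --- in the loop soup, where the long loops destroy the independence and the Markov property along the intrinsic exploration that make such estimates routine for bond percolation. This is exactly where the paper's second comparison technique enters: replacing intrinsic geodesics by the modified, loop-non-backtracking paths reduces these estimates to bond-percolation-style arguments, and the subsequent comparison of path lengths transfers the conclusions back to $d(0,\cdot)$ and hence to $\partial B(0,r)$. A secondary technical point is that several steps above require a quantitative concentration of $d(0,x)$ around $|x|^2$ (a lower-deviation tail) rather than merely the first-moment comparison stated in Theorem \ref{thm 1.3}; this strengthening should be read off from the proof of that theorem.
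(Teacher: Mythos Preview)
Your lower-bound strategy is close in spirit to the paper's, which also runs a second-moment argument and ultimately reduces to showing that conditionally on $\{0\leftrightarrow\partial\B_\rho\}$ one has $d(0,\partial\B_\rho)\gtrsim\rho^2$ with uniformly positive probability (Section~\ref{lowertail}). The paper does not, however, work with your $W$; it applies Paley--Zygmund to the number of points on a \emph{fixed simple path} from $0$ to $\partial\B_\rho$ that lie only on small loops, so that the second moment can be controlled by a BKR bound involving only loops of bounded size. Your variable $W$ mixes the intrinsic constraint $d(0,y)\geq r$ with the Euclidean connection, and it is not clear how you would get the first-moment lower bound $\E[W]\geq cr$: Theorem~\ref{thm 1.3} gives $\E[d(0,y)\mid 0\leftrightarrow y]\asymp|y|^2$, but no pointwise anti-concentration $\wtp(d(0,y)\geq c|y|^2\mid 0\leftrightarrow y)\geq c$, and the paper never establishes such a statement for point-to-point distances.

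The upper bound has a genuine gap. Both routes you propose hinge on the intrinsic volume bound $\E[|B(0,r)|]\leq Cr$ (either directly, or via $\E[Z_r]\asymp 1$ for the Kozma--Nachmias recursion). The paper explicitly states that this estimate is \emph{not} available for $6<d\leq 20$; it was obtained in \cite{ganguly2024ant} only via the $\ble$ framework, which breaks in low dimensions. Your proposed derivation of $\E[|B(0,r)|]\leq Cr$ requires a lower-deviation tail $\wtp(d(0,x)\leq r\mid 0\leftrightarrow x)\leq(|x|^2/r)^{-1-\delta}$, which is strictly stronger than anything in Theorem~\ref{thm 1.3} and is not something a ``chaining/regeneration argument'' will deliver in the loop soup, precisely because long loops destroy the regeneration structure.

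The paper's upper bound avoids the volume estimate entirely. It runs the Kozma--Nachmias-style recursion for $\Lambda(3^k)=\sup_{\widetilde{\mathscr S}}\wtp(\partial B(0,3^k;\widetilde{\cL}_{\widetilde{\mathscr S}})\neq\emptyset)$, but adds a new case to the decomposition: either the intrinsic ball $\widetilde B(0,3^k)$ exits the Euclidean box $\B_{3^{k/2}}$, in which case $\{0\leftrightarrow\partial\B_{3^{k/2}}\}$ occurs and one invokes $\pi_1(3^{k/2})\leq C\cdot 3^{-k}$ directly; or $\widetilde B(0,3^k)\subseteq\B_{3^{k/2}}$, in which case every intrinsic constraint can be replaced by the Euclidean constraint $\{0\overset{\B_{3^{k/2}}}{\longleftrightarrow}\cdot\}$, and the only volume input needed is the trivial $\E[|\widetilde{\cC}(0)\cap\B_{3^{k/2}}|]\leq C\cdot 3^k$ from the two-point function. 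This dichotomy is the substitute for the missing $\E[|B(0,r)|]\leq Cr$, and is the main new idea your proposal is missing.
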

For the bond case, matching upper and lower bounds were established in the seminal work \cite{kozma2009alexander} under \eqref{onearm}. The results were extended to the GFF level-set case in the recent work \cite{ganguly2024ant} for all $d>20$. Thus, the above result extends the latter throughout the entire high dimensional regime.

\newcommand{\BB}{\mathbb{B}}

Our final result addresses a question of Werner posed in \cite{werner2021clusters} wherein the following heuristic was presented: when $a \in (0,d)$, the $r^{a}$-th largest Brownian loop intersecting $\mathbb{B}_{r}$ will have diameter of the order of $r^{1-a/d +o(1)}$. A quick way to see this is by observing that by the Poisson loop representation, the number of loops of length in $[\ell, 2\ell]$ (essentially equivalently diameter $\sqrt \ell$ by diffusivity of random walk) intersecting $\BB_r$ is approximately $r^d \frac{1}{\ell^{d/2}}.$ Setting $\ell=r^{2\beta}$ and equating this to $r^{a}$ we get  $d -\beta d =a$, i.e., $\beta =1-a/d.$

Now, relying on an a priori bound of $r^{4+o(1)}$ (delivered using moment arguments and tree expansions) on the maximum volume of a connected component intersecting $\B_{r}$, Werner argued that the number of connected components of Euclidean diameter comparable to $r$ is at least $r^{d-6+o(1)}.$
This indicates that for $b >6/d$ we remove all loops intersecting $\mathbb{B}_{r}$ of diameter larger than $r^{b}$, essentially all the large clusters of Euclidean diameter comparable to $r$, i.e. $r^{d-6+o(1)}$ large clusters. This naturally leads to the conjecture that the removal of these big loops do not affect the two-point function. In the recent article \cite{cai2023one} the authors established a statement similar in spirit but for the one arm exponent.
In this article we prove the following local connectivity estimate confirming the conjecture in an averaged sense. 
Thus, Werner's assertion in terms of loop lengths can be stated as the removal of all loops of length greater than $r^{12/d+o(1)}$ having no effect on the two point function.

The following result states that even the removal of all loops of length at least $r^{4/(d-4)}$ will not affect the two point function in an averaged sense. 
Note in particular that $4/(d-4) < 12/d$ since $d>6$. Thus, up to the averaging, this is a stronger statement than what comes out of Werner's heuristic.

\begin{theorem} \label{werner12}
For $d>6$ and $b > \frac{4}{d-4}$ we have that for $\beta$ as in Theorem \ref{lem 4.4} there exists some {constant $C=C(d,\beta)>0$ such that for any $r \in \mathbb{N}$},

\begin{align} \label{large loop 3}
    0 \leq \sum_{x\in \mathbb{B}_{r}}\widetilde{\mathbb{P}}(0 \overset{\mathbb{B}_{\beta r}}{\longleftrightarrow}x) - \sum_{x\in \mathbb{B}_{r}}\widetilde{\mathbb{P}}(0 \overset{\mathbb{B}_{\beta r},\ \wt{\mathcal{L}}_{\leq r^{b}} }{\longleftrightarrow}x) \leq C r^{2-\eta(b)},
\end{align}
where $\eta(b) := 2-b(d/2-2)$. Note that $\eta(b)>0$ when $b >\frac{4}{d-4}$. Moreover, we have 
\begin{align} \label{large loop 4}
    \sum_{x\in \mathbb{B}_{r}}\widetilde{\mathbb{P}}(0 \overset{ \widetilde{\mathcal{L}}_{\leq r^{b}}}{\longleftrightarrow}x) \geq c r^{2},
\end{align}
where $c =c(d) >0$ is some constant.
\end{theorem}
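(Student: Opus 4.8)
The plan is to deduce both \eqref{large loop 3} and \eqref{large loop 4} from Theorem \ref{lem 4.4} (the local connectivity estimate) by a direct bookkeeping of the loops that get removed, using only the two-point estimate \eqref{two point0} and the Poisson loop representation sketched in the introduction. First I would record the elementary inclusion giving the left inequality in \eqref{large loop 3}: any path witnessing $0 \overset{\mathbb{B}_{\beta r},\,\wt{\mathcal{L}}_{\le r^b}}{\longleftrightarrow} x$ is in particular a path witnessing $0 \overset{\mathbb{B}_{\beta r}}{\longleftrightarrow} x$, so the difference of the two sums is nonnegative termwise. The content is the upper bound on this difference. The key observation is that on the event $\{0 \overset{\mathbb{B}_{\beta r}}{\longleftrightarrow} x\} \setminus \{0 \overset{\mathbb{B}_{\beta r},\,\wt{\mathcal{L}}_{\le r^b}}{\longleftrightarrow} x\}$, every connecting path inside $\mathbb{B}_{\beta r}$ must use at least one loop of length $>r^b$; in particular there is a ``big'' loop $\gamma$ with $\ell(\gamma) > r^b$, contained in (or at least intersecting) $\mathbb{B}_{\beta r}$, such that both $0$ and $x$ are connected to $\gamma$ within $\mathbb{B}_{\beta r}$. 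I would then union bound over the choice of such a loop $\gamma$, or rather over a point $z$ on it and its length scale $\ell = 2^k$ with $r^b \le \ell \le (\beta r)^2$ (lengths exceeding the square of the diameter of the ball contribute negligibly).

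Concretely, the plan is to bound
\[
\sum_{x \in \mathbb{B}_r} \wt\P\big(0 \overset{\mathbb{B}_{\beta r}}{\longleftrightarrow} x,\ 0 \not\overset{\mathbb{B}_{\beta r},\,\wt{\mathcal{L}}_{\le r^b}}{\longleftrightarrow} x\big)
\]
by summing over dyadic loop-length scales $\ell$ and over the root $z \in \mathbb{B}_{\beta r}$ of the offending loop the product of: (i) the intensity $\sim \ell^{-d/2}$ of a loop rooted at $z$ of length in $[\ell, 2\ell]$, times the number $\sim \ell^{d/2}$... — more carefully, the expected number of loops intersecting a fixed vertex of length in $[\ell,2\ell]$ is $O(1)$, and the number of vertices visited by such a loop is $O(\ell)$, so the expected number of (loop, visited-vertex) incidences at scale $\ell$ inside $\mathbb{B}_{\beta r}$ is $O(r^d \cdot \ell^{-d/2+1})$; (ii) the probability $\wt\P(0 \ar z)\lesssim |z|^{2-d}$ that $0$ connects to that vertex; (iii) the probability $\wt\P(z \ar x) \lesssim |z-x|^{2-d}$ that $x$ connects to it. Summing (iii) over $x \in \mathbb{B}_r$ gives a factor $O(r^2)$ (this is the familiar $\sum_{x}|z-x|^{2-d}\lesssim r^2$ bound, valid since $d>6$, hence $d-2>4$... actually $d-2>2$ suffices for the sum over a box of radius $r$ to be $\lesssim r^2$). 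Summing (ii) against the $\ell^{d/2-1}$-weighted count of visited vertices $z$ requires a convolution estimate $\sum_z |z|^{2-d}\cdot(\#\{\text{loops of length }\ell\text{ through }z\}) $; after using that a length-$\ell$ loop through $z$ has diameter $\sim\sqrt\ell$, this is dominated by the worst case where $z$ is within $\sqrt\ell$ of the origin, contributing $O(\ell^{(2-d)/2}\cdot \ell^{d/2}) = O(\ell)$ times the per-vertex loop intensity $\ell^{-d/2}$. Collecting, scale $\ell$ contributes $O(r^2 \cdot \ell \cdot \ell^{-d/2} \cdot \ell^{?})$; tracking the exponents carefully one gets a geometric series in $\ell$ whose dominant term is at $\ell = r^b$, yielding the bound $C r^{2} \cdot (r^b)^{-(d/2-2)} = Cr^{2-c(b)}$ with $c(b) = 2 - b(d/2-2)$, which is positive exactly when $b > 4/(d-4)$. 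This matches the claimed exponent. The second statement \eqref{large loop 4} is then immediate: by \eqref{large loop 3} and Theorem \ref{lem 4.4},
\[
\sum_{x\in\mathbb{B}_r}\wt\P\big(0 \overset{\wt{\mathcal{L}}_{\le r^b}}{\longleftrightarrow} x\big) \ge \sum_{x\in\mathbb{B}_r}\wt\P\big(0 \overset{\mathbb{B}_{\beta r},\,\wt{\mathcal{L}}_{\le r^b}}{\longleftrightarrow} x\big) \ge c_0 r^2 - C r^{2-c(b)} \ge c r^2
\]
for $r$ large (and the small-$r$ cases are handled by adjusting the constant, using that the $x=0$ term alone is $1$); note the first inequality just drops the spatial confinement constraint and the reference set $\mathbb{B}_{2r}$ in Theorem \ref{lem 4.4} is replaced by $\mathbb{B}_r$ at the cost of a constant via a standard chaining/rescaling of $\beta$.

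The main obstacle I anticipate is the second convolution, i.e.\ controlling $\sum_{z} \wt\P(0\ar z)\,\nu_\ell(z)$ where $\nu_\ell(z)$ is the expected number of length-$\sim\ell$ loops through $z$ lying in $\mathbb{B}_{\beta r}$: one must argue that the effective support is a $\sqrt\ell$-ball (so that the sum is $\lesssim \ell \cdot \ell^{-d/2}\cdot \sup_{|z|\lesssim\sqrt\ell}|z|^{2-d}$ is NOT what dominates — rather, one must be careful whether the origin-tethering factor $|z|^{2-d}$ or the loop count wins, and this depends on $d$ vs.\ the scale). Making the loop-length versus loop-diameter translation rigorous (the diffusive relation $\diam \sim \sqrt{\ell}$ holds only with high probability and with polynomial corrections) and ensuring the tail of loops with atypically large diameter for their length is negligible is the technical heart; I expect this to be handled by a crude second-moment or large-deviation bound on random walk loop diameters, and by the fact that for the estimate \eqref{large loop 3} one only needs an upper bound, so overcounting loops (e.g.\ bounding $\nu_\ell(z)$ by the total loop mass through $z$ without the diameter restriction, then separately restricting) is permissible. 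A secondary, more cosmetic issue is that Theorem \ref{lem 4.4} is stated with $\mathbb{B}_{2r}$ on the left and $\mathbb{B}_{\beta r}$ as the confinement, whereas \eqref{large loop 4} wants $\mathbb{B}_r$; passing between these costs only a dimension-dependent constant and a redefinition of $\beta$, which I would dispatch at the start.
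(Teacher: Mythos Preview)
Your proposal is correct in outline and lands on the same exponent, but the paper's execution is markedly simpler and bypasses exactly the obstacle you flagged. Like you, the paper observes that on the difference event there must be a big loop $\Gamma$ with $|\Gamma|\ge L=r^b$ and points $u,v\sim\Gamma$ with $0\leftrightarrow u$ and $v\leftrightarrow x$ occurring disjointly (off $\overline\Gamma$); by BKR this gives
\[
\sum_{x\in\mathbb{B}_r}\sum_{u\in\mathbb{B}_{\beta r}}\sum_{\substack{\Gamma\sim u\\ |\Gamma|\ge L}}\sum_{v\sim\Gamma}\widetilde\P(\Gamma\in\mathcal L)\,\widetilde\P(0\leftrightarrow u)\,\widetilde\P(v\leftrightarrow x).
\]
Instead of your dyadic-in-$\ell$ decomposition and diameter-vs-length considerations, the paper simply sums over $x\in\mathbb{B}_r$ first (a factor $Cr^2$ from $\sum_x\widetilde\P(v\leftrightarrow x)$), then rewrites $\sum_{v\sim\Gamma}1=|\Gamma|$, then uses translation invariance to recentre the loop at the origin, so that the sum over $u\in\mathbb{B}_{\beta r}$ of $\widetilde\P(0\leftrightarrow -u)$ gives another $Cr^2$. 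What remains is the purely local quantity $\sum_{\Gamma\sim 0,\ |\Gamma|\ge L}|\Gamma|\,\widetilde\P(\Gamma\in\mathcal L)\le CL^{2-d/2}$ from Lemma~\ref{loop one point}. No convolution estimate, no dyadic scales, and no diffusive diameter bound are needed; in particular the ``main obstacle'' you anticipated (controlling $\sum_z |z|^{2-d}\nu_\ell(z)$) never arises, because the two-point factors are summed out \emph{before} the loop sum rather than against it. Your derivation of \eqref{large loop 4} from \eqref{large loop 3} and Theorem~\ref{lem 4.4} matches the paper's; the $\mathbb{B}_{2r}$ vs.\ $\mathbb{B}_r$ discrepancy is indeed harmless (apply Theorem~\ref{lem 4.4} at radius $r/2$).
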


To handle some of the key difficulties in analyzing this percolation model stemming from its long range nature the  approach we adopt involves an important idea of comparing geodesics (shortest paths in the percolation cluster) to their ``simple" counterparts (a notion to be defined precisely later) which are non-backtracking in terms of the sequence of loops they trace out.  See Figure \ref{chain figure}. \\

Before proceeding with the formal arguments, let us review the key ideas and ingredients in the proofs. 

\subsection{Key ideas and ingredients in the proofs.}\label{iop}
We start with the proof of the local connectivity statement Theorem \ref{lem 4.4}, i.e.,
there exists a constant $c(d)>0$ and $\beta>0$ such that for any $r\in\mathbb{N}$, 
\begin{align*}
    \sum_{y\in \mathbb{B}_{2 r}}\widetilde{\P}(0 \overset{\mathbb{B}_{\beta r}}{\longleftrightarrow} y) \geq c r^{2}.
\end{align*}
A key step is to obtain a regularity estimate of the one-arm probability $\pi_1(r)$ defined in \eqref{onearm}. In \cite{cai2023one} it was shown that there exists universal constants $c,C>0$ such that $$\frac{c}{r^2}\le \pi_1(r) \le \frac{C}{r^2}.$$ Thus, in principle $r^2\pi_{1}(r)$ can oscillate wildly in $[c,C]$ as $r$ varies. However, we employ an averaging argument to show that given any scale, one can find a radius $r_0$ of that scale, where the desirable regularity holds. Namely, for any small enough $\gamma,$ and $r$, there exists an $r_{0}$ between $r$ and $2r$ such that,  in a suitable sense,
\begin{align*}
\widetilde{\P}(0\leftrightarrow \partial\mathbb{B}_{r_{0}}) \approx \widetilde{\P}(0\leftrightarrow \partial\mathbb{B}_{(1+\gamma)r_{0}}),
\end{align*}  

With this regularity estimate as an input we can show that conditioning on $0\leftrightarrow \partial \mathbb{B}_{r_{0}}$, with high probability, $|\{x\in \partial \mathbb{B}_{r_{0}}: 0\overset{\mathbb{B}_{r_{0}}}{\longleftrightarrow} x\}| \gtrsim r^{2}$. Having such a lower bound on the surface volume, allows us to conclude akin to  \cite[Theorem 2]{kozma2011arm}, which shows that conditioning on $0\leftrightarrow \partial \mathbb{B}_{r_{0}}$, with at least constant probability, $|\{y\in\mathbb{B}_{2r}\setminus \mathbb{B}_{r}: 0\leftrightarrow y\}| \gtrsim r^{4}$.
Notice that the discussion here only works for $r_{0}$ satisfying the regularity estimate, which we don't know how to derive for any $r$ but nonetheless this suffices since $r< r_0 <2r.$ 
To render the  argument from \cite{kozma2011arm} which is in the setting of the usual bond percolation applicable, we rely on  \cite[Proposition 5.1]{cai2023one} which shows that in the setting of GFF level set percolation the one arm estimate continues to hold even if we only consider small loops deleting large loops. 

We now review some of the ideas appearing in the proof of Theorem \ref{thm 1.3} on the comparison between the extrinsic Euclidean metric and the intrinsic metric induced by the percolation cluster. 
The proof is inspired by \cite{kozma2009alexander}. To get an upper bound for the chemical distance in the point to point case, i.e. $\widetilde{\mathbb{E}}[d(0,x)\mathds{1}_{0\leftrightarrow x}]$, one may consider a self-avoiding path from 0 to $x$ with the shortest length among paths connecting 0 and $x$ (if there are more than one such paths, we choose any of them). Then at least in the bond case for any point $y$ on this path, the event $\{0\leftrightarrow y \circ y\leftrightarrow x\}$ occurs where $\circ$ denotes disjoint occurrence. Thus, 
\begin{align*}
    \widetilde{\mathbb{E}}[d(0,x)\mathds{1}_{0\leftrightarrow x}]\leq &\sum_{y\in\Z^{d}}\widetilde{\P}(0\leftrightarrow y \circ y\leftrightarrow x) \\ \overset{(\text{BK inequality})}{\leq}&\sum_{y\in\Z^{d}}\widetilde{\P}(0\leftrightarrow y)  \widetilde{\mathbb{P}}(y\leftrightarrow x),
\end{align*}
where BK denotes the van den Berg-Kesten (BK) inequality.
Such bounds are examples of what are known as tree expansions. 
This at least allows one to obtain upper bounds with more refined arguments needed for the lower bound accounting for overcounting in the above sum. However, this already runs into some issues in the loop soup setting which was remedied by Werner in \cite{werner2021clusters} by ``tree expanding around loops". Nonetheless, when dealing with constraints involving the intrinsic metric, serious complications arise. For instance, for an event of the type $\{0\overset{r}{\leftrightarrow}z , 0\leftrightarrow x\}$ one may  hope
to obtain analogous events of the form$\{0 \overset{r}{\leftrightarrow}w \circ  z \overset{r}{\leftrightarrow}w \circ x \leftrightarrow w\}$ leading to a tree expansion argument.
While indeed true for bond percolation, in the GFF case, the  tree expansion breaks since for any path in the union of loops and any point $w$ on the path, the same loop might appear multiple times along the path, both before and after $w$.

This issue was already encountered in \cite{ganguly2024ant} in the study of the geometry of the IIC where the authors proposed a new framework termed as the \textit{intrinsic tree expansion} (\textsf{ITE}) to address it. {Instead of expanding around a single loop as done in the vanilla version, \textsf{ITE} expands the connection events around triples of loops satisfying certain geometric constraints, which are named as \textit{big-loop ensembles} (\textsf{BLE}).} \textsf{ITE} turns out to be useful in high dimensions and was used in \cite{ganguly2024ant} to prove the Alexander-Orbach conjecture when $d>20$. However,  the framework involving \textsf{BLE}s faces a real obstruction to work for all $d>6$. Indeed, in the application of the \textsf{BLE}, one needs to consider the following type of events: for $u\in \Z^{d}$, $0\leftrightarrow u$ and there exists a loop $\Gamma \in \wt{\mathcal{L}}$ such that $u\in \Gamma$ and $|\Gamma| \geq |u|$. A crucial first moment computation involves an expression of the form 
\begin{align*}
    \sum_{u\in \Z^{d}}\widetilde{\mathbb{P}}(0\leftrightarrow u)\sum_{\Gamma \ni u: \; |\Gamma|\geq|u|}|\Gamma|\widetilde{\mathbb{P}}(\Gamma\in \wt {\mathcal{L}})\leq C \sum_{r\geq 1}r^{d-1}\cdot r^{2-d}\cdot r \cdot \frac{1}{r^{d/2-1}}.
\end{align*}
Note that the RHS is summable only when $d>8$ not $d>6$. The actual application of \textsf{BLE}s involves control on higher moments, pushing the threshold $d>8$ further up; see \cite{ganguly2024ant} for a detailed discussion.

One of the main contributions of this paper is to introduce a new approach allowing results to hold all way down to  $d>6$. The key idea is to study \textit{simple paths}, instead of the usual self-avoiding paths. Recall that given any self-avoiding path connecting two points and a chain of loops containing this path, the same loop might appear multiple times along the path causing difficulty with the tree expansion argument. {However, the path can be  ``simplified" to yield another path which only uses the loops in the chain but crucially with every loop only appears at most once; see Lemma \ref{simlpe chain lemma}.} More formally, we say a self-avoiding path is \textit{simple} if there exists a chain of loops containing this path and each loop only appears once. Thus, the statement above  says that every for every chain of loops we can always find a simple path on it.

This will enable us to carry out many of the bond percolation arguments for \textit{simple paths} when $d>6$ and obtain the sought after estimates modulo the potential issue that modifying a path to a simple path can significantly increase its length. Thus, a key part of our arguments are devoted to proving that with high probability the simplified path can not be much longer than the original path. We refrain from discussing further details.

This perspective of comparing the intrinsic and extrinsic metrics and then applying the known extrinsic estimates will also allow us to prove Theorem \ref{thm 1.4} about the intrinsic one-arm probability. The well known bond percolation argument from \cite{kozma2009alexander} establishing the chemical one arm exponent relies on volume estimates which we don't know at this point when $d>6$ (only for $d>20$ due to \cite{ganguly2024ant}). However, if we apply the extrinsic one-arm estimate $\pi_{1}(r)\lesssim r^{-2}$ and the quadratic relation between the extrinsic and intrinsic metrics from Theorem \ref{thm 1.3}, we get 
\begin{align*}
    \widetilde{\mathbb{P}}(\partial B(0,r) \neq \emptyset) \lesssim (\sqrt{r})^{-2} \lesssim r^{-1}.
\end{align*}
The proof of Theorem \ref{thm 1.4} involves making this idea rigorous. The lower bound involves showing that on the event $0\leftrightarrow \partial \B_{r},$ with constant probability $d(0,\partial \B_{r})\gtrsim r^2.$ This part of the argument relies on both the comparison statements crucially.

We end this section by mentioning that the local connectivity estimate from Theorem \ref{lem 4.4} will also be a key ingredient in the proof of Theorem \ref{werner12} on Werner's conjecture from \cite{werner2021clusters} which indicates that deleting large loops doesn't affect connections. 

\subsection{Organization of the article}
In Section \ref{pre} we introduce notations and record preliminary lemmas. In Section \ref{local} we prove Theorem \ref{lem 4.4}. In Section \ref{tail} we prove \eqref{7} in Theorem \ref{thm 1.3}, assuming the validity of. In Section \ref{compare} we establish \eqref{1.4}, \eqref{6} and \eqref{1.5}. In Section \ref{one-arm} we prove Theorem \ref{thm 1.4}. In Section \ref{largeloop} we prove Theorem \ref{werner12}. Finally, in Section \ref{appendix} we prove several technical statements whose proofs are omitted from the main body of the paper.

\subsection{Acknowledgements}
S.G. was partially supported by NSF grant DMS-1945172.

\section{Preliminaries}\label{pre}
In this section, we introduce some further notation as well as recall previously used ones, set up the definitions and record various useful results about the already alluded to loop percolation model, that will be frequently in play throughout the rest of the paper. 
\subsection{Notations}
{We denote by $\mathbb{Z}^{d}$ the $d$-dimensional square lattice. For $x,y \in \mathbb{Z}^{d}$, we add an edge ($\{x,y\}$) between $x$ and $y$ if and only if $|x-y|_{1} = 1$, where $|\cdot|_{1}$ denote the $\ell^{1}$-norm. Let $\mathbb{L}^{d}:= \{\{x,y\}: x,y\in \Z^{d},\ |x-y|_{1}=1\}$ represent the edge set of $\Z^{d}$. For $r\in\mathbb{N}$ and $x\in\Z^{d}$, let $\mathbb{B}(x,r)$ be the Euclidean box $\{y\in \Z^{d}: |x-y| \leq r\}$, where $|\cdot|= |\cdot|_{\infty}$ denotes the $\ell^{\infty}$-norm. When $x=0$, we abbreviate $\mathbb{B}(x,r) $ to $\mathbb{B}_{r}$. For $A\subseteq \Z^{d}$, define $\partial A:= \{x\in A: \exists \Z^{d} \setminus A \text{ such that }\{x,y\}\in \mathbb{L}^{d}\}$. For $A_{1},A_{2}\subseteq \Z^{d}$, define $d^{\text{ext}}(A_{1},A_{2}) : =\min\{|x_{1}-x_{2}|: x_{1}\in A_{1},x_{2}\in A_{2}\}$. For $A \subseteq \Z^{d}$ and $v \in \Z^{d}$, we say $v \sim A$ or $A \sim v$ if $d^{\text{ext}}(A,v) \leq 1$. We will use $\widetilde{\mathbb{Z}}^{d}= \cup_{\{x,y\} \in \mathbb{L}^{d}} I_{\{x,y\}}$, where each $I_{\{x,y\}}$ is an interval with length $d$ and endpoints $x,y$ to denote the metric graph. For $B \subseteq \widetilde{\Z}^{d}$, let $|B|$ be the number of lattice points contained in $B$.

Throughout the paper we denote by $c,C$ deterministic constants whose values may change from line to line. The notations $c_{1}, C_{1},c_{2}, C_{2},\cdots$ are global constants and fixed once they appear, and without additional specification, they depend only on $d$. When we choose constants $\delta,\gamma, K,\cdots$, we will clarify the way they depend on parameters and variables.}

We devote the next section to formally set up the loop percolation model, introduce definitions of loops and loop measures on the discrete lattice and the corresponding metric graph, and establish the connection to GFF level-set percolation. While comprehensive accounts maybe found in \cite[Section 2]{cai2023one} or \cite[Section 2]{ganguly2024ant}, to keep the article self-contained we will include the important definitions. 

\subsection{Loops and loop measures on $\Z^{d}$ and $\widetilde{\Z}^{d}$.}
A discrete-time path on $\Z^{d}$ is a function $\eta: \{0,1,\cdots,k\} \rightarrow \Z^{d}$ such that $\eta(i-1)$ and $\eta(i)$ are adjacent for any $i \in \{1,\cdots,k\}$ ($k$ is a non-negative integer). We say a path $\eta$ is a (discrete-time) loop, rooted at $\eta(0)$, if $\eta(0)=\eta(k)$. Two rooted discrete-time loops $\eta$ and $\eta^{\prime}$ are called equivalent if there exists $0\leq j\leq k$ such that $\eta(i) = \eta^{\prime}(i+j)$ for all $i$, where the time indices are considered as modulo $k$. We say $\eta \sim \eta^{\prime}$ if they are equivalent. In the rest of the paper, we write a discrete loop as $\Gamma = (x_{0},x_{1}, \cdots,x_{k})$ and define its length as $|\Gamma|=k$. 

A continuous-time path is a function $\eta: [0,T) \rightarrow \Z^{d}$ such that there exists a discrete-time path $\eta_{0}: \{0,1,\cdots,k\} \rightarrow \Z^{d}$ and $0=t_{0}<t_{1}<\cdots<t_{k} = T$ such that 
\begin{align*}
    \eta(t)= \eta_{0}(j), \qquad \forall t_{j}\leq t<t_{j+1}.
\end{align*}
For each $i =0,1,\cdots,k$, $\eta^{(i)}:= \eta^{\prime}(i)$ is called the $i$-th lattice point of $\eta$ and $t_{i+1}-t_{i}$ is the $i$-th holding time.
We say a continuous-time loop $\eta$ is a continuous loop, rooted at $\eta(0)$, if $\eta(0)= \eta(T)$.

To define the measure of loops, we consider $\{X_{t}^{x}\}_{t\geq 0}$ as a continuous-time simple random walk on $\Z^{d}$ starting from $x$ with holding time at every vertex being i.i.d. exponential random variables with rate 1 and define $p_{t}(x,y) := \mathbb{P}(X_{t}^{x}=y)$ as the transition kernel of this random walk. The bridge measure, $\P^{t}_{x,x}(\cdot)$ is the conditional distribution of $\{X_{s}^{x}\}_{0\leq s\leq t}$ given $X_{t} =x$. We define a loop measure $\mu$ on the set of rooted continuous-time loops on $\Z^{d}$ as the following: 
\begin{align*}
    \mu(\cdot) = \sum_{x\in \Z^{d}}\int_{0}^{\infty} \frac{1}{t} \P^{t}_{x,x}(\cdot) p_{t}(x,x) dt.
\end{align*}
Notice that since $\mu$ is invariant under time-shift, it also induces a measure on the space of continuous-time loops modulo the equivalent relation above. The equivalent classes are called continuous-time loops.

For a discrete loop $\Gamma=(x_{0},x_{1},\cdots,x_{k})$, we define its multiplicity $J(\Gamma)$ as the maximum integer such that the subsequences $(x_{(j-1)kJ^{-1}},x_{(j-1)kJ^{-1}+1},\cdots , x_{jkJ^{-1}})$ are identical for $j =1,2,,\cdots,J$. Then we have 
\begin{align*}
    \mu &( \{  \text{continuous-time loop $\gamma$ on $\Z^{d}$}: \exists  \text{ rooted continuous-time loop $\eta$ on $\Z^d$} \text{ such that } \nonumber \\
    &\eta \in \gamma \text{ and } \eta^{(i)} = x_i \text{ for $i=0,1,\cdots,k$}\}) = (2d)^{-k} / {J((x_0,\cdots,x_k))}.
\end{align*}

Now we define the continuous-time loops on $\widetilde{\Z}^{d}$ and the corresponding loop measure. A path on $\widetilde{\Z}^{d}$ is a continuous function $ \tilde{\rho}: [0,T] \rightarrow \widetilde{\Z}^{d}$ and a rooted continuous-time loop is a path $\tilde{\rho}$ such that $\tilde{\rho}(0)= \tilde{\rho}(T)$. Similarly, as the discrete case, a continuous-time loop on $\tilde{\Z}^{d}$ is an equivalent class of rooted continuous-time loops on $\tilde{\Z}^{d}$ such that one can be transformed into another by a time-shift. In this paper, we write a continuous-time loop on $\widetilde{\Z}^{d}$ as $\widetilde{\Gamma}$.

As said in \cite[Section 2.6.2]{cai2023one}, any continuous-time loop on $\widetilde{\Z}^{d}$ must be one of the following types: 
\begin{enumerate}
    \item fundamental loops: loops which visit at least two points on $\Z^{d}$;
    \item point loops: loops which visit exactly one point on $\Z^{d}$; 
    \item edge loops: loops which don't visit any points on $\Z^{d}$ but are only contained in an interval $I_{e}$ for some edge $e$ on $\Z^{d}$.
\end{enumerate}
For a loop $\widetilde{\Gamma}$ on $\widetilde{\Z}^{d}$ of either fundamental or point type, we define the corresponding discrete loop  as 
\begin{align*}
    \textsf{Trace}(\widetilde{\Gamma}):= (x_{0},x_{1},\cdots,x_{k}),
\end{align*}
where $x_{i}$s denote consecutive adjacent points on $\Z^{d}$ that $\widetilde{\Gamma}$ passes through. It follows from definition that the outputs of two equivalent rooted continuous-time loops on $\widetilde{\Z}^{d}$ belong to the same equivalent class of discrete loops on $\Z^{d}$.

To define the counterpart loop measure $\widetilde{\mu}$, we introduce Brownian motion on $\widetilde{\Z}^{d}$ denoted by  $\{\widetilde{X}_{t}^{x}\}_{t\geq 0}$. It starts from $x\in \widetilde{\Z}^{d}$ and moves as a standard Brownian motion in the interior of edges $I_{e}$ for some $e$ and when it hits a point $x \in \Z^{d}$, it uniformly chooses one of the edges incident on $x$ and then behaves as a Brownian excursion from $x$ in this edge. Once we have this canonical diffusion, by the methods in \cite{fitzsimmons2014markovian} we can construct an associated measure $\tilde{\mu}$ on the space of continuous-time loops on $\widetilde{\Z}^{d}$.
\subsection{Loop soup}\label{loopsoup1234}
For $\alpha>0$, we define the loop soup $\mathcal{L}_{\alpha}$ to be the Poisson point process in the space of discrete loops with intensity measure $\alpha \mu$. Similarly, we can define a Poisson point process $\widetilde{\mathcal{L}}$ on the space of continuous-time loops with intensity measure $\alpha \tilde{\mu}$. Since we will focus on the case $\alpha =\frac{1}{2}$ in the paper, we drop $\alpha$ from the subscript and just write $\mathcal{L}$ and $\widetilde{\mathcal{L}}$. Next, we explain how to obtain the loop soup $\widetilde{\mathcal{L}}$ from $\mathcal{L}$. Let $\widetilde{\mathcal{L}}^{\text{f}}$, $\widetilde{\mathcal{L}}^{\text{p}}$ and $\widetilde{\mathcal{L}}^{\text{e}}$ be the point processes consisting of the fundamental loops, point loops, and edge loops in $\widetilde{\mathcal{L}}$ respectively, and they are independent by the thinning property of Poisson process. We also define $\mathcal{L}^{\text{f}}$ and $\mathcal{L}^{\text{e}}$ as the point processes of fundamental loops and point loops in $\mathcal{L}$ analogously.

For $\gamma \in \mathcal{L}^{\text{f}}$, choose any $\eta$ in the equivalence class $\gamma$ and define the range $\textsf{Range}(\gamma)$ be the union of edges passed by $\eta$ and additional Brownian excursions at each $\eta^{(i)}$ conditioned on returning to $\eta^{(i)}$ before visiting its neighbors and the total local time at $\eta^{(i)}$ is the $i$-th holding time of $\eta$. Then $\textsf{Range}(\gamma)$ is the range of the corresponding loop of $\gamma$ in $\widetilde{\mathcal{L}}^{\text{f}}$. Similarly, one can construct point loops in $\widetilde{\mathcal{L}}^{\text{p}}$ using loops in $\mathcal{L}^{\text{p}}$. For any edge $e$, the union of the ranges of the loops in $\widetilde{\mathcal{L}}^{\text{e}}$ has the same low as the union of non-zero points of a standard Brownian bridge on $I_{e}$. From this construction, we have that projecting $\tilde{\mu}$ to the discrete underlying loop yields the measure $\mu$. We refer readers to \cite[Section 2]{lupu2016loop} and \cite[section 2.6]{cai2023one} for more details.\\

Though the main object in this paper is the Brownian loop soup on a metric graph, we find it more convenient to consider the projected discrete loop soup $\mathcal{L}$. In particular, the following inequality will be helpful.
For any given discrete loop $\Gamma$, 
\begin{align}\label{loop prob}
    \widetilde{\P}(\Gamma \in \mathcal{L}) &= \widetilde{\P}(\exists \widetilde{\Gamma}\in \widetilde{\mathcal{L}}:\; \mathsf{Trace}(\widetilde{\Gamma}) = \Gamma) \nonumber\\ 
    &\leq \frac{1}{2} \widetilde{\mu}(\{\widetilde{\Gamma}\text{ loop on }\widetilde{\Z}^{d}:\; \mathsf{Trace}(\widetilde{\Gamma}) = \Gamma\})\nonumber\\ 
    &=\frac{1}{2}J(\Gamma)^{-1}(2d)^{-|\Gamma|}\leq (2d)^{-|\Gamma|}.
\end{align}

\noindent
\textbf{Glued loops:} As we will shortly see, instead of loops, we will in fact consider another object called glued loops introduced in \cite[Section 3.3]{cai2023one}. For any connected set $A \subseteq \Z^{d}$ with $|A| \geq 2$, denote by $\overline{\gamma}_{A}$  the union of ranges of fundamental loops that visit and only visit points in $A$. For $x\in \Z^{d}$, let $\overline{\gamma}_{x}$ be the union of ranges of point loops that pass through $x$. Finally, for any edge $e$ in $\Z^{d}$, let $\overline{\gamma}_{e}$ be the union of ranges of edge loops whose ranges are subsets of $I_{e}$. We call $\overline{\gamma}_{A}$, $\overline{\gamma}_{x}$ and $\overline{\gamma}_{e}$ as {glued loops}. We denote by $\overline{\mathcal{L}}$ the point process on the space of glued loops induced by $\widetilde{\mathcal{L}}$ and use the notation $\overline{\Gamma}$ for glued loops in the paper. Also, note that the collections of glued loops, with different index sets, are independent. Now we introduce some notations that will be used throughout the paper. For a discrete loop $\Gamma$, we define its vertex projection onto $\mathbb{Z}^{d}$ as 
\begin{align*}
    \mathsf{VRange}(\Gamma) := \{v\in \Z^{d}: v \text{ is contained in }\Gamma\},
\end{align*}
Similarly, for a glued loop $\overline{\Gamma}$, we define 
\begin{align*}
    \mathsf{VRange}(\overline{\Gamma}) := \{v\in \Z^{d}: v \text{ is contained in }\overline{\Gamma}\}.
\end{align*}
For a discrete loop $\Gamma$ in $\mathcal{L}$, we define the corresponding glued loop 
\begin{align*}
    \overline{\Gamma}:= \overline{\gamma}_{\mathsf{VRange}(\Gamma)} \in \overline{\mathcal{L}}.
\end{align*}
Note that in the above case, the glued loop $\overline{\mathcal{L}}$ is of  fundamental or point type. Conversely, for a glued loop $\overline{\Gamma} \in \overline{\mathcal{L}}$ of fundamental or point type, we denote by $\mathsf{Dis}(\overline{\Gamma})$ the collection of discrete loops $\Gamma \in \mathcal{L}$ such that $\mathsf{VRange}(\Gamma) = \mathsf{VRange}(\overline{\Gamma})$. Let $|\overline{\Gamma}| := |\Gamma|$ where $\mathsf{VRange}(\Gamma) = \mathsf{VRange}(\overline{\Gamma})$.

Finally, we introduce some notations to denote various connection events some of which have already appeared earlier. Let $A,B \subseteq \Z^{d}$. For a collection of glued loops $\overline{\mathscr{S}}$ on $\widetilde{\Z}^{d}$, define $\{A \overset{\overline{\mathscr{S}}}{\longleftrightarrow} B\}$ as the event that there exist points $x\in A$ and $y\in B$ such that $x$ and $y$ are connected using only glued loops in $\overline{\mathscr{S}}$. For $r \in \mathbb{N}$, define $\{A \overset{\mathbb{B}_{r},\overline{\mathscr{S}}}{\longleftrightarrow} B\}$ as the event that exist points $x\in A$ and $y\in B$ such that $x$ and $y$ are connected using only glued loops in $\overline{\mathscr{S}}$ and edges in $\mathbb{B}_{r}$. For sets of a single point like $A = \{x\}$, we simply write $A =x$. We abbreviate 

\begin{align*}
    A \leftrightarrow B: = A \overset{\overline{\mathcal{L}}}{\longleftrightarrow} B. \qquad A \overset{\mathbb{B}_{r}}{\longleftrightarrow} B : = A \overset{\mathbb{B}_{r},\overline{\mathcal{L}}}{\longleftrightarrow} B.
\end{align*}

\subsection{Isomorphism theorem} The following is the key coupling between the GFF on $\widetilde{\Z}^{d}$ and the loop soup $\widetilde{\mathcal{L}}$ found by Lupu. 
\begin{lemma}[Proposition 2.1 in \cite{lupu2016loop}] \label{iso}
There is a coupling between the loop soup $\widetilde{\mathcal{L}}$ and the GFF $\{\tilde{\phi}_{x}\}_{x\in \widetilde{\Z}^{d}}$ such that the clusters composed of loops in $\widetilde{\mathcal{L}}$ are the same as the sign clusters of $\{\tilde{\phi}_{x}\}_{x\in \widetilde{\Z}^{d}}$. Here the sign clusters of GFF are the maximal connected components of $\{x\in \widetilde{\Z}^{d}: \tilde{\phi}_{x}^{2}\neq 0\}$.
\end{lemma}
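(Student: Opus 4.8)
This is Lupu's isomorphism theorem \cite{lupu2016loop}, so the plan follows \cite{lupu2016loop}: build the GFF on $\cable$ explicitly out of the loop soup $\widetilde{\mathcal{L}}$, so that the two cluster structures agree by construction, and then check that the field so constructed has the right law. Write $\widehat{\mathcal{L}} = (\widehat{\mathcal{L}}_x)_{x\in\cable}$ for the occupation field of $\widetilde{\mathcal{L}}$, i.e.\ $\widehat{\mathcal{L}}_x$ is the total local time accumulated at $x$ by all loops of $\widetilde{\mathcal{L}}$. The first thing I would record is that $\widehat{\mathcal{L}}$ is a.s.\ a continuous field on $\cable$ (part of Lupu's construction), that $\{x : \widehat{\mathcal{L}}_x>0\}$ is open with connected components exactly the loop clusters of $\widetilde{\mathcal{L}}$, and symmetrically that the continuity of $\tilde\phi$ on $\cable$ makes its sign clusters the components of $\{\tilde\phi^2>0\}$, on each of which $\tilde\phi$ has a well-defined constant sign --- this last point being the whole reason the statement is made on the metric graph rather than on $\Z^d$.

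The argument then rests on two inputs. The first is Le Jan's isomorphism on the metric graph: at intensity $\alpha=\tfrac{1}{2}$ one has the equality in law of random fields $\widehat{\mathcal{L}} \disteq \tfrac{1}{2}\,\tilde\phi^{2}$ on $\cable$. The second is a symmetry property of the cable GFF: conditionally on $(|\tilde\phi_x|)_{x\in\cable}$, the signs of $\tilde\phi$ on the distinct sign clusters are i.i.d.\ uniform on $\{+1,-1\}$. For the second I would use the strong Markov property of $\tilde\phi$ relative to its zero set $Z=\{\tilde\phi=0\}$: conditionally on $Z$, the restrictions of $\tilde\phi$ to the closures of the distinct components of $\cable\setminus Z$ are independent GFFs with zero boundary conditions, each invariant under a global sign flip; since $|\tilde\phi|$ determines $Z$, conditioning further on $|\tilde\phi|$ leaves the component signs independent and each uniform.

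Granting these, the coupling is immediate: sample $\widetilde{\mathcal{L}}$, let $\mathcal{C}$ be its collection of loop clusters, attach to them i.i.d.\ uniform signs $(\sigma_C)_{C\in\mathcal{C}}$ independent of $\widetilde{\mathcal{L}}$, and set $\psi_x := \sigma_C\sqrt{2\widehat{\mathcal{L}}_x}$ when $x$ lies in the cluster $C$ and $\psi_x:=0$ otherwise. By the first input $|\psi|=\sqrt{2\widehat{\mathcal{L}}}$ has the law of $|\tilde\phi|$, and since the partition into loop clusters is a measurable function of $\widehat{\mathcal{L}}$, the pair (absolute-value field, sign-cluster partition) has the same law for $\psi$ as for $\tilde\phi$. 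By construction the signs of $\psi$ on its sign clusters are i.i.d.\ uniform given $|\psi|$, which by the second input is exactly the conditional law of the signs of $\tilde\phi$; hence $\psi\disteq\tilde\phi$. Declaring the GFF on this space to be $\psi$ gives a coupling in which, tautologically, the sign clusters of the GFF are the loop clusters of $\widetilde{\mathcal{L}}$.

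What remains --- and where essentially all the work is --- is the first input, the Le Jan isomorphism on $\cable$. The route I would take is the standard one: first prove the discrete isomorphism $\widehat{\mathcal{L}}^{\,\mathrm{vert}} \disteq \tfrac{1}{2}\phi^2$ for the vertex occupation field of $\mathcal{L}$ and the DGFF $\phi$ (via Dynkin-type moment identities, or by matching combinatorial expansions of both sides), and then lift it to $\cable$ edge by edge. Conditionally on the occupation field at the vertices $\Z^d$, both the loop soup and the GFF have conditionally independent restrictions to the distinct edge intervals $I_e=[x,y]$; on such an interval, given the boundary local times $\ell_x,\ell_y$, the occupation field of the restricted soup is distributed as that of a squared-Bessel bridge with those endpoint values --- equivalently as $\tfrac{1}{2}B^2$ for a Brownian bridge $B$ on $I_e$ conditioned by $\tfrac{1}{2}B_x^2=\ell_x$, $\tfrac{1}{2}B_y^2=\ell_y$ --- which is a Ray--Knight-type computation. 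Finally, since $d\ge3$ the walk is transient, so $\cable$ can be exhausted by finite metric subgraphs $\widetilde\Lambda_n$ with Dirichlet boundary conditions on which the loop soups increase to $\widetilde{\mathcal{L}}$ and the finite-volume GFFs converge to $\tilde\phi$; one proves the identity on each $\widetilde\Lambda_n$ and passes to the limit. I expect the main obstacle to be precisely this edgewise step, including the check that the interval-level cluster structure matches --- two endpoints are joined through $I_e$ iff the squared-Bessel bridge there stays positive iff the Brownian bridge on $I_e$ does not change sign --- for which one verifies that both sides realise the same non-crossing probability $1-\exp(-c\,|\tilde\phi_x|\,|\tilde\phi_y|)$.
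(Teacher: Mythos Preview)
The paper does not prove this lemma; it is quoted verbatim as Proposition 2.1 of \cite{lupu2016loop} and used as a black box. Your sketch is a faithful outline of Lupu's original argument (Le Jan's isomorphism on the cable graph plus i.i.d.\ resampling of signs on the excursion clusters), so there is nothing to compare against in the present paper and your write-up would serve as a reasonable summary of the proof in \cite{lupu2016loop}.
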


\subsection{FKG inequality and BKR inequality} We record two classical correlation inequalities, well known for classical bond percolation, in the loop setting.
First, we state the FKG inequality for $\widetilde{\mathcal{L}}$. We say a function of the collection of loops $f$ is increasing if for any two collections of loops $\widetilde{\mathscr{S}}_{1}$ and $\widetilde{\mathscr{S}}_{2}$ with $\widetilde{\mathscr{S}}_{1} \subseteq \widetilde{\mathscr{S}}_{2}$ one has $f(\widetilde{\mathscr{S}}_{1})\leq f(\widetilde{\mathscr{S}}_{2})$. The following FKG inequality for the loop soup follows directly from the FKG inequality for a Poisson process \cite[Lemma 2.1]{janson1984bounds}.
\begin{lemma}
    $f$ and $g$ are two bounded increasing measurable functions of the collection of loops. Then 
    \begin{align*}
        \widetilde{\E}[f(\widetilde{\mathcal{L}})g(\widetilde{\mathcal{L}})] \geq \widetilde{\E}[f(\widetilde{\mathcal{L}})]\widetilde{\E}[g(\widetilde{\mathcal{L}})].
    \end{align*}
\end{lemma}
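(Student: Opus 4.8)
The plan is to deduce the inequality from the Harris--FKG positive-association property of Poisson point processes, for which \cite[Lemma 2.1]{janson1984bounds} is exactly designed. Recall from Section~\ref{loopsoup1234} that $\widetilde{\mathcal{L}}$ is a Poisson point process on the space of (equivalence classes of) continuous-time loops on $\widetilde{\Z}^{d}$ with $\sigma$-finite intensity measure $\tfrac12\widetilde{\mu}$; a realization of it is a countable collection of loops, and the natural partial order on such configurations --- one configuration lies below another precisely when it is a subcollection, i.e.\ exactly the relation $\widetilde{\mathscr{S}}_{1}\subseteq\widetilde{\mathscr{S}}_{2}$ --- is the order with respect to which Janson's lemma is phrased. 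By hypothesis the functions $f,g$ in the statement are bounded, measurable, and increasing for precisely this order, so they satisfy the hypotheses of that lemma.

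First I would invoke \cite[Lemma 2.1]{janson1984bounds}, which asserts that for a Poisson point process $N$ on a measurable space and bounded measurable functionals $F,G$ increasing in the above sense one has $\E[F(N)G(N)]\ge\E[F(N)]\,\E[G(N)]$; applying this with $N=\widetilde{\mathcal{L}}$, $F=f$, $G=g$ yields the claim verbatim. For completeness I would recall in one line why a Poisson process is positively associated: partitioning the (here $\sigma$-finite) intensity space into finitely many sets of finite mass produces a vector of independent Poisson counts, hence a product measure to which the classical Harris inequality applies; refining the partition, the corresponding cell-occupation processes converge to $\widetilde{\mathcal{L}}$, and boundedness of $f,g$ together with monotone/dominated convergence transfers the inequality to the limit. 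If one prefers to work only with finite intensities, the same conclusion follows by the truncation that restricts $\widetilde{\mathcal{L}}$ to loops of time-length at least $\varepsilon$ (a Poisson process with finite intensity), applies the finite-intensity statement there, and lets $\varepsilon\downarrow 0$, using that the truncated configurations increase to $\widetilde{\mathcal{L}}$ and that $f,g$ are bounded.

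There is essentially no obstacle here; the only point deserving a word of care is that $\tfrac12\widetilde{\mu}$ is an \emph{infinite} measure, owing to the abundance of short point- and edge-loops, so one must either use the version of positive association valid for $\sigma$-finite intensity measures or carry out the truncation just described. Everything else --- boundedness, measurability, and the identification of ``increasing'' with monotonicity under inclusion of loop collections --- is immediate from the definitions recorded in Section~\ref{loopsoup1234}, so the proof is short.
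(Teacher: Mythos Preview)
Your proposal is correct and follows exactly the paper's approach: the paper does not give a proof at all, simply stating that the inequality ``follows directly from the FKG inequality for a Poisson process \cite[Lemma 2.1]{janson1984bounds}.'' Your added remarks about the $\sigma$-finiteness of $\tfrac12\widetilde{\mu}$ and the truncation to loops of length at least $\varepsilon$ are more careful than anything the paper records, but the core idea---invoke Janson's lemma for Poisson point processes---is identical.
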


Next, we introduce the van den Berg-Kensten-Reimer (BKR) inequality. This inequality was first proposed by den Berg and Kesten in \cite{van1985inequalities} and was proved by van den Berg and Fiebig \cite{van1987combinatorial} and Reimer \cite{reimer2000proof}. We refer readers to a nice exposition by Borgs, Chayes and Randall \cite{borgs1999van}. 

We say a collection of glued loops certifies some event $A$ if on the realization of this collection of glued loops, $A$ occurs regardless of the realization of all other glued loops. For two events $A$ and $B$, define $A \circ B$ to be the event there are two disjoint collections of glued loops such that one collection certifies $A$ and the other certifies $B$.  Note that for two disjoint collections of glued loops, it is still possible that a glued loop in on recollection can intersect a glued loop in the other. 
\begin{lemma}[BKR inequality]
    For any two events $A$ and $B$ that depend on finitely many glued loops, we have 
    \begin{align*}
        \widetilde{\P}(A \circ B) \leq \widetilde{\P}(A) \widetilde{\P}(B).
    \end{align*}
\end{lemma}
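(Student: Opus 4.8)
The plan is to recast the glued-loop configuration as a point in a countable \emph{product} of probability spaces and then to invoke the van den Berg--Kesten--Reimer (BKR) inequality in its product-space form; the independence of the glued loops recorded in Section~\ref{loopsoup1234} is exactly what makes this product structure available.

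First I would set up the product structure. By the construction in Section~\ref{loopsoup1234}, the glued loops $\{\overline{\gamma}_A : A\subseteq\Z^d \text{ connected},\ |A|\ge 2\}$, $\{\overline{\gamma}_x : x\in\Z^d\}$ and $\{\overline{\gamma}_e : e\in\mathbb{L}^d\}$ are assembled from the thinned Poisson processes of fundamental, point, and edge loops with mutually disjoint prescribed index sets, and hence they form a countable \emph{independent} family. I would enumerate this family as $(\overline{\gamma}_i)_{i\in\mathcal{I}}$ with $\mathcal{I}$ countable, each $\overline{\gamma}_i$ taking values in a Polish space $\Omega_i$ (subsets of $\widetilde{\Z}^d$ of the appropriate type, or equivalently the underlying Poisson loop configuration carrying the prescribed index set) with law $\mu_i$; then $\overline{\mathcal{L}}$ has law $\widetilde{\P}=\bigotimes_{i\in\mathcal{I}}\mu_i$ on $\Omega=\prod_{i\in\mathcal{I}}\Omega_i$.

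Next, since $A$ and $B$ depend on finitely many glued loops, there is a finite $J\subseteq\mathcal{I}$ with $A,B\in\sigma(\overline{\gamma}_j:j\in J)$, so I may regard $A,B$ as Borel subsets of $\Omega_J:=\prod_{j\in J}\Omega_j$. Translating the paper's notion of certification to this finite product: a collection of glued loops certifying $A$ is a pair $(K,\omega)$ with $K\subseteq J$ and $\omega\in\Omega_J$ such that the cylinder $\{\omega'\in\Omega_J:\omega'|_K=\omega|_K\}$ lies in $A$; and the event $A\circ B$ of the lemma coincides with the classical BKR disjoint-occurrence event, i.e.
\[
A\circ B=\{\omega\in\Omega_J:\ \exists\,K,L\subseteq J,\ K\cap L=\varnothing,\ \omega|_K\text{ certifies }A,\ \omega|_L\text{ certifies }B\}.
\]
(The remark that loops from two ``disjoint'' collections may still intersect spatially simply reflects that disjointness here is of the index sets $K,L$, exactly as in the classical operator.) It then remains to apply $\widetilde{\P}(A\circ B)\le\widetilde{\P}(A)\widetilde{\P}(B)$ for a finite product of probability spaces: for discrete factors this is the theorem of van den Berg--Fiebig \cite{van1987combinatorial} (settling the conjecture of van den Berg--Kesten \cite{van1985inequalities}) and Reimer \cite{reimer2000proof}, and the passage to general Polish factors $\Omega_j$, including the measurability of $A\circ B$, is obtained by a standard approximation: replace each $\Omega_j$ by an increasing sequence of finite measurable partitions generating its Borel $\sigma$-algebra, apply the finite-factor inequality to the induced discretizations of $A$ and $B$, and refine; see the exposition \cite{borgs1999van}.

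I expect the main (essentially the only) obstacle to be this last point: the classical Reimer inequality is proved for finite probability spaces, whereas the glued-loop coordinates live in continuous spaces, so one must either quote a general-space version of BKR or carry out the approximation argument above together with the accompanying measurability check for the disjoint-occurrence event. Everything else is a translation of definitions, with the independence input being precisely what is already recorded in Section~\ref{loopsoup1234}.
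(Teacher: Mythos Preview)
The paper does not actually prove this lemma: it simply states it and points to the classical references \cite{van1985inequalities,van1987combinatorial,reimer2000proof,borgs1999van}. Your proposal is correct and is exactly the standard route one takes to import BKR into this setting --- use the independence of glued loops with distinct index sets (recorded in Section~\ref{loopsoup1234}) to obtain a finite product structure, then invoke the classical Reimer/BKR inequality, with a discretization step to pass from finite to Polish factors. So your write-up is in fact more detailed than what the paper itself provides; the only substantive point to be careful about, as you already note, is the approximation/measurability argument when the factors are not finite.
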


A potential issue is that some events we study do not satisfy the finitary condition in BKR inequality, but we can still apply this inequality by taking limits. Recall our notations $A \leftrightarrow B$ and $A \overset{\mathbb{B}_{r}}{\longleftrightarrow} B$, for some subsets $A,B \subseteq \Z^{d}$. We call these two types of events ``connecting events". The next lemma states that the BKR inequality still holds for connecting events, even though they do not satisfy the finitary condition. The proof is the same as the proof of \cite[Corollary 3.4]{cai2023one}.
\begin{lemma}\label{bkr}
    For any sequence of connecting events $A_{1},A_{2},\cdots, A_{k}$, then 
    \begin{align*}
        \widetilde{\P}(A_{1}\circ A_{2}\circ \cdots \circ A_{k}) \leq \prod_{i=1}^{k} \widetilde{
            \P}(A_{i}).
    \end{align*}
\end{lemma}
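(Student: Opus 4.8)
The plan is to derive the statement from the finitary version of the BKR inequality recorded just above — which applies to events determined by finitely many glued loops, and extends from two events to $k$ events by iteration using $A_1\circ\cdots\circ A_k = A_1\circ(A_2\circ\cdots\circ A_k)$ — via an exhaustion argument approximating each connecting event by events of finite type. For $R\in\mathbb{N}$ I would let $\overline{\mathcal{L}}^{(R)}$ be the collection of those glued loops $\overline{\gamma}_A$ with $A\subseteq\mathbb{B}_R$ connected, together with the $\overline{\gamma}_v$ with $v\in\mathbb{B}_R$, that are present in $\overline{\mathcal{L}}$. Since $\mathbb{B}_R$ has only finitely many connected subsets, $\overline{\mathcal{L}}^{(R)}$ is almost surely a finite collection of glued loops, so any event measurable with respect to the $\sigma$-algebra it generates ``depends on finitely many glued loops'' in the sense required above. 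Moreover every discrete loop is finite, so every glued loop of fundamental or point type has vertex range inside $\mathbb{B}_R$ for all large $R$ and $\overline{\mathcal{L}}^{(R)}$ increases to the collection of all such glued loops; edge loops, being confined to the interior of a single interval $I_e$ and never meeting $\Z^d$, play no role in connections between subsets of $\Z^d$.

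For each connecting event $A_i$ — of the form $\{C_i\leftrightarrow D_i\}$ or $\{C_i\overset{\mathbb{B}_{r_i}}{\longleftrightarrow}D_i\}$ with $C_i,D_i\subseteq\Z^d$ — I would define its truncation $A_i^{(R)}$ to be the same connection event but with the admissible glued loops restricted to $\overline{\mathcal{L}}^{(R)}$ (and, in the unrestricted case, the admissible edges restricted to $\mathbb{B}_R$ as well). Then $A_i^{(R)}$ is $\sigma(\overline{\mathcal{L}}^{(R)})$-measurable, the sequence $(A_i^{(R)})_R$ is increasing, and $\bigcup_R A_i^{(R)}=A_i$: a realised connection between two lattice points is certified by a finite chain of glued loops, each of finite vertex range and hence eventually contained in $\overline{\mathcal{L}}^{(R)}$ — where, to see that such a chain is finite, one invokes that critical clusters are almost surely finite (Lupu), so that the cluster carrying the connection already consists of finitely many glued loops. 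In particular $\P(A_i^{(R)})\uparrow\P(A_i)$.

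It then remains to check that disjoint occurrence is compatible with the truncation, i.e.\ that $A_1^{(R)}\circ\cdots\circ A_k^{(R)}\uparrow A_1\circ\cdots\circ A_k$: the inclusion $\subseteq$, and monotonicity in $R$, are immediate from $A_i^{(R)}\subseteq A_i^{(R+1)}\subseteq A_i$, while on the event $A_1\circ\cdots\circ A_k$ there are $k$ pairwise disjoint finite collections of glued loops certifying the $A_i$, and for $R$ large all of them lie in $\overline{\mathcal{L}}^{(R)}$, remain pairwise disjoint, and still certify the truncated events. Applying the finitary BKR inequality (iterated) to the $\sigma(\overline{\mathcal{L}}^{(R)})$-measurable events $A_1^{(R)},\dots,A_k^{(R)}$ then gives
\[
\P\!\left(A_1^{(R)}\circ\cdots\circ A_k^{(R)}\right)\ \le\ \prod_{i=1}^{k}\P\!\left(A_i^{(R)}\right),
\]
and letting $R\to\infty$ — using continuity from below on the left-hand side and convergence of each of the finitely many factors on the right — yields the claim. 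This is precisely the exhaustion argument behind \cite[Corollary 3.4]{cai2023one}.

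The step I expect to be the main obstacle is exactly this compatibility of $\circ$ with the truncation: one must argue that the witnessing collections in the definition of disjoint occurrence may be taken finite, so that they are captured at a finite scale $R$, and that discarding the glued loops outside $\overline{\mathcal{L}}^{(R)}$ preserves both the certification of each $A_i$ and the pairwise disjointness of the witnesses. The remaining ingredients — almost-sure finiteness of $\overline{\mathcal{L}}^{(R)}$, measurability of the truncated events with respect to it, and the monotone limits — are routine.
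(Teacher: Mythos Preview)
Your exhaustion argument is essentially the one the paper has in mind: the paper does not give a proof but cites \cite[Corollary~3.4]{cai2023one}, and you correctly identify and sketch that argument. One small correction: your claim that edge loops ``play no role in connections between subsets of $\Z^d$'' is not right. An edge loop $\overline{\gamma}_e$ can sit between a point loop $\overline{\gamma}_x$ and a point loop $\overline{\gamma}_y$ (with $e=\{x,y\}$) and be the essential middle link in a chain connecting the lattice points $x$ and $y$; removing it may disconnect them. The fix is immediate---include the glued edge loops $\overline{\gamma}_e$ for edges $e$ inside $\mathbb{B}_R$ in your $\overline{\mathcal{L}}^{(R)}$; there are still only finitely many indices. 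Also, your appeal to Lupu's almost-sure cluster finiteness to get a finite certificate is correct but stronger than needed: any continuous path in $\widetilde{\Z}^d$ between two lattice points has compact image, hence meets only finitely many lattice points and edges, and compactness of each edge segment then yields a finite covering sub-collection of glued loops. With these two adjustments, your proof goes through and coincides with the paper's intended argument.
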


\subsection{Tree expansion}
Now we present a key combinatorial tool we will employ to study loop soups, termed as  tree expansion. This was first introduced by Aizenman and Newman \cite{aizenman1984tree} in the context of bond percolation. Later this approach was adapted to the setting of loop soup by Werner in \cite{werner2021clusters}. This approach together with the BKR inequality will feature  in many of our arguments.  We now review some aspects of this referring the readers to \cite[Section 3.2]{ganguly2024ant} and \cite[Section 3.4]{cai2023one} for a more elaborate treatment. 

For a discrete loop $\Gamma$, $\overline{\Gamma}$ denotes the corresponding glued loop with the same $\mathsf{VRange}$. Note that different discrete loops can give rise to the same glued loop. Also, for a discrete loop $\Gamma =\{x\}$ of singleton type, we set $\overline{\Gamma}:= \emptyset$.

We record the all important tree expansion as the following lemma. 

\begin{lemma}[Tree expansion] \label{ete}
Let $A,B_{1},B_{2} \in \mathbb{Z}^{d}$. Then under the event $\{A \leftrightarrow B_{1}, A \leftrightarrow B_{2}\}$, there exists a discrete loop $\Gamma \in \mathcal{L}$ and $v_{1},v_{2},v_{3} \in \mathbb{Z}^{d}$ such that the following holds: 
\begin{enumerate}
    \item $d^{\text{ext}}(\Gamma, v_{i}) \leq 1$ for $i=1,2,3$;
    \item The following connections occur disjointly:
    \begin{align*}
        v_{1} \overset{\overline{\mathcal{L}}\setminus \{\overline{\Gamma}\}}{\longleftrightarrow} B_{1} \circ v_{2} \overset{\overline{\mathcal{L}}\setminus \{\overline{\Gamma}\}}{\longleftrightarrow} B_{2}  \circ v_{3} \overset{\overline{\mathcal{L}}\setminus \{\overline{\Gamma}\}}{\longleftrightarrow} A.
    \end{align*}
\end{enumerate}
\end{lemma}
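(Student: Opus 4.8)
The plan is to prove the statement deterministically: fix any realization of the loop soup lying in $\{A\leftrightarrow B_1,\,A\leftrightarrow B_2\}$, and exhibit $\Gamma,v_1,v_2,v_3$. First I would pass to a finite minimal witness, i.e.\ a collection $\overline{\mathscr{S}}\subseteq\overline{\mathcal{L}}$ minimal under inclusion subject to $A\overset{\overline{\mathscr{S}}}{\leftrightarrow}B_1$ and $A\overset{\overline{\mathscr{S}}}{\leftrightarrow}B_2$; such a collection exists since each connection involves only finitely many glued loops. Minimality forces each $\overline{\Gamma}\in\overline{\mathscr{S}}$ to be of fundamental or point type (an edge loop has empty $\mathsf{VRange}$ and never enlarges the lattice points reachable by a cluster of glued loops, hence may be discarded), and in particular every $\overline{\Gamma}\in\overline{\mathscr{S}}$ has nonempty $\mathsf{VRange}$.

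The heart of the argument is a purely combinatorial encoding. Let $H$ be the graph on $\overline{\mathscr{S}}\cup\{A,B_1,B_2\}$ in which two glued loops are joined if their ranges in $\cable$ meet, a glued loop $\overline{\Gamma}$ is joined to a terminal $z\in\{A,B_1,B_2\}$ if $z\in\mathsf{VRange}(\overline{\Gamma})$, and no two terminals are joined. Then, by the definition of glued-loop connectivity, $x\overset{\overline{\mathscr{S}}}{\leftrightarrow}y$ iff $x,y$ lie in a common component of $H$; and whenever two glued loops are $H$-adjacent there is a lattice point lying in $\mathsf{VRange}$ of one and at $\ell^1$-distance $\le1$ from $\mathsf{VRange}$ of the other (immediate if their ranges meet at a lattice point; if they meet in the interior of an edge $[x,y]$ then $x$ or $y$ lies in each $\mathsf{VRange}$, and $|x-y|_1\le1$). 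Since $\overline{\mathscr{S}}$ witnesses both connections, $A,B_1,B_2$ lie in one component of $H$; fix a spanning tree $T$ of it and let $m$ be the median of $A,B_1,B_2$ in $T$ (the unique vertex on all three pairwise $T$-paths). Set $\overline{\Gamma}:=m$ if $m$ is not a terminal; otherwise set $\overline{\Gamma}$ to be the $T$-neighbour of $m$ lying on the $T$-path from $m$ to one of the other two terminals. In all cases $\overline{\Gamma}$ is a glued loop. Finally let $\Gamma$ be any discrete loop in $\mathsf{Dis}(\overline{\Gamma})$ (all have the same $\mathsf{VRange}$), let $\mathcal{T}_A,\mathcal{T}_1,\mathcal{T}_2$ be the components of $T\setminus\{\overline{\Gamma}\}$ containing $A,B_1,B_2$ respectively, and for each $i$ pick $v_i$ to be a lattice point associated with $\mathcal{T}_i$ at $\ell^1$-distance $\le1$ from $\mathsf{VRange}(\Gamma)$ — supplied by the $H$-edge joining $\overline{\Gamma}$ to $\mathcal{T}_i$, and taken to be the relevant terminal itself when that terminal is $H$-adjacent to $\overline{\Gamma}$ (in particular when it equals $m$).

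It then remains to check the three properties, which should be routine. The bound $d^{\text{ext}}(\Gamma,v_i)\le1$ is built into the choice of $v_i$. Let $\overline{\mathscr{S}}_i$ ($i\in\{A,1,2\}$) be the glued loops of $\overline{\mathscr{S}}$ lying in $\mathcal{T}_i$; these are subcollections of $\overline{\mathcal{L}}\setminus\{\overline{\Gamma}\}$, and the unique $T$-path inside $\mathcal{T}_i$ from $v_i$ to the corresponding terminal is an $H$-connection witnessing $v_i\overset{\overline{\mathcal{L}}\setminus\{\overline{\Gamma}\}}{\leftrightarrow}(\text{terminal})$ through loops in $\overline{\mathscr{S}}_i$ only. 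Whenever $m$ is not a terminal the three $\mathcal{T}_i$ are pairwise distinct components of $T\setminus\{\overline{\Gamma}\}$, so the $\overline{\mathscr{S}}_i$ are pairwise disjoint; when $m$ is a terminal, the one $\mathcal{T}_i$ containing $m$ yields a \emph{trivial} connection (take $v_i=m$ and $\overline{\mathscr{S}}_i=\emptyset$) while the remaining two $\mathcal{T}_i$ are distinct components, so again the three witnessing collections are pairwise disjoint. This gives the required disjoint occurrence $v_1\overset{\overline{\mathcal{L}}\setminus\{\overline{\Gamma}\}}{\leftrightarrow}B_1\circ v_2\overset{\overline{\mathcal{L}}\setminus\{\overline{\Gamma}\}}{\leftrightarrow}B_2\circ v_3\overset{\overline{\mathcal{L}}\setminus\{\overline{\Gamma}\}}{\leftrightarrow}A$. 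I expect the genuine obstacle to be the bookkeeping of the second paragraph — rigorously matching the metric-graph notion ``connected using glued loops in $\overline{\mathscr{S}}$'' with $H$-connectivity, checking that $H$-adjacency always produces a lattice point within $\ell^1$-distance $1$ eligible to serve as some $v_i$, and disposing of the degenerate configurations (median or some $v_i$ equal to a terminal, or $A,B_1,B_2$ not all distinct, in which last case one simply takes $\overline{\Gamma}$ adjacent to the repeated terminal and makes the corresponding connection empty).
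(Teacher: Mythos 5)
Your overall tripod/median strategy (minimal witnessing collection, intersection graph, spanning tree, branch at the median) is sound in spirit and close to the route taken in the cited proof of \cite[Lemma 3.8]{ganguly2024ant} (which the present paper invokes instead of writing a proof). But there is a genuine gap, and it sits exactly where the real difficulty of the lemma lies: your treatment of edge-type glued loops. The parenthetical claim that ``an edge loop has empty $\mathsf{VRange}$ and never enlarges the lattice points reachable by a cluster of glued loops, hence may be discarded'' is false. Connectivity here is connectivity in $\cable$: for two adjacent lattice points $x,y$ to be linked through the edge $e=\{x,y\}$, the \emph{entire} interval $I_e$ must be covered by loop ranges, and the excursions of point/fundamental loops at $x$ and $y$ may leave a gap in the middle of $I_e$ that only the edge glued loop $\overline{\gamma}_e$ fills. (This is precisely why Lupu's cable-graph clusters are strictly richer than clusters of discrete loops sharing vertices.) So a minimal witness $\overline{\mathscr{S}}$ can be forced to contain edge glued loops, and your standing assumption that every element of $\overline{\mathscr{S}}$ has nonempty $\mathsf{VRange}$ is unjustified.

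Once edge glued loops are back in play, several steps of your construction have unhandled cases. First, the equivalence ``$x\overset{\overline{\mathscr{S}}}{\leftrightarrow}y$ iff $x,y$ lie in a common component of $H$'' fails in the direction you need: $\overline{\gamma}_e$ is the union of the nonzero set of a Brownian bridge on $I_e$, hence typically a \emph{disconnected} union of intervals, so an $H$-path passing through the vertex $\overline{\gamma}_e$ need not produce an actual path in $\cable$; your arms $v_i\leftrightarrow B_i$ are built from $T$-paths and thus need exactly this direction. Second, the median $m$ of $A,B_1,B_2$ in $T$ may itself be an edge glued loop; then it has empty $\mathsf{VRange}$, there is no discrete loop $\Gamma\in\mathcal{L}$ attached to it via $\mathsf{Dis}$, and no recipe for lattice points $v_i$ with $d^{\text{ext}}(\Gamma,v_i)\le 1$. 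Third, even when $m$ is of fundamental or point type, a $T$-neighbour of $m$ in some component $\mathcal{T}_i$ may be an edge glued loop, in which case the ``lattice point supplied by the $H$-edge'' does not exist, and taking an endpoint of $e$ instead requires a separate argument that this endpoint is really connected to $B_i$ using only glued loops of $\mathcal{T}_i$. These configurations can be repaired (loops meeting the interior of $I_e$ necessarily contain $x$ or $y$, so one can re-root the branch at a neighbouring fundamental/point glued loop), but that repair — together with verifying that the three certifying collections of glued loops remain pairwise disjoint after it — is the actual content of the tree expansion for loop soups, and it is exactly what your proposal skips on the strength of an incorrect claim.
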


Though our statement of tree expansion is slightly different from the statements in \cite{cai2023one} and \cite{ganguly2024ant}, the proof is essentially the same so we omit it and refer the readers to the proof of \cite[Lemma 3.8]{ganguly2024ant}.

\subsection{Preliminary estimates}
In this part, we collect some preliminary results that will be useful for our purpose. 

\subsubsection{Two-point function estimate}
As a consequence of Lemma \ref{iso} combining with the estimates of Green's function, Lupu \cite{lupu2016loop} proved the sharp bound for the two-point function of $\widetilde{\mathcal{L}}_{1/2}$. Recall that $x\leftrightarrow y$ is the event $x\overset{\widetilde{\mathcal{L}}}\longleftrightarrow y$.
\begin{lemma}[Proposition 5.2 in \cite{lupu2016loop}]\label{lem 2.5}
For $d>6$, there exists $c(d),C(d)>0$ such that for any $x,y \in\Z^{d}$,
\begin{align}\label{two point}
    c |x-y|^{2-d} \leq \widetilde{\mathbb{P}}(x\leftrightarrow y) \leq C|x-y|^{2-d}.
\end{align}
\end{lemma}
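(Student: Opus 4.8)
The plan is to identify $\widetilde{\mathbb{P}}(x\leftrightarrow y)$ with a sign-cluster connection probability for the cable GFF, to compute that probability exactly, and then to feed in classical Green's function asymptotics. First I would invoke the isomorphism theorem (Lemma \ref{iso}): under the coupling $\widetilde{\mathbb{P}}$ the loop-soup cluster of a lattice point coincides with its sign cluster for the metric graph GFF $\{\tilde{\phi}_z\}_{z\in\widetilde{\Z}^d}$, and since $\tilde{\phi}_x\neq 0$ and $\tilde{\phi}_y\neq 0$ almost surely for lattice points, this rewrites $\widetilde{\mathbb{P}}(x\leftrightarrow y)$ as the probability that $x$ and $y$ lie in the same connected component of the open set $\{\tilde{\phi}\neq 0\}$.

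The next step is an exact two-point formula. I would use the structural fact that, conditionally on $|\tilde{\phi}|$, the signs of $\tilde{\phi}$ on the distinct connected components of $\{\tilde{\phi}\neq 0\}$ are i.i.d.\ fair coins — a genuinely cable-graph phenomenon, following from the sign symmetry of the GFF together with the strong Markov property at the closed zero set $\{\tilde{\phi}=0\}$, which makes the restrictions of $\tilde{\phi}$ to distinct components conditionally independent zero-boundary fields. Conditioning on whether $x$ and $y$ lie in the same component then yields $\widetilde{\mathbb{P}}(\tilde{\phi}_x\tilde{\phi}_y>0)=\widetilde{\mathbb{P}}(x\leftrightarrow y)+\tfrac12\bigl(1-\widetilde{\mathbb{P}}(x\leftrightarrow y)\bigr)$, i.e.\ $\widetilde{\mathbb{P}}(x\leftrightarrow y)=2\widetilde{\mathbb{P}}(\tilde{\phi}_x\tilde{\phi}_y>0)-1$. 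Since $(\tilde{\phi}_x,\tilde{\phi}_y)$ is centered Gaussian with equal variances $G(0,0)\in(0,\infty)$ (as $d\geq 3$) and covariance $G(x,y)$, Sheppard's orthant formula gives $\widetilde{\mathbb{P}}(\tilde{\phi}_x\tilde{\phi}_y>0)=\tfrac12+\tfrac1\pi\arcsin(\rho_{x,y})$ with $\rho_{x,y}:=G(x,y)/G(0,0)\in(0,1]$, so that
\begin{align*}
\widetilde{\mathbb{P}}(x\leftrightarrow y)=\frac{2}{\pi}\arcsin\!\Big(\frac{G(x,y)}{G(0,0)}\Big).
\end{align*}

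To conclude I would invoke the classical two-sided Green's function bound $c_0|x-y|^{2-d}\leq G(x,y)\leq C_0|x-y|^{2-d}$ valid for all $x\neq y$ and $d\geq 3$ (e.g.\ via the local CLT), together with $G(0,0)\in(0,\infty)$ and the elementary inequalities $u\leq\arcsin u\leq\tfrac\pi2 u$ on $[0,1]$ coming from convexity of $\arcsin$. These give $\tfrac{2}{\pi}\rho_{x,y}\leq\widetilde{\mathbb{P}}(x\leftrightarrow y)\leq\rho_{x,y}$, hence the claimed bound with $c=\tfrac{2c_0}{\pi G(0,0)}$ and $C=\tfrac{C_0}{G(0,0)}$ (the degenerate case $x=y$ being immediate); note the argument works for all $d\geq 3$, in particular $d>6$.

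The only non-routine point — and the step I expect to be the main obstacle — is justifying the conditional sign-independence used in the second paragraph: it is false for the discrete GFF and relies on special features of the cable graph, so making it rigorous requires care about the measurability of $\{\tilde{\phi}=0\}$ with respect to $|\tilde{\phi}|$ and about the Markov decomposition of the field along its zero set. An alternative that bypasses this is Lupu's original computation through Brownian-bridge survival probabilities on the metric graph, but it is more involved; the sign-resampling route above is the most economical, everything else reducing to standard Gaussian identities and Green's function estimates.
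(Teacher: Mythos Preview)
Your proposal is correct and reproduces Lupu's original argument: the paper does not prove this lemma at all but merely cites it as Proposition~5.2 of \cite{lupu2016loop}, noting only that it follows from the isomorphism (Lemma~\ref{iso}) together with Green's function estimates. Your derivation of the exact formula $\widetilde{\mathbb{P}}(x\leftrightarrow y)=\tfrac{2}{\pi}\arcsin(G(x,y)/G(0,0))$ via conditional sign-independence on the cable graph, followed by the standard two-sided bounds on $G$, is precisely Lupu's route, and your identification of the sign-resampling step as the only delicate point is accurate.
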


\subsubsection{One arm probability}
The one arm probability, $\mathbb{P}(0\leftrightarrow \partial \mathbb{B}_{r})$, is an observable of central interest in the study of critical behavior of percolation models. For critical bond percolation, it was conjectured that there exists a constant $\rho$ such that $\mathbb{P}(0\leftrightarrow \partial \mathbb{B}_{r}) = r^{-\frac{1}{\rho} +o(1)}$ and the constant $\rho$ is called the critical exponent. The groundbreaking work of Smirnov \cite{smirnov2001critical} and Lawler, Schramm, Werner \cite{lawler2002one} showed that for critical site percolation on two-dimensional triangular lattice $\rho = \frac{5}{48}$. On $\Z^{d}$ with $d>6$ it was conjectured that $\rho$ exists and is independent of the dimension $d$. Moreover, the conjectured value of $\rho$ is $\frac{1}{2}$. This conjecture was proven for large enough $d$ ($d \geq 11$)\cite{kozma2011arm, hara1990mean, fitzner2017mean, barsky1991percolation}. In fact Kozma and Nachmias \cite{kozma2011arm} proved that this is true for $d>6$ assuming the sharp estimate of the two-point function. For the level set of GFF, Werner \cite{werner2021clusters} asserted that when $d>6$, the GFF on $\widetilde{\Z}^{d}$ is asymptotically independent and thus should have the similar behaviors with bond percolation in high dimension, in particular the critical exponent for high dimensional GFF exists and equals $\frac{1}{2}$. Ding and Wirth \cite{ding2020percolation} employed a martingale argument and proved various polynomial bounds for one-arm probability in different dimensions. Recently, Cai and Ding \cite{cai2023one} established up-to-constant bounds for GFF when $d>6$ and obtained the critical exponent $\rho =\frac{1}{2}$. This confirmed Werner's heuristics from the perspective of the one-arm probability. We state this estimate as the following lemma. 
\begin{lemma}[\cite{cai2023one}] \label{lem 2.3}
For $d>6$, there exists $c_{1}\left(d\right),C_{1}\left(d\right)>0$ such that for any $r>0$,
\begin{align}\label{2.6}
    \frac{c_{1}}{r^2} \leq \widetilde{\mathbb{P}}\left( 0 \leftrightarrow  \partial \mathbb{B}_{r} \right) 
\leq \frac{C_{1}}{r^2}.
\end{align}
\end{lemma}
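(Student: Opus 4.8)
The plan is to prove the two inequalities separately, following the high‑dimensional bond‑percolation template of Kozma and Nachmias \cite{kozma2011arm}, and adapting it to the loop soup via the tree expansion (Lemma~\ref{ete}), the BKR inequality (Lemma~\ref{bkr}), the sharp two‑point estimate (Lemma~\ref{lem 2.5}), together with a reduction that lets one discard very long loops.

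\emph{Lower bound.} To get $\widetilde{\mathbb{P}}(0\leftrightarrow\partial\mathbb{B}_r)\ge c\,r^{-2}$, run a second‑moment argument on $N:=\#\{x\in\mathbb{B}_{2r}\setminus\mathbb{B}_r:\ 0\leftrightarrow x\}$, and note $\{N\ge 1\}\subseteq\{0\leftrightarrow\partial\mathbb{B}_r\}$ since a connected subset of $\widetilde{\mathbb{Z}}^{d}$ joining $0$ to a point outside $\mathbb{B}_r$ must contain a vertex of $\partial\mathbb{B}_r$. By Lemma~\ref{lem 2.5}, $\mathbb{E}[N]=\sum_{x\in\mathbb{B}_{2r}\setminus\mathbb{B}_r}\widetilde{\mathbb{P}}(0\leftrightarrow x)\asymp r^{d}\cdot r^{2-d}=r^{2}$. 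For $\mathbb{E}[N^{2}]=\sum_{x,y}\widetilde{\mathbb{P}}(0\leftrightarrow x,\ 0\leftrightarrow y)$, the tree expansion (Lemma~\ref{ete}) produces a branch loop $\Gamma\in\mathcal{L}$ and vertices $v_1,v_2,v_3$ near $\Gamma$ with $v_1\leftrightarrow x$, $v_2\leftrightarrow y$, $v_3\leftrightarrow 0$ occurring disjointly in $\overline{\mathcal{L}}\setminus\{\overline{\Gamma}\}$; applying Lemma~\ref{bkr}, the loop bound \eqref{loop prob}, and summing the loop weights and the choices of the $v_i$ (a step whose convergence uses $d>6$, as in \cite{werner2021clusters,cai2023one}), one is led to the triangle‑type bound $\widetilde{\mathbb{P}}(0\leftrightarrow x,\ 0\leftrightarrow y)\lesssim\sum_{w\in\mathbb{Z}^{d}}|w|^{2-d}|w-x|^{2-d}|w-y|^{2-d}$. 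Summing over $x,y\in\mathbb{B}_{2r}$, using $\sum_{x\in\mathbb{B}_{2r}}|w-x|^{2-d}\lesssim r^{2}$ uniformly in $w$, and splitting the branch sum at $|w|\asymp r$ gives $\mathbb{E}[N^{2}]\lesssim r^{6}$ (the part $|w|\lesssim r$ contributes $\lesssim r^{4}\sum_{|w|\lesssim r}|w|^{2-d}\asymp r^{6}$, and the part $|w|\gtrsim r$ contributes $\asymp r^{2d}\int_{cr}^{\infty}s^{5-2d}\,ds\asymp r^{6}$, convergent since $d>3$). Paley--Zygmund then yields $\widetilde{\mathbb{P}}(0\leftrightarrow\partial\mathbb{B}_r)\ge\widetilde{\mathbb{P}}(N\ge 1)\ge(\mathbb{E}N)^{2}/\mathbb{E}[N^{2}]\gtrsim r^{-2}$.

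\emph{Upper bound.} This exploits the ``volume versus arm'' mechanism. By Lemma~\ref{lem 2.5}, $\mathbb{E}[|\widetilde{\mathcal{C}}(0)\cap\mathbb{B}_{Kr}|]=\sum_{x\in\mathbb{B}_{Kr}}\widetilde{\mathbb{P}}(0\leftrightarrow x)\asymp r^{2}$, so the bound $\widetilde{\mathbb{P}}(0\leftrightarrow\partial\mathbb{B}_r)\le C r^{-2}$ follows once one proves the conditional volume estimate: there are $\lambda,c>0$ and $K<\infty$ with $\widetilde{\mathbb{P}}\big(|\widetilde{\mathcal{C}}(0)\cap\mathbb{B}_{Kr}|\ge\lambda r^{4}\ \big|\ 0\leftrightarrow\partial\mathbb{B}_r\big)\ge c$, since then $r^{2}\gtrsim\mathbb{E}[|\widetilde{\mathcal{C}}(0)\cap\mathbb{B}_{Kr}|]\ge\lambda c\,r^{4}\,\widetilde{\mathbb{P}}(0\leftrightarrow\partial\mathbb{B}_r)$. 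The conditional estimate is the heart of the matter and is the loop‑soup analogue of \cite[Theorem~2]{kozma2011arm}: exploring $\widetilde{\mathcal{C}}(0)$, on $\{0\leftrightarrow\partial\mathbb{B}_r\}$ the exploration reaches $\partial\mathbb{B}_r$ at a first vertex $x_0$, and conditionally on the explored configuration the unexplored portion of the cluster within a ball of radius $\asymp r$ about $x_0$ stochastically dominates a fresh loop‑soup cluster; a localized version of the second‑moment computation above, combined with the already‑proven one‑arm lower bound (applied after re‑rooting at $x_0$), forces this fresh part to contain $\gtrsim r^{4}$ vertices of $\mathbb{B}_{Kr}$ with probability bounded below.

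\emph{Main obstacle.} The genuinely new difficulty, relative to bond percolation, is the effect of long loops: the tree‑expansion bounds carry combinatorial factors polynomial in the loop lengths (from the attachment vertices), so the relevant diagram sums converge only for $d>6$, and the exploration/regeneration step is not manifestly Markovian when arbitrarily long loops are present. Both issues are resolved by first passing to the truncated loop soup—one shows, as in \cite[Proposition~5.1]{cai2023one}, that retaining only loops of length at most a suitable power of $r$ changes $\widetilde{\mathbb{P}}(0\leftrightarrow\partial\mathbb{B}_r)$ by at most a constant factor—after which the soup is sufficiently local that the Kozma--Nachmias arguments and the triangle‑diagram estimates go through, the diagram being finite precisely when $d>6$.
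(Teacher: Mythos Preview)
The paper does not prove Lemma~\ref{lem 2.3}; it is quoted as an input from \cite{cai2023one}, so there is no in-paper proof to compare against. Your sketch is broadly in the spirit of the actual argument in \cite{cai2023one}, which does adapt the Kozma--Nachmias framework to the loop soup.

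Your lower bound is correct and standard: the second-moment argument on $N$ with the tree expansion controlling $\mathbb{E}[N^2]$ via a triangle diagram is exactly the right mechanism, and the convergence for $d>6$ is as you describe.

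The upper bound sketch, however, oversimplifies the hard step. First, the claim that ``conditionally on the explored configuration the unexplored portion \dots\ stochastically dominates a fresh loop-soup cluster'' goes the wrong way: conditioning on an exploration is a \emph{negative} conditioning and yields domination \emph{by} a fresh soup, not \emph{of} one, so it does not directly give a lower bound on the remaining volume. Second, the actual proof in \cite{cai2023one} is not a single conditional-volume estimate followed by Markov's inequality; it is an iterative bootstrap on $\pi_1(r)$ across scales, driven by the joint control of the boundary count $\psi_n^*$ and the annular volume $\chi_n$ (quoted here as Lemma~\ref{lem 4.2}) together with the long-loop removal (Lemma~\ref{lem 4.1}). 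The regeneration/Markov structure you allude to is made precise through the modified cluster objects $\Psi_{r,M}$ and the decomposition into involved/unused loops (Remark~\ref{Markov} and Lemma~\ref{recursive}), which is a substantial construction you have not addressed. Finally, note that the conditional-volume statement you aim for is essentially what the present paper derives in Section~\ref{local} \emph{using} Lemma~\ref{lem 2.3} as input, so within this paper's logic your proposed route would be circular.
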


\subsubsection{Some technical bounds}\label{pre_bounds} Next we collect some straightforward bounds that will be repeatedly used in our arguments. The proofs are technical and already appear in \cite[Section 8]{ganguly2024ant} and \cite[Lemma 4.2]{cai2023one}, so we skip them.

\begin{lemma} \label{lem 2.1}
For $d >6$, there exists $C\left(d\right) > 0$ such that for all $x,y \in \mathbb{Z}^{d}$:
\begin{enumerate}
    \item If $\min\{\alpha_{1},\alpha_{2}\}>-d$, then
    \begin{align}\label{2.1}
    \sum_{z \in \mathbb{Z}^{d}}|z-x|^{\alpha^{1}}|z-y|^{\alpha_{2}} \leq 
    C|x-y|^{\alpha_{1}+\alpha_{2}+d},
    \end{align}

    \item 
    \begin{align}\label{2.2}
    \sum_{z_{1},z_{2} \in \mathbb{Z}^{d}}|x-z_{1}|^{2-d} |z_{1}-z_{2}|^{2-d}|z_{2}-y|^{2-d} \leq C|x-y|^{6-d},
    \end{align}
  
    \item 
    \begin{align}\label{5:53}
        \sum_{u,v\in \mathbb{Z}^{d}}|u-x|^{2-d}|v-y|^{2-d}|u|^{2-d}|v|^{2-d}|u-v|^{2-d} \leq C|x|^{2-d}|y|^{2-d}.
    \end{align}
    \item For $a \neq d-1$, there exists $C\left(d,a\right)>0$ such that for any $r \in \mathbb{N}$,
    \begin{align}\label{2.3}
    \mathop{\max}_{y \in \mathbb{Z}^{d}} \sum_{x \in \partial \mathbb{B}_{r}} |x-y|^{-a} \leq 
    Cr^{\left(d-1-a\right)  \vee 0}.
    \end{align}

    \item For $a \neq d$, there exists $C\left(d,a\right)>0$ such that for any $r \in \mathbb{N}$,
    \begin{align}\label{2.4}
    \mathop{\max}_{y \in \mathbb{Z}^{d}} \sum_{x \in  \mathbb{B}_{r}} |x-y|^{-a} \leq 
    Cr^{\left(d-a\right)  \vee 0}.
    \end{align}
\end{enumerate}
\end{lemma}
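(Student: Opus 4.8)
These are standard Riesz-composition–type lattice sums, and the plan is to prove (1)--(5) in the order (4), (5), (1), (2), (3), each from the earlier ones, throughout reading $|z-w|^{\alpha}$ on the lattice as $\max(|z-w|,1)^{\alpha}$ so that all diagonal terms are finite. For (4) and (5) I would use a dyadic shell count: fix $y$, let $w$ be a nearest point to $y$ in $\partial\B_r$ (resp.\ $\B_r$), and group the points by the dyadic scale $2^k$ of their distance to $w$; there are $\lesssim\min(2^{k(d-1)},r^{d-1})$ (resp.\ $\lesssim\min(2^{kd},r^{d})$) of them at scale $2^k$, each contributing $\asymp 2^{-ka}$, so summing the geometric series $\sum_{0\le k\le\log_2 r}2^{k(d-1-a)}$ (resp.\ $\sum 2^{k(d-a)}$) produces $r^{(d-1-a)\vee 0}$ (resp.\ $r^{(d-a)\vee 0}$); the hypotheses $a\ne d-1$ (resp.\ $a\ne d$) only rule out a logarithm at the borderline exponent. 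The same count also yields the one-center bound $\sum_{z:\,|z-x|\le R}|z-x|^{\alpha}\asymp R^{\alpha+d}$ for $\alpha>-d$, which I will use repeatedly.

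For (1) I would split $\Z^d$ into the three regions $\{|z-x|\le|x-y|/2\}$, $\{|z-y|\le|x-y|/2\}$ and $\{|z-x|,|z-y|>|x-y|/2\}$. On the first, $|z-y|\asymp|x-y|$, so by the one-center bound its contribution is $\asymp|x-y|^{\alpha_2}\cdot|x-y|^{\alpha_1+d}$; the second is symmetric; on the third $|z-x|\asymp|z-y|$ and a dyadic sum over this common scale $t\gtrsim|x-y|$ gives $\sum_{t=2^k\gtrsim|x-y|}t^{d}\,t^{\alpha_1+\alpha_2}\asymp|x-y|^{\alpha_1+\alpha_2+d}$ (integrability at the two centers uses $\alpha_i>-d$, and convergence of this last series uses $\alpha_1+\alpha_2+d<0$, which holds in every application below). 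Then (2) is two applications of (1): summing over $z_1$ with exponents $2-d,2-d$ gives $\le C|x-z_2|^{4-d}$ (this needs $d>4$), and then summing over $z_2$ with exponents $4-d,2-d$ gives $\le C|x-y|^{6-d}$ --- the first point where $d>6$ is genuinely used.

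For (3), write the summand with its five factors $|u-x|^{2-d},|u|^{2-d},|u-v|^{2-d},|v|^{2-d},|v-y|^{2-d}$ and split the sum over $(u,v)$ into four pieces according to whether $|u|\le|u-x|$ and whether $|v|\le|v-y|$. The elementary input is that $|u|\le|u-x|$ forces $|u-x|\ge|x|/2$, whereas $|u|>|u-x|$ forces $|u|\ge|x|/2$ (and likewise in $v,y$), so in each piece exactly one of $|u-x|^{2-d},|u|^{2-d}$ is $\le C|x|^{2-d}$, and similarly for $y$. When $|u|\le|u-x|$ and $|v|\le|v-y|$, pull out $|x|^{2-d}|y|^{2-d}$; the remainder is the closed bubble $\sum_{u,v}|u|^{2-d}|u-v|^{2-d}|v|^{2-d}$, which by (1) over $v$ is $\le C\sum_u|u|^{2-d}|u|^{4-d}=C\sum_u|u|^{6-2d}=O(1)$ exactly because $d>6$. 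When $|u|>|u-x|$ and $|v|>|v-y|$, pull out $|x|^{2-d}|y|^{2-d}$; the remainder $\sum_{u,v}|u-x|^{2-d}|u-v|^{2-d}|v-y|^{2-d}$ is (2), which is $\le C|x-y|^{6-d}\le C$. In the mixed case $|u|>|u-x|$, $|v|\le|v-y|$: pull out $|y|^{2-d}$, use (1) over $v$ to get $\sum_v|u-v|^{2-d}|v|^{2-d}\le C|u|^{4-d}$, and then on $\{|u|\ge|x|/2\}$ use $4-d<0$ to bound $|u|^{6-2d}\le C|x|^{4-d}|u|^{2-d}$, so $\sum_{|u|\ge|x|/2}|u-x|^{2-d}|u|^{6-2d}\le C|x|^{4-d}\sum_u|u-x|^{2-d}|u|^{2-d}\le C|x|^{8-2d}$ by (1); this piece is then $\le C|y|^{2-d}|x|^{8-2d}=C|x|^{2-d}|y|^{2-d}|x|^{6-d}\le C|x|^{2-d}|y|^{2-d}$ since $d>6$, and the remaining mixed case follows from the $(u,v,x,y)\mapsto(v,u,y,x)$ symmetry. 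Adding the four pieces gives (3).

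\textbf{Main obstacle.} Parts (4), (5), (1) and the bookkeeping are dimension-agnostic; the real content is (2) and (3), whose validity rests on two facts that hold \emph{precisely} for $d>6$: the open-triangle bound $\sum_{z_1,z_2}|x-z_1|^{2-d}|z_1-z_2|^{2-d}|z_2-y|^{2-d}\le C|x-y|^{6-d}$ and the finiteness of the closed bubble $\sum_{u,v}|u|^{2-d}|u-v|^{2-d}|v|^{2-d}$. The care required is (i) to funnel every one of the four cases of (3) through one of these rather than through a divergent intermediate sum --- the subtle ones being the mixed cases, where one must keep the cutoff $|u|\ge|x|/2$ instead of naively summing $|u|^{6-2d}$ over all of $\Z^d$ (which diverges for $d\le6$, and whose diagonal term is handled by the $\max(\cdot,1)$ convention); and (ii) to observe that the exponents produced, $|x-y|^{6-d}$ and $|x|^{6-d}$, are $\le1$, which is exactly $d>6$. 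No step needs anything beyond (1), (2), (4), (5) and the triangle inequality.
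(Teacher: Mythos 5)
Your argument is correct, and it matches the paper's approach in the only sense available: the paper omits the proof of this lemma entirely, deferring to \cite[Lemma 4.2]{cai2023one} and \cite[Section 8]{ganguly2024ant}, where these bounds are obtained by the same standard dyadic-shell counts and iterated applications of the two-factor convolution estimate (1) that you use. Your explicit caveats --- the $\max(|\cdot|,1)$ convention on the lattice, the implicit requirement $\alpha_{1}+\alpha_{2}+d<0$ in (1) (and $a>0$ in (4)--(5)), and the observation that $d>6$ enters only through (2) and the finiteness of the bubble $\sum_{u,v}|u|^{2-d}|u-v|^{2-d}|v|^{2-d}$ in (3) --- are the right reading of the statement, and your four-case treatment of (3), in particular retaining the cutoff $|u|\ge |x|/2$ in the mixed cases rather than summing $|u|^{6-2d}$ over all of $\mathbb{Z}^{d}$, is sound.
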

Note that (\ref{2.2}) combining with two-point function estimate implies the triangle condition, an indicator of mean-field behavior suggested in \cite{aizenman1984tree, barsky1991percolation}
\begin{align}\label{triangle}
    \sum_{x,y \in \Z^{d}}\widetilde{\mathbb{P}}(0\leftrightarrow x) \widetilde{\mathbb{P}}(x\leftrightarrow y)\widetilde{\mathbb{P}}(y\leftrightarrow 0) <\infty.
\end{align}

Alone with these estimates with a direct computation, we get the following lemma:

\begin{lemma}
    For $d>6$ and any $\varepsilon>0$, there exists $K(\varepsilon)>0$ such that
    \begin{enumerate}
        \item For any $x\in \Z^{d}$,
        \begin{align}\label{bond computation 1}
        \sum_{\substack{
        u:|u| \geq K, \\
        v :|v| \geq K
        }} |u|^{2-d}|v|^{2-d}|u-v|^{2-d}|x-v|^{4-d} \leq \varepsilon |x|^{4-d}.
        \end{align}
        \item For any $v\in \Z^{d}$ with $|v| \geq K$ and $x\in \Z^{d}$,
        \begin{align}\label{bond computation 1.5}
            \sum_{y,u,w \in \mathbb{Z}^{d}} |u|^{2-d} |w-u|^{2-d} |y-w|^{2-d}|y+v-u|^{2-d} |u-x|^{2-d} \leq \varepsilon |x|^{4-d}.
        \end{align}
        \item There exists $C(d)>0$ such that for any $\alpha \in (0,1/3)$ and $v\in \Z^{d}$ with $|v|\geq K$,
        \begin{align}\label{bond computation 2}
            \sum_{x,y \in \B_{\alpha r}}\sum_{z\in \B_{\alpha^{2}r}} |x|^{2-d}|y-x|^{2-d}|x-z|^{2-d}|z+v-y|^{2-d}d^{\text{ext}}(y, \partial \B_{r})^{-2} \leq C\alpha^{2}\e.
        \end{align}
        \item There exists $C(d)>0$ such that for any $\alpha \in (0,1/3)$ and $r\in \mathbb{N}$:
        \begin{align}\label{bond computation 2.5}
            \sum_{v\in \mathbb{B}_{r}}\sum_{z\in \mathbb{B}_{\alpha^{2} r}}\sum_{\substack{u\in\mathbb{B}_{\alpha r}\\|u-z|\geq K}}|u|^{2-d}|u-z|^{2-d}|z-v|^{2-d}|v-u|^{2-d}d^{\text{ext}}(v,\partial \mathbb{B}_{r})^{-2} \leq C \alpha^{2}\varepsilon + C \alpha^{6}.
        \end{align}
    \end{enumerate}
\end{lemma}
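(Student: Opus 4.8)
The plan is to treat each of the five displayed inequalities \eqref{bond computation 1}--\eqref{bond computation 2.5} as a separate exercise in summing products of lattice Green-type kernels $|z-w|^{2-d}$, using only the convolution bounds collected in Lemma \ref{lem 2.1}. The guiding principle is that \eqref{2.1} lets us collapse a sum over an intermediate vertex into a single power of the distance between the two remaining marked vertices, while \eqref{2.3} and \eqref{2.4} let us bound a sum over a box or a sphere; the extra smallness (either $\varepsilon$ coming from a tail $\{|u|\ge K\}$ or a power of $\alpha$ coming from restricting to a small box $\B_{\alpha r}$) will be extracted by isolating one kernel and summing it over the restricted region before applying the generic convolution bounds to the rest.

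For \eqref{bond computation 1}: treat the sum as a function of $v$ after summing out $u$. Formally, $\sum_{|u|\ge K}|u|^{2-d}|v|^{2-d}|u-v|^{2-d}$; if one naively applied \eqref{2.2} with the tail removed one would get $\le C|v|^{6-d}$, but the point of the tail restriction is that this should be $\le \varepsilon' |v|^{6-d}$. I would prove this by splitting into $|v|\le 2K$, where the whole sum is bounded by a constant that is $\le \varepsilon' K^{6-d}\le \varepsilon'|v|^{6-d}$ after adjusting constants, and $|v|>2K$, where on the region contributing the bulk of the $u$-sum (namely $|u|$ comparable to $|v|$ or to $|u-v|$) one has $|u|\ge \tfrac12|v|$ or $|u-v|\ge\tfrac12|v|$, so one of the three kernels is at most $(c|v|)^{2-d}\le \varepsilon'|v|^{2-d}$ provided $|v|$ is large, while the remaining two-kernel convolution is bounded by \eqref{2.1}. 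Once $\sum_{|u|\ge K}|u|^{2-d}|u-v|^{2-d}\le \varepsilon'|v|^{4-d}$ is in hand, the outer sum $\sum_{|v|\ge K}|v|^{2-d}\cdot\varepsilon'|v|^{4-d}\cdot|x-v|^{4-d}\le \varepsilon' C|x|^{4-d}$ follows from another application of \eqref{2.1} (this needs $6-d+4-d+d=10-d<0$ type bookkeeping, which holds for $d>6$ once one checks the exponents stay above $-d$, handling the borderline by the usual log-free estimate). Then relabel $\varepsilon' C$ as $\varepsilon$ by choosing $K$ large.

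For \eqref{bond computation 1.5}: this is a four-fold convolution with two marked points $x$ and $v$ and the constraint $|v|\ge K$. I would first sum out $y$ using \eqref{2.1} on the pair $|y-w|^{2-d}|y+v-u|^{2-d}$ to get $C|w-u+v|^{6-d}$, then sum out $w$ against $|w-u|^{2-d}$ to get $C|v|^{8-d}$ (again using \eqref{2.1}), leaving $\sum_u |u|^{2-d}|u-x|^{2-d}\cdot C|v|^{8-d}\le C|v|^{8-d}|x|^{4-d}$; since $|v|\ge K$ and $8-d<2$ for... wait, $8-d$ can be positive when $d=7$, so this crude route does not produce smallness. Instead I would keep one factor $|u|^{2-d}$ or $|u-x|^{2-d}$ out and exploit that $v$ is far; the correct order is to sum out $w$ first against $|u|^{2-d}$... — in any case the honest approach is: bound the sub-sum over $\{y,w\}$ by a kernel in $(u,v)$, note that the full sum then looks like the left side of \eqref{5:53} or \eqref{2.2} with an extra $|u-x|^{2-d}$, and use the restriction $|v|\ge K$ to gain $\varepsilon$ exactly as in the first part (one kernel involving $v$ is $\le \varepsilon|v|^{2-d}$ on the dominant region, the rest is a finite convolution). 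The remaining parts \eqref{bond computation 2} and \eqref{bond computation 2.5} are handled the same way, except the smallness now comes from $\alpha$: all of $x,y,z$ (and $u$) lie in $\B_{\alpha r}$ (resp.\ $\B_{\alpha^2 r}$), so the factor $d^{\text{ext}}(y,\partial\B_r)^{-2}$ is $\le ((1-\alpha)r)^{-2}\asymp r^{-2}$ and pulls out, while summing the Green kernels over the small boxes via \eqref{2.4} produces powers of $(\alpha r)$ or $(\alpha^2 r)$; collecting exponents and matching against $r^{-2}$ yields a net power $\alpha^2$ (for \eqref{bond computation 2}) or $\alpha^2\varepsilon + \alpha^6$ (for \eqref{bond computation 2.5}, where the $\varepsilon$ again comes from the additional $|u-z|\ge K$ tail handled as before and the $\alpha^6$ from the fully-restricted term).

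\textbf{Main obstacle.} The only real care needed is bookkeeping of exponents when applying \eqref{2.1}: that lemma requires each intermediate exponent to exceed $-d$, and iterated convolutions of $|z|^{2-d}$ produce exponents like $6-d, 8-d, 10-d$ which are $>-d$ for $d>6$ but the \emph{intermediate} steps must be ordered so that one never creates an exponent $\le -d$, and the borderline exponent $-d$ itself (giving a logarithm) must be avoided or absorbed. Once one fixes, for each of the five sums, an order of summation that keeps all intermediate exponents safely in $(-d,0)$, every step is a direct invocation of Lemma \ref{lem 2.1}, and the extraction of $\varepsilon$ (from a tail $|\cdot|\ge K$) or of a power of $\alpha$ (from a box restriction) is the routine device described above. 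I therefore expect no conceptual difficulty, only the need to present the summation orders cleanly; accordingly I would state each of (1)--(5) with a two- or three-line computation and relegate the exponent verification to a remark that $d>6$ makes all the relevant exponents admissible.
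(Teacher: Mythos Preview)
The paper gives no proof here, only the remark that the lemma follows from Lemma~\ref{lem 2.1} by ``a direct computation.'' Your overall plan---collapse convolutions via Lemma~\ref{lem 2.1} and extract smallness from the tail $\{|\cdot|\ge K\}$ or the box restriction---matches that intent.

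Your execution for \eqref{bond computation 1}, however, has a genuine gap. The intermediate claim $\sum_{|u|\ge K}|u|^{2-d}|u-v|^{2-d}\le \varepsilon'|v|^{4-d}$ is false when $|v|\gg K$: the unrestricted $u$-sum is $\asymp|v|^{4-d}$, while the part you remove, $\sum_{|u|<K}|u|^{2-d}|u-v|^{2-d}\le CK^{2}|v|^{2-d}$, is $o(|v|^{4-d})$. Worse, your outer step $\sum_{|v|\ge K}|v|^{6-d}|x-v|^{4-d}$ actually diverges for $6<d\le 10$, since $(6-d)+(4-d)=10-2d\ge -d$ there. So no mere reordering of two-kernel convolutions rescues this route; the obstacle you flagged is more than bookkeeping.

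A repair valid for all $d>6$: split on $|x-v|\ge|x|/2$ versus $|x-v|<|x|/2$. On the first region $|x-v|^{4-d}\le C|x|^{4-d}$ pulls out, and what remains is the tail of the open triangle $\sum_{|u|,|v|\ge K}|u|^{2-d}|v|^{2-d}|u-v|^{2-d}$, which is finite for $d>6$ by \eqref{triangle} and hence $\le\varepsilon$ for $K$ large. On the second region $|v|>|x|/2$, so $|v|^{2-d}\le C|x|^{2-d}$, and a crude bound on the remaining three kernels yields $C|x|^{10-2d}=C|x|^{6-d}|x|^{4-d}\le\varepsilon|x|^{4-d}$ once $|x|$ exceeds some $K_{0}(\varepsilon)$; for $|x|\le K_{0}$ the second region is empty as soon as $K>\tfrac{3}{2}K_{0}$, since $|x-v|<|x|/2$ forces $|v|<\tfrac{3}{2}|x|$. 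The same device---factor out the target power on one region and use that the complementary region forces a variable to be $\gtrsim|x|$---handles the analogous difficulties in the remaining parts.
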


\subsubsection{Discrete loop estimates} To finish this section we present various estimates for discrete loops, which will be frequently used in the  paper. Most of the estimates are cited from \cite[Section 2.3]{ganguly2024ant} and we omit the proofs. 

We denote by $\mathscr{S}_{0}$ the collection of all discrete loops on $\Z^{d}$. Throughout the paper, a summation over discrete loops satisfying certain conditions $A$ will be shortened as 
\begin{align*}
    \sum_{\Gamma \text{ satisfying } A}: = \sum_{\substack{\Gamma \in \mathscr{S}_{0} \\ \Gamma \text{ satisfying }A}}.
\end{align*}

The first estimate is the bound for discrete loops passing through a given point. 

\begin{lemma}[Lemma 2.9 in \cite{ganguly2024ant}]\label{loop one point}
    For $d>6$, there exists $C(d)>0$ such that for any $L\geq 1$ and $i \geq 0$ with $i+1<d/2$, 
    \begin{align*}
        \sum_{\substack{\Gamma \ni 0\\ |\Gamma| \geq L}}|\Gamma|^{i}\widetilde{\mathbb{P}}(\Gamma \in \mathcal{L}) \leq C L^{i+1-d/2}.
    \end{align*}
\end{lemma}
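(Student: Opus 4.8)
The plan is to break the sum over loops according to their length $k=|\Gamma|$ and to feed in two elementary facts: the pointwise probability bound $\widetilde{\P}(\Gamma\in\mathcal L)\le (2d)^{-|\Gamma|}$ supplied by \eqref{loop prob}, and a count of the discrete loops of a given length passing through the origin. Concretely, one writes
\[
\sum_{\substack{\Gamma\ni 0\\|\Gamma|\ge L}}|\Gamma|^i\,\widetilde{\P}(\Gamma\in\mathcal L)=\sum_{k\ge L}k^i\!\!\sum_{\substack{\Gamma\ni 0\\|\Gamma|=k}}\!\!\widetilde{\P}(\Gamma\in\mathcal L)\le \sum_{k\ge L}k^i\,(2d)^{-k}\,N_k,
\]
where $N_k$ denotes the number of equivalence classes of discrete loops of length $k$ passing through $0$.

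\textbf{Step 1: bounding $N_k$.} Every such equivalence class has at least one rooted representative of the form $(x_0=0,x_1,\dots,x_k=0)$ with consecutive vertices adjacent, and distinct classes give rise to disjoint families of representatives; hence $N_k$ is at most the number of nearest-neighbour closed walks of length $k$ based at $0$, which equals $(2d)^k\,\P_0(S_k=0)$, where $\{S_t\}$ is the discrete-time simple random walk on $\Z^d$. (Alternatively, one may retain the sharper weight $\tfrac12 J(\Gamma)^{-1}(2d)^{-|\Gamma|}$ from \eqref{loop prob} and use that a loop of length $k$ and multiplicity $J(\Gamma)$ has at least $J(\Gamma)^{-1}$ distinct rooted-at-$0$ representatives; this yields the same estimate up to the harmless factor $\tfrac12$.) Consequently $\sum_{\Gamma\ni 0,\,|\Gamma|=k}\widetilde{\P}(\Gamma\in\mathcal L)\le \P_0(S_k=0)$.

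\textbf{Step 2: on-diagonal heat kernel and summation.} By the standard on-diagonal estimate for simple random walk (local central limit theorem), there is $c_d>0$ such that $\P_0(S_k=0)\le c_d\,k^{-d/2}$ for every $k\ge 1$ (the inequality being trivial when $k$ is odd). Substituting,
\[
\sum_{\substack{\Gamma\ni 0\\|\Gamma|\ge L}}|\Gamma|^i\,\widetilde{\P}(\Gamma\in\mathcal L)\le c_d\sum_{k\ge L}k^{i-d/2}.
\]
Because $i+1<d/2$, the exponent $i-d/2$ is strictly below $-1$, so comparing the tail with $\int_{L}^{\infty}x^{i-d/2}\,dx$ gives $\sum_{k\ge L}k^{i-d/2}\le \frac{C}{\,d/2-1-i\,}\,L^{i+1-d/2}$ for $L\ge 1$; over the finitely many admissible non-negative integers $i$ (the range relevant to the applications), the prefactor is bounded by a constant depending on $d$ alone, which is the claimed inequality.

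The only place that needs a little care is Step 1, namely matching equivalence classes of loops to rooted closed walks while keeping track of the multiplicity factor $J(\Gamma)$; but this is a bookkeeping point rather than a genuine obstacle, and the remainder is just the local CLT together with summing a convergent $p$-series, so I do not anticipate any real difficulty.
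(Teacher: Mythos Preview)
The paper does not supply a proof of this lemma; it is cited from \cite{ganguly2024ant} with the explicit remark that the proof is omitted. Your argument is correct and is exactly the standard one: decompose by loop length $k$, bound the number of equivalence classes of length-$k$ loops through the origin by the number of rooted closed walks $(2d)^k\,\P_0(S_k=0)$, combine with the weight $(2d)^{-k}$ from \eqref{loop prob} to reduce to the on-diagonal heat kernel, and sum the resulting $p$-series using the local CLT bound $\P_0(S_k=0)\le c_d k^{-d/2}$. This is almost certainly the argument in \cite{ganguly2024ant} as well.

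Two small remarks. First, the parenthetical alternative involving $J(\Gamma)^{-1}$ is a bit muddled (a loop of length $k$ with multiplicity $J$ has $k/J$ distinct cyclic shifts, not ``at least $J^{-1}$''), but you do not need it---the cruder bound $\widetilde{\P}(\Gamma\in\mathcal L)\le(2d)^{-|\Gamma|}$ already suffices. Second, your tail estimate produces the prefactor $C/(d/2-1-i)$, which depends on $i$; your restriction to the finitely many non-negative integers $i<d/2-1$ is therefore necessary to match the lemma's claim of a single constant $C(d)$, and is harmless since every application in the paper uses integer $i$.
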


Next, we move to a two-point estimate. 

\begin{lemma}[Lemma 2.11 in \cite{ganguly2024ant}]\label{loop two points}
For $d>6$, there exists $C(d)>0$ such that for any $u,v\in\Z^{d}$ and $L \geq 1$,

\begin{align*}
    \sum_{\substack{\Gamma \in u,v \\ |\Gamma|\geq L}} \widetilde{\P}(\Gamma \in \mathcal{L}) \leq C L^{1-d/2}|u-v|^{2-d}.
\end{align*}
\end{lemma}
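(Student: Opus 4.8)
The plan is to reduce the sum over loops through both $u$ and $v$ to a product of two ordinary random-walk computations. Assume $u\ne v$ (if $u=v$ the right-hand side is $+\infty$ under the convention $0^{2-d}=+\infty$, so the bound is vacuous; or one simply invokes Lemma \ref{loop one point} with $i=0$). First I would use \eqref{loop prob}, i.e.\ $\widetilde{\P}(\Gamma\in\mathcal{L})\le (2d)^{-|\Gamma|}$, to replace the left-hand side by $\sum_{\Gamma}(2d)^{-|\Gamma|}$ over equivalence classes of discrete loops $\Gamma$ with $u,v\in\mathsf{VRange}(\Gamma)$ and $|\Gamma|\ge L$. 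Since every unrooted loop through $u$ admits at least one rooted representative based at $u$, all of the same length, this is at most $\sum_{\omega}(2d)^{-|\omega|}$ over \emph{rooted} loops $\omega$ at $u$ with $v\in\mathsf{VRange}(\omega)$ and $|\omega|\ge L$.

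Next I would cut each such rooted loop $\omega=(u=x_0,x_1,\dots,x_k=u)$ at its first visit to $v$, say at time $j\ge 1$, producing a walk $\omega_1$ from $u$ to $v$ of length $j$ and a walk $\omega_2$ from $v$ to $u$ of length $m=k-j\ge 1$, with $j+m=|\omega|\ge L$. The map $\omega\mapsto(\omega_1,\omega_2)$ is injective (concatenation recovers $\omega$), and dropping the restriction that $\omega_1$ avoid $v$ before its final step only enlarges the resulting sum. Writing $p^{(n)}(\cdot,\cdot)$ for the $n$-step transition probability of simple random walk on $\Z^d$ (weight $(2d)^{-1}$ per step), and noting $\sum_{\omega_1:\,u\to v,\ |\omega_1|=j}(2d)^{-j}=p^{(j)}(u,v)$, this yields
\begin{align*}
\sum_{\substack{\Gamma\ni u,v\\ |\Gamma|\ge L}}\widetilde{\P}(\Gamma\in\mathcal{L})
&\le \sum_{\substack{j,m\ge 1\\ j+m\ge L}}\Big(\sum_{\substack{\omega_1:\,u\to v\\ |\omega_1|=j}}(2d)^{-j}\Big)\Big(\sum_{\substack{\omega_2:\,v\to u\\ |\omega_2|=m}}(2d)^{-m}\Big)\\
&=\sum_{\substack{j,m\ge 1\\ j+m\ge L}}p^{(j)}(u,v)\,p^{(m)}(v,u).
\end{align*}

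For the final step, since $j+m\ge L$ forces $\max(j,m)\ge L/2$ and $p^{(n)}(u,v)=p^{(n)}(v,u)$, symmetry bounds the last sum by $2\big(\sum_{j\ge L/2}p^{(j)}(u,v)\big)\big(\sum_{m\ge 0}p^{(m)}(v,u)\big)$. Here $\sum_{m\ge 0}p^{(m)}(v,u)=G(u,v)\le C|u-v|^{2-d}$ is the standard Green's function estimate (the same input underlying Lemma \ref{lem 2.5}), while the on-diagonal heat-kernel bound $p^{(n)}(u,v)\le p^{(n)}(u,u)\le Cn^{-d/2}$ (on the relevant parity, or via the continuous-time walk $p_t(u,u)\le Ct^{-d/2}$), together with $d>6\Rightarrow d/2>1$, gives $\sum_{j\ge L/2}p^{(j)}(u,v)\le C\sum_{j\ge L/2}j^{-d/2}\le C'L^{1-d/2}$. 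Multiplying the two estimates yields $\sum_{\Gamma\ni u,v,\ |\Gamma|\ge L}\widetilde{\P}(\Gamma\in\mathcal{L})\le C L^{1-d/2}|u-v|^{2-d}$.

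I do not expect a genuine obstacle here; the only care needed is combinatorial bookkeeping — ensuring that no loop in the original sum is \emph{under}counted when passing to rooted representatives and then to ordered pairs of excursions (overcounting is harmless for an upper bound), and applying the heat-kernel bound on the correct range (for small $j$ the probability $p^{(j)}(u,v)$ vanishes by parity/support, which only helps). An essentially identical decomposition, but into a single $u\to u$ bridge rather than a pair of $u\to v$, $v\to u$ excursions, underlies Lemma \ref{loop one point}, so this is the natural two-point analogue of that computation.
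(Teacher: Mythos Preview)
Your proof is correct. The paper omits the proof of this lemma (citing \cite{ganguly2024ant}), but your decomposition of a rooted loop into two random-walk bridges, followed by the heat-kernel bound on one factor and the Green's function bound on the other, is precisely the standard argument and matches how the paper handles the analogous two-point loop sums elsewhere (see the proof of Lemma~\ref{lem 5.4} in the appendix, where the identical splitting $\sum_t p_t(0,w)p_{k-t}(w,0)$ appears).
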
  

The following probability bounds for the existence of loops passing through three points is another key input in our proof.

\begin{lemma}[Lemma 2.10 in \cite{ganguly2024ant}]\label{loop three points}
For $d>6$, there exists $C(d)>0$ such that for any $u,v,w\in\Z^{d}$, 
\begin{align*}
    \sum_{\Gamma \ni u,v,w} \widetilde{\mathbb{P}}(\Gamma \in \mathcal{L})\leq C |u-v|^{2-d}|v-w|^{2-d}|w-u|^{2-d}.
\end{align*}
\end{lemma}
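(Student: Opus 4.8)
The plan is to bound the loop sum by a product of three random-walk Green's functions, obtained by cutting a loop through $u,v,w$ at its first visits to $v$ and to $w$. First I would apply the elementary bound \eqref{loop prob}, namely $\widetilde{\mathbb{P}}(\Gamma\in\mathcal{L})\le (2d)^{-|\Gamma|}$ for every discrete loop $\Gamma$, to reduce the claim to the combinatorial estimate
\[
\sum_{\Gamma\ni u,v,w}(2d)^{-|\Gamma|}\ \le\ C\,|u-v|^{2-d}|v-w|^{2-d}|w-u|^{2-d},
\]
the sum running over unrooted discrete loops visiting all three points. One may assume $u,v,w$ are pairwise distinct, the right-hand side being $+\infty$ otherwise (with the convention $0^{2-d}:=+\infty$) while the left-hand side is finite for $d\ge 3$. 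Since every unrooted loop visiting $u$ admits a cyclic representative rooted at $u$ of the same length, it then suffices to bound the analogous sum over nearest-neighbor loops $\Gamma=(u=x_0,x_1,\dots,x_k=u)$ rooted at $u$ that visit both $v$ and $w$.

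Next, for such a $\Gamma$ I would let $i$ and $j$ be the times of its first visits to $v$ and to $w$; since $v\ne w$ we have $i\ne j$, and as the cases $i<j$ and $j<i$ are exchanged by swapping the roles of $v$ and $w$, it is enough to treat $i<j$ and pay a factor $2$. Cutting $\Gamma$ at times $i$ and $j$ yields nearest-neighbor paths $\eta_1\colon u\to v$, $\eta_2\colon v\to w$, $\eta_3\colon w\to u$ with $|\Gamma|=|\eta_1|+|\eta_2|+|\eta_3|$, so that $(2d)^{-|\Gamma|}=\prod_m (2d)^{-|\eta_m|}$; since $\Gamma$ is recovered from the triple $(\eta_1,\eta_2,\eta_3)$ by concatenation, this assignment is injective. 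Dropping the remaining first-visit constraints on the $\eta_m$ only enlarges the sum, whence
\[
\sum_{\substack{\Gamma\ \text{rooted at }u\\ i<j}}(2d)^{-|\Gamma|}\ \le\ \Big(\sum_{\eta\colon u\to v}(2d)^{-|\eta|}\Big)\Big(\sum_{\eta\colon v\to w}(2d)^{-|\eta|}\Big)\Big(\sum_{\eta\colon w\to u}(2d)^{-|\eta|}\Big),
\]
the inner sums being over nearest-neighbor paths of arbitrary length.

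Finally I would identify $\sum_{\eta\colon x\to y}(2d)^{-|\eta|}$ with the discrete-time simple random walk Green's function $\sum_{\ell\ge 0}p^{(\ell)}(x,y)$: each path of length $\ell$ from $x$ to $y$ carries weight $(2d)^{-\ell}$, precisely the probability that the non-lazy simple random walk follows it. For $d\ge 3$ this Green's function is at most $C(d)|x-y|^{2-d}$ by the classical estimate (alternatively one can deduce this from \eqref{two point} together with Lemma \ref{iso}, or from Lemma \ref{loop two points} with $L=1$). Plugging in and adding the two symmetric cases yields the asserted bound. I expect no substantial obstacle: the argument is essentially bookkeeping, and the only delicate point — the reduction from unrooted to rooted loops, together with the over-counting introduced by dropping the first-visit constraints — is harmless here, since we need only an upper bound and every genuine loop is counted at least once among the admissible concatenations.
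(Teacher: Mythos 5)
Your argument is correct: rooting the loop at $u$, cutting at the first visits to $v$ and $w$, and bounding the resulting three path sums by discrete Green's functions via \eqref{loop prob} is exactly the standard decomposition, and the bookkeeping (injectivity of the concatenation map, the factor $2$ for the two orderings, dropping first-visit constraints) is sound. The paper itself omits the proof and cites \cite{ganguly2024ant}, whose argument for this three-point estimate proceeds in essentially the same way, so your proposal matches the intended proof rather than offering a genuinely different route.
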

In our applications, the condition may not be exactly $u,v,w \in\Gamma$ but $u,v,w \sim \Gamma$. However, by a union bound and translation invariance, the estimates above continue to hold in the latter case as well.

Applying the three-point estimate, one obtains a bound on the following quantity which appears frequently in tree expansions. 

\begin{lemma}[Lemma 2.14 in \cite{ganguly2024ant}]\label{loop tree estimate}
    For $d>6$, there exists $C(d)>0$ such that for any $x,y \in \Z^{d}$, 
    \begin{align*}
        \sum_{\Gamma}\sum_{\substack{z_{1}\in\Z^{d} \\ z_{1} \sim \Gamma}}\sum_{\substack{z_{2}\in\Z^{d} \\ z_{2} \sim \Gamma}} |\Gamma| \widetilde{\mathbb{P}}(x \leftrightarrow z_{1})\widetilde{\mathbb{P}}(z_{2} \leftrightarrow y) \widetilde{\mathbb{P}}(\Gamma \in \mathcal{L}) \leq C|x-y|^{4-d}.
    \end{align*}
\end{lemma}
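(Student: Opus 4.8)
The plan is to strip off the two random two-point functions using the Green's function bound, convert the sums over points adjacent to a loop into sums over the lattice vertices that loop visits, perform the loop summation first, and reduce the whole quantity to convolutions of power-law kernels to which \eqref{2.1}--\eqref{2.2} apply. First I would use Lemma \ref{lem 2.5} in the form $\wtp(a\leftrightarrow b)\le C(1\vee|a-b|)^{2-d}$ (valid also for $a=b$). Since $z\sim\Gamma$ means $z$ lies within lattice distance $1$ of some vertex of $\mathsf{VRange}(\Gamma)$, bounded degree of $\Z^d$ gives $\sum_{z_1\sim\Gamma}\wtp(x\leftrightarrow z_1)\le C\sum_{v_1\in\mathsf{VRange}(\Gamma)}(1\vee|x-v_1|)^{2-d}$, and likewise for the $z_2$-sum. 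Exchanging the order of summation (all terms are non-negative), the left-hand side is then at most
\begin{align*}
C\sum_{v_1,v_2\in\Z^d}(1\vee|x-v_1|)^{2-d}(1\vee|y-v_2|)^{2-d}\,S(v_1,v_2),\qquad S(v_1,v_2):=\sum_{\Gamma:\;v_1,v_2\in\mathsf{VRange}(\Gamma)}|\Gamma|\,\wtp(\Gamma\in\mathcal{L}).
\end{align*}

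The heart of the matter is the estimate $S(v_1,v_2)\le C(1\vee|v_1-v_2|)^{6-2d}$. For $v_1=v_2$ this is Lemma \ref{loop one point} with $i=1$ (legitimate since $d>4$). For $v_1\ne v_2$ I would use \eqref{loop prob} together with the identity $\sum_{\Gamma\ni v_1,v_2}|\Gamma|J(\Gamma)^{-1}(2d)^{-|\Gamma|}=\sum_{\eta}(2d)^{-|\eta|}$, the last sum being over all \emph{rooted} discrete loops $\eta$ with $v_1,v_2\in\mathsf{VRange}(\eta)$ (this merely re-expresses a loop class as its $|\Gamma|/J(\Gamma)$ rooted representatives). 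Cutting such a walk $\eta$ at its first visit to $v_1$ and at the first subsequent visit to $v_2$, and symmetrically with the roles of $v_1,v_2$ reversed, bounds $\sum_\eta(2d)^{-|\eta|}$ by $2\sum_{z}G(z,v_1)G(v_1,v_2)G(v_2,z)$; using the Green's function estimate $G(a,b)\le C(1\vee|a-b|)^{2-d}$ and then \eqref{2.1} gives $C(1\vee|v_1-v_2|)^{2-d}\cdot(1\vee|v_1-v_2|)^{4-d}=C(1\vee|v_1-v_2|)^{6-2d}$. (Equivalently one may invoke a marked-vertex version of the three-point estimate Lemma \ref{loop three points}, which the same argument supplies.)

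It then remains to feed this back into the displayed bound. Splitting the $v_1,v_2$-sum into the diagonal $v_1=v_2$ and the off-diagonal $|v_1-v_2|\ge1$: on the diagonal $S\le C$ and $\sum_v(1\vee|x-v|)^{2-d}(1\vee|y-v|)^{2-d}\le C(1\vee|x-y|)^{4-d}$ by \eqref{2.1}; off the diagonal $S(v_1,v_2)\le C|v_1-v_2|^{6-2d}$, and since $d>6$ forces $6-2d<-d$, the kernel $|v|^{6-2d}$ is summable over $\Z^d$, so a routine case split (according as $v_1$ lies within $\tfrac12|x-v_2|$ of $v_2$, near $x$, or far from both) with repeated use of \eqref{2.1} again produces $C(1\vee|x-y|)^{4-d}$. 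Adding the two contributions yields the claim.

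The step I expect to be hardest is the estimate on $S(v_1,v_2)$. The subtlety is that a discrete loop may revisit lattice vertices, so $|\Gamma|$ cannot be replaced by $|\mathsf{VRange}(\Gamma)|$; the weight $|\Gamma|$ must be read either through the rooted-representative count $|\Gamma|/J(\Gamma)$ as above or as a sum over a marked step of the loop, which is exactly why the bare class-level three-point estimate does not suffice and one needs its rooted/marked refinement. Using instead only the length-graded two-point loop estimate Lemma \ref{loop two points} (via a layer-cake decomposition $|\Gamma|=\sum_{\ell\le|\Gamma|}1$ and the bound $|\Gamma|\ge 2|v_1-v_2|_1$) gives the weaker $S(v_1,v_2)\le C|v_1-v_2|^{4-3d/2}$, which is enough to close the argument for $d\ge 8$ but falls short in the window $6<d<8$; it is the sharp three-point / Green's function input that lets the proof reach all $d>6$.
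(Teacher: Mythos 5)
Your proof is correct. Note that the paper itself does not prove this statement: it is imported verbatim as Lemma 2.14 of \cite{ganguly2024ant}, with the proof explicitly omitted. Your argument is a valid self-contained derivation in exactly the spirit of the toolkit the paper relies on: reduce to a loop estimate with two marked vertices and a length weight, prove that estimate by passing to rooted representatives (the identity $\sum_{\eta\in\Gamma}(2d)^{-|\eta|}=|\Gamma|J(\Gamma)^{-1}(2d)^{-|\Gamma|}$ combined with \eqref{loop prob}) and cutting the closed walk at first hitting times to get a product of Green's functions, and then close with the power-law convolutions \eqref{2.1}. The one place where care is genuinely needed is the one you flag: the weight $|\Gamma|$ counts steps, not distinct vertices, so the class-level three-point bound of Lemma \ref{loop three points} cannot simply be summed over a third vertex $w\in\mathsf{VRange}(\Gamma)$; your rooted decomposition (equivalently, summing over a marked time rather than a marked site) handles this correctly and yields the sharp $S(v_1,v_2)\le C(1\vee|v_1-v_2|)^{6-2d}$, whose summability for $d>6$ is exactly what makes the final convolution close at $|x-y|^{4-d}$. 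Your closing remark correctly identifies why the cruder route through Lemma \ref{loop two points} only reaches $d\ge 8$. I have no objections.
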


The next lemma is stating that the diameter of a loop of length $\ell$ is around $\sqrt{\ell}$.

\begin{lemma}[Lemma 2.13 in \cite{ganguly2024ant}]\label{local clt}
    For any $\kappa>0$, there exists $C(d)>0$ such that for any $u,v\in\mathbb{Z}^{d}$, 
    \begin{align*}
        \sum_{\substack{\Gamma \ni u,v\\ |\Gamma|\leq |u-v|^{2(1-\kappa)}}}\widetilde{\P}(\Gamma\in \mathcal{L}) \leq C e^{-|u-v|^{2\kappa}/4}.
    \end{align*}
\end{lemma}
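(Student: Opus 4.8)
The plan is to exploit diffusivity of the underlying random walk: a discrete loop through two points at distance $\rho:=|u-v|$ has length of order at least $\rho^{2}$, so forcing its length to be at most $\rho^{2(1-\kappa)}=o(\rho^{2})$ is exponentially costly, and the whole estimate reduces to a transition‑probability bound for the discrete‑time simple random walk. First I would invoke \eqref{loop prob}, which gives $\widetilde{\mathbb{P}}(\Gamma\in\mathcal{L})\le (2d)^{-|\Gamma|}$ for every discrete loop $\Gamma$, so that, writing $L:=|u-v|^{2(1-\kappa)}$,
\[
\sum_{\substack{\Gamma\ni u,v\\ |\Gamma|\le L}}\widetilde{\mathbb{P}}(\Gamma\in\mathcal{L})\ \le\ \sum_{\substack{\Gamma\ni u,v\\ |\Gamma|\le L}}(2d)^{-|\Gamma|}\ \le\ \sum_{k\le L}(2d)^{-k}\,N_k ,
\]
where $N_k$ counts the nearest‑neighbour closed paths from $u$ to $u$ of length $k$ that visit $v$ (the last step holds because every unrooted loop of length $k$ through $u$ has at least one representative based at $u$, and the multiplicity factor $J(\Gamma)$ discarded in \eqref{loop prob} supplies exactly the slack needed; the degenerate case $u=v$, where $L=0$, is immediate). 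Writing $q_j(x,y):=(2d)^{-j}\,\#\{\text{nearest-neighbour paths of length }j\text{ from }x\text{ to }y\}$ for the $j$-step transition kernel of discrete‑time simple random walk, I would split a path counted by $N_k$ at its first visit to $v$ and concatenate, getting $(2d)^{-k}N_k\le\sum_{s=1}^{k-1}q_s(u,v)\,q_{k-s}(v,u)$, with $q_{k-s}(v,u)=q_{k-s}(u,v)$ by symmetry; summing over $k\le L$ and relaxing the summation range,
\[
\sum_{\substack{\Gamma\ni u,v\\ |\Gamma|\le L}}\widetilde{\mathbb{P}}(\Gamma\in\mathcal{L})\ \le\ \sum_{k\le L}\sum_{s=1}^{k-1}q_s(u,v)\,q_{k-s}(u,v)\ \le\ \Big(\sum_{s=1}^{L}q_s(u,v)\Big)^{2}.
\]

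The remaining input is a bound on $\sum_{s\le L}q_s(u,v)$. Picking a coordinate $j^{\ast}$ with $|u_{j^{\ast}}-v_{j^{\ast}}|=|u-v|_\infty=\rho$, the quantity $q_s(u,v)$ is at most the probability that a sum of $s$ i.i.d.\ variables, each mean zero and supported in $[-1,1]$ (the $j^{\ast}$-coordinates of the walk's steps), has absolute value at least $\rho$; Hoeffding's inequality then yields $q_s(u,v)\le 2e^{-\rho^{2}/(2s)}$. In particular $q_s(u,v)=0$ for $s<\rho$, while for $\rho\le s\le L$ we get $q_s(u,v)\le 2e^{-\rho^{2}/(2L)}=2e^{-\rho^{2\kappa}/2}$, hence $\sum_{s\le L}q_s(u,v)\le 2L\,e^{-\rho^{2\kappa}/2}$ (and if $L<\rho$ the sum, hence the left side of the lemma, simply vanishes). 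Combining the displays, $\sum_{\Gamma\ni u,v,\,|\Gamma|\le L}\widetilde{\mathbb{P}}(\Gamma\in\mathcal{L})\le 4L^{2}e^{-\rho^{2\kappa}}=4\,|u-v|^{4(1-\kappa)}e^{-|u-v|^{2\kappa}}$, and since $x\mapsto x^{4(1-\kappa)}e^{-3x^{2\kappa}/4}$ is bounded on $[1,\infty)$, this is at most $C\,e^{-|u-v|^{2\kappa}/4}$ for a constant $C=C(d,\kappa)$ (the value $1/4$ is far from sharp, there being a full factor $e^{-3\rho^{2\kappa}/4}$ to spare), the case $|u-v|=0$ being trivial.

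The argument is a short computation and I do not foresee a genuine obstacle; the two points deserving a little care are the combinatorial reduction from unrooted discrete loops to based simple‑random‑walk closed paths (losing only the per‑step factor $(2d)^{-1}$, which is exactly what \eqref{loop prob} affords), and the elementary verification that the polynomial prefactor $|u-v|^{4(1-\kappa)}$ is absorbed into a fraction of the Gaussian factor uniformly in $u,v$. If preferred, one may instead retain only a single factor $\sum_{s\le L}q_s(u,v)\le 2Le^{-\rho^{2\kappa}/2}$ and bound the remaining sum crudely by the Green's function $\sum_{m\ge0}q_m(u,v)=G(u,v)\le G(0,0)<\infty$ (finite since $d\ge 3$), which gives the same conclusion.
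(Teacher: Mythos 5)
Your argument is correct, and it is essentially self-contained: the chain (i) $\widetilde{\P}(\Gamma\in\mathcal{L})\le (2d)^{-|\Gamma|}$ from \eqref{loop prob}, (ii) injection of unrooted loops through $u,v$ into closed walks rooted at $u$ visiting $v$, (iii) decomposition at the first visit to $v$ giving $(2d)^{-k}N_k\le\sum_s q_s(u,v)q_{k-s}(v,u)$, and (iv) the coordinatewise Hoeffding bound $q_s(u,v)\le 2e^{-\rho^2/(2s)}$ with $s\le L=\rho^{2(1-\kappa)}$, after which the polynomial prefactor $L^2$ is absorbed into a fraction of the exponential, all check out, including the degenerate cases $u=v$ and $L<\rho$. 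Note that the present paper does not prove this lemma at all — it imports it from \cite{ganguly2024ant} — so there is no in-paper proof to compare against; the standard route there goes through Gaussian upper bounds for the random-walk transition kernel, which is the same diffusive-scaling mechanism you exploit, only you replace the heat-kernel estimate by the more elementary Hoeffding inequality. Two small remarks: your parenthetical that the multiplicity factor $J(\Gamma)$ "supplies exactly the slack needed" is not what makes step (ii) work (and is not needed) — the point is simply that each equivalence class has at least one representative rooted at $u$, so the map from classes to rooted closed walks is injective and overcounting only helps an upper bound; and your constant necessarily depends on $\kappa$ as well as $d$, which is consistent with the quantifier order in the statement (and the choice of $\ell^1$ versus $\ell^\infty$ norm only shifts dimension-dependent constants, not the form of the bound).
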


The following lemmas will be useful in Section \ref{compare} 
\begin{lemma}\label{lem 5.5}
    For $d>6$ and any $x, y \in \mathbb{Z}^{d}$, there exists some constant $C(d)>0$ such that 
    \begin{align*}
        \sum_{\substack{\Gamma\ni 0}}\widetilde{\mathbb{P}}(\Gamma \in \mathcal{L}) \widetilde{\mathbb{P}}(\overline{\Gamma} \overset{\overline{\mathcal{L}}\setminus \{\overline{\Gamma}\}}{\longleftrightarrow} x\circ \overline{\Gamma} \overset{\overline{\mathcal{L}}\setminus \{\overline{\Gamma}\}}{\longleftrightarrow} y) \leq C |x|^{2-d}|y|^{2-d}.
    \end{align*}
    Recall that $\overline{\Gamma}$ is the glued loop such that $\mathsf{VRange}(\overline{\Gamma}) = \mathsf{VRange}(\Gamma)$.
\end{lemma}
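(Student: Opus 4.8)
\textbf{Proof strategy for Lemma \ref{lem 5.5}.} The plan is to estimate the sum by conditioning on the glued loop $\overline\Gamma$ (equivalently on the discrete loop $\Gamma$) and then applying the tree expansion and the BKR inequality to the connection event $\{\overline\Gamma \overset{\overline{\mathcal L}\setminus\{\overline\Gamma\}}{\longleftrightarrow} x \circ \overline\Gamma \overset{\overline{\mathcal L}\setminus\{\overline\Gamma\}}{\longleftrightarrow} y\}$. Concretely, on this event there must be vertices $z_1, z_2 \sim \Gamma$ (two vertices adjacent to, or lying on, the loop) from which the connections to $x$ and $y$ respectively emanate disjointly; so by Lemma \ref{bkr} (BKR for connecting events) the probability is bounded by $\sum_{z_1\sim\Gamma}\sum_{z_2\sim\Gamma}\widetilde{\mathbb P}(z_1\leftrightarrow x)\,\widetilde{\mathbb P}(z_2\leftrightarrow y)$. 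Summing over all discrete loops $\Gamma\ni 0$ weighted by $\widetilde{\mathbb P}(\Gamma\in\mathcal L)$ then reduces the claim to
\begin{align*}
    \sum_{\Gamma\ni 0}\ \sum_{\substack{z_1\sim\Gamma}}\ \sum_{\substack{z_2\sim\Gamma}} \widetilde{\mathbb P}(z_1\leftrightarrow x)\,\widetilde{\mathbb P}(z_2\leftrightarrow y)\,\widetilde{\mathbb P}(\Gamma\in\mathcal L) \leq C|x|^{2-d}|y|^{2-d},
\end{align*}
which is a variant of Lemma \ref{loop tree estimate} but with the loop-length factor $|\Gamma|$ replaced by $1$ and with the loop pinned to pass through the origin.

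The second step is to prove this last inequality. I would substitute the two-point bound $\widetilde{\mathbb P}(z_i\leftrightarrow \cdot)\le C|z_i-\cdot|^{2-d}$ from Lemma \ref{lem 2.5}, and handle the sums over $z_1,z_2$ near $\Gamma$ by the three-point discrete loop estimate of Lemma \ref{loop three points}: for fixed $z_1,z_2$ (treating them, after a union bound over the bounded number of neighbors, as points of $\Gamma$ together with $0\in\Gamma$),
\begin{align*}
    \sum_{\Gamma\ni 0,z_1,z_2}\widetilde{\mathbb P}(\Gamma\in\mathcal L) \leq C|z_1|^{2-d}|z_1-z_2|^{2-d}|z_2|^{2-d}.
\end{align*}
Plugging this in leaves
\begin{align*}
    \sum_{z_1,z_2\in\Z^d}|z_1|^{2-d}|z_1-z_2|^{2-d}|z_2|^{2-d}|z_1-x|^{2-d}|z_2-y|^{2-d} \leq C|x|^{2-d}|y|^{2-d},
\end{align*}
which is exactly the convolution bound \eqref{5:53} of Lemma \ref{lem 2.1} (after the trivial relabeling $u\mapsto z_1$, $v\mapsto z_2$, and using $|z_i|=|0-z_i|$). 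Assembling the three displays gives the claimed bound, with the constant $C$ depending only on $d$.

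The only slightly delicate point — and the step I expect to be the main obstacle — is the first one: making precise that, on the event in question, the two connections $\overline\Gamma\leftrightarrow x$ and $\overline\Gamma\leftrightarrow y$ (off the glued loop $\overline\Gamma$) can be realized \emph{disjointly} so that BKR applies, and that the endpoints on the $\overline\Gamma$-side may be taken to be honest lattice vertices within distance $1$ of $\Gamma$ rather than arbitrary points of the metric-graph range. This is essentially the content of (the proof of) the tree expansion, Lemma \ref{ete}, specialized to the case where the "central" loop is prescribed to be $\Gamma$ and $A=\overline\Gamma$ itself; one takes the first loops/edges by which the $\overline{\mathcal L}\setminus\{\overline\Gamma\}$-clusters of $x$ and $y$ attach to $\overline\Gamma$, which are disjoint by definition of the symbol $\circ$, and records their attaching vertices as $z_1,z_2$. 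Since $\overline\Gamma$ and $\Gamma$ share the same $\mathsf{VRange}$, "$z_i\sim\overline\Gamma$'' is the same as "$z_i\sim\Gamma$'', and the union bound over the at most $2d+1$ relevant vertices costs only a constant. Everything else is a routine application of the already-recorded lemmas, and the remaining summations are precisely the technical estimates in Lemma \ref{lem 2.1} and the discrete-loop estimates of Section \ref{pre}.
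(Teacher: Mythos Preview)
Your proposal is correct and follows essentially the same approach as the paper's proof: find $z_1,z_2\sim\Gamma$ witnessing the two disjoint connections, apply a union bound and BKR, then invoke the three-point loop estimate (Lemma \ref{loop three points}) followed by the convolution bound \eqref{5:53}. The paper's argument is in fact slightly terser than yours (it does not dwell on the point about lattice vertices that you flagged), but the steps and the cited lemmas are identical.
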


\begin{proof}
    If $\{\overline{\Gamma} \overset{\overline{\mathcal{L}}\setminus \{\overline{\Gamma}\}}{\longleftrightarrow} x\circ \overline{\Gamma} \overset{\overline{\mathcal{L}}\setminus \{\overline{\Gamma}\}}{\longleftrightarrow} y\}$ happens, then we can find $u,v \sim \Gamma$ such that $u \overset{\overline{\mathcal{L}}\setminus \{\overline{\Gamma}\}}{\longleftrightarrow} x \circ v \overset{\overline{\mathcal{L}}\setminus \{\overline{\Gamma}\}}{\longleftrightarrow} y$. Then by a union bound and BKR inequality we have 
    \begin{align}
         &\sum_{\substack{\Gamma\ni 0}}\widetilde{\P}(\Gamma\in \mathcal{L})\widetilde{\mathbb{P}}(\overline{\Gamma} \overset{\overline{\mathcal{L}}\setminus \{\overline{\Gamma}\}}{\longleftrightarrow} x\circ \overline{\Gamma} \overset{\overline{\mathcal{L}}\setminus \{\overline{\Gamma}\}}{\longleftrightarrow} y) \leq \sum_{\Gamma\ni 0}\sum_{u,v \sim \Gamma}\widetilde{\mathbb{P}}(\Gamma \in \mathcal{L}) \widetilde{\mathbb{P}}(u \overset{\overline{\mathcal{L}}\setminus \{\overline{\Gamma}\}}{\longleftrightarrow} x \circ v \overset{\overline{\mathcal{L}}\setminus \{\overline{\Gamma}\}}{\longleftrightarrow} y) \nonumber \\ 
         &\leq \sum_{u,v \in \mathbb{Z}^{d}} \mathbb{P}(u \leftrightarrow x)\widetilde{\mathbb{P}}(v \leftrightarrow y)\sum_{\substack{\Gamma \in 0\\ \Gamma \sim u,v}}\widetilde{\mathbb{P}}(\Gamma \in \mathcal{L}) \nonumber \\ &\overset{(\text{Lemma \ref{loop three points}})}{\leq}C \sum_{u,v \in \mathbb{Z}^{d}}|u-x|^{2-d}|v-y|^{2-d}|u|^{2-d}|v|^{2-d}|u-v|^{2-d}\nonumber\\
         &\overset{(\ref{5:53})}{\leq} C|x|^{2-d}|y|^{2-d}.\label{103}
    \end{align}
\end{proof}

The last lemma is an estimate involving one-arm probability. The proof is rather technical and deferred to the appendix.

\begin{lemma}\label{lem 5.8}
    There exists $C(d)>0$ such that for any $r\in \mathbb{N}$, $y\in \mathbb{B}_{r/3}$ and $w \in \mathbb{B}_{r}$,
    \begin{align*}
        \sum_{\Gamma \sim w} \widetilde{\mathbb{P}}(\Gamma \in \mathcal{L})\widetilde{\mathbb{P}}(\overline{\Gamma} \overset{\overline{\mathcal{L}}\setminus \{\overline{\Gamma}\},\mathbb{B}_{r}}{\longleftrightarrow} y\circ \overline{\Gamma} \overset{\overline{\mathcal{L}}\setminus \{\overline{\Gamma}\}}{\longleftrightarrow} \partial\mathbb{B}_{r}) \leq C |y-w|^{2-d}d^{\text{ext}}(w,\partial \mathbb{B}_{r})^{-2} .
    \end{align*}
\end{lemma}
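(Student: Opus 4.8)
\textbf{Proof proposal for Lemma \ref{lem 5.8}.}

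The plan is to mimic the argument of Lemma \ref{lem 5.5}, but to replace one of the two free two-point functions by a one-arm probability so as to pick up the factor $d^{\text{ext}}(w,\partial \mathbb{B}_r)^{-2}$. First, if the event $\{\overline{\Gamma} \overset{\overline{\mathcal{L}}\setminus\{\overline{\Gamma}\},\mathbb{B}_r}{\longleftrightarrow} y\circ \overline{\Gamma}\overset{\overline{\mathcal{L}}\setminus\{\overline{\Gamma}\}}{\longleftrightarrow}\partial\mathbb{B}_r\}$ holds, then there exist $u,v\sim\Gamma$ with $u\overset{\overline{\mathcal{L}}\setminus\{\overline{\Gamma}\},\mathbb{B}_r}{\longleftrightarrow} y$ and $v\overset{\overline{\mathcal{L}}\setminus\{\overline{\Gamma}\}}{\longleftrightarrow}\partial\mathbb{B}_r$, disjointly. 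Applying a union bound over $u,v$ together with the BKR inequality (Lemma \ref{bkr}), and then summing over the loop $\Gamma$ using the three-point loop estimate (Lemma \ref{loop three points}, with the $\sim$ versions as remarked there), I would reduce the left side to
\begin{align*}
    C\sum_{u,v\in\mathbb{Z}^d}\widetilde{\mathbb{P}}(u\overset{\mathbb{B}_r}{\longleftrightarrow}y)\,\widetilde{\mathbb{P}}(v\leftrightarrow\partial\mathbb{B}_r)\,|u-w|^{2-d}|v-w|^{2-d}|u-v|^{2-d}.
\end{align*}

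Next I would bound the restricted two-point function $\widetilde{\mathbb{P}}(u\overset{\mathbb{B}_r}{\longleftrightarrow}y)$ crudely from above by the unrestricted one $\widetilde{\mathbb{P}}(u\leftrightarrow y)\le C|u-y|^{2-d}$ (Lemma \ref{lem 2.5}), and the one-arm probability $\widetilde{\mathbb{P}}(v\leftrightarrow\partial\mathbb{B}_r)$. The key point is that the one-arm term, when summed against the loop geometry centered near $w$, should effectively contribute a factor of order $d^{\text{ext}}(w,\partial\mathbb{B}_r)^{-2}$: a cluster from a point $v$ near $w$ reaching $\partial\mathbb{B}_r$ must cross a Euclidean distance comparable to $d^{\text{ext}}(w,\partial\mathbb{B}_r)$ (assuming $v$ is close to $w$, which the factor $|v-w|^{2-d}$ forces), and by Lemma \ref{lem 2.3} the probability of this is $\lesssim d^{\text{ext}}(v,\partial\mathbb{B}_r)^{-2}\lesssim d^{\text{ext}}(w,\partial\mathbb{B}_r)^{-2}$ up to the usual care for $v$ far from $w$. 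The remaining convolution
\begin{align*}
    \sum_{u,v\in\mathbb{Z}^d}|u-y|^{2-d}|u-w|^{2-d}|u-v|^{2-d}|v-w|^{2-d}
\end{align*}
is handled by two applications of \eqref{2.1}: summing over $u$ first (three kernels meeting, using \eqref{2.2}-type convolution) gives $|y-w|^{2-d}\cdot(\text{extra decay})$, and then summing over $v$ collapses to $|y-w|^{2-d}$ up to constants. Combining these yields the claimed bound $C|y-w|^{2-d}d^{\text{ext}}(w,\partial\mathbb{B}_r)^{-2}$.

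The main obstacle I anticipate is making the heuristic ``one-arm from $v$ near $w$ costs $d^{\text{ext}}(w,\partial\mathbb{B}_r)^{-2}$'' fully rigorous while simultaneously retaining enough decay in $|v-w|$ to perform the convolution; naively pulling $d^{\text{ext}}(w,\partial\mathbb{B}_r)^{-2}$ out of the sum loses the $v$-dependence that one needs, whereas keeping $d^{\text{ext}}(v,\partial\mathbb{B}_r)^{-2}$ inside requires splitting the sum according to whether $v$ lies in, say, $\mathbb{B}(w,\tfrac12 d^{\text{ext}}(w,\partial\mathbb{B}_r))$ or not. On the near region one uses $d^{\text{ext}}(v,\partial\mathbb{B}_r)\asymp d^{\text{ext}}(w,\partial\mathbb{B}_r)$ directly; on the far region the decay $|v-w|^{2-d}$ is already smaller than $d^{\text{ext}}(w,\partial\mathbb{B}_r)^{2-d}$, which beats the needed $d^{\text{ext}}(w,\partial\mathbb{B}_r)^{-2}$ since $d\ge 6$ and thus $d-2\ge 4>2$, and one absorbs the one-arm factor by $\widetilde{\mathbb{P}}(v\leftrightarrow\partial\mathbb{B}_r)\le 1$. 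A secondary technical point is that the hypotheses $y\in\mathbb{B}_{r/3}$, $w\in\mathbb{B}_r$ are exactly what guarantee the geometry is non-degenerate (e.g. $d^{\text{ext}}(y,\partial\mathbb{B}_r)\gtrsim r$), and I would use them to control the borderline case where $w$ itself is close to $\partial\mathbb{B}_r$. Once the region decomposition is in place, all remaining sums are routine applications of Lemma \ref{lem 2.1}.
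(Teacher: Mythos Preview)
Your strategy — reduce via a union bound and BKR to $u,v\sim\Gamma$, apply the three-point loop estimate (Lemma~\ref{loop three points}), and then handle the one-arm factor by a near/far split on $|v-w|$ — is different from what the paper does, and the far-region step has a genuine gap. The near region ($|v-w|\le D/2$ with $D=d^{\text{ext}}(w,\partial\mathbb{B}_r)$) is fine: after pulling out $D^{-2}$ and summing $v$ via \eqref{2.1} you get $CD^{-2}\sum_u|u-y|^{2-d}|u-w|^{6-2d}$, and since $6-2d<-d$ for $d>6$ this collapses to $CD^{-2}|y-w|^{2-d}$. The problem is your far region. Bounding the one-arm by $1$ there is not a pointwise-to-sum argument: the claim ``$|v-w|^{2-d}\le D^{2-d}$ already beats $D^{-2}$'' is true termwise but you still have to sum over $v$. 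If instead you write $|v-w|^{2-d}\le 4D^{-2}|v-w|^{4-d}$ on the far region and sum, two applications of \eqref{2.1} give $CD^{-2}\sum_u|u-y|^{2-d}|u-w|^{8-2d}$; for $d=7$ this is $CD^{-2}|y-w|^{-4}$, a full factor of $|y-w|$ worse than the target $CD^{-2}|y-w|^{-5}$. Concretely, take $w$ with $|w|=r/2$, $y=0$: the contribution from $u$ at scale $|u|\sim|u-w|\sim r/4$ and $v$ near $u$ (so $|v-w|\sim r/4>D/2$ but $d^{\text{ext}}(v,\partial\mathbb{B}_r)\sim r$) is of order $r^{8-2d}$, which for $d=7$ exceeds the target $r^{-d}$ by a factor $r$. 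The one-arm probability at such $v$ is actually $\sim r^{-2}$, not $\sim 1$, so throwing it away is exactly what breaks the bound.

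The paper avoids this by \emph{not} passing through the three-point estimate uniformly. It splits on the loop length $|\Gamma|\gtrless r^{1.9}$ and on whether $w\in\mathbb{B}_{2r/3}$: for long loops it bounds the one-arm by $1$ but uses Lemma~\ref{loop two points} to gain an extra factor $r^{1.9(1-d/2)}$, which more than compensates; for short loops it uses Lemma~\ref{local clt} to show that a loop with $|\Gamma|\le r^{1.9}$ cannot contain points $\gtrsim r$ apart except with exponentially small probability, so the ``far $v$'' case is killed outright rather than estimated via convolution. If you want to rescue your route, you would need to keep the full one-arm bound $d^{\text{ext}}(v,\partial\mathbb{B}_r)^{-2}$ inside the sum and decompose $v$ by its distance to $\partial\mathbb{B}_r$ as well as to $w$; this is doable but considerably more involved than the simple near/far split you sketch.
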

The $1/3$ in the condition $y\in \B_{r/3}$  above is arbitrary, it can be replaced by any $ c \in (0,1)$ with a different constant $C$. 

With all the above preparation, we are now in a position to dive into the proofs.


\section{Local connectivity estimates}\label{local}

In this section we prove Theorem \ref{lem 4.4}.
The first order of business is to introduce all the different definitions of the various kinds of loops we will encounter. Again, we will be brief, reviewing only the bare minimum and the interested reader is encouraged to refer to  \cite[Section 6]{cai2023one} for a more comprehensive treatment.

Some of these definitions are technical and will not appear explicitly in our proof but implicitly through results we will quote from \cite{cai2023one}.
For $n \in \mathbb{N}$ and $x\in \Z^{d}$, let $\widehat{\mathbb{B}}_{n}(x): = \{y\in\Z^{d}: |y-x| \leq n, |y-x|_{1} < nd\}$ be the box obtained by removing all the conner points of $\mathbb{B}_{n}(x)$, recall that $|\cdot|_{1}$ is the $\ell^{1}$ norm on $\mathbb{Z}^{d}$. When x is the origin, we may write $\widehat{\mathbb{B}}_{n} := \widehat{\mathbb{B}}_{n}(0)$. For any $x\in \partial \widehat{\mathbb{B}}_{n}$, we denote $x^{\text{in}}$ the unique point in $\partial\mathbb{B}_{n-1}$ such that $x^{\text{in}}\sim x$. For $r \in \mathbb{N}$ and $M \in [1,\infty]$, let 
$\widetilde{\mathcal{L}}( M)$ be the set of continuous loops $\widetilde{\Gamma} \in \widetilde{\mathcal{L}}$ with $\text{diam}(\widetilde{\Gamma}) \leq M$  and  $\Psi_{r,M}^{1}$ be the cluster containing 0 formed by the following loops in $\widetilde{\mathcal{L}}( M)$: 
\begin{enumerate}
    \item fundamental loops intersecting $\widetilde{\mathbb{B}}_{r}$; 
    \item point loops intersecting $\mathbb{B}_{r-1}$;
    \item edge loops contained in $\widetilde{\mathbb{B}}_{r}$.
\end{enumerate}
These three types of loops are called `involved loops'. Let $\Psi_{r,M}^{2}$ be defined as 
\begin{align*}
    \Psi_{r,M}^{2}:= \{x \in \partial \widehat{\mathbb{B}}_{n}: x \notin \Psi_{r,M}^{1},\ I_{\{x,x^{\text{in}}\}} \subseteq \gamma_{x}^{\text{p}} \cup \Psi_{r,M}^{1}\}
\end{align*}
and $\Psi_{r,M} = (\Psi_{r,M}^{1},\Psi_{r,M}^{2})$. The object $\Psi_{r,M}$ is a modified version of the `collection of loops intersecting $\mathbb{B}_{r}$ with diameter at most $M$'. Such a modification was indeed introduced in \cite{cai2023one} and we will shortly quote a result from the latter (see Lemma \ref{lem 4.2} below). 

We say $\mathbf{A} = (\mathbf{A}^{1},\mathbf{A}^{2})$ is \textit{admissible} if $\mathbf{A}$ is a possible configuration of $\Psi_{r,M}$. For any $\mathbf{A}$, let $\widetilde{\mathcal{L}}_{U,M,\mathbf{A}}$ be the set of the following types of loops in $\widetilde{\mathcal{L}}(M)$ (the subscript $U$ stands for \textit{unused} following \cite{cai2023one}): 
\begin{enumerate}
    \item involved loops $\widetilde{\Gamma}$ with $\widetilde{\Gamma} \cap \mathbf{A}^{1} = \emptyset$; 
    \item loops $\widetilde{\Gamma}$ with $\widetilde{\Gamma} \cap \widetilde{\mathbb{B}}_{r}=\emptyset$;
    \item point loops intersecting $\mathbf{A}^{1} \cap \partial \widehat{\mathbb{B}}_{r}$; 
    \item point loops intersecting $\partial \widehat{\mathbb{B}}_{r} \setminus (\mathbf{A}^{1} \cup \mathbf{A}^{2})$ and not intersecting $I_{\{x,x^{\text{in}}\}}\cap \mathbf{A}^{1}$,
\end{enumerate}
and let $\overline{\mathcal{L}}_{U,M,\mathbf{A}}$ be the  collection of glued loops corresponding to $\widetilde{\mathcal{L}}_{U,M,\mathbf{A}}$. Then we define  $\widetilde{\mathcal{L}}_{U,M}$ (or $\overline{\mathcal{L}}_{U,M}$) as $\widetilde{\mathcal{L}}_{U,M,\mathbf{A}}$ (or $\overline{\mathcal{L}}_{U,M,\mathbf{A}}$) on the event $\{\Psi_{r,M} = \mathbf{A}\}$. 

\begin{remark}\label{Markov}
    A key property of the definitions is that for any admissible tuple $\mathbf{A}$, $\{\Psi_{r, M}= \mathbf{A}\}$ is measurable with respect to $\widetilde{\mathcal{L}}(M) \setminus \widetilde{\mathcal{L}}_{U,M}$. Moreover, given $\{\Psi_{r, M}= \mathbf{A}\}$, by the thinning property of Poisson processes, the conditional distribution of $\widetilde{\mathcal{L}}_{U,M}$ is the same as $\widetilde{\mathcal{L}}_{U,M, \mathbf{A}}$ without conditioning.
\end{remark}

Let $\overline{\Psi}_{r,M} : = (\Psi_{r,M}^{1} \cap \Z^{d} \setminus \mathbb{B}_{r-1}) \cup \Psi_{r,M}^{2}$, $\psi_{r,M} := |\overline{\Psi}_{r,M}|$ and in addition
\begin{align*}
    \widehat{\Psi}_{r,M} := \Psi_{r,M}^{1} \cup \bigcup_{x \in \Psi_{r,M}^{2}}\gamma_{x}^{\text{p}}.
\end{align*}
When $M=\infty$, we may omit the subscript and denote $\overline{\Psi}_{r}:= \overline{\Psi}_{r,\infty}$ etc..

The following lemma is the key observation which appeared in \cite[Remark 6.5]{cai2023one} as the analogy to an important property in bond percolation: if $\{0\leftrightarrow (1+\gamma)r\}$, there exists $x \in \mathcal{C}_{\mathbb{B}_{r}}(0)\cap \partial \mathbb{B}_{r}$ such that $x \leftrightarrow \partial \mathbb{B}_{r}$ off $\mathcal{C}_{\mathbb{B}_{r}}(0)\cap \partial \mathbb{B}_{r}$.

\begin{lemma}[Remark 6.5 in \cite{cai2023one}]\label{recursive}
    If $0 \overset{\overline{\mathcal{L}}(M)}{\longleftrightarrow} \partial \mathbb{B}_{m}$ for some $m >r+M$, then there exists $v \in \widehat{\Psi}_{r,M}$ such that $v \overset{\overline{\mathcal{L}}_{U,M}}{\longleftrightarrow} \partial \mathbb{B}_{m}$. Moreover, we have
    \begin{align*}
        \{0 \overset{\overline{\mathcal{L}}(M)}{\longleftrightarrow} \partial \mathbb{B}_{m}\} \subseteq \bigcup_{z_{1}\in \overline{\Psi}_{r,M}} \bigcup_{z_{2}\in\Z^{d}:|z_{1}-z_{2}|_{2}\leq 1} \{z_{2} \overset{\overline{\mathcal{L}}_{U,M}}{\longleftrightarrow} \partial \mathbb{B}_{m}\}.
    \end{align*}
\end{lemma}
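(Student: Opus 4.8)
## Proof Plan for Lemma \ref{recursive}

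The plan is to mimic the classical bond percolation argument — where a connection from $0$ to a far boundary $\partial\mathbb{B}_m$ must pass through some vertex on $\partial\mathbb{B}_r$ that lies in the cluster of $0$ restricted to $\mathbb{B}_r$, and then continue to $\partial\mathbb{B}_m$ using only edges disjoint from that restricted cluster — but adapted to the loop soup with the modified boundary object $\Psi_{r,M}$ introduced in \cite{cai2023one}. First I would fix an admissible realization $\mathbf{A} = (\mathbf{A}^1, \mathbf{A}^2)$ of $\Psi_{r,M}$, so that by Remark \ref{Markov} the complementary loops $\widetilde{\mathcal{L}}_{U,M,\mathbf{A}}$ are precisely the ``unused'' loops, distributed (conditionally) as an unconditioned Poisson process. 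On the event $\{0 \overset{\overline{\mathcal{L}}(M)}{\longleftrightarrow} \partial\mathbb{B}_m\}$ there is a chain of glued loops of diameter $\le M$ realizing the connection; I would track this chain from $0$ outward and let $\overline\Gamma_0$ be the \emph{last} glued loop in the chain that still intersects $\widehat{\Psi}_{r,M}$ (equivalently, the last involved loop that touches $\mathbf{A}^1$, or a point loop attached to $\mathbf{A}^2$). Such a loop exists since the chain starts at $0 \in \mathbb{B}_{r-1}$, hence $\overline\Gamma_0$ is involved and meets $\mathbf{A}^1$.

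The key geometric point is the separation of scales: since every glued loop in the chain has diameter at most $M$ and $m > r + M$, no single loop can bridge from inside $\mathbb{B}_r$ directly to $\partial\mathbb{B}_m$, so the chain genuinely leaves the region controlled by $\Psi_{r,M}$ before reaching $\partial\mathbb{B}_m$. I would argue that the portion of the chain strictly after $\overline\Gamma_0$ consists entirely of loops in $\overline{\mathcal{L}}_{U,M}$ — this is where one must carefully check the four cases in the definition of $\widetilde{\mathcal{L}}_{U,M,\mathbf{A}}$: a loop after $\overline\Gamma_0$ is either non-involved (hence in category (2) if it avoids $\widetilde{\mathbb{B}}_r$, or category (1) if it is involved but disjoint from $\mathbf{A}^1$ by maximality of $\overline\Gamma_0$), or it is a point loop on $\partial\widehat{\mathbb{B}}_r$ handled by categories (3)–(4). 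Then $\overline\Gamma_0$ itself, being an involved loop meeting $\mathbf{A}^1$, has its $\mathsf{VRange}$ (or attached point-loop structure) contributing a vertex $v \in \widehat{\Psi}_{r,M}$ at which the tail connection to $\partial\mathbb{B}_m$ starts; this gives the first assertion. For the displayed inclusion, I would instead peel back to where the chain crosses out of the controlled region and locate a vertex $z_1 \in \overline\Psi_{r,M}$ (a point of $\Psi^1_{r,M}$ outside $\mathbb{B}_{r-1}$, or of $\Psi^2_{r,M}$) adjacent to a vertex $z_2$ from which the remaining chain, lying in $\overline{\mathcal{L}}_{U,M}$, reaches $\partial\mathbb{B}_m$; taking the union over all such possible $(z_1,z_2)$ yields the containment of events.

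The main obstacle I anticipate is the bookkeeping around the auxiliary set $\Psi^2_{r,M}$ and the point loops $\gamma_x^{\text{p}}$ attached to it: unlike in bond percolation, the ``boundary'' of the cluster of $0$ in $\mathbb{B}_r$ is not simply a set of vertices on $\partial\mathbb{B}_r$ but involves the delicate distinction between a vertex being reached directly by an involved loop versus only through the segment $I_{\{x,x^{\text{in}}\}}$ lying in a point loop; ensuring that the loop $\overline\Gamma_0$ at the interface is correctly classified (involved-meeting-$\mathbf{A}^1$ versus unused point loop) in all boundary scenarios requires careful unwinding of the definitions of $\Psi^1_{r,M}$, $\Psi^2_{r,M}$, $\widehat\Psi_{r,M}$ and the four unused-loop categories. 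Since the statement is quoted verbatim from \cite[Remark 6.5]{cai2023one}, I would in fact keep this verification brief, citing the construction there and emphasizing only the scale-separation input ($m > r + M$) which makes the peeling argument close; a fully self-contained proof would essentially reproduce the relevant portion of Section 6 of \cite{cai2023one}.
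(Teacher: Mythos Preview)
The paper does not actually prove this lemma: it is stated with the attribution ``Remark 6.5 in \cite{cai2023one}'' and used as a black-box input, with no argument given. Your plan is a sensible sketch of the peeling argument behind the result, and you correctly identify at the end that a full proof would amount to reproducing the relevant construction from \cite{cai2023one}; since the present paper offers nothing to compare against, your proposal already does more than the paper itself.
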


We fix constants $b \in (\frac{6}{d},1)$ and $\lambda \in (0,1]$. For a fixed large integer $r$, let $\overline{\Psi}_{n}^{*} = \overline{\Psi}_{n}\cap \mathbb{B}_{n+[(1+\lambda)r]^{b}}$ and $\psi_{n}^{*} = |\overline{\Psi}_{n}^{*}|$. We also pick a constant $\e$ (chosen later) and $L= \e^{\frac{3}{10}} r$. The next lemma which appeared as \cite[Theorem 6.7]{cai2023one} is another key in put to our proof. This is a loop counterpart of one of the main geometric results from \cite{kozma2011arm}. For any $n \in \mathbb{N}$, define 
\begin{align*}
    \chi_{n} = |\{x \in \mathbb{B}_{n+L} \setminus \mathbb{B}_{n}:  0 \leftrightarrow x\}|.
\end{align*}

\begin{lemma}[Theorem 6.7 in \cite{cai2023one}]\label{lem 4.2}
For $d>6$, there exists $C_{2}\left(d\right)>0$, $c_{2}\left(d\right) \in \left(0,1\right)$ such that for each fixed $\lambda \in \left(0,1\right)$ and sufficiently small fixed $\e>0$, the following holds for any large enough $r \in\mathbb{N}$ and any $n\in \left((1+\frac{\lambda}{4})r,(1+\frac{\lambda}{3})r\right)$:
\begin{align}\label{31}
\widetilde{\mathbb{P}}\left(\psi^{*}_{n} \geq L^{2}, \chi_{n} \leq C_{2} L^{4} \right) \leq \left(1-c_{2}\right) \pi_{1}\left(r\right).
\end{align}
Recall that $\pi_{1}(r) = \widetilde{\mathbb{P}}(0 \leftrightarrow \partial\mathbb{B}_{r})$.
\end{lemma}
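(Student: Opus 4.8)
This is the loop-soup analogue of the geometric estimate that drives the Kozma--Nachmias one-arm argument \cite{kozma2011arm}, and the plan is to transplant their scheme, using the exploration/Markov structure of the exit configuration $\Psi_n$ recorded in Remark \ref{Markov} to deal with the loop setting, together with the tree expansion (Lemma \ref{ete}) and the BKR inequality (Lemma \ref{bkr}) in place of the BK inequality available for bond percolation.

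\emph{Step 1: conditioning on the exit configuration.} By Remark \ref{Markov}, for every admissible $\mathbf{A}$ the event $\{\Psi_n=\mathbf{A}\}$ is measurable with respect to the loops outside $\widetilde{\mathcal{L}}_{U,\infty}$, and conditionally on it the law of $\overline{\mathcal{L}}_{U,\infty}$ is the unconditional law of $\overline{\mathcal{L}}_{U,\infty,\mathbf{A}}$. Let $\mathcal{A}^\ast$ denote the truncated exit set $\overline{\mathbf{A}}\cap\mathbb{B}_{n+[(1+\lambda)r]^{b}}$ determined by $\mathbf{A}$, so that $|\mathcal{A}^\ast|=\psi_n^\ast$ on $\{\Psi_n=\mathbf{A}\}$; since $\mathcal{A}^\ast$ lies in the cluster of $0$, on $\{\Psi_n=\mathbf{A}\}$ we have
\begin{align*}
\chi_n\ \geq\ Z_{\mathbf{A}}\ :=\ \bigl|\{x\in\mathbb{B}_{n+L}\setminus\mathbb{B}_n:\ x\overset{\overline{\mathcal{L}}_{U,\infty}}{\longleftrightarrow}\mathcal{A}^\ast\}\bigr|.
\end{align*}
Thus it suffices to find $C_2,c_2>0$ and an exceptional family of configurations of probability $o(\pi_1(r))$ (as $\e\downarrow0$, uniformly in $r$) off which, for every admissible $\mathbf{A}$ with $|\mathcal{A}^\ast|\geq L^2$,
\begin{align}\label{eq:cond-goal}
\widetilde{\mathbb{P}}\bigl(Z_{\mathbf{A}}\leq C_2L^{4}\ \big|\ \Psi_n=\mathbf{A}\bigr)\ \leq\ 1-c_2 .
\end{align}
Summing \eqref{eq:cond-goal} against $\widetilde{\mathbb{P}}(\Psi_n=\mathbf{A})$ and using that $n>r$ forces $\{\psi_n^\ast\geq L^2\}\subseteq\{0\leftrightarrow\partial\mathbb{B}_r\}$ then yields the assertion.

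\emph{Step 2: first and second moments.} Conditionally on $\{\Psi_n=\mathbf{A}\}$, $\overline{\mathcal{L}}_{U,\infty,\mathbf{A}}$ contains (by thinning) a genuine loop soup supported on the annulus, so restricted-connectivity lower bounds comparable to \eqref{two point} apply; combined with the one-arm lower bound of Lemma \ref{lem 2.3} at scale $L$ inside the annulus these give $\mathbb{E}[Z_{\mathbf{A}}\mid\Psi_n=\mathbf{A}]\geq cL^{4}$ \emph{provided} the exit set $\mathcal{A}^\ast$ is not too spatially concentrated (e.g.\ $\mathcal{A}^\ast$ is not contained in a Euclidean ball of radius $\eta L$ for a small $\eta(d)$); call such $\mathbf{A}$ \emph{spread}. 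For the second moment, apply Lemma \ref{ete} to $\{x\leftrightarrow\mathcal{A}^\ast,\ y\leftrightarrow\mathcal{A}^\ast\}$ and then Lemma \ref{bkr} to produce a sum of products of restricted two-point functions; the two-point bound \eqref{two point}, the convolution estimates of Lemma \ref{lem 2.1}, and in particular the triangle condition \eqref{triangle} yield $\mathbb{E}[Z_{\mathbf{A}}^{2}\mid\Psi_n=\mathbf{A}]\leq C(\mathbb{E}[Z_{\mathbf{A}}\mid\Psi_n=\mathbf{A}])^{2}$. By Paley--Zygmund, for spread $\mathbf{A}$ there is $c_2>0$ with $\widetilde{\mathbb{P}}(Z_{\mathbf{A}}\geq c_2L^{4}\mid\Psi_n=\mathbf{A})\geq c_2$, i.e.\ \eqref{eq:cond-goal} with $C_2:=c_2$.

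\emph{Step 3: concentrated exit sets are rare, and conclusion.} It remains to show $\widetilde{\mathbb{P}}(\psi_n^\ast\geq L^{2},\ \Psi_n\text{ not spread})=o(\pi_1(r))$ as $\e\downarrow 0$. On this event the cluster of $0$ reaches $\partial\mathbb{B}_n$ yet places at least $L^{2}$ of its vertices, all in a thin shell about $\partial\mathbb{B}_n$, inside a ball $\mathbb{B}_{\eta L}(w)$ for some $w\in\partial\mathbb{B}_n$; hence it has an anomalously large \emph{local} volume at Euclidean distance $\sim r$ from the origin. A union bound over $w$, a tree expansion (Lemma \ref{ete}) around the loops realizing this local cluster, and the sharp tail for restricted cluster volumes coming from \eqref{two point}, \eqref{2.2} and the triangle condition --- using the slack from the choice $L=\e^{3/10}r$ --- bound this probability by $C\e^{a}\,\pi_1(r)$ for some $a=a(d)>0$. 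Taking $\e$ small makes this negligible, and combining Steps 1--3 proves the theorem with $1-c_2$ replaced by $1-c_2+C\e^{a}<1$.

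\emph{Expected main obstacle.} The delicate step is Step 3: one must establish, quantitatively and uniformly in the location $w\in\partial\mathbb{B}_n$, that a \emph{large but spatially concentrated} exit set for a long arm is very unlikely, and this must be done in the loop model, where a single long loop can simultaneously create many mutually nearby cluster vertices, so that a crude tree-graph (Markov) bound is too lossy and one needs the refined restricted-volume estimates. The remaining ingredients --- the conditional first/second moment computation of Step 2 --- are routine given the two-point bound \eqref{two point}, Lemma \ref{lem 2.1}, and the tree expansion, exactly as in the bond case \cite{kozma2011arm}.
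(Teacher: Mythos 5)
First, a point of order: the paper does not prove Lemma \ref{lem 4.2} at all --- it is imported verbatim as Theorem 6.7 of \cite{cai2023one}, so there is no in-paper argument to compare yours against, and your proposal has to stand on its own as a reconstruction of the Cai--Ding/Kozma--Nachmias argument. Your skeleton (condition on $\Psi_n$ via Remark \ref{Markov}, run a second-moment argument on the number $Z_{\mathbf A}$ of shell points connected to the exit set by unused loops, conclude by Paley--Zygmund) is indeed the right one, and Step 1 is fine.

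The genuine gap is the interaction between Step 2 and Step 3, which is exactly the load-bearing part of the lemma. The first-moment bound $\mathbb{E}[Z_{\mathbf A}\mid\Psi_n=\mathbf A]\geq cL^4$ is \emph{false} for spatially concentrated exit sets: if the $L^2$ points of $\mathcal A^\ast$ sit in a region of diameter $\rho\ll L$, then $\widetilde{\mathbb{P}}(x\leftrightarrow\mathcal A^\ast)$ is governed by a capacity-type quantity of that region rather than by $\sum_{y\in\mathcal A^\ast}\widetilde{\mathbb{P}}(x\leftrightarrow y)$ (the Bonferroni correction $\sum_{y\neq y'}\widetilde{\mathbb{P}}(x\leftrightarrow y,\ x\leftrightarrow y')$, estimated by the tree-graph bound, swamps the main term when many pairs $y,y'$ are close), and one checks that $\mathbb{E}[Z_{\mathbf A}]\ll L^4$, so by Markov your conditional bound fails for such $\mathbf A$. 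Your patch has two problems. (i) Your notion of ``spread'' (not contained in a single ball of radius $\eta L$) is far too weak: $L^2-1$ points in a tiny ball plus one outlier is spread by your definition but still kills the first moment; what is actually needed is control of the energy $\sum_{y\neq y'\in\mathcal A^\ast}|y-y'|^{4-d}$ of the exit set, uniformly over admissible configurations. (ii) The claimed bound $\widetilde{\mathbb{P}}(\psi_n^\ast\geq L^2,\ \Psi_n\ \text{not spread})\leq C\e^{a}\pi_1(r)$ does not follow from the tools you cite: a Markov bound on $|\widetilde{\mathcal C}(0)\cap\mathbb{B}(w,\eta L)|$ using the two-point function gives only $\eta^{d}(\eta L)^{d-2}r^{2-d}L^{-2}\cdot L^{2}$-type quantities of constant order (not $o(r^{-2})$), the Aizenman--Barsky tail gives only $L^{-1}$, and the union over the $\sim(r/L)^{d-1}$ candidate locations on $\partial\mathbb{B}_n$ introduces a factor that turns the bound into a \emph{negative} power of $\e$. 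Ruling out that the one-arm event is typically realized by a cluster exiting $\mathbb{B}_n$ through a single concentrated patch is the hard content of the lemma, and neither a union bound over locations nor the volume tails you invoke delivers it; this is where the careful bookkeeping of \cite{kozma2011arm} and \cite{cai2023one} (and the specific choice $L=\e^{3/10}r$, which is tuned precisely so that the resulting error terms carry positive powers of $\e$) cannot be waved away.
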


The next ingredient we need allows us to ignore the effect of large loops. This is the already alluded to one arm variant of our result Theorem \ref{werner12} and was proven in \cite{cai2023one}. 

\begin{lemma}[Proposition 5.1 in \cite{cai2023one}] \label{lem 4.1}
For $d>6$ and any $b \in \left(\frac{6}{d},1\right)$, there exists $C\left(d\right),c\left(d,b\right)>0$ such that for all $r\geq 1$,
\begin{align}\label{4.1}
0 \leq \widetilde{\mathbb{P}}\left( 0 \leftrightarrow  \partial \mathbb{B}_{r} \right) - \widetilde{\mathbb{P}}( 0 \stackrel{\overline{\mathcal{L}}(r^{b})}{\longleftrightarrow} \partial \mathbb{B}_{r} ) \leq \frac{C_
{2}}{r^{2+c_{2}}}.
\end{align}
\end{lemma}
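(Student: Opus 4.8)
Since $\overline{\mathcal{L}}(r^{b})$ is the sub-collection of $\overline{\mathcal{L}}$ consisting of glued loops of diameter at most $r^{b}$, using fewer loops only destroys connections, so $\{0\overset{\overline{\mathcal{L}}(r^{b})}{\longleftrightarrow}\partial\mathbb{B}_{r}\}\subseteq\{0\leftrightarrow\partial\mathbb{B}_{r}\}$ and the displayed difference equals $\widetilde{\mathbb{P}}(\mathcal{E}_{r})\ge0$, where $\mathcal{E}_{r}:=\{0\leftrightarrow\partial\mathbb{B}_{r}\}\setminus\{0\overset{\overline{\mathcal{L}}(r^{b})}{\longleftrightarrow}\partial\mathbb{B}_{r}\}$; the plan is to show $\widetilde{\mathbb{P}}(\mathcal{E}_{r})\le Cr^{-2-c}$.

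The first step is to localize a ``pivotal large loop''. On $\mathcal{E}_{r}$ one fixes a minimal chain of glued loops realizing $0\leftrightarrow\partial\mathbb{B}_{r}$ and lets $\overline{\Gamma}$ be the first loop along it with $\diam(\overline{\Gamma})>r^{b}$, which exists because otherwise $0\overset{\overline{\mathcal{L}}(r^{b})}{\longleftrightarrow}\partial\mathbb{B}_{r}$. Splitting the chain at $\overline{\Gamma}$ and arguing as in the tree expansion (Lemma \ref{ete}) together with the BKR inequality (Lemma \ref{bkr}) yields $u,v\in\mathbb{Z}^{d}$ with $u,v\sim\overline{\Gamma}$ such that
\begin{align*}
\{0\overset{\overline{\mathcal{L}}(r^{b})\setminus\{\overline{\Gamma}\}}{\longleftrightarrow}u\}\ \circ\ \{v\overset{\overline{\mathcal{L}}\setminus\{\overline{\Gamma}\}}{\longleftrightarrow}\partial\mathbb{B}_{r}\}.
\end{align*}
Summing over the underlying discrete loop $\Gamma$ (so that $\widetilde{\mathbb{P}}(\Gamma\in\mathcal{L})\le(2d)^{-|\Gamma|}$) and over $u,v\sim\Gamma$, and applying Lemma \ref{bkr}, one obtains
\begin{align*}
\widetilde{\mathbb{P}}(\mathcal{E}_{r})\ \le\ \sum_{\Gamma:\,\diam(\Gamma)>r^{b}}\ \sum_{u,v\sim\Gamma}\widetilde{\mathbb{P}}(\Gamma\in\mathcal{L})\,\widetilde{\mathbb{P}}\big(0\overset{\overline{\mathcal{L}}(r^{b})}{\longleftrightarrow}u\big)\,\widetilde{\mathbb{P}}\big(v\leftrightarrow\partial\mathbb{B}_{r}\big).
\end{align*}

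The next step is to extract decay from this sum, using two mechanisms. First, $\diam(\Gamma)>r^{b}$ forces $\Gamma$ to contain two points at distance $>r^{b}$, so by Lemma \ref{local clt} one may restrict to $|\Gamma|\ge r^{2b(1-\kappa)}$ up to an exponentially small error; more generally, whenever $u$ and $v$ are far apart the loop through them is automatically long, and Lemmas \ref{loop two points} and \ref{loop three points} turn this into quantitative gains of the form $|\Gamma|^{1-d/2}$ and an extra $|u-v|^{2-d}$. Second, since $\overline{\mathcal{L}}(r^{b})$ consists only of loops of diameter $\le r^{b}$, the connection $0\overset{\overline{\mathcal{L}}(r^{b})}{\longleftrightarrow}u$ cannot reach far cheaply: one has $\widetilde{\mathbb{P}}(0\overset{\overline{\mathcal{L}}(r^{b})}{\longleftrightarrow}u)\le C|u|^{2-d}$ always, and this decays much faster once $|u|\gg r^{b}$, so up to an exponentially small error $u\in\mathbb{B}_{Cr^{b}\log r}$ and hence $d^{\text{ext}}(u,\partial\mathbb{B}_{r})\asymp r$. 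Feeding in $\widetilde{\mathbb{P}}(v\leftrightarrow\partial\mathbb{B}_{r})\le Cd^{\text{ext}}(v,\partial\mathbb{B}_{r})^{-2}$ (which follows from Lemma \ref{lem 2.3} and translation invariance) and evaluating the resulting multi-sum with the estimates of Lemma \ref{lem 2.1} and Section \ref{pre_bounds}, after a dyadic decomposition over $D=\diam(\Gamma)\in(r^{b},2r)$, should yield $\widetilde{\mathbb{P}}(\mathcal{E}_{r})\le Cr^{-2-c(d,b)}$.

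The hard part will be the regime where the pivotal loop spans almost all of $\mathbb{B}_{r}$, i.e.\ $D\asymp r$ with $v$ close to $\partial\mathbb{B}_{r}$: there the gain from $\Gamma$ being ``large'' disappears (a loop joining a point near $0$ to a point near $\partial\mathbb{B}_{r}$ already has diameter $\asymp r$), and a one-shot tree expansion is dominated by near-boundary configurations and only closes for large $d$. To reach every $d>6$ one would re-invoke the renewal/Markov structure of the explored cluster: peel the small-loop cluster of $0$ at an intermediate scale via Lemma \ref{recursive} and Remark \ref{Markov}, control the size of its boundary $\overline{\Psi}$ by Lemma \ref{lem 4.2}, and reapply the estimate to each boundary vertex, iterating across dyadic scales in the spirit of Kozma--Nachmias. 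Balancing the one-arm decay against the diameter gain at each scale — which is precisely where the assumption $b>6/d$ enters — is the technical core of the argument.
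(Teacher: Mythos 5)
You should first be aware that the paper does not prove this statement: it is imported verbatim as Proposition 5.1 of \cite{cai2023one}, so there is no in‑paper argument to compare against, and a self‑contained proof has to reproduce a substantial part of that reference. Your setup is a sensible start: the lower bound is indeed trivial by monotonicity, and on the difference event, isolating the first glued loop of diameter exceeding $r^{b}$ along a simple chain and applying the tree expansion and BKR does give a first‑moment bound of the form $\sum_{\Gamma:\ \diam(\Gamma)>r^{b}}\sum_{u,v\sim\Gamma}\widetilde{\mathbb{P}}(\Gamma\in\mathcal{L})\,\widetilde{\mathbb{P}}\bigl(0\overset{\overline{\mathcal{L}}(r^{b})}{\longleftrightarrow}u\bigr)\,\widetilde{\mathbb{P}}(v\leftrightarrow\partial\mathbb{B}_{r})$.

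There are, however, two genuine gaps. First, the claim that $\widetilde{\mathbb{P}}(0\overset{\overline{\mathcal{L}}(r^{b})}{\longleftrightarrow}u)$ is exponentially small once $|u|\gg r^{b}\log r$ is false: a single loop of diameter $\le r^{b}$ cannot travel far, but a chain of many such loops can, and if this localization held then $\widetilde{\mathbb{P}}(0\overset{\overline{\mathcal{L}}(r^{b})}{\longleftrightarrow}\partial\mathbb{B}_{r})$ would itself be exponentially small, which combined with \eqref{4.1} would contradict the lower bound $\pi_{1}(r)\ge c r^{-2}$ of Lemma \ref{lem 2.3}. Only the unconditional bound $\widetilde{\mathbb{P}}(0\overset{\overline{\mathcal{L}}(r^{b})}{\longleftrightarrow}u)\le C|u|^{2-d}$ is available here. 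Second, and decisively, with that bound the one‑shot tree expansion does not close on the stated range of parameters, as you yourself suspect: Lemmas \ref{local clt} and \ref{loop two points} give at best $\widetilde{\mathbb{P}}(\mathcal{E}_{r})\lesssim r^{2b(1-d/2)}\sum_{u,v}|u|^{2-d}|u-v|^{2-d}\widetilde{\mathbb{P}}(v\leftrightarrow\partial\mathbb{B}_{r})$, and the $(u,v)$‑sum is of order $r^{3}$ (dominated by $v$ in the shell near $\partial\mathbb{B}_{r}$, where the one‑arm factor is $O(1)$), so the bound is $r^{3-b(d-2)}$; making this $\le r^{-2-c}$ needs $b>5/(d-2)$, which exceeds $6/d$ for all $6<d<12$ and exceeds $1$ at $d=7$. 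Hence the entire content of the lemma in low dimensions lives in the multiscale exploration argument that your last paragraph describes in one sentence and defers — peeling the truncated cluster scale by scale, controlling $\psi$ and $\chi$ as in Lemma \ref{lem 4.2}, and balancing the one‑arm decay against the loop‑size gain, which is exactly where $b>6/d$ enters. As written, the proposal establishes the trivial inequality and the first‑moment skeleton but not the estimate itself.
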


Recalling the strategy we presented in Section \ref{iop}, with the above preparation, we are finally ready to prove Theorem \ref{lem 4.4}.

\begin{proof}[Proof of Theorem \ref{lem 4.4}]
First we recall some constants that will be used in the proof: $c_{1},C_{1}$  are the constants in Lemma \ref{lem 4.2}, $c_{2},C_{2}$  are the constants in Lemma \ref{lem 4.1} and $c_{3},C_{3}$ are the constants in the lower and upper bounds in Lemma \ref{lem 2.5}. 

We choose $\delta$ such that $1-2\delta-\frac{1-c_{1}}{1-\delta} \geq \frac{c_{1}}{2}$, $\gamma>0$ such that $ \log \frac{c_{3}}{4C_{3}} \geq \frac{\log 2}{\log 1+\gamma} \log (1-\delta) $ and $\lambda = \frac{\gamma}{2}$. We say $r$ is \textit{regular} if $\frac{\pi_{1}((1+\gamma)r)}{\pi_{1}\left(r\right)} \geq 1-\delta$. From now on, we assume $r$ is regular until the very end where we show this suffices. Note that $\frac{\pi_{1}\left(\left(1+\gamma\right)r\right)}{\pi_{1}((1+\frac{\gamma}{2})r)} \geq \frac{\pi_{1}\left(\left(1+\gamma\right)r\right)}{\pi_{1}\left(r\right)} \geq 1-\delta$. Letting $b$ be as above, let us define the events
\begin{align*}
\widetilde{\mathcal{A}}:= \{\chi_{(1+\frac{\gamma}{2})r} \leq C_{1} L^{4}\} , \; 
\widetilde{\mathcal{B}} : = \{0 < \psi^{*}_{(1+\frac{\gamma}{2})r}\leq L^{2}\}, \; 
\widetilde{\mathcal{C}} := \{ 0\leftrightarrow \partial \mathbb{B}_{\left(1+\gamma\right)r}\} , \;
\widetilde{\mathcal{D}} := \{0 \leftrightarrow \partial \mathbb{B}_{\left(1+\frac{\gamma}{2}\right)r}\}.
\end{align*}
Since $r$ is regular, we have 
\begin{align}\label{4.6}
\widetilde{\mathbb{P}}(\widetilde{\mathcal{C}} \mid  \widetilde{\mathcal{D}}) = \frac{\pi_{1}((1+\gamma)r)}{\pi_{1}((1+\frac{\gamma}{2})r)}\geq 1-\delta.
\end{align}
Define 
\begin{align*}
\widetilde{\mathcal{E}}:= \{0 < \psi_{(1+\frac{\gamma}{2})r, [(1+\frac{\gamma}{2})r]^{b}}\leq L^{2}\},\;\widetilde{\mathcal{F}}:= \{0\stackrel{\overline{\mathcal{L}}([(1+\gamma/2)r]^{b})}{\longleftrightarrow} \partial \mathbb{B}_{\left(1+\gamma\right)r}\}.
\end{align*}
Eventually we need to bound $\widetilde{\mathbb{P}}(\widetilde{\mathcal{B}}^{c}\mid \widetilde{\mathcal{D}})$ from below. First, we seek to apply Lemma \ref{recursive} after removing loops with large diameters and hence consider the events $\widetilde{\mathcal{E}}$ and $\widetilde{\mathcal{F}}$. 
We claim that for large enough $r$,
\begin{align} \label{43}
    \widetilde{\mathbb{P}}(\widetilde{\mathcal{E}}^{c}\mid \widetilde{\mathcal{D}}) \geq 1-2\delta.
\end{align}
If $\widetilde{\mathbb{P}}(\widetilde{\mathcal{E}}) \leq \frac{C_{2}}{r^{2+c_{2}/2}}$, by the one arm estimate (\ref{lem 2.3}) we know that 
\begin{align*}
\widetilde{\mathbb{P}}(\widetilde{\mathcal{E}} \mid \widetilde{\mathcal{D}}) \leq \frac{\widetilde{P}(\widetilde{\mathcal{E}})}{\widetilde{P}(\widetilde{\mathcal{D}})}\leq
\frac{C_{2}}{r^{c_{2}/2}} \leq 2\delta
\end{align*}
for large enough $r$. Thus, we only need to care about the case that $\widetilde{\mathbb{P}}(\widetilde{\mathcal{E}}) \geq \frac{C_{2}}{r^{2+c_{2}/2}}$.
When $\widetilde{\mathcal{F}} \cap \widetilde{\mathcal{E}}$ occurs, $0<\psi_{(1+\frac{\gamma}{2})r,[(1+\frac{\gamma}{2})r]^{b}} \leq L^{2}$. By Lemma \ref{recursive} there exists $z_{1} \in \overline{\Psi}_{(1+\frac{\gamma}{2})r,[(1+\frac{\gamma}{2})r]^{b}}$ and $z_{2} \sim z_{1}$ such that $z_{2} \stackrel{\overline{\mathcal{L}}_{U, [(1+\frac{\gamma}{2})r]^{b}}}{\longleftrightarrow} \partial \mathbb{B}_{\left(1+\gamma\right)r}$. 
Thus, by the union bound 
\begin{align}\label{F cap E}
    \widetilde{\mathbb{P}}(\widetilde{\mathcal{F}} \cap \widetilde{\mathcal{E}}) \leq \widetilde{\mathbb{E}}\left[\sum_{z_{1}\in \overline{\Psi}_{(1+\frac{\gamma}{2})r,[(1+\frac{\gamma}{2})r]^{b}}}\sum_{z_{2}:|z_{2}-z_{1}|\leq 1}\widetilde{\mathbb{P}}(z_{2} \stackrel{\overline{\mathcal{L}}_{U, [(1+\frac{\gamma}{2})r]^{b}}}{\longleftrightarrow} \partial \mathbb{B}_{\left(1+\eta\right)r}\mid \Psi_{(1+\frac{\gamma}{2})r,[(1+\frac{\gamma}{2})r]^{b}})\cdot\mathds{1}_{\widetilde{\mathcal{E}}}\right].
\end{align}
Here we use the fact that $\widetilde{\mathcal{E}}$ is measurable with respect to $\Psi_{(1+\frac{\gamma}{2})r,[(1+\frac{\gamma}{2})r]^{b}}$.
 
As discussed in Remark \ref{Markov}, conditioning on $\Psi_{(1+\frac{\gamma}{2})r,[(1+\frac{\gamma}{2})r]^{b}}= \mathbf{A}$ for any admissible $\mathbf{A}$, the distribution of $\widetilde{\mathcal{L}}_{(1+\frac{\gamma}{2})r,[(1+\frac{\gamma}{2})r]^{b}}$ is the same as $\widetilde{\mathcal{L}}_{(1+\frac{\gamma}{2})r,[(1+\frac{\gamma}{2})r]^{b},\mathbf{A}}$ without conditioning. Also, for any $z_{2}$ in the sum above, 
\begin{center}
$d^{\text{ext}}(z_{2},\partial \mathbb{B}_{(1+\gamma)r}) \geq \frac{\gamma r}{8}$ since $|z_{1}| \leq (1+\frac{\gamma}{2})r + [(1+\frac{\gamma}{2})r]^{b}$. 
\end{center}
Thus, for any admissible $\mathbf{A}$, on $\{\Psi_{(1+\frac{\gamma}{2})r,[(1+\frac{\gamma}{2})r]^{b}} = \mathbf{A}\}$,
\begin{align}\label{exploration}
    \widetilde{\mathbb{P}}(z_{2} \stackrel{\overline{\mathcal{L}}_{U, [(1+\frac{\gamma}{2})r]^{b}}}{\longleftrightarrow} \partial \mathbb{B}_{\left(1+\eta\right)r}\mid \Psi_{(1+\frac{\gamma}{2})r,[(1+\frac{\gamma}{2})r]^{b}}= \mathbf{A}) &= \widetilde{\mathbb{P}}( z_{2} \overset{\overline{\mathcal{L}}_{(1+\frac{\gamma}{2})r,[(1+\frac{\gamma}{2})r]^{b},\mathbf{A}}}{\longleftrightarrow } \partial \mathbb{B}_{(1+\gamma)r}) \nonumber\\ &\leq \widetilde{\mathbb{P}}(z_{2}\leftrightarrow \partial \mathbb{B}_{(1+\gamma)r}) \leq \pi_{1}(\gamma r/8).
\end{align}
Plugging (\ref{exploration}) into (\ref{F cap E}) we  have
\begin{align}
&\widetilde{\mathbb{P}}(\widetilde{\mathcal{F}} \cap\widetilde{\mathcal{E}}) \leq \widetilde{\mathbb{E}}\left[\sum_{z_{1}\in \overline{\Psi}_{(1+\frac{\gamma}{2})r,[(1+\frac{\gamma}{2})r]^{b}}}\sum_{z_{2}:|z_{2}-z_{1}|\leq 1} \pi_{1}(\gamma r/8) \mathds{1}_{\widetilde{\mathcal{E}}} \right] \nonumber\\ &\leq \widetilde{\mathbb{E}}\left[(2d+1) \psi_{(1+\frac{\gamma}{2})r,[(1+\frac{\gamma}{2})r]^{b}}\pi_{1}(\gamma r/8) \mathds{1}_{\widetilde{\mathcal{E}}}\right] \leq C\e^{\frac{3}{5}} r^{2} \frac{C}{\gamma^{2}r^{2}}\widetilde{\mathbb{P}}(\widetilde{\mathcal{E}}) \leq C\e^{\frac{3}{5}}\widetilde{\mathbb{P}}(\widetilde{\mathcal{E}}),
\end{align}
where we use the fact that under $\widetilde{\mathcal{E}}$, $\psi_{(1+\frac{\gamma}{2})r,[(1+\frac{\gamma}{2})r]^{b}} \leq L^{2} =\e^{\frac{3}{5}}r^{2}$.
This shows that 
\begin{align}\label{F mid E}
\widetilde{\mathbb{P}}(\widetilde{\mathcal{F}} \mid \widetilde{\mathcal{E}}) \leq C \e^{\frac{3}{5}}.
\end{align}
Notice that $\widetilde{\mathcal{F}}\subseteq \widetilde{\mathcal{C}}$. By Lemma \ref{lem 4.1} 
\begin{align}\label{C-F}
&\widetilde{\mathbb{P}}(\widetilde{\mathcal{C}}\cap \widetilde{\mathcal{E}}) - \widetilde{\mathbb{P}}(\widetilde{\mathcal{F}}\cap \widetilde{\mathcal{E}}) = 
\widetilde{\mathbb{P}}((\widetilde{\mathcal{C}} \setminus \widetilde{\mathcal{F}})\cap \widetilde{\mathcal{E}}) \leq \widetilde{\mathbb{P}}(\widetilde{\mathcal{C}} \setminus \widetilde{\mathcal{F}}) \nonumber\\ &= \widetilde{\mathbb{P}}(0\leftrightarrow \partial \mathbb{B}_{(1+\gamma)r}) - \widetilde{\mathbb{P}}(0\stackrel{\overline{\mathcal{L}}([(1+\gamma/2)r]^{b})}{\longleftrightarrow} \partial \mathbb{B}_{\left(1+\gamma\right)r})
\leq \frac{C_{2}}{r^{2+c_{2}}},
\end{align}
Since we assume $\widetilde{\mathbb{P}}(\widetilde{\mathcal{E}}) \geq \frac{C_{2}}{r^{2+c_{2}/2}}$, for large enough $r$ and small enough $\e$, using (\ref{F mid E}) and (\ref{C-F}),
\begin{align}\label{4.7}
&\widetilde{\mathbb{P}}(\widetilde{\mathcal{C}} \mid \widetilde{\mathcal{E}}) =  \widetilde{\mathbb{P}}(\widetilde{\mathcal{F}}\mid \widetilde{\mathcal{E}}) + \widetilde{\mathbb{P}}(\widetilde{\mathcal{C}} \mid \widetilde{\mathcal{E}}) - \widetilde{\mathbb{P}}(\widetilde{\mathcal{F}}\mid \widetilde{\mathcal{E}})\nonumber\\ 
&= \widetilde{\mathbb{P}}(\widetilde{\mathcal{F}}\mid\widetilde{\mathcal{E}})+ \frac{\widetilde{\mathbb{P}}(\widetilde{\mathcal{C}}\cap \widetilde{\mathcal{E}}) - \widetilde{\mathbb{P}}(\widetilde{\mathcal{F}}\cap \widetilde{\mathcal{E}})}{\widetilde{\mathbb{P}}({\widetilde{\mathcal{E}}})} \leq C\e^{\frac{3}{5}} + \frac{C}{r^{c_{2}/2}} \leq \delta,
\end{align}
Using (\ref{4.6}) and (\ref{4.7}) we have 
\begin{align*}
\widetilde{\mathbb{P}}(\widetilde{\mathcal{E}}^{c} \mid \widetilde{\mathcal{D}}) \geq \widetilde{\mathbb{P}}(\widetilde{\mathcal{C}}\mid \widetilde{\mathcal{D}}) - \widetilde{\mathbb{P}}(\widetilde{\mathcal{E}}\cap \widetilde{\mathcal{C}} \mid \widetilde{\mathcal{D}}) &=
\widetilde{\mathbb{P}}(\widetilde{\mathcal{C}}\mid \widetilde{\mathcal{D}}) - \widetilde{\mathbb{P}}(\widetilde{\mathcal{C}} \mid \widetilde{\mathcal{E}})\widetilde{\mathbb{P}}(\widetilde{\mathcal{E}}\mid \widetilde{\mathcal{D}}) \\ &\geq 
1-\delta -\delta \cdot 1 = 1-2\delta,
\end{align*}
where for the second line we use the fact that $\widetilde{\mathcal{E}} \subseteq \widetilde{\mathcal{D}}$. Therefore, the claim (\ref{43}) is true.

Overall, since $\psi^{*}_{(1+\gamma)r} \geq \psi_{(1+\gamma)r,[(1+\gamma)r]^{b}}$, we have $\widetilde{\mathcal{E}}^{c} \cap \widetilde{\mathcal{D}}\subseteq \widetilde{\mathcal{B}}^{c} \cap \widetilde{\mathcal{D}}$. Therefore, by (\ref{43})
\begin{align}\label{40}
\widetilde{\mathbb{P}}(\widetilde{\mathcal{B}}^{c}\mid \widetilde{\mathcal{D}}) \geq \widetilde{\mathbb{P}}(\widetilde{\mathcal{E}}^{c}\mid \widetilde{\mathcal{D}})\geq 1-2\delta.
\end{align}
By Lemma \ref{lem 4.2} and the assumption that $r$ is regular, we know 
\begin{align*}
\widetilde{\mathbb{P}}(\widetilde{\mathcal{B}}^{c} \cap \widetilde{\mathcal{A}}\mid \widetilde{\mathcal{D}}) = \widetilde{\mathbb{P}}(\widetilde{\mathcal{B}}^{c} \cap \widetilde{\mathcal{A}}\mid 0\leftrightarrow \partial \mathbb{B}_{r})\cdot \frac{\pi_{1}(r)}{\pi_{1}((1+\gamma)r)}\leq \frac{1-c_{2}}{1-\delta},
\end{align*}
and then 
\begin{align*}
\widetilde{\mathbb{P}}(\widetilde{\mathcal{A}}^{c}\mid \widetilde{\mathcal{D}}) =
1 - \widetilde{\mathbb{P}}(\widetilde{\mathcal{A}}\mid \widetilde{\mathcal{D}}) &\geq 
1- \widetilde{\mathbb{P}}(\widetilde{\mathcal{B}}^{c} \cap \widetilde{\mathcal{A}} \mid \widetilde{\mathcal{D}}) - \widetilde{\mathbb{P}}(\widetilde{\mathcal{B}} \mid \widetilde{\mathcal{D}}) \\ &\geq 
1- \frac{1-c_{2}}{1-\delta} -2 \delta  \geq \frac{c_{2}}{2}.
\end{align*}
If we define $\widetilde{\mathcal{G}} := \{0 \leftrightarrow \partial \mathbb{B}_{\theta r}\}$ for some $\theta >1$, then 
\begin{align*}
\widetilde{\mathbb{P}}(\widetilde{\mathcal{G}}\mid \widetilde{\mathcal{D}}) = \frac{\pi_{1}\left(\theta r\right)}{\pi_{1}\left(r\right)} \leq \frac{C_{3}}{c_{3}\theta^{2}}.
\end{align*}
Finally, since on $\widetilde{\mathcal{G}}^{c}$, $|\widetilde{\mathcal{C}}_{\mathbb{B}_{\theta r}}(0) \cap \mathbb{B}_{2r}| = |\widetilde{\mathcal{C}}(0) \cap \mathbb{B}_{2r}|$ and 
$\chi_{\left(1+\frac{\gamma}{2}\right)r} \leq |\widetilde{\mathcal{C}}(0) \cap \mathbb{B}_{2r}|$, we have 
\begin{align}\label{4.9}
\widetilde{\mathbb{P}}(|\widetilde{\mathcal{C}}_{\mathbb{B}_{\theta r}}(0) \cap \mathbb{B}_{2r}| \geq C_{1}\e^{\frac{6}{5}} r^{4} \mid \widetilde{\mathcal{D}}) &\geq 
\widetilde{\mathbb{P}}(\{|\widetilde{\mathcal{C}}(0) \cap \mathbb{B}_{2r}| \geq C_{1}\e^{\frac{6}{5}} r^{4}\} \cap \widetilde{\mathcal{G}}^{c}\mid \widetilde{\mathcal{D}})  \nonumber\\ &\geq \widetilde{\mathbb{P}}(\widetilde{\mathcal{A}}^{c} \cap \widetilde{\mathcal{G}}^{c}\mid \widetilde{\mathcal{D}}) \nonumber\\ &\geq 
\widetilde{\mathbb{P}}(\widetilde{\mathcal{A}}^{c}\mid \widetilde{\mathcal{D}}) - \widetilde{\mathbb{P}}(\mathcal{G}\mid \widetilde{\mathcal{D}}) \geq 
\frac{c_{2}}{2} - \frac{C_{3}}{c_{3}\theta^{2}}.
\end{align}  
Choosing $\theta$ large enough such that $\frac{C_{3}}{c_{3}\theta^{2}} \leq \frac{c_{2}}{4}$. By (\ref{4.9}) we have 

\begin{align*}
&\sum_{y\in B_{2r}} \widetilde{\mathbb{P}}(0 \stackrel{\mathbb{B}_{\theta r}}{\longleftrightarrow} y) = \widetilde{\mathbb{E}}\left[|\widetilde{\mathcal{C}}_{\mathbb{B}_{\theta r}}(0) \cap \mathbb{B}_{2r}|\right]\geq  c_{2} \e^{\frac{6}{5}} r^{4} \widetilde{\mathbb{P}}(|\widetilde{\mathcal{C}}_{\mathbb{B}_{\theta r}}(0) \cap \mathbb{B}_{2r}| \geq C_{1}\e^{\frac{6}{5}} r^{4} \mid \widetilde{\mathcal{D}}) \widetilde{\mathbb{P}}(\widetilde{\mathcal{D}})  \\ 
& \geq c_{2} \e^{\frac{6}{5}} r^{4} \widetilde{\mathbb{P}}(|\widetilde{\mathcal{C}}_{\mathbb{B}_{\theta r}}(0) \cap \mathbb{B}_{2r}| \geq C_{1}\e^{\frac{6}{5}} r^{4} \mid \widetilde{\mathcal{D}}) \widetilde{\mathbb{P}}(0\leftrightarrow \partial \mathbb{B}_{2r}) \\ 
&\geq 
\frac{4c_{3}}{r^{2}}c_{2}\e^{\frac{6}{5}}r^{4}(\frac{c_{2}}{2} - \frac{C_{3}}{c_{3}\theta^{2}}) \geq 
c_{3}c_{2}^{2}\e^{\frac{6}{5}}r^{2}.
\end{align*}
Thus choosing $\beta = \theta$ proves the statement in Theorem \ref{lem 4.4} when $r$ is regular.

Next we show why this indeed suffices. We will show this by establishing the existence of a regular $r_{0}$ in $[r,2r]$ for any $r>0$. This finishes the proof since 
\begin{align*}
    \sum_{y\in \mathbb{B}_{2r}} \mathbb{P}(0 \stackrel{\mathbb{B}_{\beta r}}{\longleftrightarrow} y) \geq 
    \sum_{y\in \mathbb{B}_{2r_{0}}} \mathbb{P}(0 \stackrel{\mathbb{B}_{\beta r_{0}}}{\longleftrightarrow} y) \geq cr_{0}^{2} \geq \frac{c}{4}r^{2}.
    \end{align*}
To find the regular $r_{0}$ as promised, define  $\sigma\left(j\right) := \log \frac{\pi\left(\left(1+\gamma\right)^{j}r/2\right)}{\pi\left(\left(1+\gamma\right)^{j-1}r/2\right)}$ for $1\leq j \leq \lfloor \frac{\log 2}{\log 1+\gamma}\rfloor$, then 
\begin{align*}
\sum_{j =1}^{\lfloor \frac{\log 2}{\log 1+\gamma}\rfloor} \sigma_{j} = \log \prod_{j =1}^{\lfloor \frac{\log 2}{\log 1+\gamma}\rfloor}\frac{\pi(\left(1+\gamma\right)^{j}r/2)}{\pi(\left(1+\gamma\right)^{j-1}r/2)} \geq \log \frac{\pi\left(r\right)}{\pi\left(r/2\right)} \geq \log \frac{c_{1}}{4C_{1}}.
\end{align*}
Since $\sigma(j) <0$ for any $j>0$, by pigeonhole principle, there exists $r_{0} \in \left(r/2,r\right)$ such that $\log \frac{\pi\left(\left(1+\gamma\right)r_{0}\right)}{\pi\left(r_{0}\right)} \geq \log \frac{c_{1}}{4C_{1}} / \frac{\log 2}{\log 1+\gamma} \geq \log \left(1-\delta\right)$. This verifies that $r_{0}$ is regular. 
\end{proof}

\section{Tail bounds for chemical distance}\label{tail}

In this section we prove (\ref{7}), i.e. the upper tail bound for the point to boundary case, assuming \eqref{6}. The idea was already introduced in the proof of Theorem \ref{lem 4.4}. The main challenge is the absence of upper bound for $\widetilde{\E}[d(0,\partial \B_{r})\mid 0\leftrightarrow \partial \B_{r}]$. To address this issue, similar to the approach in the proof of Theorem \ref{lem 4.4}, first we establish the tail bound under an assumption of the regularity for the one-arm probability. Then we use an averaging argument to show that the one-arm probability satisfies the required regularity assumption for a constant fraction of $r \in [N,2N]$.

\begin{proof}[Proof of \eqref{7} assuming \eqref{6}]
    For any $\delta>0$, choose a constant $\gamma(\delta)>0$ such that 
    \begin{align}\label{continuity}
        \log\frac{c_{1}}{4C_{1}} \geq (\frac{\log 2}{\log 1+\gamma}-1)\log(1-\delta/2),
    \end{align}
    where $c_{1}$ and $C_{1}$ are constants such that 
    \begin{align*}
        c_{1}r^{-2} \leq \pi(r)\leq C_{1}r^{-2}.
    \end{align*}
    We first show that the tail bound in \eqref{7} is true when $r$ satisfies 
    \begin{align}\label{regular}
    \pi((1+\gamma)r)/\pi(r) \geq 1-\delta/2.
    \end{align}
    Indeed, for any $M>0$, by (6) and Markov inequality, we know that there exists constant $C^{\prime}(d,\gamma)$ such that 
    \begin{align*}
        \widetilde{\P}(d(0,\partial \mathbb{B}_{r})\geq Mr^{2}\mid 0\leftrightarrow \partial \mathbb{B}_{(1+\gamma)r}) \leq \frac{\mathbb{E}[d(0,\partial\mathbb{B}_{r})\mid 0\leftrightarrow\partial \mathbb{B}_{(1+\gamma)r}]}{Mr^{2}} \leq \frac{C^{\prime}}{M}.
    \end{align*}
    Thus, we can find $M=M(d, \gamma)$ such that 
    \begin{align*}
        \widetilde{\P}(d(0,\partial \mathbb{B}_{r})\geq Mr^{2}\mid 0\leftrightarrow \partial \mathbb{B}_{(1+\gamma)r}) \leq \frac{C^{\prime}}{M} \leq \delta/2.
    \end{align*}
    By (\ref{regular}) we have that 
    \begin{align*}
        &\widetilde{\P}(d(0,\partial \mathbb{B}_{r})\geq Mr^{2}\mid 0\leftrightarrow \partial \mathbb{B}_{r}) \\ &\leq \widetilde{\P}(d(0,\partial \mathbb{B}_{r})\geq Mr^{2}\mid 0\leftrightarrow \partial \mathbb{B}_{(1+\gamma)r})\widetilde{\P}(0\leftrightarrow \partial\mathbb{B}_{(1+\gamma)r}\mid 0\leftrightarrow \partial \mathbb{B}_{r}) + \widetilde{\P}(0\nleftrightarrow \partial \mathbb{B}_{(1+\gamma)r}\mid 0\leftrightarrow\partial \mathbb{B}_{r}) \\ &\leq \delta/2 \cdot 1 + 1- \frac{\pi_{1}((1+\gamma)r)}{\pi_{1}(r)}\overset{(\ref{regular})}{\leq} \delta/2+\delta/2 \leq \delta.
        \end{align*}
        Since $\gamma$ only depends on $\delta$, $M=M(d,\delta)$ only depends on $d$ and $\delta$. Thus, we get the upper tail bound for $r$ satisfying (\ref{regular}). 
        
        Next, we show that there are at least a constant fraction of $r$s in $[N,2N]$ satisfying (\ref{regular}). For any $N \in \mathbb{N}$ and $1\leq i\leq \lfloor \frac{\log 2}{\log (1+\gamma)}\rfloor$, define 
        \begin{align*}
            \Xi_{i}:= \log \frac{\pi((1+\gamma)^{i-1}N)}{\pi((1+\gamma)^{i}N)}.
        \end{align*}
        Then we know that 
        \begin{align*}
            \sum_{i=1}^{\lfloor \frac{\log 2}{\log (1+\gamma)}\rfloor} \Xi_{i} = \sum_{i=1}^{\lfloor \frac{\log 2}{\log (1+\gamma)}\rfloor}\log \frac{\pi((1+\gamma)^{i-1}N)}{\pi((1+\gamma)^{i}N)}  \leq \log \frac{\pi(N)}{\pi(2N)} \leq \log \frac{4C_{1}}{c_{1}}.
        \end{align*}
        Then by pigeonhole principle, there must exist $1\leq i_{0}\leq  \lfloor \frac{\log 2}{\log (1+\gamma)}\rfloor$ such that 
        \begin{align*}
        \Xi_{i_{0}} \leq \frac{\log 1+\gamma}{\log 2} \cdot \log\frac{4C_{1}}{c_{1}}\overset{(\ref{continuity})}{\leq} \log \frac{1}{1-\delta/2}.
        \end{align*}
        In particular, $r_{0}= (1+\gamma)^{i_{0}-1} N$ satisfies (\ref{regular}).
        Furthermore, for any $1\leq j \leq \gamma r$, if we define
        \begin{align*}
            \Xi^{j}_{i} = \log\frac{\pi((1+\gamma)^{i-1}(N+j))}{\pi((1+\gamma)^{i}(N+j))}, \; 1\leq i \leq \lfloor \frac{\log 2}{\log (1+\gamma)}\rfloor-1
        \end{align*}
        then by the exact same argument and (\ref{continuity}), we can find that there exists $1\leq i_{j} \leq \lfloor \frac{\log 2}{\log (1+\gamma)}\rfloor-1$ such that 
        \begin{align*}
            \pi((1+\gamma)r_{j})/\pi((1+\gamma)r_{j}) \geq 1-\delta/2,\; r_{j}:= (1+\gamma)^{i_{j}-1} (N+j).
        \end{align*}
        This implies that 
        \begin{align*}
            \# \{ N\leq r\leq 2N: \widetilde{\P}(d(0,\partial \mathbb{B}_{r})\geq Mr^{2}\mid 0\leftrightarrow \partial \mathbb{B}_{r})  \leq \delta\} \geq \gamma N,
        \end{align*}
        which finishes the proof.
\end{proof}

\section{Comparison of intrinsic and extrinsic metrics}\label{compare}
In this section we finish the proof of Theorem \ref{thm 1.3}.
Since in Section \ref{tail}, we have already proven \eqref{7} assuming that \eqref{6} holds, this section will focus on the proofs of \eqref{1.4}, \eqref{6} and \eqref{1.5}, which complete the proof of Theorem \ref{thm 1.3}.
As discussed in Section \ref{iop}, the key idea driving the results in this section involves comparing a geodesic which could have a rather complicated journey through loops, to a simplified path which passes through a sequence of distinct loops in order. However, we begin by introducing some language some of which will be imported from \cite[Section 3]{ganguly2024ant}. Recall the definition of glued loops from Section \ref{loopsoup1234}.

\subsection{Glued loop sequences and simple paths}\label{loopchain} Recalling the notion of glued loops from Section \ref{loopsoup1234}, we start with a fundamental definition.

\begin{definition}
Let $\ell: [0,T]\rightarrow \widetilde{\Z}^{d}$ be a path on $\widetilde{\Z}^{d}$. A sequence of glued loops $\overline{L}=(\overline{\Gamma}_{1},\overline{\Gamma}_{2},\cdots, \overline{\Gamma}_{k})$ is called a \textit{glued loop sequence of $\ell$} if there exist $0=t_{0}<t_{1}<\cdots<t_{k}=T$ such that for any $i=1,2,\cdots,k$,
\begin{align*}
    \ell([t_{i-1},t_{i}]) \text{ is a subset of $\overline{\Gamma}_{i}$.}
\end{align*}
In addition, for $A,B\subseteq \Z^{d}$, a sequence of glued loops $\overline{L}$ is called a \textit{glued loop sequence from $A$ to $B$} if there exists a path $\ell$ from $A$ to $B$ such that $\overline{L}$ is a glued loop sequence for $\ell$. We denote $\textsf{Set}(\overline{L})$ by the collection of glued loops in the sequence $\overline{L}$.
\end{definition}
To prevent confusion, we note that all paths dealt with in the paper will be lattice paths, i.e. sequence of adjacent lattice edges.
The next thing is to introduce the notion of a ``simple chain" and a ``simple path". 

\begin{definition}\label{nonbacktracking def}
    We say that a sequence of glued loops $\overline{L}= (\overline{\Gamma}_{1},\cdots, \overline{\Gamma}_{k})$ is a \textit{simple chain} or simple glued loop sequence if all the glued loops $\overline{\Gamma}_{i}$s are distinct. Moreover, a self-avoiding path $\ell$ is called a \textit{$\overline{\mathscr{S}}$-simple path} for a collection of glued loops $\overline{\mathscr{S}}\subseteq \overline{\mathcal{L}}$ if there exists a simple chain $\overline{L}$ for $\ell$, which only uses glued loops in $\overline{\mathscr{S}}$. For brevity, we will refer to a $\overline{\mathcal{L}}$-simple path as a simple path.
\end{definition}

Next, we introduce the definition of a geodesic and analogously a simple geodesic. 

\begin{definition}[Geodesic, simple geodesic]
    Let $A,B\subseteq \Z^{d}$ and $\ell$ be a path on $\widetilde{\Z}^{d}$. Let $\overline{\mathscr{S}}$ be a collection of glued loops in $\overline{\mathcal{L}}$. 
    \begin{enumerate}
        \item $\ell$ is called a \textit{$\overline{\mathscr{S}}$-path from $A$ to $B$} if it there exists a glued loop sequence of $\ell$ from $A$ to $B$ only using glued loops in $\overline{\mathscr{S}}$.
        \item An $\overline{\mathscr{S}}$-path $\ell$ is called an \textit{$\overline{\mathscr{S}}$-geodesic from $A$ to $B$} if it is the shortest path among all $\overline{\mathscr{S}}$-paths from $A$ to $B$.
        \item An $\overline{\mathscr{S}}$-simple path $\ell$ is called a \textit{$\overline{\mathscr{S}}$-simple geodesic from $A$ to $B$} if it is the shortest path among all $\overline{\mathscr{S}}$-simple paths from $A$ to $B$.
    \end{enumerate}
In the special case $\overline{\mathscr{S}}=\overline{\mathcal{L}},$ for brevity, 
    we will refer to $\overline{\mathcal{L}}$-geodesics and $\overline{\mathcal{L}}$-simple geodesics as geodesics and simple geodesics respectively.
\end{definition}
We next record the following lemma from \cite{ganguly2024ant}, which allows us to modify a path to a simple path. 
\begin{lemma}[(49) in \cite{ganguly2024ant}]\label{simlpe chain lemma}
    For $A,B\subseteq \Z^{d}$ and any glued loop sequence $\overline{L}$ from $A$ to $B$, 
    \begin{align}\label{simple chain}
        \text{$\exists$ a simple chain of glued loops, only using the glued loops in $\textsf{Set}(\overline{L})$, from $A$ to $B$.}
    \end{align}
\end{lemma}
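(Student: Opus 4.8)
The plan is to run a ``loop-erasure'' argument at the level of glued loop sequences. Let $\mathcal{M}$ denote the collection of glued loop sequences from $A$ to $B$ that use only glued loops in $\mathsf{Set}(\overline{L})$; this collection is nonempty because $\overline{L}$ itself lies in it. I would pick $\overline{M}=(\overline{\Gamma}_{1},\dots,\overline{\Gamma}_{m})\in\mathcal{M}$ of \emph{minimal} length $m$, together with a realizing path $\ell\colon[0,T]\to\widetilde{\Z}^{d}$ from $A$ to $B$ and breakpoints $0=t_{0}<t_{1}<\dots<t_{m}=T$ with $\ell([t_{i-1},t_{i}])\subseteq\overline{\Gamma}_{i}$ for each $i$, and then show that minimality forces all the $\overline{\Gamma}_{i}$ to be distinct, i.e. that $\overline{M}$ is already a simple chain.

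The one structural input needed is that every glued loop appearing in $\overline{\mathcal{L}}$ is path-connected. For a fundamental-type glued loop $\overline{\Gamma}=\overline{\gamma}_{\mathsf{VRange}(\Gamma)}$ this follows from its description: $\overline{\Gamma}$ is the union of the ranges of all fundamental loops whose vertex set is contained in $\mathsf{VRange}(\Gamma)$, each such range is a continuous image of an interval and meets a vertex of $\mathsf{VRange}(\Gamma)\subseteq\mathsf{Range}(\Gamma)$, so the whole union is path-connected, being glued together along the connected ``core'' $\mathsf{Range}(\Gamma)$ (the same reasoning handles point-type glued loops, all ranges containing the common base point). Granting this, suppose towards a contradiction that $\overline{\Gamma}_{i}=\overline{\Gamma}_{j}=:\overline{\Gamma}$ for some $i<j$. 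Both $\ell(t_{i-1})$ and $\ell(t_{j})$ lie in $\overline{\Gamma}$, since they are endpoints of the $i$-th and $j$-th segments, so there is a path $\ell''$ contained in $\overline{\Gamma}$ from $\ell(t_{i-1})$ to $\ell(t_{j})$. Concatenating $\ell|_{[0,t_{i-1}]}$, then $\ell''$, then $\ell|_{[t_{j},T]}$ produces a path $\ell'$ from $A$ to $B$ whose successive segments lie in $\overline{\Gamma}_{1},\dots,\overline{\Gamma}_{i-1},\overline{\Gamma},\overline{\Gamma}_{j+1},\dots,\overline{\Gamma}_{m}$; this exhibits a glued loop sequence in $\mathcal{M}$ of length $m-(j-i)<m$, contradicting the minimality of $m$. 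Hence the $\overline{\Gamma}_{i}$ are all distinct, so $\overline{M}$ is a simple chain from $A$ to $B$ that uses only glued loops in $\mathsf{Set}(\overline{L})$, which is exactly \eqref{simple chain}.

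I expect the genuinely delicate point to be the path-connectedness of an individual glued loop, as this is precisely what licenses replacing the detour of $\ell$ between the two occurrences of $\overline{\Gamma}$ by a single segment inside $\overline{\Gamma}$; the remaining steps are bookkeeping about concatenating paths and reading off the shorter glued loop sequence. The argument can equivalently be phrased as an induction on the length of $\overline{L}$, erasing one repeated glued loop at a time until a simple chain remains.
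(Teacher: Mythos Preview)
The paper does not give its own proof of this lemma; it is imported wholesale from \cite{ganguly2024ant}, so there is no in-paper argument to compare against. Your minimality/loop-erasure strategy is the natural one and is essentially how such statements are proved. One small inaccuracy: under the paper's convention (needed for the independence claim in Section~\ref{loopsoup1234}) the glued loop $\overline{\gamma}_A$ collects ranges of fundamental loops whose vertex set is \emph{exactly} $A$, not merely contained in it. This actually makes your connectedness argument cleaner, since every contributing range then contains all of $A$, so the union is path-connected through any single vertex of $A$.

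The real gap is that you never treat edge-type glued loops, and for these the structural input you isolate as ``the genuinely delicate point'' simply fails. By the description in Section~\ref{loopsoup1234}, $\overline{\gamma}_e$ has the law of the nonzero set of a Brownian bridge on $I_e$, which is almost surely a countable disjoint union of open subintervals of $I_e^\circ$. Hence if $\overline{\Gamma}_i=\overline{\Gamma}_j=\overline{\gamma}_e$ with $\ell(t_{i-1})$ and $\ell(t_j)$ lying in two different components, there is no path $\ell''\subseteq\overline{\Gamma}$ joining them, and your shortcutting step breaks down as written. This is not fatal to the approach, but it does require an additional argument: for instance, first collapse consecutive equal entries (this uses the same path $\ell$, so no connectedness is needed), and then observe that for any remaining repeat of $\overline{\gamma}_e$ the neighbouring entries $\overline{\Gamma}_{i+1}$ and $\overline{\Gamma}_{j-1}$ are necessarily of fundamental or point type (the only way a glued loop other than $\overline{\gamma}_e$ can meet $I_e^\circ$ is via an excursion from an endpoint of $e$), and use their connectedness to reroute through that endpoint. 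Alternatively one can run the whole loop-erasure at the level of connected components of glued loops and then check that no single $\overline{\gamma}_e$ contributes two components to the final chain; either way, the edge-loop case needs to be handled explicitly.
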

This implies that for every path from $A$ to $B$, associated with a glued loop sequence $\overline{L}$, there exists a simple path from $A$ to $B$ that corresponds to a simple chaining using only the glued loops  in $\mathsf{Set}(\overline{L})$; see Figure \ref{chain figure}.

\begin{figure}[h]
    \centering
    \includegraphics[width=0.9\textwidth]{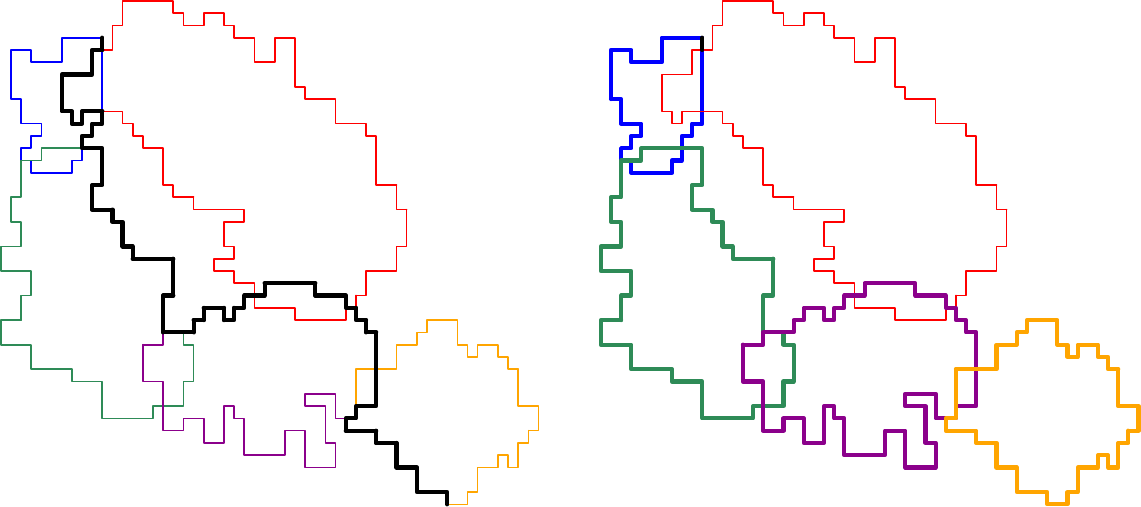}
    \caption{Illustration of the above lemma.  On the left is a path running through various loops, on the right, a subsequence of these loops is highlighted in bold, forming a simple chain.}
    \label{chain figure}
    \end{figure}

\subsection{A geometric lemma}
The next step is to introduce a key geometric lemma which allows us to compare paths and simple paths.

We say a glued loop sequence $\overline{L}$ is \textit{minimal} for a path $\ell$ if for any $\overline{\Gamma}\in \mathsf{Set}(\overline{L})$,
\begin{align}\label{5:48}
    \ell \text{ is not a $\textsf{Set}(\overline{L})\setminus \{\overline{\Gamma}\}$-path}.
\end{align}
The next lemma extracts a minimal loop sequence for any given simple self avoiding walk and bound its size.
\begin{lemma}\label{geometric lemma}
    For $A,B\subseteq \Z^{d}$ and any (self-avoiding) path $\ell$ from $A$ to $B$, there exists a minimal glued loop sequence $\mathsf{Set}(\overline{L})$ for $\ell$.
    Moreover,  \begin{align}\label{5:49}
            3|\ell| \geq |\textsf{Set}(\overline L)|.
        \end{align}
\end{lemma}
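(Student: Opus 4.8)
The plan is to build a minimal glued loop sequence greedily and then bound its cardinality by a local injection argument. First I would take \emph{any} glued loop sequence $\overline{L}_0 = (\overline{\Gamma}_1,\dots,\overline{\Gamma}_m)$ of $\ell$ (one exists because $\ell$ is a path in the cluster, hence lies in a union of glued loops), and then repeatedly discard a glued loop $\overline{\Gamma}$ from $\mathsf{Set}(\overline{L}_0)$ whenever $\ell$ remains a $\mathsf{Set}(\overline{L}_0)\setminus\{\overline{\Gamma}\}$-path; since we are removing elements from a finite set, this terminates, and by construction the resulting sequence $\overline{L}$ is minimal in the sense of \eqref{5:48}. (One subtlety: after discarding $\overline{\Gamma}$, the \emph{sequence} witnessing that $\ell$ is a $\mathsf{Set}(\overline{L})\setminus\{\overline{\Gamma}\}$-path may repeat loops and may be longer; this is fine, since minimality is a statement about $\mathsf{Set}(\overline{L})$, not about a particular ordering. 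I would just need to be slightly careful that ``$\ell$ is a $\mathsf{Set}(\overline{L})$-path'' is what is preserved, and it is.)

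For the quantitative bound \eqref{5:49}, the idea is that minimality forces each glued loop in $\mathsf{Set}(\overline{L})$ to ``do some work'' that no other glued loop does, and this work can be charged to edges of $\ell$. Concretely, fix a parametrization of $\ell$ as a lattice path $v_0, v_1, \dots, v_{|\ell|}$ and, for each $\overline{\Gamma}\in\mathsf{Set}(\overline{L})$, let $W(\overline{\Gamma})$ be the set of (directed) edges $\{v_{j-1},v_j\}$ of $\ell$ that are covered by $\overline{\Gamma}$ but by no other glued loop of $\mathsf{Set}(\overline{L})$ — equivalently, edges that $\overline{\Gamma}$ is ``solely responsible'' for. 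Minimality says precisely that $W(\overline{\Gamma})\neq\emptyset$ for every $\overline{\Gamma}$: if $W(\overline{\Gamma})$ were empty, every edge of $\ell$ would be covered by $\mathsf{Set}(\overline{L})\setminus\{\overline{\Gamma}\}$ and, since a path is determined by covering each of its edges, $\ell$ would still be a $\mathsf{Set}(\overline{L})\setminus\{\overline{\Gamma}\}$-path, contradicting \eqref{5:48}. The sets $W(\overline{\Gamma})$ are pairwise disjoint by definition, so $|\mathsf{Set}(\overline{L})| \le \sum_{\overline{\Gamma}} |W(\overline{\Gamma})| \le |\ell|$, which is in fact stronger than the claimed bound $3|\ell|$.

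The factor of $3$ in the statement suggests the authors work instead with a coarser, but more robust, accounting: rather than ``sole responsibility for an edge,'' one probably wants ``sole responsibility for a vertex'' together with the fact that a single glued loop, being connected, can be forced to appear in the sequence at most a bounded number of times and that the decomposition points $t_0<\dots<t_k$ can be chosen so that each $\overline{\Gamma}_i$ contributes at least one vertex not in $\overline{\Gamma}_{i-1}\cap\overline{\Gamma}_{i+1}$; the constant $3$ would then come from charging each loop to a vertex shared by at most three consecutive segments. Either way, the mechanism is the same: minimality $\Rightarrow$ every retained loop has a private witness (edge or vertex) on $\ell$, private witnesses are (boundedly) disjoint, and $\ell$ has $|\ell|$ edges. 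The main obstacle I anticipate is not the counting but the bookkeeping around the \emph{ordering}: a glued loop can be visited by $\ell$ several times, in several disjoint time-intervals, so one must be careful to define ``covered solely by $\overline{\Gamma}$'' at the level of the \emph{set} $\mathsf{Set}(\overline{L})$ rather than the sequence, and to check that removing a redundant loop genuinely preserves the $\mathsf{Set}(\overline{L})$-path property after re-segmenting. Once that is set up cleanly, both the existence of a minimal sequence and the inequality \eqref{5:49} follow from the disjoint-witness count above.
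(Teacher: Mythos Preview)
Your existence argument (greedy deletion) is fine and matches the paper's. The gap is in the inequality \eqref{5:49}: your ``private edge'' count would yield $|\mathsf{Set}(\overline{L})|\le |\ell|$, and this is actually false. A single lattice edge $e=[x,y]$ of $\ell$ can be covered, in a minimal sequence, by \emph{three} glued loops: the point loop $\overline{\gamma}_x$ reaching partway into $e$ from $x$, the point loop $\overline{\gamma}_y$ reaching partway in from $y$, and the edge loop $\overline{\gamma}_e$ bridging the gap. None of them contains the full edge, none is redundant, and all three have the same single edge as their only possible witness. So the implication ``$W(\overline{\Gamma})=\emptyset \Rightarrow \overline{\Gamma}$ is redundant'' fails: glued loops cover edges only \emph{partially}, and $\overline{\Gamma}$ can be essential on an edge without being solely responsible for it. Your vertex-based alternative doesn't fix this either, since edge loops contain no lattice vertex at all.

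The paper's argument exploits the type decomposition of glued loops. Edge-type glued loops are trivially bounded: there is exactly one glued edge loop $\overline{\gamma}_e$ per lattice edge, so at most $|\ell|$ can appear. For a fundamental or point loop $\overline{\Gamma}$ (which necessarily passes through some lattice vertex), minimality forces that on \emph{some} edge $[x_i,x_{i+1}]$ the loop $\overline{\Gamma}$ is the \emph{unique} element of $\mathsf{Set}(\overline{L})$ through $x_i$ (resp.\ $x_{i+1}$) whose intersection with $[x_i,x_{i+1}]$ is maximal among all retained loops through that endpoint; otherwise, on every edge, $\overline{\Gamma}$'s contribution is dominated from the same side by another retained loop and $\overline{\Gamma}$ could be removed. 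This assigns each fundamental/point loop to an (edge, endpoint) pair, with each pair used at most once, giving at most $2|\ell|$ such loops and hence $3|\ell|$ in total. The correct ``private witness'' is thus not an edge or a vertex but the role of being the maximal loop from one side on some edge; this is precisely where the constant $3$ comes from.
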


\begin{proof}
For any self-avoiding path $\ell=(x_{1},x_{2},\cdots,x_{t})$ from $A$ to $B$, there exists a glued loop sequence $\overline{L}= (\overline{\Gamma}_{1},\cdots,\overline{\Gamma}_{k})$ associated to $\ell$. We claim that there exists another glued sequence $\overline{L}^{\prime}$ of $\ell$ such that $\textsf{Set}(\overline{L}^{\prime}) \subseteq \textsf{Set}(\overline{L})$ and any $\overline{\Gamma}^{\prime} \in \textsf{Set}(\overline{L}^{\prime})$ satisfies (\ref{5:48}).
Indeed, we set $\overline{\mathscr{S}}^{0}:=\textsf{Set}(\overline{L})$. For $1\leq i\leq k$, define $\overline{\mathscr{S}}^{i}:= \overline{\mathscr{S}}^{i-1}\setminus \{\overline{\Gamma}_{i}\}$ 
if $\ell$ is a $\overline{\mathscr{S}}^{i-1}\setminus \{\overline{\Gamma}_{i}\}$-path; otherwise, define $\overline{\mathscr{S}}^{i}:=\overline{\mathscr{S}}^{i-1}$. Then we can get $\overline{\mathscr{S}}^{k} \subseteq \textsf{Set}(\overline{L})$ and there exists a glued loop sequence $\overline{L}^{\prime}$ of $\ell$ only using glued loops in $\overline{\mathscr{S}}^{k}$ satisfying (\ref{5:48}) which for brevity we will continue to refer to as $\overline L.$

To bound the size of $\overline L$ we first notice that the number of edge loops in $\textsf{Set}(\overline L)$ can at most be $|\ell|.$ Thus, we turn our attention to loops $\overline \Gamma \in \overline L $ only of fundamental or point type. 

For any $1\leq i\leq t$, define the following the collection of glued loops:
    \begin{align*}
        &\mathsf{Set}(\overline{L})^{i}:=\{ \overline{\Gamma}\in \textsf{Set}(\overline{L}): x_{i} \in \overline{\Gamma}\}.
    \end{align*}
    Note that for any $i$, the glued loops in $\mathsf{Set}(\overline{L})^{i}$ are of fundamental or point type because they pass through lattice points. Moreover, we claim that for any $\overline{\Gamma} \in \mathsf{Set}(\overline{L})$ of fundamental type or point type, there exists $1\leq i\leq t$ such that at least one of the following two conditions happen:
    \begin{enumerate}
        \item $\overline{\Gamma}$ is the unique glued loop in $\mathsf{Set}(\overline{L})^{i}$ such that 
        \begin{align}\label{optimizer 1}
            \overline{\Gamma}^{\prime}\cap [x_{i},x_{i+1}] \subseteq \overline{\Gamma}\cap [x_{i},x_{i+1}],\; \text{for any $\overline{\Gamma}^{\prime}\in \textsf{Set}(\overline{L})^{i}$.}
        \end{align}

        \item $\overline{\Gamma}$ is the unique glued loop in $\mathsf{Set}(\overline{L})^{i+1}$ such that 
        \begin{align}\label{optimizer 2}
            \overline{\Gamma}^{\prime}\cap [x_{i},x_{i+1}] \subseteq \overline{\Gamma}\cap [x_{i},x_{i+1}],\; \text{for any $\overline{\Gamma}^{\prime}\in \textsf{Set}(\overline{L})^{i+1}$.}
        \end{align}
    \end{enumerate}
    Otherwise, if $\overline{\Gamma}$ is neither the unique glued loop satisfying \eqref{optimizer 1} nor \eqref{optimizer 2}for any $1\leq i\leq t$, then $\ell$ remains a $\mathsf{Set}(\overline{\Gamma}) \setminus \{\overline{\Gamma}\}$-path after removing $\overline{\Gamma}$ from $\mathsf{Set}(\overline{L})$. This is because in this case, for any edge $[x_{i},x_{i+1}] \in \ell$, there exists another glued loop $\overline{\Gamma}^{\prime} \in \mathsf{Set}(\overline{\Gamma}) \setminus \{\overline{\Gamma}\}$ such that 
    \begin{align*}
        \overline{\Gamma}\cap [x_{i},x_{i+1}] \subseteq \overline{\Gamma}^{\prime}\cap [x_{i},x_{i+1}].
    \end{align*}
    Therefore, for every glued loop of fundamental type or point type in $\overline{\Gamma} \in \mathsf{Set}(\overline{L})$, we can assign it to the edge $[x_{i},x_{i+1}]$ for which $\overline{\Gamma}$ is either the unique glued loop satisfying \eqref{optimizer 1} or the unique glued loop satisfying \eqref{optimizer 2}. If there are multiple edges that meet this requirement, we choose the first one appearing in $\ell$. Then, by the uniqueness constraint, for every edge $e \in \ell$, there are at most two glued loops assigned to $e$. Hence, we have 
    \begin{align*}
        |\{\overline{\Gamma}\in \mathsf{Set}(\overline{L}): \text{ $\overline{\Gamma}$ is of fundamental type or point type}\}| \leq 2|\ell|.
    \end{align*} 
\end{proof}

We are now prepared to dive into the proof of Theorem \ref{thm 1.3}. We start with the straightforward proof of the upper bound in (\ref{1.4}).

\subsection{Proof of the upper bound in (\ref{1.4})} 
If $0\leftrightarrow x$, then there exists a glued loop sequence $\overline{L} = (\overline{\Gamma}_{1},\cdots,\overline{\Gamma}_{k})$ from 0 to $x$, then by Lemma \ref{simlpe chain lemma}, we know that there exists a simple path $\ell$ from 0 to $x$, associated with a simple chain. Also, if $y\in \Z^{d}$ is on the simple path $\ell$, then the following event occurs. 
\begin{center}
    There exists a glued loop $\overline{\Gamma}\in \overline{\mathcal{L}}$ such that $y \in \overline{\Gamma}$ and $0 \stackrel{ \overline{\mathcal{L}} \setminus \{\overline{\Gamma}\}}{\longleftrightarrow} \overline{\Gamma} \circ \overline{\Gamma} \overset{\overline{\mathcal{L}} \setminus \{\overline{\Gamma}\}}{\longleftrightarrow }x.$
\end{center}
Note that this glued loop can not be of edge type since $y$ is a lattice point. We call this event $\mathcal{T}_{x}(y)$. Thus, we have 
\begin{align*}
\widetilde{\mathbb{E}}[d\left(0,x\right) \mathds{1}_{0 \leftrightarrow  x}] \leq 
\sum_{y \in \mathbb{Z}^{d}}\widetilde{\mathbb{P}}(\mathcal{T}_{x}(y)).
\end{align*} Moreover, when $\mathcal{T}_{x}(y)$ occurs, there exists $z_{1},z_{2} \in \mathbb{Z}^{d}$ and a discrete loop $\Gamma$ with $y\in \Gamma$ ($\Gamma$ can be taken to be $\mathsf{Dis}(\overline{\Gamma})$ for the glued loop $\overline{\Gamma}$ in the definition of $\mathcal{T}_{x}(y)$) and $z_{1},z_{2} \sim \Gamma$  such that $\Gamma \in \mathcal{L}$ and $0\overset{\overline{\mathcal{L}} \setminus \{\overline{\Gamma}\}}{\longleftrightarrow} z_{1} \circ z_{2} \overset{\overline{\mathcal{L}} \setminus \{\overline{\Gamma}\}}{\longleftrightarrow} x$. Thus, by a union bound we have, 
\begin{align*}
\sum_{y \in \mathbb{Z}^{d}}\widetilde{\mathbb{P}}(\mathcal{T}_{x}(y)) &\leq 
\sum_{y\in \mathbb{Z}^{d}}\sum_{\Gamma \ni y}\sum_{\substack{z_{1} \in \mathbb{Z}^{d} \\ z_{1} \sim \Gamma}} \sum_{\substack{z_{2} \in \mathbb{Z}^{d} \\ z_{2} \sim \Gamma}}\widetilde{\mathbb{P}}(\Gamma \in \mathcal{L},\ 0\overset{\overline{\mathcal{L}} \setminus \{\overline{\Gamma}\}}{\longleftrightarrow} z_{1} \circ z_{2} \overset{\overline{\mathcal{L}} \setminus \{\overline{\Gamma}\}}{\longleftrightarrow} x)  \\ &\leq
\sum_{z_{1} \in \mathbb{Z}^{d} } \sum_{z_{2} \in \mathbb{Z}^{d}}\sum_{\Gamma : z_{1},z_{2} \sim \Gamma}\sum_{y\in \Gamma}\widetilde{\mathbb{P}}(\Gamma \in \mathcal{L},\ 0\overset{\overline{\mathcal{L}} \setminus \{\overline{\Gamma}\}}{\longleftrightarrow} z_{1} \circ z_{2} \overset{\overline{\mathcal{L}} \setminus \{\overline{\Gamma}\}}{\longleftrightarrow} x) \\ &\overset{\text{(BKR)}}{\leq} 
\sum_{z_{1} \in \mathbb{Z}^{d} } \sum_{z_{2} \in \mathbb{Z}^{d}}\sum_{\Gamma : z_{1},z_{2} \sim \Gamma}\sum_{y\in \Gamma}\widetilde{\mathbb{P}}(\Gamma \in \mathcal{L})\widetilde{\mathbb{P}}( 0\overset{\overline{\mathcal{L}} \setminus \{\overline{\Gamma}\}}{\longleftrightarrow} z_{1})\widetilde{\mathbb{P}}(z_{2} \overset{\overline{\mathcal{L}} \setminus \{\overline{\Gamma}\}}{\longleftrightarrow} x) \\ &\leq \sum_{z_{1} \in \mathbb{Z}^{d} } \sum_{z_{2} \in \mathbb{Z}^{d}}\sum_{\Gamma : z_{1},z_{2} \sim \Gamma}\sum_{y\in \Gamma}\widetilde{\mathbb{P}}(\Gamma \in \mathcal{L})\widetilde{\mathbb{P}}( 0 \leftrightarrow z_{1})\widetilde{\mathbb{P}}(z_{2} \leftrightarrow x)
\\ & \leq \sum_{z_{1} \in \mathbb{Z}^{d} } \sum_{z_{2} \in \mathbb{Z}^{d}}\sum_{\Gamma : z_{1},z_{2} \sim \Gamma}|\Gamma|\widetilde{\mathbb{P}}(\Gamma \in \mathcal{L})\widetilde{\mathbb{P}}( 0 \leftrightarrow z_{1})\widetilde{\mathbb{P}}(z_{2} \leftrightarrow x).
\end{align*}
By Lemma \ref{loop tree estimate} we know for any $d>6$
\begin{align*}
    \sum_{z_{1} \in \mathbb{Z}^{d} } \sum_{z_{2} \in \mathbb{Z}^{d}}\sum_{\Gamma : z_{1},z_{2} \sim \Gamma}|\Gamma|\widetilde{\mathbb{P}}(\Gamma \in \mathcal{L})\widetilde{\mathbb{P}}( 0 \leftrightarrow z_{1})\widetilde{\mathbb{P}}(z_{2} \leftrightarrow x) \leq C |x|^{4-d}.
\end{align*}
 Thus, we have 
 \begin{align*}
    \widetilde{\mathbb{E}}[d(0,x)\mathds{1}_{0\leftrightarrow x}] \leq \sum_{y\in \mathbb{Z}^{d}}\widetilde{\mathbb{P}}(\mathcal{T}_{x}(y))\leq C|x|^{4-d}.
 \end{align*}
This along with the lower bound of the two point function \eqref{two point} finishes the proof.
\qed

\subsection{Proof of the lower bound in (\ref{1.4})}
The proof involves a few parts. First, we study the geometry of simple geodesics. 
Once we obtain the estimate for a simple geodesic, we will apply Lemma \ref{simlpe chain lemma} and Lemma \ref{geometric lemma} to transfer it to the actual geodesic. The key idea is that for a simple path, the time spent on large loops is negligible while that spent on small loops is comparable with the length of the geodesic.

We begin with a bound for the simple geodesic.
\begin{proposition}\label{prop 5.2}
    For $d>6$, there exists $c(d)>0$ such that 
    \begin{align}\label{48}
        \widetilde{\mathbb{E}}[d^{\text{simple}}(0,x)\mathds{1}_{0\leftrightarrow x}] \geq c|x|^{4-d},
    \end{align}
    {where $d^{\text{simple}}(0,x)$ is the length of the simple geodesic}. 
\end{proposition}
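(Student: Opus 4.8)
The plan is to reduce the statement, via Markov's inequality, to showing that \emph{short simple geodesics are atypical}. Fix $\rho = \varepsilon|x|^2$ for a small constant $\varepsilon = \varepsilon(d)$ to be chosen. Then
\[
\widetilde{\mathbb{E}}[d^{\text{simple}}(0,x)\mathds{1}_{0\leftrightarrow x}] \;\geq\; \rho\,\widetilde{\mathbb{P}}\big(0\leftrightarrow x,\ d^{\text{simple}}(0,x)\geq \rho\big) \;=\; \rho\Big(\widetilde{\mathbb{P}}(0\leftrightarrow x) - \widetilde{\mathbb{P}}\big(0\leftrightarrow x,\ d^{\text{simple}}(0,x)< \rho\big)\Big),
\]
so, since $\widetilde{\mathbb{P}}(0\leftrightarrow x)\asymp|x|^{2-d}$ by \eqref{two point}, it suffices to prove $\widetilde{\mathbb{P}}(0\leftrightarrow x,\ d^{\text{simple}}(0,x)< \rho)\leq \tfrac12\widetilde{\mathbb{P}}(0\leftrightarrow x)$ once $\varepsilon$ is small; this yields \eqref{48}.

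The engine for controlling the short-geodesic event is that \emph{for simple paths a tree/last-loop expansion is available}, because a simple chain never repeats a glued loop (Lemma \ref{simlpe chain lemma}), which was the obstruction in the vanilla loop setting. Concretely, I would introduce the expected size of the $k$-th simple-chemical sphere, $g(k) := \sum_{z\in\mathbb{Z}^d}\widetilde{\mathbb{P}}(d^{\text{simple}}(0,z)= k)$, and, by a last-loop decomposition of a shortest simple path together with the BKR inequality (Lemma \ref{bkr}), the loop estimates of Lemmas \ref{loop one point}--\ref{loop tree estimate} and the triangle condition \eqref{triangle}, derive a bound of the form $\sum_{k\le \rho} g(k)\le C\rho$, i.e.
\[
\widetilde{\mathbb{E}}\big[\,|\{z\in\mathbb{Z}^d : d^{\text{simple}}(0,z)\le \rho\}|\,\big]\;\le\; C\rho .
\]
This is the loop-soup analogue of the mean-field fact that the intrinsic ball of radius $\rho$ has $O(\rho)$ vertices in expectation, and it is here that restricting to simple paths lets the argument run for all $d>6$.

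It remains to upgrade this global volume bound to the single-point estimate. If $d^{\text{simple}}(0,x)<\rho$ with $0\leftrightarrow x$, take the midpoint $z$ of a shortest simple path from $0$ to $x$: then $0\leftrightarrow z$ and $z\leftrightarrow x$ each by a simple path of length $<\rho/2$, and applying the tree expansion (Lemma \ref{ete}) once more to extract the at-most-one glued loop shared by the two halves factorizes the event into a product of two ``short simple connections''. Summing over $z$ and the extracted loop and feeding in the bound on $g$ together with the arithmetic estimates of Lemma \ref{lem 2.1}, one gets $\widetilde{\mathbb{P}}(0\leftrightarrow x,\ d^{\text{simple}}(0,x)< \rho)\le C\varepsilon^{\theta}|x|^{2-d}$ for some $\theta=\theta(d)>0$, which is $\le\tfrac12\widetilde{\mathbb{P}}(0\leftrightarrow x)$ for $\varepsilon$ small; alternatively one can establish the estimate for a positive-density set of scales/endpoints and invoke a separate averaging step as in the proof of Theorem \ref{lem 4.4}. (One could also try a more hands-on route, using the conditional volume bound of Lemma \ref{lem 4.2} and Theorem \ref{lem 4.4} to show the cluster is ``thick'', but turning thickness of the cluster into length of a geodesic seems to require essentially the same expansion.)

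The main obstacle I anticipate is the second step: making the recursion/convolution inequality for $g(k)$ actually close with the correct constant in dimensions $d$ just above $6$. Since a single loop can be arbitrarily long, one must split off the contribution of loops of length $\gtrsim \rho$ (which is negligible by Lemma \ref{local clt} and the diffusive relation between a loop's diameter and the square root of its length) from the bulk, and control the accumulation of error over the repeated peeling of loops --- precisely the difficulty that the notion of \emph{simple path} was introduced to handle, and the technical heart of the argument.
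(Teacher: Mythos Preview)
Your approach is genuinely different from the paper's and, as written, contains a real gap. The paper does \emph{not} bound the probability that the simple geodesic is short; instead it runs a first--moment argument on the number of lattice points that ``witness'' a simple connection. Concretely, it shows (Lemma~\ref{lem 5.1}) that $\sum_y\widetilde{\mathbb P}(\mathcal T_x(y))\ge c|x|^{4-d}$, and then defines two notions of over--counting (``$K$ over--counted'' when $y$ sits on a large loop, and ``$(K,K')$ over--counted'' when the two branch points on the simple geodesic are far from $y$). Both over--count contributions are shown to be $\le\varepsilon|x|^{4-d}$, while any $y$ that is \emph{not} over--counted must lie within Euclidean distance $K'$ of the simple geodesic; hence $d^{\text{simple}}(0,x)\ge c(K')^{-d}N_1^{K,K'}$ and taking expectations yields \eqref{48}. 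No intrinsic (or simple--intrinsic) volume bound is ever used.

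The gap in your route is not in Step~2 (which you already flag as delicate) but in Step~3. Even granting the simple--volume bound $\sum_z\widetilde{\mathbb P}(d^{\text{simple}}(0,z)\le\rho/2)\le C\rho$ and the two--point function, the midpoint decomposition only gives, after BKR and summing over the midpoint,
\[
\widetilde{\mathbb P}\bigl(0\leftrightarrow x,\ d^{\text{simple}}(0,x)<\rho\bigr)\ \lesssim\ \sum_{z}\widetilde{\mathbb P}\bigl(d^{\text{simple}}(0,z)\le\tfrac{\rho}{2}\bigr)\,\widetilde{\mathbb P}\bigl(d^{\text{simple}}(z,x)\le\tfrac{\rho}{2}\bigr).
\]
Using the pointwise bound $\widetilde{\mathbb P}(d^{\text{simple}}(z,x)\le\rho/2)\le C|z-x|^{2-d}$ together with the $L^1$ bound on the other factor, the best you can extract is of order $\rho\cdot|x|^{2-d}=\varepsilon|x|^{4-d}$ (split according to whether $|z-x|\ge|x|/2$ or not; either half gives the same answer). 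This is off by a factor $|x|^{2}$ from the target $\tfrac12\widetilde{\mathbb P}(0\leftrightarrow x)\asymp|x|^{2-d}$, and no choice of constant $\varepsilon$ can fix that. The loop extraction you propose does not help: it only produces additional convolution factors of the form $|\cdot|^{2-d}$, and by \eqref{2.1} these exactly reproduce the same $|x|^{4-d}$ scaling. In short, an $L^1$ volume bound on the simple ball cannot be upgraded to a pointwise two--point estimate by a single midpoint split; this is precisely why the paper (and Kozma--Nachmias before it) uses the over--counting route instead.
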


We next record the estimate which says that the time spent on large loops is negligible. 
{Let
\begin{align*}
    \mathcal{N}_{x}(K) = \{y\in \mathbb{Z}^{d}: \text{there exists } \Gamma \in \mathcal{L}\text{ such that } y\in \Gamma,\; |\Gamma|\geq K \text{ and } 0\overset{\overline{\mathcal{L}}\setminus \{\overline{\Gamma}\}}{\leftrightarrow} \overline{\Gamma} \circ \overline{\Gamma}\overset{\overline{\mathcal{L}}\setminus \{\overline{\Gamma}\}}{\leftrightarrow} x \}.
\end{align*}}

\begin{lemma}\label{lem 5.3}
    For $d>6$, any constant $\e>0$ and any $x \in \mathbb{Z}^{d}$, there exists  $K(\varepsilon)>0$ such that 
    $$\E(\cN_x(K)) \leq \e |x|^{4-d}.$$
\end{lemma}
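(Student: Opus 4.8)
The plan is to run the tree-expansion estimate behind the upper bound in \eqref{1.4}, but keeping the length of the expanding loop as a bookkeeping parameter, so that restricting to long loops produces a small quantity.

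\emph{Reduction.} Fix $x$ and $K$. If $y\in\cN_x(K)$ then there is a discrete loop $\Gamma\in\cL$ with $y\in\mathsf{VRange}(\Gamma)$ and $|\Gamma|\geq K$, together with lattice points $z_1,z_2\sim\Gamma$, such that the three events $\{\Gamma\in\cL\}$ (certified by $\overline\Gamma$), $\{0\overset{\overline{\cL}\setminus\{\overline\Gamma\}}{\longleftrightarrow}z_1\}$ and $\{z_2\overset{\overline{\cL}\setminus\{\overline\Gamma\}}{\longleftrightarrow}x\}$ occur disjointly. Summing $\widetilde\P(y\in\cN_x(K))$ by a union bound over $(\Gamma,z_1,z_2)$, applying the BKR inequality (Lemma \ref{bkr}), then the two-point bound $\widetilde\P(a\leftrightarrow b)\leq C|a-b|^{2-d}$ (Lemma \ref{lem 2.5}), and finally summing over $y$ via $\sum_y\mathds 1\{y\in\mathsf{VRange}(\Gamma)\}=|\mathsf{VRange}(\Gamma)|\leq|\Gamma|$, I get
\begin{align*}
\widetilde\E\,|\cN_x(K)|\ \leq\ C\sum_{\Gamma:\,|\Gamma|\geq K}|\Gamma|\,\widetilde\P(\Gamma\in\cL)\,\Big(\sum_{z_1\sim\Gamma}|z_1|^{2-d}\Big)\Big(\sum_{z_2\sim\Gamma}|z_2-x|^{2-d}\Big).
\end{align*}
Without the cutoff $|\Gamma|\geq K$ this is exactly the quantity that Lemma \ref{loop tree estimate} bounds by $C|x|^{4-d}$ (after $\widetilde\P(z\leftrightarrow w)\leq C|z-w|^{2-d}$).

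\emph{The length-restricted tree estimate.} It therefore suffices to prove that there is $\kappa=\kappa(d)>0$ such that, for all $x$,
\begin{align*}
\sum_{\Gamma:\,|\Gamma|\geq K}|\Gamma|\,\widetilde\P(\Gamma\in\cL)\,\Big(\sum_{z_1\sim\Gamma}|z_1|^{2-d}\Big)\Big(\sum_{z_2\sim\Gamma}|z_2-x|^{2-d}\Big)\ \leq\ C\,K^{-\kappa}\,|x|^{4-d}.
\end{align*}
Granting this, choosing $K=K(\e)$ with $CK^{-\kappa}\leq\e$ finishes the proof, and $K$ depends only on $\e$ and $d$ (uniformly in $x$). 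To prove the displayed bound I would rerun the argument for Lemma \ref{loop tree estimate} from \cite{ganguly2024ant}, decomposing $\Gamma$ dyadically by length, $|\Gamma|\in[2^j,2^{j+1})$, and retaining only the scales $2^j\geq K$. By Lemma \ref{local clt} one may assume, up to an error that is super-polynomially small in $2^j$, that $\mathsf{VRange}(\Gamma)$ lies in a ball of radius $|\Gamma|^{1/2+o(1)}$; this is what ties the locations of $z_1,z_2$ to the loop. The proof of Lemma \ref{loop tree estimate} controls the contribution of scale $2^j$ by a constant times $2^{-\kappa j}\,|x|^{4-d}$ for some $\kappa>0$ — this geometric decay, which one expects with $\kappa=(d-6)/2$, is precisely where the hypothesis $d>6$ enters — so the tail $2^j\geq K$ contributes $\leq CK^{-\kappa}|x|^{4-d}$, as needed.

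\emph{Main obstacle.} The one genuinely delicate point will be preserving the exponent $|x|^{4-d}$ when the loop sits near $0$ or near $x$. There one cannot afford the crude bound $\sum_{z\sim\Gamma}|z-x|^{2-d}\leq|\Gamma|\cdot\max_{z\sim\Gamma}|z-x|^{2-d}$: combined with the other factor it would replace the convolution bound \eqref{2.1} by the weaker triangle bound \eqref{2.2} and degrade the answer to $|x|^{6-d}$, which is useless for $d$ close to $6$ (and not uniform in $x$). Instead one must reuse the sharper random-walk occupation-time / truncated Green's function estimates underlying Lemma \ref{loop tree estimate}; everything else — the union bound, BKR, the dyadic sum in the loop length, and the observation that $d>6$ is exactly the condition making that sum and hence its $K$-tail finite — is routine.
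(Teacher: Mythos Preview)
Your reduction via union bound and BKR is exactly the paper's first step and yields the same expression. The divergence is in how to bound it. The paper does not attempt your symmetric scale-by-scale estimate; it factors asymmetrically as
\[
\sum_{z}|z|^{2-d}\Big[\sum_{\substack{\Gamma\sim z\\ |\Gamma|\ge K}}\sum_{w\sim\Gamma}|\Gamma|\,\widetilde{\mathbb{P}}(\Gamma\in\cL)\,\widetilde{\mathbb{P}}(w\leftrightarrow x)\Big]
\]
and reduces to the one-sided Lemma~\ref{lem 5.4}, after which a single convolution \eqref{2.1} finishes. This halves the boundary-case analysis you flag, since only the endpoint $x$ sits outside the loop.

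There is a real gap. Your claim that the proof of Lemma~\ref{loop tree estimate} already yields $2^{-\kappa j}|x|^{4-d}$ per dyadic scale is unsubstantiated, and the natural route to that lemma (expand $|\Gamma|=\sum_{y\in\Gamma}1$ and apply the three-point estimate Lemma~\ref{loop three points}) carries no length information at all. More to the point, a clean uniform-in-$x$ bound $CK^{-\kappa}|x|^{4-d}$ does not simply fall out. In the paper's appendix proof of Lemma~\ref{lem 5.4}, the case $w$ far from $x$ does give the $K^{3-d/2}$ tail you anticipate (via Lemma~\ref{loop one point}); but the case $w\in\B(x,|x|/4)$ --- a loop through $0$ reaching near $x$ --- is handled by splitting at $|\Gamma|$ above or below $|x|^{1.9}$ and using Lemma~\ref{local clt} for short loops and raw heat-kernel bounds $p_t(0,0)\asymp t^{-d/2}$ for long ones; the resulting bound is a negative power of $|x|$, not of $K$. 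So the lemma is genuinely of ``for every $\e$ there is $K(\e)$'' type. The random-walk estimates you correctly anticipate are precisely the content of Lemma~\ref{lem 5.4}, which is the step you identify as the obstacle and leave undone.
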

Recalling the event $\mathcal{T}_{x}(y)$, the above says that the points that lie on some large loop do not contribute significantly to the sum $\sum_{y\in \mathbb{Z}^{d}} \widetilde{\mathbb{P}}(\mathcal{T}_{x}(y)).$

Postponing the proofs of the above results we first finish the proof of the lower bound in (\ref{1.4}) assuming those.

\begin{proof}[Proof of the lower bound in \eqref{1.4}]

We now make the geometric claim that on the event $\{0\leftrightarrow x\}$,
\begin{align*}
    d^{\text{simple}}(0,x) \leq |\mathcal{N}_{x}(K)| + 2K d(0,x).
\end{align*}
Assuming this claim, the lower bound follows  from Proposition \ref{prop 5.2} and Lemma \ref{lem 5.3} by taking expectations on both sides.

Now we prove the claim. First, for a geodesic $\gamma_{0\rightarrow x}$, i.e. the shortest self-avoiding path from 0 to $x$, by Lemma \ref{geometric lemma}, there exists a minimal glued loop subsequence $\overline{L}$ for $\gamma_{0\rightarrow x}$ and
\begin{align*}
    |\textsf{Set}(\overline{L})| \leq 3|\gamma_{0\rightarrow x}|.
 \end{align*}
Moreover, by Lemma \ref{simlpe chain lemma} there exists a simple chain $\overline{L}_{1}$ only using glued loops in $\textsf{Set}(\overline{L})$ from 0 to $x$ and a $\textsf{Set}(\overline{L})$-simple path $\ell$ from 0 to $x$.

For any point $y\in \ell$, if $y \notin \mathcal{N}_{x}(K)$, then there exists a glued loop $\overline{\Gamma}\in \textsf{Set}(\overline{L})$ such that $y \in \overline{\Gamma}$ and $|\overline{\Gamma}| \leq K$, (if there are more than one $\overline{\Gamma}$ satisfying this requirement, we choose the first one). We denote this loop by $\overline{\Gamma}(y)$. Then we have that  
{\begin{align*}
    3|\gamma_{0\rightarrow x}| &\geq \sum_{\overline{\Gamma} \in \textsf{Set}(\overline{L})}\mathds{1}_{|\overline{\Gamma}|\leq K} \geq \sum_{\substack{\overline{\Gamma} \in \textsf{Set}(\overline{L})\\ |\overline{\Gamma}| \leq K}}\frac{1}{K}|\overline{\Gamma}| \\&\geq \frac{1}{K}\sum_{\substack{\overline{\Gamma} \in \textsf{Set}(\overline{L})\\ |\overline{\Gamma}| \leq K}} \sum_{y\in \ell \cap \overline{\Gamma}} \mathds{1}_{y\in \overline{\Gamma}} = \frac{1}{K}\sum_{y\in \ell}\sum_{\substack{\overline{\Gamma} \in \textsf{Set}(\overline{L})\\ y\in \overline{\Gamma} \\ |\Gamma| \leq K}}\mathds{1}_{y\in \overline{\Gamma}} \\ &\geq \frac{1}{K}\sum_{y\in \ell} \mathds{1}_{y\notin \mathcal{N}(K)}.
\end{align*}} 
Also, we know 
\begin{align*}
    d^{\text{simple}}(0,x) \leq |\ell| \leq \sum_{y\in \ell} \mathds{1}_{y\notin \mathcal{N}_{x}(K)} + \sum_{y\in \mathbb{Z}^{d}} \mathds{1}_{y\in \mathcal{N}_{x}(K)}.
\end{align*}
Overall we have that 
{\begin{align*}
    d^{\text{simple}}(0,x) \leq \sum_{y\in \ell} \mathds{1}_{y\notin \mathcal{N}_{X}(K)} + |\mathcal{N}_{x}(K)| \leq 3K |\gamma_{0\rightarrow x}| + |\mathcal{N}_{x}(K)|,
\end{align*}}
which proves the claim since $|\gamma_{0\rightarrow x}|= d(0,x)$.
\end{proof}

In the rest of this subsection, we prove  Proposition \ref{prop 5.2} and Lemma \ref{lem 5.3}. We start with the latter.

\begin{proof}[Proof of Lemma \ref{lem 5.3}]
    First, if $0\overset{\overline{\mathcal{L}}\setminus \{\overline{\Gamma}\}}{\longleftrightarrow} \overline{\Gamma}$, there exists $z\sim \Gamma$ such that $0\overset{\overline{\mathcal{L}}\setminus \{\overline{\Gamma}\}}{\longleftrightarrow} z$, then by BKR inequality and a union bound we have 
    \begin{align*}
        &\sum_{y\in \mathbb{Z}^{d}}\sum_{\substack{\Gamma \ni y \\ |\Gamma|\geq K}} \widetilde{\mathbb{P}}(0\overset{\overline{\mathcal{L}}\setminus \{\overline{\Gamma}\}}{\longleftrightarrow} \overline{\Gamma} \circ \overline{\Gamma}\overset{\overline{\mathcal{L}}\setminus \{\overline{\Gamma}\}}{\longleftrightarrow} x \circ \Gamma \in \mathcal{L}) \leq 
        \sum_{y\in \mathbb{Z}^{d}}\sum_{\substack{\Gamma \ni y \\ |\Gamma|\geq K}} \widetilde{\mathbb{P}}(\Gamma \in \mathcal{L})\widetilde{\mathbb{P}}(0\overset{\overline{\mathcal{L}}\setminus \{\overline{\Gamma}\}}{\longleftrightarrow} \overline{\Gamma}) \widetilde{\mathbb{P}}( \overline{\Gamma}\overset{\overline{\mathcal{L}}\setminus \{\overline{\Gamma}\}}{\longleftrightarrow} x) \\ 
        &\leq \sum_{y\in \mathbb{Z}^{d}}\sum_{\substack{\Gamma \ni y \\ |\Gamma|\geq K}} \sum_{z\sim \Gamma}\widetilde{\mathbb{P}}(\Gamma \in \mathcal{L})\widetilde{\mathbb{P}}(0\leftrightarrow z) \widetilde{\mathbb{P}}( \overline{\Gamma}\overset{\overline{\mathcal{L}}\setminus \{\overline{\Gamma}\}}{\longleftrightarrow} x) 
        \leq\sum_{y\in \mathbb{Z}^{d}}\sum_{\substack{\Gamma \ni y \\ |\Gamma|\geq K}} \sum_{z,w\sim \Gamma}\widetilde{\mathbb{P}}(\Gamma \in \mathcal{L})\widetilde{\mathbb{P}}(0\leftrightarrow z) \widetilde{\mathbb{P}}( w\leftrightarrow x) \\
        &\leq \sum_{z\in \mathbb{Z}^{d}}\widetilde{\mathbb{P}}(0\leftrightarrow z)\sum_{ \substack{\Gamma \sim z\\ |\Gamma|\geq K}}\sum_{w\sim \Gamma}\sum_{y\sim \Gamma}\widetilde{\mathbb{P}}(\Gamma \in \mathcal{L})\widetilde{\mathbb{P}}(w\leftrightarrow x)=\sum_{z\in \mathbb{Z}^{d}}\widetilde{\mathbb{P}}(0\leftrightarrow z)\sum_{ \substack{\Gamma \sim z\\ |\Gamma|\geq K}}\sum_{w\sim \Gamma}|\Gamma| \widetilde{\mathbb{P}}(\Gamma \in \mathcal{L})\widetilde{\mathbb{P}}(w\leftrightarrow x).
    \end{align*}
The proof now is a consequence of the following Lemma \ref{lem 5.4}. Taking $x$ in the lemma to be $z-x$ we know that there exists $K(\varepsilon)>0$ such that,
    \begin{align*}
        \sum_{z\in \mathbb{Z}^{d}}\widetilde{\mathbb{P}}(0\leftrightarrow z)\sum_{ \substack{\Gamma \sim z\\ |\Gamma|\geq K}}\sum_{w\sim \Gamma}|\Gamma| \widetilde{\mathbb{P}}(\Gamma \in \mathcal{L})\widetilde{\mathbb{P}}(w\leftrightarrow x) \leq \e\sum_{z\in \mathbb{Z}^{d}} |z|^{2-d}|z-x|^{2-d} \leq \e |x|^{4-d}.
    \end{align*}
    \end{proof}
    
        \begin{lemma}\label{lem 5.4}
        For any $\varepsilon$, there exists $K(\varepsilon)>0$ such that for any $x \in \mathbb{Z}^{d}$,
        \begin{align*}
            \sum_{ \substack{\Gamma \sim 0\\ |\Gamma|\geq K}}\sum_{w\sim \Gamma}|\Gamma| \widetilde{\mathbb{P}}(\Gamma \in \mathcal{L})\widetilde{\mathbb{P}}(w\leftrightarrow x) \leq \e |x|^{2-d}. 
        \end{align*}
    \end{lemma}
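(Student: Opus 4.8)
The plan is to bound the sum by splitting over dyadic scales of the loop length $|\Gamma| = \ell$ and controlling, for each scale, the combined cost of: (i) the probability that a loop of length roughly $\ell$ rooted near $0$ exists, summed over its possible vertices, and (ii) the cost of connecting a neighbor $w$ of $\Gamma$ to $x$. The basic inputs are the discrete loop estimates from the preliminaries, in particular Lemma \ref{loop one point} (loops through a fixed point), Lemma \ref{loop two points} (loops through two fixed points), the two-point bound \eqref{two point}, and the convolution bound \eqref{2.1}.

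First I would rewrite the left-hand side by summing over the root and over the ``exit vertex'' $w$. Writing $\Gamma \sim 0$ and $w \sim \Gamma$, a union bound over the (at most $2d+1$) lattice neighbors reduces everything to sums of the form $\sum_{\Gamma \ni z,\, |\Gamma|\geq K}$ for $z$ at bounded distance. For such a loop, $w\sim \Gamma$ means $w$ is within distance $1$ of some vertex of $\Gamma$; since $|\Gamma|=\ell$, there are at most $\ell$ choices of vertex-of-$\Gamma$, each contributing $O(1)$ neighbors $w$. The key is that I do \emph{not} want to pay a naive factor $\ell$ for this, because combined with the extra $|\Gamma|$ already in the summand this would cost $\ell^2$, which is too lossy. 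Instead I would first fix $w$ and use Lemma \ref{loop two points}: $\sum_{\Gamma \ni 0,w,\, |\Gamma|\geq K} \widetilde\P(\Gamma\in\mathcal L) \leq C K^{1-d/2}|w|^{2-d}$. To handle the factor $|\Gamma|$, I perform a dyadic decomposition $\ell\in[2^j K, 2^{j+1}K)$ and apply the two-point bound on each dyadic block, gaining $|\Gamma|\leq 2^{j+1}K$ as a prefactor and $(2^jK)^{1-d/2}$ from the loop estimate; summing the geometric series in $j$ (convergent since $2-d/2<0$ for $d>6$) yields $\sum_{\Gamma\ni 0,w,\,|\Gamma|\geq K}|\Gamma|\widetilde\P(\Gamma\in\mathcal L)\leq C K^{2-d/2}|w|^{2-d}$.

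Next I sum over $w$: since $w\sim\Gamma$ and $\Gamma\ni 0$, actually $w$ ranges over all of $\Z^d$ a priori, but the loop-through-two-points estimate already localizes the cost via the factor $|w|^{2-d}$ (here using that the root is within $O(1)$ of $0$, and translation invariance plus a union bound as noted after Lemma \ref{loop three points}). Thus the whole expression is at most
\begin{align*}
C K^{2-d/2}\sum_{w\in\Z^d}|w|^{2-d}\,\widetilde\P(w\leftrightarrow x) \leq C K^{2-d/2}\sum_{w\in\Z^d}|w|^{2-d}|w-x|^{2-d} \leq C K^{2-d/2}|x|^{2-d},
\end{align*}
where the middle step is the two-point bound \eqref{two point} and the last is the convolution estimate \eqref{2.1} with $\alpha_1=\alpha_2=2-d$ (valid since $2-d>-d$ and $(2-d)+(2-d)+d=4-d$; wait — this gives $|x|^{4-d}$, so I must be more careful, see below). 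Given this, choosing $K=K(\varepsilon)$ large enough that $CK^{2-d/2}\leq\varepsilon$ finishes the proof, since $2-d/2<0$.

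The main obstacle I anticipate is the bookkeeping of exponents: a direct application of \eqref{2.1} to $\sum_w |w|^{2-d}|w-x|^{2-d}$ gives $|x|^{4-d}$, not $|x|^{2-d}$, so the clean argument above is off by a factor $|x|^2$. The resolution is that one should \emph{not} bound $\sum_{\Gamma\ni 0, w}$ by introducing a free variable $w$ ranging over all of $\Z^d$ with only a $|w|^{2-d}$ weight; rather, the loop of length $\ell\leq$ (dyadic block) has diameter $O(\sqrt\ell)$ by diffusivity (Lemma \ref{local clt} controls the atypically small-diameter case, and a standard random-walk bound controls the typical one), so $w$ is within $O(\sqrt{K_{\text{block}}})$ of $0$; summing $\widetilde\P(w\leftrightarrow x)\leq C|w-x|^{2-d}\approx C|x|^{2-d}$ over the $O(\ell^{d/2})$ such $w$ and combining with the per-loop weight and the loop-through-one-point bound Lemma \ref{loop one point} ($\sum_{\Gamma\ni 0,|\Gamma|\geq L}|\Gamma|\widetilde\P(\Gamma\in\mathcal L)\leq CL^{2-d/2}$, valid when $2<d/2$, i.e. $d>4$) must be arranged so that the $\ell^{d/2}$ volume factor is absorbed. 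Concretely, for a dyadic block $\ell\sim 2^jK$: the number of loops through $0$ of that length has total mass $\lesssim (2^jK)^{2-d/2}$ (with the $|\Gamma|$ weight), each such loop offers $\lesssim 2^jK$ neighbors $w$ within distance $1$ of its $\lesssim 2^jK$ vertices, \emph{but} for each individual loop those $w$'s are spread within an $O(\sqrt{2^jK})$-ball of $0$, so $\sum_{w\sim\Gamma}\widetilde\P(w\leftrightarrow x)\lesssim |\Gamma|\cdot |x|^{2-d}$ uniformly (using $|x|\geq 2\sqrt{2^jK}$; the finitely many large-$j$ blocks with $2^jK\gtrsim|x|^2$ are handled separately and trivially, or by noting $K$ is a fixed large constant while $|x|$ may be small — in that regime the bound $|x|^{2-d}$ is $\Theta(1)$ and a crude estimate suffices). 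The careful version thus pays $(2^jK)^{2-d/2}\cdot (2^jK)\cdot|x|^{2-d}$ per block? That is $(2^jK)^{3-d/2}|x|^{2-d}$, summable in $j$ for $d>6$, giving $CK^{3-d/2}|x|^{2-d}$, still $\to 0$ as $K\to\infty$ since $3-d/2<0$ for $d>6$. I would streamline this by proving the intermediate estimate $\sum_{\Gamma\ni 0}\sum_{w\sim\Gamma}|\Gamma|\widetilde\P(\Gamma\in\mathcal L)\1_{|\Gamma|\geq K}\,\1_{w\in B(0,\sqrt{|\Gamma|}\log|\Gamma|)}\leq CK^{?}$ using Lemmas \ref{loop one point} and \ref{local clt}, and handling the negligible far-away-$w$ contribution via Lemma \ref{local clt}'s Gaussian tail. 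This is where the bulk of the technical work lies, but all the needed estimates are already in the preliminaries.
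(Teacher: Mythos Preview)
Your argument for the main term — loops with $|\Gamma|$ small relative to $|x|^2$, whose vertices stay within $O(\sqrt{|\Gamma|}\log|\Gamma|)$ of $0$ so that $\widetilde\P(w\leftrightarrow x)\lesssim |x|^{2-d}$ — is correct and leads exactly to the bound $CK^{3-d/2}|x|^{2-d}$ via Lemma~\ref{loop one point} with $i=2$. The real gap is the case you label ``handled separately and trivially'': dyadic blocks with $2^jK\gtrsim |x|^2$. Here the loop's diffusive range is $\gtrsim |x|$, so $w$ can land arbitrarily close to $x$ and $\widetilde\P(w\leftrightarrow x)$ is no longer uniformly $\lesssim |x|^{2-d}$. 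Your fallback (``$K$ is fixed while $|x|$ may be small, so the target $|x|^{2-d}$ is $\Theta(1)$'') is irrelevant since the claim must hold for \emph{all} $x$; and the only crude bound available, $\sum_{w\sim\Gamma}\widetilde\P(w\leftrightarrow x)\leq C|\Gamma|$, combined with Lemma~\ref{loop one point} at threshold $|x|^2$, gives only $|x|^{6-d}$, off by a factor $|x|^4$. Lemma~\ref{local clt} does not help either: it kills short loops that reach far, not long loops.

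The missing observation is that for $w$ near $x$ one must exploit that the loop passes through \emph{two} points at mutual distance $\gtrsim |x|$ (namely $0$ and $w$), gaining an extra factor $|w|^{2-d}\sim |x|^{2-d}$. The paper organizes this by splitting on the location of $w$ relative to $x$ rather than to $0$. For $w\notin\mathbb{B}(x,|x|/4)$ one has $\widetilde\P(w\leftrightarrow x)\leq C|x|^{2-d}$ uniformly, and Lemma~\ref{loop one point} with $i=2$ gives $CK^{3-d/2}|x|^{2-d}$ directly — this already absorbs your entire ``typical diffusive'' case without any diameter consideration. For $w\in\mathbb{B}(x,|x|/4)$ (so $|w|\gtrsim|x|$), the paper splits further at $|\Gamma|=|x|^{1.9}$: shorter loops are exponentially suppressed by Lemma~\ref{local clt}, while for longer loops a direct heat-kernel bound $\sum_{\Gamma\ni 0,w,\,|\Gamma|=k}\widetilde\P(\Gamma\in\mathcal L)\leq Ck^{-d/2}|w|^{2-d}$ is proved and used; summing over $k\geq |x|^{1.9}$ and over dyadic shells of $w$ around $x$ yields a contribution $\lesssim |x|^{2-d}\cdot |x|^{5.8-0.95d}\ll |x|^{2-d}$ for $d\geq 7$. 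Your decomposition can be completed with the same ingredient, but it is not a triviality.
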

  The proof involves some algebra and is deferred to the appendix, Section \ref{appendix.1}.\\

We now prepare for the proof of Proposition \ref{prop 5.2}. The first step is the following lower bound. This is reminiscent of the argument from  \cite{kozma2009alexander}. Since the event $\mathcal{T}_{x}(y)$ does not involve intrinsic constraints, essentially a bond-percolation like argument can be carried out. The modifications needed to deal with the loop soup model were more or less developed in \cite[Section 4.1]{ganguly2024ant}. While we include some of the tedious details for completeness, we defer them to the appendix, Section \ref{appendix.3}. 
    
    \begin{lemma}\label{lem 5.1}
        There exists $c(d) >0$ such that for any $x\in \mathbb{Z}^{d}$
        \begin{align*}
            \sum_{y\in \mathbb{Z}^{d}} \widetilde{\mathbb{P}}(\mathcal{T}_{x}(y)) \geq c |x|^{4-d}.
        \end{align*}
    \end{lemma}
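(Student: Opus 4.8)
The plan is to prove the lower bound $\sum_{y\in\Z^d}\wt\P(\mathcal T_x(y))\ge c|x|^{4-d}$ by a second-moment argument on the random variable $N:=|\{y\in\Z^d:\mathcal T_x(y)\text{ occurs}\}|$, following the broad strategy of \cite{kozma2009alexander} but adapted to the loop-soup setting as in \cite[Section 4.1]{ganguly2024ant}. The idea is that $N$ counts, up to constants, the number of lattice points lying on some loop $\overline\Gamma$ that is simultaneously connected to $0$ and to $x$ off $\overline\Gamma$; this is the loop-soup substitute for the number of points on a pivotal-free backbone between $0$ and $x$. By Cauchy--Schwarz,
\begin{align*}
\wt\P(0\leftrightarrow x)\le \wt\P(N\ge 1)\le \frac{\wt\E[N]^2}{\wt\E[N^2]},
\end{align*}
so it suffices to show $\wt\E[N]\ge c|x|^{4-d}$ and $\wt\E[N^2]\le C|x|^{6-d}$, since then $\wt\E[N]=\sum_y\wt\P(\mathcal T_x(y))\ge \wt\P(0\leftrightarrow x)\cdot \wt\E[N^2]/\wt\E[N]\ge \big(c|x|^{2-d}\big)\big(|x|^{4-d}\big)^2/\big(C|x|^{6-d}\big)=c'|x|^{4-d}$, using the two-point lower bound \eqref{two point}. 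Actually the cleaner route is: $\wt\E[N]\ge \wt\P(N\ge1)\cdot\inf\wt\E[N\mid N\ge1]$, but the Cauchy--Schwarz form above is the standard one, so I would run with $\wt\E[N]^2\le \wt\P(N\ge1)\,\wt\E[N^2]$ and rearrange to $\wt\E[N]\ge \wt\P(0\leftrightarrow x)^{-1}\cdot$ wait --- let me restate: from $\wt\E[N]^2\le\wt\P(N\ge1)\wt\E[N^2]$ and $\wt\P(N\ge 1)=\wt\P(0\leftrightarrow x)\le C|x|^{2-d}$ we only get an \emph{upper} bound on $\wt\E[N]$, which is not what we want. So the correct use is the reverse: we want a lower bound on $\wt\E[N]=\sum_y\wt\P(\mathcal T_x(y))$, and we have the trivial identity $\wt\E[N]=\wt\P(N\ge1)\,\wt\E[N\mid N\ge1]$, so it is enough to lower bound $\wt\E[N\mid N\ge1]$ by $c|x|^{2}$, or equivalently use Paley--Zygmund: $\wt\P(N\ge 1)\ge \wt\E[N]^2/\wt\E[N^2]$, hence $\wt\E[N]\ge \wt\P(0\leftrightarrow x)^{-1}$ is wrong too. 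The right statement: $\wt\P(0\leftrightarrow x)=\wt\P(N\ge1)\ge \wt\E[N]^2/\wt\E[N^2]$ gives $\wt\E[N]\le \sqrt{\wt\P(0\leftrightarrow x)\wt\E[N^2]}$. To get the lower bound on $\wt\E[N]$ one instead needs an a priori \emph{lower} bound on $N$ conditioned on $0\leftrightarrow x$; that is, one shows the geodesic (hence the simple geodesic, hence the number of points on the relevant loops) is typically long. I will therefore argue as follows.

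\textbf{Step 1: reduce to a conditional lower bound on the backbone length.} On the event $0\leftrightarrow x$ fix a geodesic $\gamma_{0\to x}$; every lattice point $y$ on it satisfies $\mathcal T_x(y)$ (there is a loop through $y$ connecting to $0$ and to $x$ off that loop, by the tree expansion Lemma \ref{ete} applied along the path), hence $N\ge d(0,x)\ge |x|$ trivially, but this only gives $\wt\E[N]\ge |x|\cdot\wt\P(0\leftrightarrow x)\gtrsim |x|^{3-d}$, which is not enough. The gain must come from the fact that on $\{0\leftrightarrow x\}$ the backbone is typically of length $\gtrsim |x|^2$, not $|x|$. Concretely I would show: there is $c>0$ with $\wt\P\big(N\ge c|x|^2\,\big|\,0\leftrightarrow x\big)\ge c$. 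Given this, $\wt\E[N]\ge c|x|^2\cdot c\,\wt\P(0\leftrightarrow x)\ge c'|x|^{4-d}$ by \eqref{two point}, which is exactly \eqref{48}-type bound needed.

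\textbf{Step 2: prove the conditional lower bound on $N$.} This is the heart and the main obstacle. The approach, mirroring \cite{kozma2009alexander}: condition on $0\leftrightarrow x$, explore the cluster, and use the sharp two-point function together with the BKR/tree-expansion machinery to show that with constant probability there are many ``cut-loops'' (loops $\overline\Gamma$ with $0\overset{\overline{\mathcal L}\setminus\{\overline\Gamma\}}{\leftrightarrow}\overline\Gamma\overset{\overline{\mathcal L}\setminus\{\overline\Gamma\}}{\leftrightarrow}x$), each contributing its $\mathsf{VRange}$ to $N$. The second-moment input is that $\wt\E[N^2]\le C|x|^{6-d}$ (a ``square triangle'' / tree-expansion estimate using Lemma \ref{lem 2.1}\eqref{2.2}, Lemma \ref{loop three points}, and the two-point bound, essentially bounding $\sum_{y,y'}\wt\P(\mathcal T_x(y)\cap\mathcal T_x(y'))$ by expanding around at most two loops), and $\wt\E[N]\ge c|x|^{4-d}$ — wait, this is circular. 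To break the circularity one argues, as in \cite{kozma2009alexander} and \cite[Section 4.1]{ganguly2024ant}, by a regeneration/recursive structure: decompose the backbone at its cut-loops and use that between consecutive cut-loops the pieces are roughly i.i.d. with a heavy-tailed length, forcing $\sum(\text{lengths})\gtrsim|x|^2$ with constant probability. More concretely, one shows $\wt\E[N]=\sum_y\wt\P(\mathcal T_x(y))\ge c|x|^{4-d}$ directly by a lower-bound tree expansion: insert an intermediate loop and point $u$ near $|x|/2$, use FKG to decouple $0\leftrightarrow u$ and $u\leftrightarrow x$ and the existence of a loop of length $\asymp|x|^2$ through $u$ (whose probability is $\asymp |u|^{-2}\cdot$(intensity), summable to a constant contribution), giving $\sum_y\wt\P(\mathcal T_x(y))\ge \sum_{u}\wt\P(0\leftrightarrow u)\wt\P(u\leftrightarrow x)\cdot c\gtrsim |x|^{6-d}\cdot$ no --- the correct counting: each such configuration contributes $\asymp|x|^2$ to $N$ (the length of the inserted loop), and the inserted-loop configurations have total probability $\gtrsim \wt\P(0\leftrightarrow x)$, so $\wt\E[N]\gtrsim |x|^2\wt\P(0\leftrightarrow x)\gtrsim |x|^{4-d}$. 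I expect the genuine difficulty to be making this ``insert a big loop in the middle of the backbone, with constant cost'' rigorous in the loop soup: one must ensure the big inserted loop's range does not destroy the FKG-decoupled connections, which requires a careful conditioning on the loop's $\mathsf{VRange}$ and a gluing argument — exactly the kind of delicate surgery carried out in \cite[Section 4.1]{ganguly2024ant}. The remaining steps (the upper-bound side $\wt\E[N^2]\le C|x|^{6-d}$ via the triangle condition \eqref{triangle} and Lemma \ref{loop three points}, and then Paley--Zygmund to convert the first two moments into $\wt\P(N\ge c|x|^2\mid 0\leftrightarrow x)\ge c$) are routine given the estimates already assembled in Section \ref{pre}. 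I would defer the bulk of the tedious loop-surgery computations to the appendix, as the paper signals (Section \ref{appendix.3}), and present here only the second-moment bound and the Paley--Zygmund conclusion.
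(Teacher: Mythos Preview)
Your proposal has a genuine gap that you yourself detect but do not resolve: the second-moment/Paley--Zygmund route is circular. Paley--Zygmund gives $\wt\P(N\ge1)\ge\wt\E[N]^2/\wt\E[N^2]$, which only yields an \emph{upper} bound on $\wt\E[N]$ from the two-point function; your attempt to escape via $N\ge d(0,x)$ and a conditional lower bound $d(0,x)\gtrsim|x|^2$ is also circular, since in the paper that very bound (the lower half of \eqref{1.4}) is deduced from Proposition~\ref{prop 5.2}, which in turn uses Lemma~\ref{lem 5.1} as input. The ``insert a big loop'' heuristic you mention at the end is closer to the right idea but is not carried out.

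The paper's proof is \emph{not} a second-moment argument on $N$. It is a direct lower bound of each summand, in the spirit of \cite{kozma2009alexander} adapted to loops. One fixes a large constant $K$, sets $u=(-K,0,\dots,0)$, $v=(K,0,\dots,0)$, and first shows via a resampling/gluing argument (from \cite[Section 4.1]{ganguly2024ant}) that $\wt\P(\mathcal T_x(y))\ge c(K)\,\wt\P(0\leftrightarrow y+u\circ y+v\leftrightarrow x)$. Then one lower-bounds the disjoint-occurrence probability by inclusion--exclusion: conditioning on $\wt{\mathcal C}^{\mathrm{int}}(y+u)=S$,
\[
\wt\P(0\leftrightarrow y+u\circ y+v\leftrightarrow x)\ \ge\ \wt\P(0\leftrightarrow y+u)\,\wt\P(y+v\leftrightarrow x)\ -\ \text{(error)},
\]
where the error is the probability that $y+v\leftrightarrow x$ is forced to use a loop intersecting $S$. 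Tree expansion (Lemma~\ref{ete}) together with Lemma~\ref{loop three points} and the convolution estimates \eqref{5:53}, \eqref{bond computation 1.5} show that, summed over $y$, this error is at most $\varepsilon|x|^{4-d}$ once $K=K(\varepsilon)$ is large. The main term summed over $y$ is $\sum_y|y+u|^{2-d}|y+v-x|^{2-d}\asymp|x|^{4-d}$ by \eqref{2.1} and the two-point bound \eqref{two point}. Taking $\varepsilon$ small finishes the proof. No moment bounds on $N$, and no conditional geodesic-length statements, are needed.
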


To go from the above estimate to a conditional expected lower bound for the length of a simple geodesic between $0$ and $x$, one needs to estimate the number of $y$ which may not lie on the simple geodesic but still contributes to the sum in Lemma \ref{lem 5.1}, i.e., estimate the over-counting. The basic approach would be to show that points which are at a large enough Euclidean distance from the simple geodesic do not contribute much. 

We will consider two kinds of over-counting which we define next. For any $y\in \Z^{d}$ and constant $K,K^{\prime}$, we say $y$ is \textbf{$(K,K^{\prime})$ over-counted} if there exists $v_{1},w_{1}\in \Z^{d}$ with $|y-v_{1}|\geq K^{\prime}$ and $|y-w_{1}|\geq K^{\prime}$ such that there exists discrete loops $\Gamma , \Gamma_{1}, \Gamma_{2}\in \mathcal{L}$ with $y\in \Gamma$, $v_{1}\sim \Gamma_{1}$, $w_{1}\sim \Gamma_{2}$ such that 
        \begin{align*}
            0\overset{\overline{\mathcal{L}}\setminus \{\overline{\Gamma},\overline{\Gamma}_{1},\overline{\Gamma}_{2}\}}{\longleftrightarrow} \overline{\Gamma}_{1} \circ \overline{\Gamma}_{1}\overset{\overline{\mathcal{L}}\setminus \{\overline{\Gamma},\overline{\Gamma}_{1},\overline{\Gamma}_{2}\}}{\longleftrightarrow}\overline{\Gamma} \circ \overline{\Gamma} \overset{\overline{\mathcal{L}}\setminus \{\overline{\Gamma},\overline{\Gamma}_{1},\overline{\Gamma}_{2}\}}{\longleftrightarrow} \overline{\Gamma}_{2} \circ \overline{\Gamma}_{2} \overset{\overline{\mathcal{L}}\setminus \{\overline{\Gamma},\overline{\Gamma}_{1},\overline{\Gamma}_{2}\}}{\longleftrightarrow}x \circ v_{1}\overset{\overline{\mathcal{L}}\setminus \{\overline{\Gamma},\overline{\Gamma}_{1},\overline{\Gamma}_{2}\}}{\longleftrightarrow}w_{1}.
        \end{align*}
        We say $y$ is \textbf{$K$ over-counted} if there exists $\Gamma \in \mathcal{L}$ with $y\in \Gamma$ and $|\Gamma|\geq K$ such that
        \begin{align*}
        0\overset{\overline{\mathcal{L}}\setminus \{\overline{\Gamma}\}}{\longleftrightarrow}\overline{\Gamma} \circ \overline{\Gamma} \overset{\overline{\mathcal{L}}\setminus \{\overline{\Gamma}\}}{\longleftrightarrow}x.
        \end{align*}
Before recording results which help us bound such overcounting let us emphasize that the fact that we are doing away with intrinsic constraints by virtue of working with a simple geodesic allows us to obtain such estimates. 
        
        The number of $K$ over-counted points can be controlled by Lemma \ref{lem 5.4}. The next lemma  controls $(K,K^{\prime})$ over-counted points.
        \begin{lemma}
            For any constant $\e>0$ and  any $K>0$, there exists large enough constant $K^{\prime}>0$ such that
        \begin{align*}
            \sum_{y\in \Z^{d}} \widetilde{\mathbb{P}}(y\text{ is $(K,K^{\prime})$ over-counted}) \leq \e |x|^{4-d}.
        \end{align*}
        \end{lemma}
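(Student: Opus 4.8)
The plan is to bound $\sum_{y}\widetilde{\mathbb{P}}(y\text{ is }(K,K')\text{ over-counted})$ by a union bound followed by the BKR inequality, in the same spirit as the proof of Lemma \ref{lem 5.3} and of the upper bound in \eqref{1.4}, and then to extract the required smallness from the two constraints $|y-v_{1}|\geq K'$ and $|y-w_{1}|\geq K'$ via the technical bounds \eqref{bond computation 1}--\eqref{bond computation 1.5}. On the $(K,K')$ over-counting event one can select lattice points $a_{1},b_{1}\sim\Gamma_{1}$, $c_{1},c_{2}\sim\Gamma$ and $d_{2},e_{2}\sim\Gamma_{2}$ at which the five disjoint connections $0\leftrightarrow\overline{\Gamma}_{1}$, $\overline{\Gamma}_{1}\leftrightarrow\overline{\Gamma}$, $\overline{\Gamma}\leftrightarrow\overline{\Gamma}_{2}$, $\overline{\Gamma}_{2}\leftrightarrow x$ and $v_{1}\leftrightarrow w_{1}$ attach. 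Summing over $y$, over the loops $\Gamma,\Gamma_{1},\Gamma_{2}$, over these six attachment points and over $v_{1},w_{1}$, Lemma \ref{bkr} replaces the probability of the simultaneous disjoint occurrence by the product of the individual probabilities; bounding each restricted two-point function by the full one and using Lemma \ref{lem 2.5}, the contribution of a fixed configuration of loops becomes a product of factors $|\,\cdot-\cdot\,|^{2-d}$ among the attachment points. Summing over the loops $\Gamma,\Gamma_{1},\Gamma_{2}$ with the three-point loop estimate Lemma \ref{loop three points} (valid with ``$\sim\Gamma$'' in place of ``$\in\Gamma$'' by the remark following that lemma) then turns the three loop sums into further products of $|\,\cdot-\cdot\,|^{2-d}$.

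This leaves a purely deterministic sum, over $y$, the six attachment points and $v_{1},w_{1}$, of a product of finitely many factors $|\,\cdot-\cdot\,|^{2-d}$, subject to $|y-v_{1}|\geq K'$ and $|y-w_{1}|\geq K'$. I would perform the sums over the six attachment points one variable at a time using the convolution bound \eqref{2.1} and the triangle-type bounds \eqref{2.2} and \eqref{5:53}; at the (few) stages where a single attachment variable meets three factors of exponent $2-d$, so that \eqref{2.1} does not apply directly, I would instead not sum out the corresponding loop and use the packaged estimates Lemma \ref{lem 5.5} and Lemma \ref{loop tree estimate}, which already bundle a loop with two pendant two-point functions. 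Carrying out these reductions collapses the diagram to a sum of the schematic form
\[
\sum_{y\in\mathbb{Z}^{d}}\ \sum_{\substack{v_{1}\,:\,|y-v_{1}|\geq K'\\ w_{1}\,:\,|y-w_{1}|\geq K'}} F\big(0,x,y,v_{1},w_{1}\big),
\]
where $F$ is a product of boundedly many factors $|\,\cdot-\cdot\,|^{2-d}$ (with possibly one factor of exponent $4-d$ produced by an earlier convolution), and where, crucially, removing the two range restrictions would make the whole sum $O(|x|^{4-d})$ --- which is exactly the content of the upper bound in \eqref{1.4} and of Lemma \ref{lem 5.1}.

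The final step is to observe that, after translating so that $y$ sits at the origin, this restricted sum is precisely of the type controlled by \eqref{bond computation 1} (two summation points constrained to have norm $\geq K'$, together with a product of $|\,\cdot-\cdot\,|^{2-d}$'s and one $|\,\cdot-\cdot\,|^{4-d}$) or by \eqref{bond computation 1.5}: it is bounded by $\varepsilon\,|x|^{4-d}$ as soon as $K'=K'(\varepsilon)$ is large enough. Since all the reductions above cost only multiplicative constants depending on $d$, these can be absorbed into the choice of $K'$, completing the proof.

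The step I expect to be the main obstacle is not any individual estimate but the combinatorial bookkeeping: one must set up the union bound so that the five connections are genuinely disjoint (fixing, for each, a certifying sub-collection of glued loops disjoint from the others and avoiding $\overline{\Gamma},\overline{\Gamma}_{1},\overline{\Gamma}_{2}$), treat the ``$\sim$'' versus ``$\in$'' distinction uniformly, and --- most delicately --- carry the two range constraints $|y-v_{1}|\geq K'$, $|y-w_{1}|\geq K'$ untouched through the entire chain of convolutions down to the last sum, where the technical lemmas can exploit them. Because the unconstrained diagram is only borderline summable (it is of size $|x|^{4-d}$, with no power of $|x|$ to spare), the constraints cannot be dropped at any intermediate stage, which is what dictates the particular order in which the summations must be executed.
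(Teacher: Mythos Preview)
Your proposal is correct and follows essentially the same route as the paper: union bound plus BKR, then Lemma \ref{loop three points} to convert each of the three loop sums into products of $|\,\cdot-\cdot\,|^{2-d}$, followed by successive convolution reductions, and finally \eqref{bond computation 1} to extract the $\varepsilon$ from the two constraints $|y-v_{1}|\geq K'$, $|y-w_{1}|\geq K'$. The only minor difference is that where you anticipate needing Lemma \ref{lem 5.5} or Lemma \ref{loop tree estimate} at the ``three factors meeting'' stages, the paper simply applies Lemma \ref{loop three points} together with \eqref{5:53} directly to each loop with its three marked points, which cleanly reduces the diagram to the five-factor sum $\sum_{y,v_{1},w_{1}}|v_{1}|^{2-d}|v_{1}-y|^{2-d}|w_{1}-y|^{2-d}|v_{1}-w_{1}|^{2-d}|w_{1}-x|^{2-d}$ before invoking \eqref{bond computation 1}.
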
  
    While the proof bears strong resemblance to the arguments in \cite[Lemma 3.1]{kozma2009alexander} and often we will omit details in the paper in such situations, in this case we include the details to help the reader get a flavor.   
        \begin{proof}
        According to the definition of $(K,K^{\prime})$ over-counted, we apply a union bound and BKR inequality to get
        \begin{align*}
           &\sum_{y\in \Z^{d}} \widetilde{\mathbb{P}}(y \text{ is $(K,K^{\prime})$ over-counted}) \nonumber\\ 
           &\leq 
           \sum_{y\in \Z^{d}}\sum_{u,u^{\prime}\Z^{d}}\sum_{\substack{v_{1},v_{2},v_{3}\in\Z^{d}\\ |v_{1}-y|\geq K^{\prime}}}\sum_{\substack{w_{1},w_{2},w_{3}\in\Z^{d}\\ |w_{1}-y|\geq K^{\prime}}}\sum_{\substack{\Gamma \ni y\\ u,u^{\prime} \sim \Gamma\\ |\Gamma|\leq K}} \sum_{\Gamma_{1}\sim v_{1},v_{2},v_{3}}\sum_{\Gamma_{2}\sim w_{1},w_{2},w_{3}} \nonumber\\ 
           &\widetilde{\mathbb{P}}(\Gamma \in \mathcal{L})\widetilde{\mathbb{P}}(\Gamma_{1} \in \mathcal{L})\widetilde{\mathbb{P}}(\Gamma_{2} \in \mathcal{L})\widetilde{\mathbb{P}}(0\leftrightarrow v_{3})\widetilde{\mathbb{P}}(v_{2}\leftrightarrow u)\widetilde{\mathbb{P}}(u^{\prime}\leftrightarrow w_{2})\widetilde{\mathbb{P}}(v_{1}\leftrightarrow w_{1})\widetilde{\mathbb{P}}(w_{3}\leftrightarrow x).
        \end{align*}
        Reorganizing the terms to separate the ones involving $v_2$ and $w_2$, we rewrite this bound as 
        \begin{align}\label{long sum}
            &\sum_{y\in \Z^{d}}\sum_{\substack{v_{1}\in\Z^d\\|v_1-y|\geq K^{\prime}}}\sum_{\substack{w_{1}\in\Z^d\\|w_1-y|\geq K^{\prime}}}\sum_{\Gamma_{1}\sim v_{1}}\sum_{v_2,v_3\sim \Gamma_{1}}\sum_{\Gamma_2\sim w_1}\sum_{w_2,w_3\sim \Gamma_2}\nonumber\\ 
           &\cdot \left(\widetilde{\mathbb{P}}(\Gamma_{1}\in \mathcal{L})\widetilde{\mathbb{P}}(\Gamma_2\in \mathcal{L}) \widetilde{\mathbb{P}}(0\leftrightarrow v_3)\widetilde{\mathbb{P}}(v_1\leftrightarrow w_1)\widetilde{\mathbb{P}}(w_3\leftrightarrow x)\right)\cdot 
           \left(\sum_{\substack{\Gamma\ni y\\|\Gamma|\leq K}}\sum_{u,u^{\prime}\sim\Gamma}\widetilde{\mathbb{P}}(\Gamma\in\mathcal{L})\widetilde{\mathbb{P}}(v_2\leftrightarrow u)\widetilde{\mathbb{P}}(w_2\leftrightarrow u^{\prime})\right).
        \end{align}
        By Lemma \ref{loop three points} and (\ref{5:53}) recorded in Section \ref{pre_bounds}, 
        \begin{align*}
            &\sum_{\substack{\Gamma\ni y\\|\Gamma|\leq K}}\sum_{u,u^{\prime}\sim\Gamma}\widetilde{\mathbb{P}}(\Gamma\in\mathcal{L})\widetilde{\mathbb{P}}(v_2\leftrightarrow u)\widetilde{\mathbb{P}}(w_2\leftrightarrow u^{\prime}) \\ 
            \overset{(\text{Lemma \ref{loop three points}})}{\leq}& C\sum_{u,u^{\prime}\in \Z^{d}}|y-u|^{2-d}|u-u^{\prime}|^{2-d}|u^{\prime}-y|^{2-d}|v_{2}-u|^{2-d}|w_2-u^{\prime}|^{2-d}\overset{(\ref{5:53})}{\leq} C|y-v_2|^{2-d}|y-w_2|^{2-d}.
        \end{align*}
        Substituting this estimate into 
        \eqref{long sum} gives 
        \begin{align}\label{long sum 2}
            &\sum_{y\in \Z^{d}} \widetilde{\mathbb{P}}(y \text{ is $(K,K^{\prime})$ over-counted}) \leq C\sum_{y\in \Z^{d}}\sum_{\substack{v_{1}\in\Z^d\\|v_1-y|\geq K^{\prime}}}\sum_{\substack{w_{1}\in\Z^d\\|w_1-y|\geq K^{\prime}}}\sum_{\Gamma_{1}\sim v_{1}}\sum_{v_2,v_3\sim \Gamma_{1}}\sum_{\Gamma_2\sim w_1}\sum_{w_2,w_3\sim \Gamma_2}\nonumber\\ 
            &\cdot \widetilde{\mathbb{P}}(\Gamma_{1}\in \mathcal{L})\widetilde{\mathbb{P}}(\Gamma_2\in \mathcal{L}) \widetilde{\mathbb{P}}(0\leftrightarrow v_3)\widetilde{\mathbb{P}}(v_1\leftrightarrow w_1)\widetilde{\mathbb{P}}(w_3\leftrightarrow x)|y-v_2|^{2-d}|y-w_2|^{2-d} \nonumber\\ 
            &\leq C \sum_{y\in \Z^{d}}\sum_{\substack{v_{1}\in\Z^d\\|v_1-y|\geq K^{\prime}}}\sum_{\substack{w_{1}\in\Z^d\\|w_1-y|\geq K^{\prime}}}|v_1-w_1|^{2-d}\cdot \left(\sum_{\Gamma_{1}\sim v_{1}}\sum_{v_2,v_3\sim \Gamma_{1}}\widetilde{\mathbb{P}}(\Gamma_{1}\in \mathcal{L})|v_3|^{2-d}|y-v_3|^{2-d}\right) \nonumber\\ 
            &\cdot \left(\sum_{\Gamma_2\sim w_1}\sum_{w_2,w_3\sim \Gamma_2}\widetilde{\mathbb{P}}(\Gamma_2\in \mathcal{L})|w_3-x|^{2-d}|y-w_2|^{2-d}\right).
        \end{align}
        Next, applying Lemma \ref{loop three points} and (\ref{5:53}), we obtain the following bounds:
        \begin{align*}
            &\sum_{\Gamma_{1}\sim v_{1}}\sum_{v_2,v_3\sim \Gamma_{1}}\widetilde{\mathbb{P}}(\Gamma_{1}\in \mathcal{L})|v_3|^{2-d}|y-v_3|^{2-d} \leq C|v_1|^{2-d}|y-v_1|^{2-d};\\ 
            &\sum_{\Gamma_2\sim w_1}\sum_{w_2,w_3\sim \Gamma_2}\widetilde{\mathbb{P}}(\Gamma_2\in \mathcal{L})|w_3-x|^{2-d}|y-w_2|^{2-d} \leq C|w_1-y|^{2-d}|w_1-x|^{2-d}.
        \end{align*}
        Plugging these bounds back into \eqref{long sum 2}, we get
        \begin{align*}
            &\sum_{y\in \Z^{d}} \widetilde{\mathbb{P}}(y \text{ is $(K,K^{\prime})$ over-counted}) \\ &\leq C\sum_{y\in \Z^{d}}\sum_{\substack{v_{1}\in\Z^d\\|v_1-y|\geq K^{\prime}}}\sum_{\substack{w_{1}\in\Z^d\\|w_1-y|\geq K^{\prime}}}|v_1|^{2-d}|v_1-y|^{2-d}|w_1-y|^{2-d}|v_1-w_1|^{2-d}|w_1-x|^{2-d}.
        \end{align*}
         Finally, applying (\ref{bond computation 1}) from Section \ref{pre_bounds}, we get that for large enough constant $K^{\prime}$,
        \begin{align*}
            C\sum_{y\in \Z^{d}}\sum_{\substack{v_{1}\in\Z^d\\|v_1-y|\geq K^{\prime}}}\sum_{\substack{w_{1}\in\Z^d\\|w_1-y|\geq K^{\prime}}}|v_1|^{2-d}|v_1-y|^{2-d}|w_1-y|^{2-d}|v_1-w_1|^{2-d}|w_1-x|^{2-d} \leq \e |x|^{4-d},
        \end{align*}
        which completes the proof.
        \end{proof}
    
    Given the above we now finish the proof of Proposition \ref{prop 5.2}.
        \begin{proof}[Proof of Proposition \ref{prop 5.2}]
    We define 
    \begin{align*}
        N_{1}^{K,K^{\prime}}:=|\{y\in \Z^{d}: \mathcal{T}_{x}(y) \text{ happens and $y$ is neither $K$ over-counted nor $(K,K^{\prime})$ over-counted}\}|.
    \end{align*}
    We claim that for $K,K^{\prime}$ with $K<3K^{\prime}$, on the event $\{0\leftrightarrow x\}$ we have $N^{K,K^{\prime}} \leq C (K^{\prime})^{d} d^{\text{simple}}(0,x)$. 
    
    Indeed, we fix a simple geodesic $\ell^{(0,x)}=(x_{0}=0,x_{1},\cdots, x_{m}=x)$ from 0 to $x$ (there could be several simple geodesics, and we arbitrarily choose one) and we choose a simple chain $\overline{L}= (\overline{\Gamma}_{1},\cdots , \overline{\Gamma}_{k})$ for $\ell^{(0,x)}$.
    
{    For $y\in \Z^d$ with $d^{\text{ext}}(y,\ell^{(0,x)}) \geq K^{\prime}$, if $\mathcal{T}_{x}(y)$ occurs and $y$ is not $K$ over-counted, then for any $\overline{\Gamma} \in \mathcal{L}$ such that $y\in \Gamma$, we claim that $\overline{\Gamma} \notin \overline{L}$. Otherwise, there exists $z\in \ell^{(0,x)}$ such that $z\sim \Gamma$. Then since $d^{\text{ext}}(y,z) \geq K^{\prime}$ we know that $|\Gamma| \geq K^{\prime} \geq 3K$. Now since  $L= (\overline \Gamma_1, \ldots, \overline \Gamma_k)$ is simple and minimal, if $\overline \Gamma=\overline \Gamma_i$ for some $i,$ then 
$$0\overset{\overline{\mathcal{L}}\setminus \{\overline{\Gamma}\}}{\longleftrightarrow}\overline{\Gamma} \circ \overline{\Gamma} \overset{\overline{\mathcal{L}}\setminus \{\overline{\Gamma}\}}{\longleftrightarrow}x.$$
implying $y$ is $K$ over-counted, yielding a contradiction.}
    
{
 For a glued loop $\overline{\Gamma}$ such that $y\in \overline{\Gamma}$ and $0\overset{\overline{\mathcal{L}}\setminus \{\overline{\Gamma}\}}{\longleftrightarrow}\overline{\Gamma} \circ \overline{\Gamma} \overset{\overline{\mathcal{L}}\setminus \{\overline{\Gamma}\}}{\longleftrightarrow}x$, there exists two disjoint sequences of glued loops $\overline{L}_{1}$, $\overline{L}_{2}$, which do not utilize $\overline{\Gamma}$, such that 
    \begin{center}
        $\overline{L}_{1}$ is a simple chain from 0 to $\overline{\Gamma}$ and $\overline{L}_{2}$ is a simple chain from $\overline{\Gamma}$ to $x$.
    \end{center}
    Choose the indices 
    \begin{align*}
        a_{1}:= \max\{1\leq t\leq k: \overline{\Gamma}_{t}\in \overline{L}_{1}\};\; a_{2}:= \min\{a_{1} <t\leq k: \overline{\Gamma}_{t}\in \overline{L}_{2}\}.
    \end{align*} 
 If $\mathsf{Set}(\overline{L})\cap \mathsf{Set}(\overline{L}_{1}) = \emptyset$, we choose $a_{1}=1$. If $\overline{\Gamma}_{a_{1}}$ is of edge type we can replace $a_{1}$ by $a_{1}-1$. In this case, $a_{1}\neq 1$, since the first glued loop in $\overline{L}$ can not be of edge type due to the definition of the loop sequence. Notice that in this case $\overline{\Gamma}_{a_{1}-1}$ must be point of fundamental types since there cannot be consecutive glued loops of edge types in $\overline{L}$. Similarly, we set $a_{2}=k$ if $\mathsf{Set}(\overline{L})\cap \mathsf{Set}(\overline{L}_{2}) = \emptyset$, and replace $a_{2}$ by $a_{2}+1$ if $\overline{\Gamma}_{a_{2}}$ is of edge type. Thus, we can assume neither $\overline{\Gamma}_{a_{1}}$ nor $\overline{\Gamma}_{a_{2}}$ is of edge type.
    Then choose $v_{1}$ to be the exit point of $\ell^{(0,x)}$ from $\overline{\Gamma}_{a_{1}}$ and $w_{1}$ to be the entry point of $\ell^{(0,x)}$ to $\overline{\Gamma}_{a_{1}}$. 
    Hence, the following event happens
    \begin{align*}
        0\overset{\overline{\mathcal{L}}\setminus \{\overline{\Gamma},\overline{\Gamma}_{a_1},\overline{\Gamma}_{a_2}\}}{\longleftrightarrow} \overline{\Gamma}_{1} \circ \overline{\Gamma}_{a_1}\overset{\overline{\mathcal{L}}\setminus \{\overline{\Gamma},\overline{\Gamma}_{a_1},\overline{\Gamma}_{a_2}\}}{\longleftrightarrow}\overline{\Gamma} \circ \overline{\Gamma} \overset{\overline{\mathcal{L}}\setminus \{\overline{\Gamma},\overline{\Gamma}_{a_1},\overline{\Gamma}_{2}\}}{\longleftrightarrow} \overline{\Gamma}_{a_2} \circ \overline{\Gamma}_{a_2} \overset{\overline{\mathcal{L}}\setminus \{\overline{\Gamma},\overline{\Gamma}_{a_1},\overline{\Gamma}_{a_2}\}}{\longleftrightarrow}x \circ v_{1}\overset{\overline{\mathcal{L}}\setminus \{\overline{\Gamma},\overline{\Gamma}_{a_1},\overline{\Gamma}_{a_2}\}}{\longleftrightarrow}w_{1},
    \end{align*}
    Notice that $v_{1}\leftrightarrow w_{1}$ happens disjointly with other connection events since $\overline{L}$ is a simple chain. 
    Also, by the restriction $d^{\text{ext}}(y,\ell^{(0,x)}) \geq K^{\prime}$, we know that $d^{\text{ext}}(y,v_1) \geq K^{\prime}$ and $d^{\text{ext}}(y,w_1) \geq K^{\prime}$.

    Thus, we know that $y$ is $(K,K^{\prime})$ over-counted. 
    This shows that 
    \begin{align*}
        N_{1}^{K,K^{\prime}} \leq |\{y\in \Z^{d}: d^{\text{ext}}(y,\ell^{(0,x)})\leq K^{\prime}\} \leq C(K^{\prime})^{d} d^{\text{simple}}(0,x),
    \end{align*}
    which is the claim.
    
    Thus for any $\e$ and large enough constants $K,K^{\prime}$ we get
    \begin{align*}
        \widetilde{\mathbb{E}}[d^{\text{simple}}(0,x)\mathds{1}_{0\leftrightarrow x}] &\geq \sum_{y\in \Z^{d}}\widetilde{\mathbb{P}}(\mathcal{T}_{x}(y)) - \sum_{y\in \Z^{d}}\widetilde{\mathbb{P}}(y\text{ is $K$ over-counted}) - \sum_{y\in \Z^{d}}\widetilde{\mathbb{P}}(y\text{ is $(K,K^{\prime})$ over-counted}) \\ &\geq 
        c|x|^{4-d} - \e |x|^{4-d} - \e|x|^{4-d}.
    \end{align*} 
    Choose sufficiently small $\e$ we know that 
     \begin{align*}
        \widetilde{\mathbb{E}}[d^{\text{simple}}(0,x)\mathds{1}_{0\leftrightarrow x}] \geq \frac{c}{2}|x|^{4-d}.
    \end{align*}}
    \end{proof}

    In the rest of the section, we prove (\ref{6}) and (\ref{1.5}) for the point to boundary case.

    \subsection{Proof of (\ref{6})} 
    Since for any $\kappa \in(0,1)$, the proof is the same, we just choose $\kappa = 1/2$ for convenience. For any $r \in \mathbb{N}$ and $z \in \mathbb{B}_{r}$, define 
    \begin{align}\label{Tr}
        \mathcal{T}_{r}(z):= \{\text{there exists $\overline{\Gamma}\in \overline{\mathcal{L}}$ with $z\in \overline{\Gamma}$ such that } 0\overset{\overline{\mathcal{L}}\setminus \{\overline{\Gamma}\}, \mathbb{B}_{r}}{\longleftrightarrow} \overline{\Gamma} \circ \overline{\Gamma}\overset{\overline{\mathcal{L}}\setminus \{\overline{\Gamma}\}}{\longleftrightarrow} \partial \mathbb{B}_{r}\}.
    \end{align} 
     As the discussion in the proof of the lower bound in (\ref{1.4}), by Lemma \ref{simlpe chain lemma} and Lemma \ref{geometric lemma}, every geodesic path can be modified to some simple path only increasing its length as discussed in the proof of upper bound in the point to point case, we get
    \begin{align*}
        \widetilde{\mathbb{E}}[d(0,\partial \mathbb{B}_{1/2 r})\mathds{1}_{0\leftrightarrow \partial \mathbb{B}_{r}}] \leq \sum_{z\in \mathbb{B}_{1/2 r}}\widetilde{\mathbb{P}}( \mathcal{T}_{r}(z)).
    \end{align*}

    For any $z\in \mathbb{B}_{1/2 r}$, by Lemma \ref{lem 5.8}, 
    \begin{align*}
        \widetilde{\mathbb{P}}(\mathcal{T}_{r}(z)) &\leq \sum_{\Gamma \sim z} \widetilde{\mathbb{P}}(\Gamma \in \mathcal{L})\widetilde{\mathbb{P}}( 0\overset{\overline{\mathcal{L}}\setminus \{\overline{\Gamma}\}, \mathbb{B}_{r}}{\longleftrightarrow} \overline{\Gamma} \circ \overline{\Gamma}\overset{\overline{\mathcal{L}}\setminus \{\overline{\Gamma}\}}{\longleftrightarrow} \partial \mathbb{B}_{r})  \\ &\leq C |z|^{2-d}d^{\text{ext}}(z,\partial \mathbb{B}_{r})^{-2} \leq Cr^{-2}|z|^{2-d},
    \end{align*}
    where the last inequality is because $d^{\text{ext}}(z,\partial \mathbb{B}_{r}) \geq 1/2 r$ for any $z \in \mathbb{B}_{1/2 r}$. Thus, we get the upper bound 
    \begin{align*}
        \widetilde{\mathbb{E}}[d(0,\partial \mathbb{B}_{1/2 r})\mathds{1}_{0\leftrightarrow \partial \mathbb{B}_{r}}]\leq \sum_{z \in \mathbb{B}_{1/2 r}} \widetilde{\mathbb{P}}(\mathcal{T}_{r}(z)) \leq C r^{-2}\sum_{z\in \mathbb{B}_{1/2 r}} |z|^{2-d} \overset{(\ref{2.4})}{\leq} C r^{-2} \cdot r^{2} \leq C.
    \end{align*} 
    Then by the lower bound of one arm probability, we get 
    \begin{align*}
        \widetilde{\mathbb{E}}[d(0,\partial \mathbb{B}_{1/2 r})\mid 0\leftrightarrow \partial \mathbb{B}_{r}] \leq C r^{2},
    \end{align*}
    which finishes the proof.
\qed

    \subsection{Proof of (\ref{1.5})} The idea is the same as that  for the point to point case. Namely,  we get a lower bound for a simple path from 0 to $\partial \mathbb{B}_{r}$ and as before the large loops contribute negligibly which allows us to assert a similar lower bound for the true geodesic. 
 However, there is a crucial difference, namely we only consider the amount of time the path spends in  $\mathbb{B}_{\alpha r}$ for some $\alpha<1$ since we are unable to control its size close to the boundary.

Towards this we need to use a new event which is a modified version of $\mathcal{T}_{r}(z)$ defined in \eqref{Tr}. For any $r\in \mathbb{N}$, fixed $\alpha\in(0,1/3)$ (which will be chosen later) and $z\in \mathbb{B}_{\alpha^{2}r}$, we define 
    \begin{align*}
        \mathcal{T}_{r}^{\alpha}(z) := \{\text{there exists $\overline{\Gamma}\in \overline{\mathcal{L}}$ with $z\in \overline{\Gamma}$ such that } 0\overset{\overline{\mathcal{L}}\setminus \{\overline{\Gamma}\}, \mathbb{B}_{\alpha r}}{\longleftrightarrow} \overline{\Gamma} \circ \overline{\Gamma}\overset{\overline{\mathcal{L}}\setminus \{\overline{\Gamma}\}}{\longleftrightarrow} \partial \mathbb{B}_{r}\}.
    \end{align*} 
    Notice that in $\mathcal{T}_{r}^{\alpha}(z)$, the connection $0\leftrightarrow \overline{\Gamma}$ happens inside $\mathbb{B}_{\alpha r}$, while it happens in $\mathbb{B}_{r}$ on the event $\mathcal{T}_{r}(z)$.

    \begin{lemma}\label{lem 5.7}
        For $d >6$, there exists $c(d)>0$ and $\alpha_{0} \in (0,1/3)$ such that for any $r\in \mathbb{N}$ and $\alpha \in (0,\alpha_{0})$, 
        \begin{align*}
            \sum_{z\in \mathbb{B}_{\alpha^{2} r}}\widetilde{\mathbb{P}}(\mathcal{T}_{r}^{\alpha}(z)) \geq c\alpha^{4}.
        \end{align*}
    \end{lemma}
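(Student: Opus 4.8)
\textbf{Proof proposal for Lemma \ref{lem 5.7}.}

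The plan is to bound the left-hand side, which equals $\widetilde{\mathbb{E}}\big[\#\{z\in\mathbb{B}_{\alpha^2 r}:\mathcal{T}_r^\alpha(z)\}\big]$, from below by a first-moment argument in the spirit of Lemma \ref{lem 5.1} and \cite[Section 4.1]{ganguly2024ant}, incorporating the two new features of the present statement: the connection from the origin is confined to $\mathbb{B}_{\alpha r}$, handled via the local connectivity estimate Theorem \ref{lem 4.4}, and the target is $\partial\mathbb{B}_r$ rather than a single point, handled via the one-arm lower bound of Lemma \ref{lem 2.3}. Let $\beta$ be the constant of Theorem \ref{lem 4.4} and set $\alpha_0\le\min\{1/3,2/\beta\}$, to be decreased below; it suffices to treat $r$ with $\alpha^2 r\ge 2$, the remaining finitely many scales being absorbed into the constant $c$.

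For the main term, note that for $\alpha<\alpha_0$ we have $\mathbb{B}_{\beta\alpha^2 r/2}\subseteq\mathbb{B}_{\alpha r}$, so Theorem \ref{lem 4.4} applied at scale $\alpha^2 r/2$ gives $\sum_{z\in\mathbb{B}_{\alpha^2 r}}\widetilde{\mathbb{P}}(0\overset{\mathbb{B}_{\alpha r}}{\longleftrightarrow}z)\ge c\alpha^4 r^2$. For each fixed $z\in\mathbb{B}_{\alpha^2 r}$, since $\{0\overset{\mathbb{B}_{\alpha r}}{\longleftrightarrow}z\}$ and $\{z\leftrightarrow\partial\mathbb{B}_r\}$ are both increasing, the FKG inequality for the loop soup together with the one-arm estimate $\widetilde{\mathbb{P}}(z\leftrightarrow\partial\mathbb{B}_r)\ge\pi_1(2r)\ge c/r^2$ (valid for $z\in\mathbb{B}_{\alpha^2 r}$, $\alpha<1/3$) yield $\widetilde{\mathbb{P}}\big(\{0\overset{\mathbb{B}_{\alpha r}}{\longleftrightarrow}z\}\cap\{z\leftrightarrow\partial\mathbb{B}_r\}\big)\ge (c/r^2)\,\widetilde{\mathbb{P}}(0\overset{\mathbb{B}_{\alpha r}}{\longleftrightarrow}z)$. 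Writing $\mathcal{R}_z:=\big(\{0\overset{\mathbb{B}_{\alpha r}}{\longleftrightarrow}z\}\cap\{z\leftrightarrow\partial\mathbb{B}_r\}\big)\setminus\mathcal{T}_r^\alpha(z)$, we therefore obtain $\widetilde{\mathbb{P}}(\mathcal{T}_r^\alpha(z))\ge (c/r^2)\,\widetilde{\mathbb{P}}(0\overset{\mathbb{B}_{\alpha r}}{\longleftrightarrow}z)-\widetilde{\mathbb{P}}(\mathcal{R}_z)$, and summing over $z$ the first term already contributes $c\alpha^4$.

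It remains to show that $\sum_{z\in\mathbb{B}_{\alpha^2 r}}\widetilde{\mathbb{P}}(\mathcal{R}_z)$ is small. On $\mathcal{R}_z$ the origin is connected to $z$ inside $\mathbb{B}_{\alpha r}$ and $z$ is connected to $\partial\mathbb{B}_r$, yet no glued loop through $z$ serves (disjointly) as a common hub for both connections; by the tree expansion (Lemma \ref{ete}) and the simple-chain lemma (Lemma \ref{simlpe chain lemma}) this forces a ``meeting point'' $w$ from which three arms emanate disjointly --- one reaching $0$ within $\mathbb{B}_{\alpha r}$, one reaching $z$, and one reaching $\partial\mathbb{B}_r$ --- with $z$ lying on the hanging arm to $w$. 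Applying the BKR inequality (Lemma \ref{bkr}), the two-point estimate \eqref{two point}, the three-point loop bound (Lemma \ref{loop three points}) and the one-arm factor $d^{\text{ext}}(w,\partial\mathbb{B}_r)^{-2}$, the sum $\sum_z\widetilde{\mathbb{P}}(\mathcal{R}_z)$ is reduced to combinations of exactly the shape of \eqref{bond computation 2} and \eqref{bond computation 2.5}; together with a separate treatment of the case where the hub loop through $z$ is long, controlled by Lemma \ref{lem 5.4} after fixing $K=K(\varepsilon)$, this gives $\sum_z\widetilde{\mathbb{P}}(\mathcal{R}_z)\le C\alpha^2\varepsilon+C\alpha^6$. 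Choosing $\varepsilon$ small depending on $\alpha$ and then $\alpha_0$ small enough that $C\alpha^6\le\frac{c}{4}\alpha^4$ for $\alpha<\alpha_0$, we conclude $\sum_{z\in\mathbb{B}_{\alpha^2 r}}\widetilde{\mathbb{P}}(\mathcal{T}_r^\alpha(z))\ge c\alpha^4-C\alpha^2\varepsilon-C\alpha^6\ge\frac{c}{2}\alpha^4$.

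The main obstacle is controlling $\widetilde{\mathbb{P}}(\mathcal{R}_z)$, i.e. the passage from $\{0\overset{\mathbb{B}_{\alpha r}}{\longleftrightarrow}z\}\cap\{z\leftrightarrow\partial\mathbb{B}_r\}$ to $\mathcal{T}_r^\alpha(z)$: one must rule out, up to a negligible amount, the possibility that although $0$ and $\partial\mathbb{B}_r$ both lie in $z$'s cluster the relevant connections cannot be routed disjointly through a single loop at $z$ --- precisely the difficulty (a loop recurring before and after $z$) that the simple-path formalism was introduced to handle. This is where the intrinsic tree expansion and the tailored analytic inputs \eqref{bond computation 2} and \eqref{bond computation 2.5} enter; their right-hand sides $C\alpha^2\varepsilon$ and $C\alpha^2\varepsilon+C\alpha^6$ are exactly what is needed, since the two-point sums that appear carry the geometric constraints $z\in\mathbb{B}_{\alpha^2 r}$, intermediate points in $\mathbb{B}_{\alpha r}$, and a factor $d^{\text{ext}}(\cdot,\partial\mathbb{B}_r)^{-2}$ coming from the one-arm to the boundary.
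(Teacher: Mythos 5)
Your main-term computation is sound: for $\alpha\le 2/\beta$ the inclusion $\mathbb{B}_{\beta\alpha^2r/2}\subseteq\mathbb{B}_{\alpha r}$ lets Theorem \ref{lem 4.4} give $\sum_{z\in\mathbb{B}_{\alpha^2r}}\widetilde{\mathbb{P}}(0\overset{\mathbb{B}_{\alpha r}}{\longleftrightarrow}z)\ge c\alpha^4r^2$, and FKG with $\pi_1(2r)\ge c/r^2$ then yields $\sum_z\widetilde{\mathbb{P}}\big(\{0\overset{\mathbb{B}_{\alpha r}}{\longleftrightarrow}z\}\cap\{z\leftrightarrow\partial\mathbb{B}_r\}\big)\ge c\alpha^4$, which matches the first term of the paper's display \eqref{98}. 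The gap is in the passage from this intersection event to $\mathcal{T}_r^\alpha(z)$. You write $\widetilde{\mathbb{P}}(\mathcal{T}_r^\alpha(z))\ge\widetilde{\mathbb{P}}(\text{both connections})-\widetilde{\mathbb{P}}(\mathcal{R}_z)$ and claim $\sum_z\widetilde{\mathbb{P}}(\mathcal{R}_z)\le C\alpha^2\varepsilon+C\alpha^6$, on the grounds that $\mathcal{R}_z$ forces a three-arm configuration with $z$ on a hanging arm. But the tree expansion only guarantees \emph{some} hub loop $\Gamma$ with three disjoint arms to $0$, $\partial\mathbb{B}_r$ and $z$; it does not guarantee that the arm to $z$ is long. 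The set difference $\mathcal{R}_z$ also contains all configurations in which $z$ sits at bounded (even zero or unit) distance from the backbone without any glued loop \emph{containing} $z$ serving as a disjoint hub — for instance when every route from $z$'s neighbourhood to both $0$ and $\partial\mathbb{B}_r$ passes through a single bottleneck loop not containing $z$, so the two arms required by $\mathcal{T}_r^\alpha(z)$ can never be certified disjointly. Such short-hanging configurations are generic, not rare: their total mass over $z\in\mathbb{B}_{\alpha^2r}$ is of the same order $\alpha^4$ as the main term. The estimates \eqref{bond computation 2} and \eqref{bond computation 2.5} you invoke only produce the small factor $\varepsilon$ because of the built-in separation constraints ($|u-z|\ge K$, etc.); without them the corresponding triangle-type sums are $O(\alpha^4)$, and your subtraction yields nothing of a definite sign.

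This is precisely why the paper does not argue by subtraction. Its route (sketched in Section \ref{appendix.4}, elaborating Lemma \ref{lem 5.1}) is multiplicative at the critical step: split $z$ into two points $z$ and $z+v$ at distance $K$, use a local resampling argument to show $\widetilde{\mathbb{P}}(\mathcal{T}_r^\alpha(z))\ge c(K)\,\widetilde{\mathbb{P}}\big(0\overset{\mathbb{B}_{\alpha r}}{\longleftrightarrow}z\circ z+v\leftrightarrow\partial\mathbb{B}_r\big)$ — the resampling manufactures the hub loop at $z$ joining the two \emph{distinct} clusters, at the cost of a constant factor — and then lower-bound the \emph{disjoint} occurrence by $\widetilde{\mathbb{P}}(0\overset{\mathbb{B}_{\alpha r}}{\longleftrightarrow}z)\widetilde{\mathbb{P}}(z+v\leftrightarrow\partial\mathbb{B}_r)$ minus an error, via conditioning on $\widetilde{\mathcal{C}}^{\text{int}}_{\mathbb{B}_{\alpha r}}(z)$ and the ``only on $\mathsf{loop}(S)$'' decoupling (FKG cannot help here, since BKR bounds $\widetilde{\mathbb{P}}(A\circ B)$ from \emph{above} by the product). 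In that decoupling the error genuinely carries a long-range/three-arm structure and is $O(\alpha^2\varepsilon)$. To repair your argument you would either need to adopt this resampling step, or replace your subtraction by the multiplicative bookkeeping $\sum_z\widetilde{\mathbb{P}}(\text{both})\le C(K')^d\sum_w\widetilde{\mathbb{P}}(\mathcal{T}_r^\alpha(w))+(\text{long-hanging error})$, accepting a constant degraded by $(K')^{-d}$ and checking carefully that a short hanging arm really does place $z$ within distance $K'$ of a point $w$ for which $\mathcal{T}_r^\alpha(w)$ holds with the $\mathbb{B}_{\alpha r}$ constraint intact.
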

    
    The proof of this is similar to Lemma \ref{lem 5.1} with the two-point estimate \eqref{two point} replaced by its local counterpart Theorem \ref{lem 4.4}, which is deferred to Appendix, Section \ref{appendix.4}.

The next lemma bounds the contribution of large loops analogous to Lemma \ref{lem 5.4} (the proof can be found in the Appendix, Section \ref{appendix.2}).

\begin{lemma}\label{large loops on geo}
    For $d>6$, for any constant $\e>0$, there exists $K(\varepsilon)>0$ such that for any $r\in \mathbb{N}$,
        \begin{align}\label{5-59}
           \sum_{z\in \mathbb{B}_{1/3 r}} \sum_{\substack{\Gamma \ni z\\|\Gamma| \geq K}}\widetilde{\mathbb{P}}(\Gamma\in \mathcal{L})\widetilde{\mathbb{P}}(0\overset{\overline{\mathcal{L}}\setminus \{\overline{\Gamma}\},\mathbb{B}_{r}}{\longleftrightarrow} \overline{\Gamma} \circ \overline{\Gamma} \overset{\overline{\mathcal{L}}\setminus \{\overline{\Gamma}\}}{\longleftrightarrow} \partial \mathbb{B}_{r}) \leq \e.
        \end{align}
\end{lemma}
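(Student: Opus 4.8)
The plan is to follow the route used to prove Lemma~\ref{lem 5.8}, but to carry the length restriction $|\Gamma|\ge K$ through the computation so as to gain a small power of $K$. First, on the event $\{0\overset{\overline{\mathcal{L}}\setminus\{\overline{\Gamma}\},\mathbb{B}_{r}}{\longleftrightarrow}\overline{\Gamma}\circ\overline{\Gamma}\overset{\overline{\mathcal{L}}\setminus\{\overline{\Gamma}\}}{\longleftrightarrow}\partial\mathbb{B}_{r}\}$ there exist $u,v\sim\Gamma$ such that $u\overset{\overline{\mathcal{L}}\setminus\{\overline{\Gamma}\},\mathbb{B}_{r}}{\longleftrightarrow}0$ and $v\overset{\overline{\mathcal{L}}\setminus\{\overline{\Gamma}\}}{\longleftrightarrow}\partial\mathbb{B}_{r}$ occur disjointly; applying a union bound over $u,v$ together with the BKR inequality (Lemma~\ref{bkr}), treating $\{\Gamma\in\mathcal{L}\}$ as one of the certified events as in the proof of the upper bound in \eqref{1.4}, one gets
\begin{align*}
\sum_{z\in\mathbb{B}_{r/3}}\sum_{\substack{\Gamma\ni z\\|\Gamma|\ge K}}\widetilde{\mathbb{P}}(\Gamma\in\mathcal{L})\,\widetilde{\mathbb{P}}\big(0\overset{\overline{\mathcal{L}}\setminus\{\overline{\Gamma}\},\mathbb{B}_{r}}{\longleftrightarrow}\overline{\Gamma}\circ\overline{\Gamma}\overset{\overline{\mathcal{L}}\setminus\{\overline{\Gamma}\}}{\longleftrightarrow}\partial\mathbb{B}_{r}\big)\le\sum_{z\in\mathbb{B}_{r/3}}\sum_{\substack{\Gamma\ni z\\|\Gamma|\ge K}}\widetilde{\mathbb{P}}(\Gamma\in\mathcal{L})\sum_{u,v\sim\Gamma}\widetilde{\mathbb{P}}(0\leftrightarrow u)\,\widetilde{\mathbb{P}}(v\leftrightarrow\partial\mathbb{B}_{r}).
\end{align*}
I would then bound $\widetilde{\mathbb{P}}(0\leftrightarrow u)\le C(|u|\vee1)^{2-d}$ by Lemma~\ref{lem 2.5} and $\widetilde{\mathbb{P}}(v\leftrightarrow\partial\mathbb{B}_{r})\le C\,d^{\text{ext}}(v,\partial\mathbb{B}_{r})^{-2}$ by Lemma~\ref{lem 2.3} (and the analogous bound with $d^{\text{ext}}(v,\mathbb{B}_r)$ when $v\notin\mathbb{B}_r$). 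Without the constraint $|\Gamma|\ge K$ and for a single fixed $z$, this is precisely the quantity estimated in the proof of Lemma~\ref{lem 5.8}: one applies the three-point loop bound Lemma~\ref{loop three points} to the triple $z,u,v$ and performs a technical spatial summation of the type collected in Section~\ref{pre_bounds} to reach $\le C|z|^{2-d}d^{\text{ext}}(z,\partial\mathbb{B}_{r})^{-2}$. Summing this over $z\in\mathbb{B}_{r/3}$, where $d^{\text{ext}}(z,\partial\mathbb{B}_{r})\ge r/3$, and invoking \eqref{2.4} already yields an upper bound of order $1$; the content of the lemma is that the constraint $|\Gamma|\ge K$ improves this to $o_K(1)$, uniformly in $r$.

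To produce the $K$-gain I would replace Lemma~\ref{loop three points} by a truncated refinement. Writing a loop through $z,u,v$ as a concatenation of three arcs, the arc lengths sum to $|\Gamma|\ge K$, so some arc has length $\ge K/3$; the Green's-function weight of that arc is then bounded by $C\min\big(K^{1-d/2},\,|\cdot|^{\,2-d}\big)$, the truncated heat-kernel tail bound underlying Lemma~\ref{loop two points}. Interpolating this minimum as $K^{(1-d/2)(1-\theta)}\,|\cdot|^{(2-d)\theta}$ retains a genuine power of $K$ while keeping a fraction $(2-d)\theta$ of the spatial decay on the truncated arc; choosing $\theta=\theta(d)\in(0,1)$ so that all the spatial sums still converge — the admissible range of $\theta$ being nonempty exactly in the borderline regime $d>6$ — and running the summations with \eqref{2.1}, \eqref{2.3}, \eqref{2.4} as in the proofs of Lemmas~\ref{lem 5.4} and \ref{lem 5.8}, one arrives at
\begin{align*}
\sum_{z\in\mathbb{B}_{r/3}}\sum_{\substack{\Gamma\ni z\\|\Gamma|\ge K}}\widetilde{\mathbb{P}}(\Gamma\in\mathcal{L})\,\widetilde{\mathbb{P}}\big(0\overset{\overline{\mathcal{L}}\setminus\{\overline{\Gamma}\},\mathbb{B}_{r}}{\longleftrightarrow}\overline{\Gamma}\circ\overline{\Gamma}\overset{\overline{\mathcal{L}}\setminus\{\overline{\Gamma}\}}{\longleftrightarrow}\partial\mathbb{B}_{r}\big)\le C\,K^{-\delta}
\end{align*}
for some $\delta=\delta(d)>0$ and $C=C(d)$ not depending on $r$; taking $K=K(\varepsilon)$ large enough makes the right-hand side $\le\varepsilon$.

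The main obstacle is exactly this balancing act. Once one arc of the loop has been truncated, the spatial sums that arise (schematically of the form $\sum_{w}(|w|\vee1)^{6-2d}(|w-x|\vee1)^{2-d}$ and their relatives) are divergent for $d>6$ if one discards all the spatial decay from the truncated arc, so one is forced to keep a positive fraction of it; the window of admissible interpolation exponents $\theta$ contracts to a point as $d\downarrow6$, which is where the hypothesis $d>6$ enters. A secondary technical point, handled exactly as in the proof of Lemma~\ref{lem 5.8}, is that the one-arm bound for $\widetilde{\mathbb{P}}(v\leftrightarrow\partial\mathbb{B}_{r})$ degrades when $\Gamma$ is long enough (diameter $\gtrsim r$) to reach close to $\partial\mathbb{B}_{r}$; there one uses that such long loops are rare, quantitatively via the moment bound Lemma~\ref{loop one point} and the diffusivity estimate Lemma~\ref{local clt}, to absorb the loss.
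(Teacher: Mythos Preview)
Your initial setup—the union bound over $u,v\sim\Gamma$, the BKR inequality, the two-point and one-arm estimates, and the observation that Lemma~\ref{lem 5.8} plus \eqref{2.4} already bounds the unrestricted sum by $O(1)$—is correct and matches the paper. The handling of $v$ close to $\partial\mathbb{B}_r$ via Lemma~\ref{local clt} is also the right instinct.

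The gap is in how you extract the $K$-gain. Carrying out your interpolated three-point bound (say the $z$--$u$ arc is the long one), after summing out $v$ via \eqref{2.1} you are left with
\[
K^{-(d/2-1)(1-\theta)}\,r^{-2}\sum_{z\in\mathbb{B}_{r/3}}\sum_{u}|u|^{2-d}\,|u-z|^{(2-d)\theta+4-d}.
\]
Local integrability of the $u$-sum at $z$ forces $(d-2)\theta+d-4<d$, i.e.\ $\theta<4/(d-2)$; under this constraint the $u$-sum is $\asymp|z|^{6-d-(d-2)\theta}$, the $z$-sum then contributes $r^{6-(d-2)\theta}$, and the full expression is $K^{-\delta}\,r^{4-(d-2)\theta}$. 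Uniformity in $r$ now requires $\theta\ge 4/(d-2)$, so the admissible window for $\theta$ is empty (with a logarithmic divergence at the endpoint). The other two arc cases reduce to the same computation. A single-parameter interpolation therefore cannot close.

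The paper takes a different route: after isolating $|\Gamma|\ge r^{1.9}$ (Lemma~\ref{loop two points} with the one-arm factor dropped gives $O(r^{5.9-0.95d})$) and $u\notin\mathbb{B}_{2r/3}$ (Lemma~\ref{local clt}), it bounds the one-arm factor by $Cr^{-2}$ in the remaining range, sums the boundary vertex to produce a factor $|\Gamma|$, and recognizes the resulting expression as exactly the translate of the quantity in Lemma~\ref{lem 5.4}, yielding $\le C\varepsilon\,r^{-2}\sum_{z\in\mathbb{B}_{r/3}}|z|^{2-d}\le C\varepsilon$. The crucial point is that Lemma~\ref{lem 5.4} is itself proved not by interpolation but by a dichotomy on whether $w$ lies inside or outside $\mathbb{B}(x,|x|/2)$; that case split is what makes the $K$-gain compatible with the spatial sums, and it is the missing ingredient in your outline.
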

Note that the sum above is over $z \in \B_{1/3 r}$, which will survive in our purpose.

We need one more estimate for which we introduce a definition. \\

For a self-avoiding path $\ell=(0,x_{1},\cdots,x_{m})$ from 0 to $\partial \mathbb{B}_{r}$, let $\ell(\alpha)$ be the subpath of $\ell$ 
\begin{center}
    $\ell(\alpha):=(0,x_{1},\cdots,x_{t})$, where $t:= \min\{1\leq k\leq m: x_{k}\in \partial \mathbb{B}_{\alpha r}\}$.
\end{center}
    In fact, we will prove that for any $r\in \mathbb{N}$ and any sufficiently small constant $\alpha \in (0,1/3)$, 
    \begin{align}\label{initial part}
        \mathbb{E}[|\hat\ell(\alpha)\cap \mathbb{B}_{\alpha^{2}r}| \mid 0\leftrightarrow \partial \mathbb{B}_{r}] \geq c\alpha^{4} r^{2},
    \end{align}
    where $\hat\ell(\alpha)$ is the simple path minimizing $|\ell(\alpha) \cap \mathbb{B}_{\alpha^{2} r}|$ over all simple paths $\ell$ from 0 to $\partial \mathbb{B}_{r}$. Notice that even though there may be multiple minimizers, $|\hat\ell(\alpha) \cap \mathbb{B}_{\alpha^{2}r}|$ is always well-defined. If this is true, then by definition, 
    \begin{align*}
        \mathbb{E}[d^{\text{simple}}(0,\partial \mathbb{B}_{r})\mid 0\leftrightarrow \partial \mathbb{B}_{r}] \geq \mathbb{E}[|\hat \ell(\alpha) \cap \mathbb{B}_{\alpha^{2}r}| \mid 0\leftrightarrow \partial \mathbb{B}_{r}] \geq c\alpha^{4} r^{2}.
    \end{align*}

\begin{proposition}\label{prop 5.11}
    For $d>6$, there exists $c(d)>0$ such that for any $r$ and any sufficiently small $\alpha \in (0,1/3)$, 
    \begin{align}\label{5-66}
        \mathbb{E}[|\hat \ell(\alpha) \cap \mathbb{B}_{\alpha^{2}r}| \mathds{1}_{0\leftrightarrow \mathbb{B}_{r}}] \geq c \alpha^4.
    \end{align} 
\end{proposition}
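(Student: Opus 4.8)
The plan is to rerun, in the localized setting, the argument that establishes the point-to-point lower bound in Proposition~\ref{prop 5.2}. The starting estimate is Lemma~\ref{lem 5.7}, which already gives $\sum_{z\in\mathbb{B}_{\alpha^2 r}}\widetilde{\mathbb{P}}(\mathcal{T}_r^\alpha(z))\ge c\alpha^4$. As in Proposition~\ref{prop 5.2}, I would introduce for each $z\in\mathbb{B}_{\alpha^2 r}$ the notions of being $K$ over-counted and $(K,K')$ over-counted, defined exactly as in Section~\ref{compare} except that every connection ``$0\leftrightarrow\cdot$'' is replaced by its $\mathbb{B}_{\alpha r}$-confined version ``$0\overset{\mathbb{B}_{\alpha r}}{\longleftrightarrow}\cdot$'' and ``$x$'' is replaced by ``$\partial\mathbb{B}_r$''; let $N_1=N_1^{K,K'}$ count the $z\in\mathbb{B}_{\alpha^2 r}$ with $\mathcal{T}_r^\alpha(z)$ which are neither $K$ over-counted nor $(K,K')$ over-counted. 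The proposition will follow from (i) $\widetilde{\mathbb{E}}[N_1]\ge\tfrac{c}{2}\alpha^4$ for an appropriate choice of $\alpha,K,K'$, and (ii) the deterministic inequality $N_1\le C_{\alpha}\,|\hat\ell(\alpha)\cap\mathbb{B}_{\alpha^2 r}|$ on $\{0\leftrightarrow\partial\mathbb{B}_r\}$; taking expectations and combining yields \eqref{5-66}.

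For (i) I would bound the two over-counting sums. Since $\mathbb{B}_{\alpha^2 r}\subseteq\mathbb{B}_{r/3}$ and confining the first leg of a connection to $\mathbb{B}_{\alpha r}$ only makes events smaller, Lemma~\ref{large loops on geo} gives $\sum_{z\in\mathbb{B}_{\alpha^2 r}}\widetilde{\mathbb{P}}(z\text{ is }K\text{ over-counted})\le\e$ for $K=K(\e)$ large. For the $(K,K')$ over-counted vertices, a union bound together with the BKR inequality (Lemma~\ref{bkr}), the three-point loop bound (Lemma~\ref{loop three points}) and estimates in the spirit of Lemma~\ref{lem 5.8} collapse the sum to expressions of precisely the form of \eqref{bond computation 2} and \eqref{bond computation 2.5}: confining the connection from $0$ to $\mathbb{B}_{\alpha r}$ forces the relevant summation variables into $\mathbb{B}_{\alpha r}$, the pivotal vertex stays in $\mathbb{B}_{\alpha^2 r}$, and $d^{\text{ext}}(\cdot,\partial\mathbb{B}_r)^{-2}\asymp r^{-2}$ throughout, which is what produces the extra powers of $\alpha$. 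The net bound is of the form $C\alpha^2\e+C\alpha^6$. I would first fix $\alpha$ small so that $C\alpha^6\le\tfrac{c}{8}\alpha^4$, and then fix $\e$ (hence $K,K'$) small depending on $\alpha$ so that the remaining pieces are each $\le\tfrac{c}{8}\alpha^4$; this gives total over-counting at most $\tfrac{c}{2}\alpha^4$, hence $\widetilde{\mathbb{E}}[N_1]\ge\tfrac{c}{2}\alpha^4$.

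For (ii) I would argue geometrically, following the display that closes the proof of Proposition~\ref{prop 5.2}. On $\{0\leftrightarrow\partial\mathbb{B}_r\}$ fix a simple path $\hat\ell(\alpha)$ minimizing $|\ell(\alpha)\cap\mathbb{B}_{\alpha^2 r}|$ (one exists by Lemma~\ref{simlpe chain lemma}) and a simple chain $\overline L$ for it that is minimal by Lemma~\ref{geometric lemma}. Suppose $z$ is counted by $N_1$ with $d^{\text{ext}}(z,\hat\ell(\alpha))\ge K'$. Because $z$ is not $K$ over-counted, the pivotal loop $\overline\Gamma$ realizing $\mathcal{T}_r^\alpha(z)$ has $|\Gamma|\le K$, hence diameter $<K'$ (taking $K'$ large relative to $K$), so $\overline\Gamma\subseteq\mathbb{B}_{\alpha^2 r+K}\subseteq\mathbb{B}_{\alpha r}$ for $r$ large, and moreover $\overline\Gamma\notin\overline L$ (a loop of $\overline L$ through $z$ would be met by $\hat\ell(\alpha)$ within $K'$ of $z$). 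Then running the flanking-loop construction of Proposition~\ref{prop 5.2} --- take $v_1,w_1$ to be the exit and entry vertices of $\hat\ell(\alpha)$ at the loops $\overline\Gamma_{a_1},\overline\Gamma_{a_2}$ of $\overline L$ bracketing where $\overline\Gamma$ attaches, and note that the confined leg ``$0\overset{\mathbb{B}_{\alpha r}}{\longleftrightarrow}$'' survives since $\overline\Gamma\subseteq\mathbb{B}_{\alpha r}$ --- forces $z$ to be $(K,K')$ over-counted, a contradiction. Hence every vertex counted by $N_1$ lies within $K'$ of $\hat\ell(\alpha)$, and therefore within $K'$ of a vertex of $\hat\ell(\alpha)$ lying in $\mathbb{B}_{\alpha^2 r+K'}$, so $N_1\le C(K')^d\,|\hat\ell(\alpha)\cap\mathbb{B}_{\alpha^2 r+K'}|$; the change of radius is absorbed for $r$ large (or by replacing $\alpha$ by a slightly smaller constant), giving $N_1\le C_{\alpha}\,|\hat\ell(\alpha)\cap\mathbb{B}_{\alpha^2 r}|$.

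The hard part is (ii). Unlike Proposition~\ref{prop 5.2}, where the quantity bounded is the whole length of the optimal simple path, here it is only the part of the initial segment of $\hat\ell(\alpha)$ lying inside $\mathbb{B}_{\alpha^2 r}$; one must therefore charge each vertex counted by $N_1$ to a vertex of $\hat\ell(\alpha)$ that genuinely lies on that initial in-ball segment, rather than on a later re-entry of $\hat\ell(\alpha)$ into the ball. This is exactly where minimality in the definition of $\hat\ell(\alpha)$ is used: a vertex carrying the localized pivotal event but close only to a late excursion of $\hat\ell(\alpha)$ would, thanks to the disjointness $0\overset{\mathbb{B}_{\alpha r}}{\longleftrightarrow}\overline\Gamma\circ\overline\Gamma\longleftrightarrow\partial\mathbb{B}_r$, let one splice in a strictly cheaper initial route, contradicting the choice of $\hat\ell(\alpha)$. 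A secondary point requiring care is keeping the radii $\alpha^2 r$, $\alpha^2 r+K$, $\alpha r$ mutually consistent so that the first-leg confinement is never broken, and keeping the quantifier order (fix a small $\alpha$, only then $K$ and $K'$) so that the constant in \eqref{5-66} is a fixed number for that fixed small $\alpha$ --- which is all that is needed to deduce \eqref{1.5}.
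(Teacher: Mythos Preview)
Your proposal has a genuine gap in step (ii). The flanking-loop construction from Proposition~\ref{prop 5.2} relies on the fact that the alternate chain $\overline L_2$ (from $\overline\Gamma$ to the target) must eventually meet the fixed simple chain $\overline L$ again, because both terminate at the \emph{single point} $x$; this is what lets you set $a_2=k$ when no earlier intersection exists. In the point-to-boundary problem the target is the \emph{set} $\partial\mathbb B_r$, so the chain $\overline L_2$ witnessing $\overline\Gamma\overset{\overline{\mathcal L}\setminus\{\overline\Gamma\}}{\longleftrightarrow}\partial\mathbb B_r$ may land on $\partial\mathbb B_r$ at a point disjoint from where $\hat\ell$ lands, and may share no loop with $\overline L$ after index $a_1$. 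In that situation there is no $a_2$, the $(K,K')$ over-counted event you wrote down does not hold, and your contradiction fails. Your attempt to rescue this via the minimality of $|\hat\ell(\alpha)\cap\mathbb B_{\alpha^2 r}|$ does not work either: the alternate route through $z$ need not have a smaller initial in-ball segment, so nothing is contradicted.

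The paper closes this gap by introducing a \emph{third} over-counting notion, ``$x$-over-counted'' for $x\in\mathbb B_{\alpha r}$ (illustrated in Figure~\ref{overcount figure}), which captures exactly the case where the two simple chains from the branching loop $\overline\Gamma'$ reach $\partial\mathbb B_r$ at distinct points. Concretely, one gets the disjoint event
\[
0\longleftrightarrow x\ \circ\ \overline\Gamma'\longleftrightarrow\overline\Gamma\ \circ\ \overline\Gamma\longleftrightarrow\partial\mathbb B_r\ \circ\ \overline\Gamma'\longleftrightarrow\partial\mathbb B_r,
\]
and the sum $\sum_{x\in\mathbb B_{\alpha r}}\sum_{z\in\mathbb B_{\alpha^2 r}}\widetilde{\mathbb P}(z\text{ is $x$-over-counted})$ is bounded by $C\alpha^6$ via Lemma~\ref{lem 5.8} and Lemma~\ref{loop three points}. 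The paper's $(K,K')$ over-counting is also structurally different from yours (it has a cyclic shape $\overline\Gamma_1\leftrightarrow\overline\Gamma\leftrightarrow\overline\Gamma_2\leftrightarrow\overline\Gamma_1$ rather than a linear one), reflecting that in the non-divergent case the two chains merge and then re-separate. With all three error terms of size $O(\e)+O(\alpha^2\e)+O(\alpha^6)$, one chooses $\e=\alpha^6$ and then $\alpha$ small; your quantifier order (fix $\alpha$ first, then $\e$) would not cleanly absorb the $C\alpha^6$ term coming from the missing case.
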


The proof of this is similar to Proposition \ref{prop 5.2}, but since it involves a new case (see Figure \ref{overcount figure}) we provide all the details at the risk of being a bit repetitive.

\begin{figure}[h]
    \centering
    \includegraphics[scale=.24]{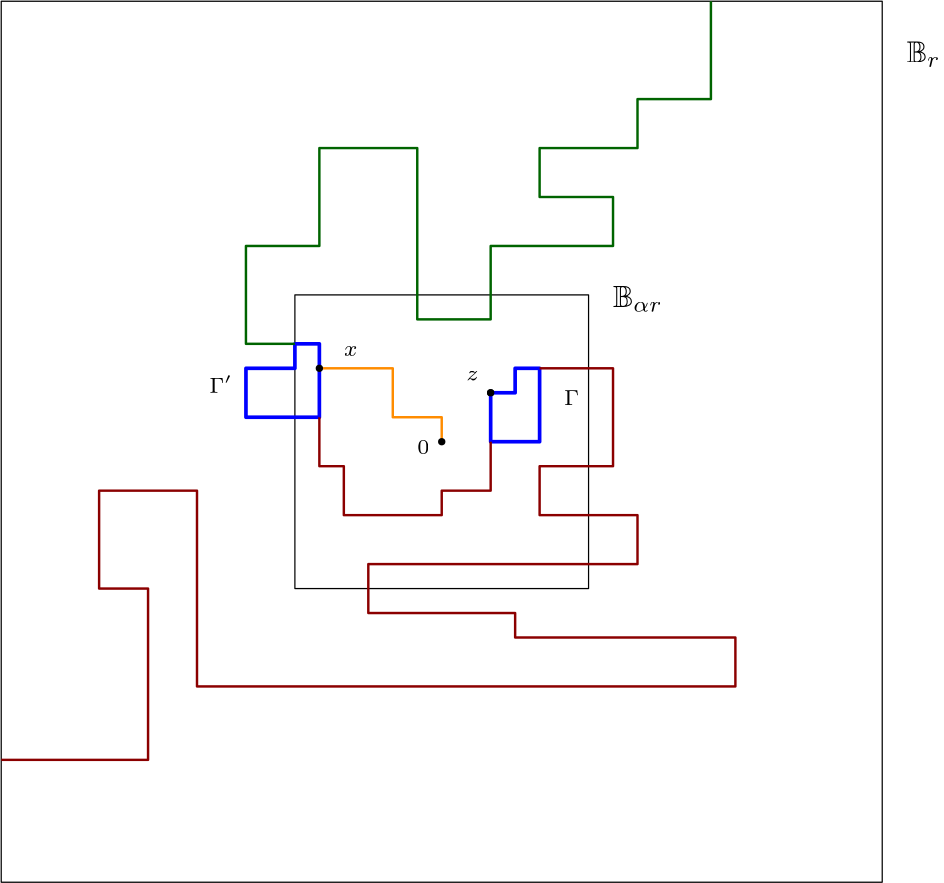}
    \caption{Illustration of an $x$-over-counted point $z$. $x$ and $z$ lie on two distinct simple paths connecting $0$ to $\partial \B_{r}$ with different end points on $\partial \B_{r}$. These two paths share a common segment (highlighted in orange) at the beginning until this orange segment enters $\Gamma_{2}$ from $x$. Beyond this, the path splits into two separate segments: a green one and a red one. The red path passes through $\Gamma$, where $z$ is located.
    }
    \label{overcount figure}
    \end{figure}

\begin{proof}
    For constants $K,K^{\prime}>0$ and $z\in \mathbb{B}_{\alpha^{2} r}$, we say $z$ is \textbf{$K$ over-counted}, if there exists a discrete loop $\Gamma \in \mathcal{L}$ with $z\in \Gamma$ and $|\Gamma|> K$ such that 
    \begin{align*}
        0\overset{\overline{\mathcal{L}}\setminus \{\overline{\Gamma}\},\mathbb{B}_{\alpha r}}{\longleftrightarrow} \overline{\Gamma} \circ \overline{\Gamma} \overset{\overline{\mathcal{L}}\setminus \{\overline{\Gamma}\}}{\longleftrightarrow} \partial \mathbb{B}_{r}.
    \end{align*}

    We say $z$ is \textbf{$(K,K^{\prime})$ over-counted}, if there exists discrete loops $\Gamma,\Gamma_{1},\Gamma_{2}\in \mathcal{L}$ and $u\in \mathbb{B}_{\alpha r}$, $v\in \mathbb{B}_{r}$ with $|z-u|\geq K^{\prime}$, $|z-v|\geq K^{\prime}$, $z\in \Gamma$, $|\Gamma|\leq K$, $u\sim \overline{\Gamma}_{1}$ and $v \sim \overline{\Gamma}_{2}$ such that 
    \begin{align*}
        0 \overset{\overline{\mathcal{L}}\setminus \{\overline{\Gamma},\overline{\Gamma}_{1},\overline{\Gamma}_{2}\},\mathbb{B}_{\alpha r}}{\longleftrightarrow} u \circ \overline{\Gamma}_{1}\overset{\overline{\mathcal{L}}\setminus \{\overline{\Gamma},\overline{\Gamma}_{1},\overline{\Gamma}_{2}\},\mathbb{B}_{\alpha r}}{\longleftrightarrow}\overline{\Gamma} \circ \overline{\Gamma} \overset{\overline{\mathcal{L}}\setminus \{\overline{\Gamma},\overline{\Gamma}_{1},\overline{\Gamma}_{2}\},\mathbb{B}_{r}}{\longleftrightarrow}\overline{\Gamma}_{2} \circ \overline{\Gamma}_{2}\overset{\overline{\mathcal{L}}\setminus \{\overline{\Gamma},\overline{\Gamma}_{1},\overline{\Gamma}_{2}\},\mathbb{B}_{r}}{\longleftrightarrow}\overline{\Gamma}_{1} \circ v\overset{\overline{\mathcal{L}}\setminus \{\overline{\Gamma},\overline{\Gamma}_{1},\overline{\Gamma}_{2}\}}{\longleftrightarrow} \partial \mathbb{B}_{r}.
    \end{align*}

    We say $z$ is \textbf{$x$-over-counted} for $x \in \mathbb{B}_{\alpha r}$ if there exists glued loops $\overline{\Gamma} , \overline{\Gamma}^{\prime}\in \mathcal{L}$  with $z \in \overline{\Gamma}$ and $x \in \overline{\Gamma}^{\prime}$ such that 
    \begin{align*}
        0\overset{\overline{\mathcal{L}}\setminus \{\overline{\Gamma} , \overline{\Gamma}^{\prime}\} }{\longleftrightarrow} x\circ \overline{\Gamma}^{\prime}\overset{\overline{\mathcal{L}}\setminus \{\overline{\Gamma} , \overline{\Gamma}^{\prime}\} }{\longleftrightarrow} \overline{\Gamma} \circ \overline{\Gamma}\overset{\overline{\mathcal{L}}\setminus \{\overline{\Gamma} , \overline{\Gamma}^{\prime}\} }{\longleftrightarrow} \partial \mathbb{B}_{r}\circ \overline{\Gamma}^{\prime}\overset{\overline{\mathcal{L}}\setminus \{\overline{\Gamma} , \overline{\Gamma}^{\prime}\} }{\longleftrightarrow}\partial \mathbb{B}_{r}.
    \end{align*}
    
    We say $z$ is not over-counted if it is neither $K$ over-counted, $(K, K^{\prime})$ over-counted nor $x$-over-counted for any $x \in \mathbb{B}_{\alpha r}$. Define 
    \begin{align*}
        N_{2}^{K,K^{\prime}} : = |\{z \in \mathbb{B}_{\alpha r}: \mathcal{T}_{r}(z) \text{ happens and $z$ is not over-counted}\}|.
    \end{align*}  
    For the similar reasons in the proof of lower bound in (\ref{1.4}) for GFF level-set, we know that for $K^{\prime}>3K$, 
    \begin{align*}
    N_{2}^{K,K^{\prime}} \leq C(K^{\prime})^{d}|\ell^{\alpha}_{0\rightarrow \partial \mathbb{B}_{r}}(\alpha)\cap \mathbb{B}_{\alpha^{2}r}|.
    \end{align*}

    Now we control the over-counting. By (\ref{5-59}), for any constant $\e>0$,there exists a large but fixed $K$, 
    \begin{align}\label{5-67}
        \sum_{z\in \mathbb{B}_{\alpha^{2} r}}\widetilde{\mathbb{P}}(z\text{ is $K$ over-counted}) &\leq \sum_{z\in \mathbb{B}_{\alpha^{2}r}}\sum_{\substack{\Gamma \ni z\\|\Gamma| \geq K}}\widetilde{\mathbb{P}}(\Gamma\in \mathcal{L})\widetilde{\mathbb{P}}(0\overset{\overline{\mathcal{L}}\setminus \{\overline{\Gamma}\},\mathbb{B}_{r}}{\longleftrightarrow} \overline{\Gamma} \circ \overline{\Gamma}\overset{\overline{\mathcal{L}}\setminus \{\overline{\Gamma}\}}{\longleftrightarrow} \partial \mathbb{B}_{r}) \leq \e.
    \end{align}

    Next, for the $(K,K^{\prime})$ over-counted, using a union bound and the BKR inequality, we have
    \begin{align*}
        &\sum_{z \in \mathbb{B}_{\alpha^{2}r}}\widetilde{\mathbb{P}}(z\text{ is $(K,K^{\prime})$ over-counted}) \nonumber\\ \leq &\sum_{z\in \mathbb{B}_{\alpha^{2} r}}\sum_{\substack{\Gamma \ni z\\ |\Gamma|\leq K}}\widetilde{\mathbb{P}}(\Gamma \in \mathcal{L})\sum_{\substack{u\in \mathbb{B}_{\alpha r}\\ |z-u|\geq K^{\prime}}}\sum_{\substack{v\in \mathbb{B}_{r}\\ |z-v|\geq K^{\prime}}}\sum_{s_{1},s_{2}\in \mathbb{B}_{r}}\sum_{\Gamma_{1}\sim u,s_{1},s_{2}}\sum_{w_{1},w_{2} \in \mathbb{B}_{r}}\sum_{\Gamma_{2} \sim v,w_{1},w_{2}}\widetilde{\mathbb{P}}(\Gamma_{1}\in \mathcal{L})\widetilde{\mathbb{P}}(\Gamma_{2}\in \mathcal{L})
    \end{align*}
    \begin{align}\label{5-68}
        \cdot \widetilde{\mathbb{P}}(0\leftrightarrow u)\widetilde{\mathbb{P}}(s_{1}\overset{\overline{\mathcal{L}}\setminus \{\overline{\Gamma}\}}{\longleftrightarrow} \overline{\Gamma})\widetilde{\mathbb{P}}(s_{2}\leftrightarrow w_{2})\widetilde{\mathbb{P}}(\overline{\Gamma}\overset{\overline{\mathcal{L}}\setminus \{\overline{\Gamma}\}}{\longleftrightarrow}  w_{1})\widetilde{\mathbb{P}}(v\leftrightarrow \partial \mathbb{B}_{r}).
    \end{align}
    Since for $\Gamma \ni z$ with $|\Gamma|\leq K$ and any $s_{3},w_{3}\sim \Gamma$, 
    \begin{align*}
        |s_{3}-z|\leq K+1,\; |w_{3}-z|\leq K+1.
    \end{align*}
    Then the two point function estimate \eqref{two point} implies that 
    \begin{align*}
        \widetilde{\mathbb{P}}(s_{3}\leftrightarrow s_{1}) \leq C\widetilde{\mathbb{P}}(z\leftrightarrow s_{1}),\; \widetilde{\mathbb{P}}(w_{3}\leftrightarrow w_{1}) \leq C\widetilde{\mathbb{P}}(z\leftrightarrow w_{2}).
    \end{align*}
    Thus, by applying a union bound, we get
    \begin{align}\label{5-69}
        &\sum_{\substack{\Gamma\ni z\\|\Gamma| \leq K}} \widetilde{\mathbb{P}}(\Gamma \in \mathcal{L})\widetilde{\mathbb{P}}(\overline{\Gamma} \overset{\overline{\mathcal{L}}\setminus \{\overline{\Gamma}\}}{\longleftrightarrow}  s_{1})\widetilde{\mathbb{P}}(\overline{\Gamma}\overset{\overline{\mathcal{L}}\setminus \{\overline{\Gamma}\}}{\longleftrightarrow}  w_{1}) \leq \sum_{\substack{\Gamma\ni z\\|\Gamma| \leq K}}\sum_{s_{3},w_{3} \sim \Gamma}\widetilde{\mathbb{P}}(\Gamma \in \mathcal{L})\widetilde{\mathbb{P}}(s_{3}\leftrightarrow s_{1})\widetilde{\mathbb{P}}(w_{3}\leftrightarrow w_{1}) \nonumber\\ &\leq C \widetilde{\mathbb{P}}(z\leftrightarrow s_{1})\widetilde{\mathbb{P}}(z\leftrightarrow w_{1})\sum_{\substack{\Gamma\ni z\\|\Gamma| \leq K}}|\Gamma|^{2}\widetilde{\mathbb{P}}(\Gamma \in \mathcal{L}) \overset{(\text{Lemma \ref{loop one point}})}{\leq} CK^{3-d/2}\widetilde{\mathbb{P}}(z\leftrightarrow s_{1})\widetilde{\mathbb{P}}(z\leftrightarrow w_{1}).
    \end{align}
    Applying Lemma (\ref{5:53}), we obtain
    \begin{align}\label{5-70}
        \sum_{s_{1},s_{2}\in \mathbb{B}_{r}}\sum_{\Gamma_{1}\sim u,s_{1},s_{2}}\widetilde{\mathbb{P}}(\Gamma_{1}\in \mathcal{L})\widetilde{\mathbb{P}}(s_{1}\leftrightarrow z)\widetilde{\mathbb{P}}(s_{2}\leftrightarrow w_{2}) \leq C|u|^{2-d}|u-w_{2}|^{2-d}.
    \end{align} 
    Furthermore, by Lemma \ref{lem 5.8}, we have
    \begin{align}\label{5-71}
        \sum_{v,w_{1}\in \mathbb{B}_{r}}\sum_{\Gamma_{2}\sim v,w_{1},w_{2}}\widetilde{\mathbb{P}}(\Gamma_{2}\in \mathcal{L})\widetilde{\mathbb{P}}(z \leftrightarrow w_{1})\widetilde{\mathbb{P}}(v\leftrightarrow \partial \mathbb{B}_{r})\widetilde{\mathbb{P}}(u\leftrightarrow v) \leq C |z-w_{2}|^{2-d} d^{\text{ext}}(w_{2}, \partial \mathbb{B}_{r})^{-2}.
    \end{align}
    Substituting (\ref{5-69})-(\ref{5-71}) into (\ref{5-68}) we get 
    \begin{align}\label{5-72}
        &\sum_{z \in \mathbb{B}_{\alpha^{2}r}}\widetilde{\mathbb{P}}(z\text{ is $(K,K^{\prime})$ over-counted}) \nonumber\\ &\leq C\sum_{z \in \mathbb{B}_{\alpha^{2}r}}\sum_{\substack{u\in \mathbb{B}_{\alpha r}\\ |z-u|\geq K^{\prime}}}\sum_{w_{2}\in \mathbb{B}_{r}}|u|^{2-d}|u-z|^{2-d}|u-w_{2}|^{2-d}|w_{2}-z|^{2-d}d^{\text{ext}}(w_{2}, \partial \mathbb{B}_{r})^{-2} \nonumber\\
        &\overset{(\ref{bond computation 2.5})}{\leq} C \alpha^{2}\e + C\alpha^{6}.
    \end{align}

    The last thing is to control the $x$ over-counting for $x\in \mathbb{B}_{\alpha r}$. By a union bound 
    \begin{align*}
        &\widetilde{\mathbb{P}}(z \text{ is $x$-over-counted}) \\ 
        &\leq \sum_{\Gamma \sim z} \sum_{u\in \mathbb{B}_{\alpha r}}\sum_{v \in \mathbb{B}_{r}}\sum_{\substack{\Gamma^{\prime}\ni x\\ \Gamma^{\prime} \sim u,v}}\widetilde{\mathbb{P}}(\Gamma\in \mathcal{L})\widetilde{\mathbb{P}}(\Gamma^{\prime}\in \mathcal{L})\widetilde{\mathbb{P}}(0\leftrightarrow x)\widetilde{\mathbb{P}}(u\overset{\overline{\mathcal{L}}\setminus \{\overline{\Gamma}\}}{\longleftrightarrow} \overline{\Gamma})\widetilde{\mathbb{P}}(\overline{\Gamma}\overset{\overline{\mathcal{L}}\setminus \{\overline{\Gamma}\}}{\longleftrightarrow} \partial \mathbb{B}_{r})\widetilde{\mathbb{P}}(v\leftrightarrow \partial \mathbb{B}_{r}).
    \end{align*}
   By Lemma \ref{lem 5.8}, for any $u\in \mathbb{B}_{\alpha r}$ and $z\in \mathbb{B}_{\alpha^{2}r}$,
   \begin{align*}
    \sum_{\Gamma\in z}\widetilde{\mathbb{P}}(u\overset{\overline{\mathcal{L}}\setminus \{\overline{\Gamma}\}}{\longleftrightarrow} \overline{\Gamma})\widetilde{\mathbb{P}}(\overline{\Gamma}\overset{\overline{\mathcal{L}}\setminus \{\overline{\Gamma}\}}{\longleftrightarrow} \partial \mathbb{B}_{r}) \leq C |u-z|^{2-d}d^{\text{ext}}(z,\partial \mathbb{B}_{r})^{-2}\leq Cr^{-2}|u-z|^{2-d}.
   \end{align*}
   The last inequality is because $d^{\text{ext}}(z,\partial \mathbb{B}_{r}) \geq (1-\alpha^{2})r$.
   Thus,
   \begin{align*}
    &\sum_{z\in \mathbb{B}_{\alpha^{2}r}}\widetilde{\mathbb{P}}(z \text{ is $x$-over-counted})   \\ \leq &C r^{-2}\sum_{u\in \mathbb{B}_{\alpha r}}\sum_{v \in \mathbb{B}_{r}}\sum_{\substack{\Gamma^{\prime}\ni x\\ \Gamma^{\prime} \sim u,v}}\widetilde{\mathbb{P}}(\Gamma^{\prime} \in \mathcal{L})\widetilde{\mathbb{P}}(0\leftrightarrow x)\widetilde{\mathbb{P}}(v\leftrightarrow \partial \mathbb{B}_{r})\sum_{z \in \mathbb{B}_{\alpha^{2}r}}|u-z|^{2-d}  \\ \overset{(\ref{2.4})}{\leq}&C r^{-2} \cdot \alpha^{4}r^{2}\sum_{u\in \mathbb{B}_{\alpha r}}\sum_{v \in \mathbb{B}_{r}}\sum_{\substack{\Gamma^{\prime}\ni x\\ \Gamma^{\prime} \sim u,v}}\widetilde{\mathbb{P}}(\Gamma^{\prime} \in \mathcal{L})\widetilde{\mathbb{P}}(0\leftrightarrow x)\widetilde{\mathbb{P}}(v\leftrightarrow \partial \mathbb{B}_{r}).
   \end{align*}
   By Lemma \ref{loop three points}, we get
   \begin{align*}
    \sum_{\substack{\Gamma^{\prime}\ni x\\ \Gamma^{\prime} \sim u,v}}\widetilde{\mathbb{P}}(\Gamma^{\prime} \in \mathcal{L}) \leq C |u-x|^{2-d}|x-v|^{2-d}|v-u|^{2-d}.
   \end{align*}
   Finally summing over $x\in \mathbb{B}_{\alpha r}$ we get 
   \begin{align*}
    &\sum_{x\in \mathbb{B}_{\alpha r}}\sum_{z\in \mathbb{B}_{\alpha^{2}r}}\widetilde{\mathbb{P}}(z \text{ is $x$-over-counted}) \\ \leq 
    &C \alpha^{4}\sum_{x\in \mathbb{B}_{\alpha r}}\sum_{u\in \mathbb{B}_{\alpha r}}\sum_{v\in \mathbb{B}_{r}}|u-x|^{2-d}|x-v|^{2-d}|v-u|^{2-d}|x|^{2-d}d^{\text{ext}}(v,\partial \mathbb{B}_{r})^{-2}.
   \end{align*}
   By a computation similar to (\ref{5-72}), we have the following estimate:
   \begin{align*}
    \sum_{x\in \mathbb{B}_{\alpha r}}\sum_{u\in \mathbb{B}_{\alpha r}}\sum_{v\in \mathbb{B}_{r}}|u-x|^{2-d}|x-v|^{2-d}|v-u|^{2-d}|x|^{2-d}d^{\text{ext}}(v,\partial \mathbb{B}_{r})^{-2} \leq C \alpha^{2}.
   \end{align*}
   Thus, it follows that
   \begin{align}\label{5-73}
    \sum_{x\in \mathbb{B}_{\alpha r}}\sum_{z\in \mathbb{B}_{\alpha^{2}r}}\widetilde{\mathbb{P}}(z \text{ is $x$-over-counted}) \leq C\alpha^{4}\cdot \alpha^{2} \leq C\alpha^{6}.
   \end{align}
   Combining (\ref{5-67}), (\ref{5-72}) and (\ref{5-73}), we get
   \begin{align}\label{5-74}
    \sum_{z\in \mathbb{B}_{\alpha^{2}r}}\widetilde{\mathbb{P}}(\mathcal{T}_{r}^{\alpha}(z)) &\leq \widetilde{\mathbb{E}}[N^{K,K^{\prime}}_{2}] + \sum_{z\in \mathbb{B}_{\alpha^{2} r}}\widetilde{\mathbb{P}}(z\text{ is $K$ over-counted}) \nonumber\\ &+\sum_{z \in \mathbb{B}_{\alpha^{2}r}}\widetilde{\mathbb{P}}(z\text{ is $(K,K^{\prime})$ over-counted})+\sum_{x\in \mathbb{B}_{\alpha r}}\sum_{z\in \mathbb{B}_{\alpha^{2}r}}\widetilde{\mathbb{P}}(z \text{ is $x$-over-counted}) \nonumber\\ &\leq \widetilde{\mathbb{E}}[N^{K,K^{\prime}}_{2}] + \e + C \alpha^{2}\e +C\alpha^{6}+ C \alpha^{6}.
   \end{align}
   By Lemma \ref{lem 5.7}, for sufficiently small $\alpha$, we have the lower bound.
   \begin{align*}
    \sum_{z\in \mathbb{B}_{\alpha^{2}r}}\widetilde{\mathbb{P}}(\mathcal{T}^{\alpha}_{r}(z)) \geq C \alpha^{4}.
   \end{align*}
   Plugging this into (\ref{5-74}), we obtain 
   \begin{align*}
    \widetilde{\mathbb{E}}[N^{K,K^{\prime}}_{2}] \geq c \alpha^{4} - 2\e - C \alpha^{6}.
   \end{align*}
   Now, by choosing $\e = \alpha^{6}$ and selecting $\alpha$ small enough such that $c\alpha^{4} - (2+C)\alpha^{6} \geq \frac{1}{2} \alpha^{4}$, we have 
   \begin{align*}
    C(K^{\prime})^{d}|\widetilde{\mathbb{E}}[\ell^{\alpha}_{0\rightarrow \partial \mathbb{B}_{r}}(\alpha)\cap \mathbb{B}_{\alpha^{2}r}|]\geq
    \widetilde{\mathbb{E}}[N^{K,K^{\prime}}_{2}] \geq \frac{1}{2}\alpha^{4}.
   \end{align*}
   Here $K^{\prime}$ and $\alpha$ are fixed constants, thus we finish the proof.
\end{proof}

\begin{proof}[Proof of (\ref{1.5})]Given the above input, the proof of \eqref{1.5} now follows along the same lines as the proof of \eqref{1.4} and is omitted. 
\end{proof}

\section{Intrinsic one-arm exponent}\label{one-arm}

In this section we prove Theorem \ref{thm 1.4}. We start with the upper bound which doesn't directly rely on our other results. However, the novel idea here is to compare the intrinsic and extrinsic metrics, a theme prevalent in our discussions throughout the paper. 

\subsection{Proof of the upper bound in Theorem \ref{thm 1.4}}

While some parts of the argument will essentially be the same as  the proof of \cite[Proposition 5.1]{ganguly2024ant}, to avoid repetition we will keep them brief, instead focusing on emphasizing the new observation that allows us to prove the bound for all $d>6$ as opposed to \cite{ganguly2024ant} where the bound was established only for $d>20.$

Before diving into our proof, we first review the argument from \cite{kozma2009alexander} in the bond case, and the new observations made in \cite{ganguly2024ant} which works for $d>20$ in the loop case. In the bond case, using a prior bound of Aizenman-Barsky \cite{barsky1991percolation}, one has $\mathbb{P}(|\mathcal{C}(0)| \geq \e r^{2}) \lesssim \frac{1}{r}$ (where recall that $\cC(0) $ is the connected component of $0$). Then one splits the event into two cases. First, if $|\mathcal{C}(0)| \geq \e r^{2}$, then the Aizenman-Barsky bound is in play. If $|\mathcal{C}(0)| \leq \e r^{2}$, by pigeonhole principle there is some $i\in [r/2,2]$ such that the number of points at (chemical) distance exactly $i$ is less than $2\e r$. Then one can reveal the cluster of the origin till the first such $i$, say $\tau$. Then by an inductive argument one obtains 
\begin{align}\label{bond recursion}
    \mathbb{P}(\partial B(0,2r)\neq \emptyset \mid B(0,\tau)) \leq 2\e r p_{r},
\end{align}
where we use the notation $p_{r}:=\mathbb{P}(\partial B(0,r) \neq \emptyset)$. This leads to a recursion of the form $p_{2r} \lesssim 2\e rp_{r}^{2} + \frac{1}{r}$ which implies that $p_{r}\lesssim \frac{1}{r}$. 


{Such a stopping domain argument fails in the loop model in the presence of potentially long loops. To do this an averaging argument was employed in \cite{ganguly2024ant} to bound the probability that at least a good fraction of the spheres (i.e. points at chemical distance exactly $i$ for some $i\in [r/2,r]$) intersect large loops.}
A crucial input in that argument was the volume estimate $\widetilde{\mathbb{E}}[|B(0,r)|]\lesssim r$ which was obtained by an \textsf{ITE} argument for $d>20$ in \cite{ganguly2024ant}.

{Unfortunately, this argument breaks down for $6<d\leq 20$ because of the absence of the volume estimate. The novel idea we introduce is to compare the intrinsic one-arm event with extrinsic one-arm event. Namely, we aim to use the quadratic relating $\{\partial B(0,r) \neq \emptyset\}$ to $\{0\leftrightarrow \partial \mathbb{B}_{\sqrt{r}}\} $. More precisely, we split $\{\partial B(0,r) \neq \emptyset\}$ into two cases: }
\begin{itemize}
\item If $B(0,r) \cap \mathbb{B}_{\sqrt{r}}^{c} \neq \emptyset$, then obviously $0\leftrightarrow \partial \mathbb{B}_{\sqrt{r}}$ occurs and we can bound this case using $\pi_{1}(\sqrt{r})\lesssim \frac{1}{r}$. 
\item {If $B(0,r) \subseteq \mathbb{B}_{\sqrt{r}}$, then we can simply replace all the intrinsic connections $\overset{r}{\longleftrightarrow}$ by $\overset{\mathbb{B}_{r}}{\longleftrightarrow}$. Thus, we can avoid using the estimate for the intrinsic ball and simply apply $\widetilde{\mathbb{E}}[|\mathcal{C}(0)\cap\mathbb{B}_{\sqrt{r}}|]\lesssim r$, which a straightforward consequence of the two point function \eqref{two point}.}
\end{itemize}

Now we are in a position to prove the upper bound in Theorem \ref{thm 1.4}. First we introduce some useful notations from \cite[Section 5]{ganguly2024ant}.
For any collection of loops $\widetilde{\mathscr{S}}$ on $\widetilde{\mathbb{Z}}^{d}$ and $w,v \in \mathbb{Z}^{d}$, let $d(w,v;\widetilde{\mathscr{S}})$ be the length of the shortest path from $u$ to $v$, only using loops in $\widetilde{\mathscr{S}}$. Define the $r$-ball and $r$-sphere w.r.t. $\widetilde{\mathscr{S}}$ as follows: For $v \in \mathbb{Z}^{d}$ and $r \in \mathbb{N}$, 
\begin{align*}
\widetilde{B}(v,r;\widetilde{\mathscr{S}})= \{ w\in \mathbb{Z}^{d}: d(w,v;\widetilde{\mathscr{S}}) \leq r\}, \quad  \partial B(v,r;\widetilde{\mathscr{S}})= \{ w\in \mathbb{Z}^d: d(w,v;\widetilde{\mathscr{S}}) =r\}.
\end{align*}

The set of loops $\widetilde{\mathscr{S}}$ we will need to encounter will all satisfy the following properties.

\begin{assumption}[Assumption 1 in \cite{ganguly2024ant}] \label{topological assumption}
    The collection $\widetilde{\mathscr{S}}$ will contain fundamental, point loops and all possible glued loops associated with all but finitely many edges, i.e. the entire support of the glued edge loop for those edges. For any of the edges whose glued edges are not included, there is at least one of its incident points such that no point loop corresponding to that point, or fundamental loop passing through that point is included.
\end{assumption}
For our applications, it will suffice to restrict to classes of loops with the above property. In fact, the entire set of loops in consideration satisfies the above property. \\

Let $\widetilde{\mathcal{L}}_{\widetilde{\mathscr{S}}}$ be the loop soup $\widetilde{\mathcal{L}}$ restricted to $\widetilde{\mathscr{S}}$. Set 
\begin{align*}
\Lambda\left(r\right):= \sup_{v,\widetilde{\mathscr{S}}}\widetilde{\mathbb{P}}(\partial B(v,r;\widetilde{\mathcal{L}}_{\widetilde{\mathscr{S}}}) \neq \emptyset),
\end{align*}
where the supremum is taken over all vertices $v\in \mathbb{Z}^{d}$ and (deterministic) collections $\widetilde{\mathscr{S}}$ of loops satisfying Assumption \ref{topological assumption}. We will prove the following proposition.

\begin{proposition}\label{one arm 2}
For $d>6$, there exists $C\left(d\right)>0$ such that for any $r \in \mathbb{N}$
\begin{align*}
    \Lambda\left(r\right) \leq C r^{-1}.
\end{align*}
\end{proposition}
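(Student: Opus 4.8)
The plan is to establish Proposition \ref{one arm 2} by an induction on dyadic scales, following the skeleton of the Kozma--Nachmias argument from \cite{kozma2009alexander}, but replacing the volume input by a comparison between the intrinsic and extrinsic metrics exactly as sketched in the paragraph above. Write $\Lambda(r) := \sup_{v,\widetilde{\mathscr{S}}} \widetilde{\mathbb{P}}(\partial B(v,r;\widetilde{\mathcal{L}}_{\widetilde{\mathscr{S}}}) \neq \emptyset)$; by translation invariance it suffices to take $v = 0$. We aim to prove a recursion of the schematic form $\Lambda(2r) \leq C\varepsilon\, r\, \Lambda(r)^2 + C/r$ for a small but fixed $\varepsilon$, which, fed into the standard iteration lemma (see \cite[Lemma 2.1]{kozma2009alexander} or the analogous step in \cite[Section 5]{ganguly2024ant}), yields $\Lambda(r) \leq Cr^{-1}$.

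First I would set up the split of the event $\{\partial B(0,2r;\widetilde{\mathcal{L}}_{\widetilde{\mathscr{S}}}) \neq \emptyset\}$ according to whether the intrinsic ball $B(0,2r;\widetilde{\mathcal{L}}_{\widetilde{\mathscr{S}}})$ escapes the Euclidean box $\mathbb{B}_{\sqrt{r}}$ or not. On the event $\{B(0,2r;\widetilde{\mathcal{L}}_{\widetilde{\mathscr{S}}})\cap \mathbb{B}_{\sqrt{r}}^{c} \neq \emptyset\}$ the origin is Euclidean-connected to $\partial\mathbb{B}_{\sqrt{r}}$ (using only loops in $\widetilde{\mathscr{S}}$, hence certainly using all loops), so this part is bounded by $\pi_1(\sqrt{r}) \leq C r^{-1}$ via Lemma \ref{lem 2.3}. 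On the complementary event $\{B(0,2r;\widetilde{\mathcal{L}}_{\widetilde{\mathscr{S}}}) \subseteq \mathbb{B}_{\sqrt{r}}\}$ all the intrinsic connections relevant to reaching chemical distance $2r$ take place inside $\mathbb{B}_{\sqrt{r}}$, so every occurrence of $\overset{r'}{\longleftrightarrow}$ can be replaced by $\overset{\mathbb{B}_{\sqrt{r}}}{\longleftrightarrow}$. This is the crucial point: it lets us run the pigeonhole/stopping-domain argument while only ever needing the \emph{confined} volume estimate $\widetilde{\mathbb{E}}[|\mathcal{C}(0)\cap\mathbb{B}_{\sqrt{r}}|] \leq \sum_{y\in\mathbb{B}_{\sqrt r}}\widetilde{\mathbb{P}}(0\leftrightarrow y) \leq C\sum_{y\in\mathbb{B}_{\sqrt r}}|y|^{2-d} \leq Cr$, which is immediate from the two-point bound \eqref{two point} and \eqref{2.4}, rather than the genuine intrinsic volume bound $\widetilde{\mathbb{E}}[|B(0,r)|]\lesssim r$ which is only available for $d>20$.

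Next I would carry out the stopping-domain step on the confined event, mirroring \cite[Section 5]{ganguly2024ant}. Since $\widetilde{\mathbb{E}}[|B(0,2r;\widetilde{\mathcal{L}}_{\widetilde{\mathscr{S}}})\cap\mathbb{B}_{\sqrt r}|] \leq Cr$, with probability close to $1$ (conditionally on $\partial B(0,r) \neq \emptyset$, up to a factor controlled by $\Lambda(r)$) there is a radius $\tau \in [r/2, r]$ at which $|\partial B(0,\tau;\widetilde{\mathcal{L}}_{\widetilde{\mathscr{S}}})| \leq \varepsilon r$; one must also control the probability that one of these $O(r)$ intrinsic spheres intersects a loop of diameter larger than, say, $r^{1/10}$, which is handled by the averaging argument of \cite{ganguly2024ant} combined with the same confined volume bound and the discrete loop estimates of Section \ref{pre} (Lemmas \ref{loop one point}, \ref{loop two points}). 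On the complement of these bad events one reveals $B(0,\tau;\widetilde{\mathcal{L}}_{\widetilde{\mathscr{S}}})$ as a stopping domain; by the Markov-type property for the modified exploration (as in Remark \ref{Markov} and the analogous setup in \cite{ganguly2024ant}), conditionally on this domain the event $\{\partial B(0,2r) \neq \emptyset\}$ requires one of the $\leq \varepsilon r$ points on $\partial B(0,\tau)$ to reach chemical distance $r$ using a fresh collection of loops still satisfying Assumption \ref{topological assumption}, each with probability $\leq \Lambda(r)$. A union bound gives the $C\varepsilon r \Lambda(r)^2$ term, and assembling the two cases produces the recursion. Finally one invokes the iteration lemma to conclude $\Lambda(r) \leq Cr^{-1}$.

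The main obstacle I anticipate is the bookkeeping in the stopping-domain step: one must verify that after revealing $B(0,\tau;\widetilde{\mathcal{L}}_{\widetilde{\mathscr{S}}})$ the unrevealed loops again form a collection obeying Assumption \ref{topological assumption} (so that $\Lambda(r)$ may legitimately be applied), and one must ensure that the confinement to $\mathbb{B}_{\sqrt r}$ is preserved under this conditioning, since the exploration could a priori reach chemical distance $2r$ only by first leaving $\mathbb{B}_{\sqrt r}$ — but that scenario is precisely the first case already absorbed into $\pi_1(\sqrt r)$. Making this dichotomy airtight, together with the quantitative control of the ``large loop on a sphere'' event uniformly over $\widetilde{\mathscr{S}}$, is where the real work lies; the rest is a faithful transcription of the bond-percolation and $d>20$ loop-soup arguments.
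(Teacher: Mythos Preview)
Your overall architecture---split according to whether the intrinsic ball escapes $\mathbb{B}_{\sqrt r}$, and on the confined event replace intrinsic constraints by extrinsic ones---is exactly the new idea the paper introduces (cases $\mathsf{C}_2$, $\mathsf{C}_4$, $\mathsf{C}_5$). However, there is a genuine gap in how you handle the ``no sparse sphere'' case, and it prevents the recursion from closing.

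You propose to control $\widetilde{\mathbb{P}}(\text{every }i\in[r/2,r]\text{ has }|\partial B(0,i)|>\varepsilon r)$ by Markov's inequality applied to the first moment $\widetilde{\mathbb{E}}[|\mathcal{C}(0)\cap\mathbb{B}_{\sqrt r}|]\le Cr$. This yields only a bound of order $C/(\varepsilon r)$, so your recursion reads
\[
\Lambda(2r)\ \le\ \frac{C_0}{\varepsilon r}\ +\ C_1\,\varepsilon\, r\,\Lambda(r)^2\ +\ (\text{lower order}).
\]
Assuming inductively $\Lambda(r)\le D/r$ and optimizing in $\varepsilon$ gives $\varepsilon\asymp 1/D$ and a right-hand side of size $2\sqrt{C_0C_1}\,D/r$; one then needs $2\sqrt{C_0C_1}\le 1/2$, a constraint on absolute constants that is not at your disposal. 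The recursion simply does not iterate to $\Lambda(r)\lesssim r^{-1}$.

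The paper does \emph{not} rely on the confined first moment for this step. Instead it invokes the Aizenman--Barsky--type cluster tail $\widetilde{\mathbb{P}}(|\mathcal{C}(0)|\ge M)\le CM^{-1/2}$ (Lemma~\ref{cluster upper}, quoted from \cite{cai2023one}) as a separate case $\mathsf{C}_1$, yielding $C/(\sqrt{\varepsilon}\,r)$ rather than $C/(\varepsilon r)$. With the square root, optimizing gives a right-hand side $\sim D^{2/3}/r$, and the induction closes for $D$ large. So the missing ingredient is precisely this $M^{-1/2}$ tail bound on the full cluster size; the confined volume bound you cite is used elsewhere (in bounding $\mathsf{C}_4$ via Proposition~\ref{large loop 2}), but it cannot replace the Aizenman--Barsky input in the pigeonhole step. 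Your remarks about verifying Assumption~\ref{topological assumption} after exploration and about averaging over large loops are on target and are handled in the paper by Lemma~\ref{lem 6.5} and the $\mathsf{C}_4/\mathsf{C}_5$ analysis respectively.
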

The upper bound in (\ref{1.4}) can be directly deduced from Proposition \ref{one arm 2} by taking $\widetilde{\mathscr{S}}$ to be the collection of all loops and setting $v=0$.

\begin{proof}[Proof of Proposition \ref{one arm 2}]
We begin with a bit of a set up mimicking \cite[Section 5.2]{ganguly2024ant}.
By translation invariance, 
\begin{align*}
\widetilde{\mathbb{P}}(\partial B(v,r;\widetilde{\mathcal{L}}_{\widetilde{\mathscr{S}}}) \neq \emptyset) = \widetilde{\mathbb{P}}(\partial B(0,r;\widetilde{\mathcal{L}}_{\widetilde{\mathscr{S}}-v}) \neq \emptyset),
\end{align*}
where $\widetilde{\mathscr{S}}-v$ denotes the collection of loops in $\widetilde{\mathscr{S}}$ shifted by $v$. It's easy to check that $\widetilde{\mathscr{S}}-v$ also satisfies Assumption \ref{topological assumption}. Hence, it suffices to bound 
\begin{align*}
\Lambda(r) = \sup_{\widetilde{\mathscr{S}}} \widetilde{\mathbb{P}}(\partial B(0,r;\widetilde{\mathcal{L}}_{\widetilde{\mathscr{S}}}) \neq \emptyset).
\end{align*}
We aim to prove a recursive inequality for $\Lambda(\cdot)$. Precisely, we consider $r= 3^{k}$ for $k\in \mathbb{N}$ and will establish a relation between $\Lambda(3^{k-1})$ and $\Lambda(3^{k})$. 

For $i \in \mathbb{N}$, let $\widetilde{\mathcal{L}}_{\widetilde{\mathscr{S}}}^{\leq i}$ (resp. $\widetilde{\mathcal{L}}_{\widetilde{\mathscr{S}}}^{i}$) be the collection of loops in $\widetilde{\mathcal{L}}_{\widetilde{\mathscr{S}}}$ which intersect $\widetilde{B}(0,i;\widetilde{\mathcal{L}}_{\widetilde{\mathscr{S}}})$ (resp. $\partial B(0,i;\widetilde{\mathcal{L}}_{\widetilde{\mathscr{S}}})$). We also define $\mathcal{L}_{\widetilde{\mathscr{S}}}^{\leq i}$ (resp. $\mathcal{L}_{\widetilde{\mathscr{S}}}^{i}$) as the corresponding discrete loop soup for $\widetilde{\mathcal{L}}_{\widetilde{\mathscr{S}}}^{\leq i}$ ((resp. $\widetilde{\mathcal{L}}_{\widetilde{\mathscr{S}}}^{i}$)).
In addition, we say that an edge $e$ is within $\widetilde{\mathscr{S}}$-level $i$ if it intersects some loop in $\widetilde{\mathcal{L}}_{\widetilde{\mathscr{S}}}^{\leq i}$.
Let $\widetilde{\mathcal{C}}_{\widetilde{\mathscr{S}}}(0)$ be the cluster containing the origin with respect to $\widetilde{\mathcal{L}}_{\widetilde{\mathscr{S}}}$. For any $\e >0$, define 
\begin{align*}
    I := \{i \in \big[3^{k-1},3^{k}/2\big]: |\partial B(0,i;\widetilde{\mathcal{L}}_{\widetilde{\mathscr{S}}})| \leq 12 \e \cdot 3^{k}\}.
\end{align*}
If $|\widetilde{\mathcal{C}}_{\widetilde{\mathscr{S}}}(0)| \leq \e 9^{k}$, as 
\begin{align*}
|\widetilde{\mathcal{C}}_{\widetilde{\mathscr{S}}}(0)| \geq \sum_{i= 3^{k-1}}^{3^{k}/2} |\partial B(0,i;\widetilde{\mathcal{L}}_{\widetilde{\mathscr{S}}})|,
\end{align*}
by averaging the right-hand side above we have
\begin{align} \label{5.2}
    |I|\geq 3^{k-1}/4.
\end{align}

Under the event $\{\partial B(0,3^{k};\widetilde{\mathcal{L}}_{\widetilde{\mathscr{S}}}) \neq \emptyset\}$, let $\ell= (e_{1},\cdots,e_{3^{k}})$ be an $\widetilde{\mathcal{L}}_{\widetilde{\mathscr{S}}}$-geodesic (i.e. a shortest path in $\widetilde{\mathcal{L}}_{\widetilde{\mathscr{S}}}$) from the origin to $\partial B(0,3^{k};\widetilde{\mathcal{L}}_{\widetilde{\mathscr{S}}})$. For each level $i\in\big[3^{k-1},3^{k}/2\big]$, set 
\begin{align*}
   J(i):= \max \{1 \leq m\leq 3^{k}: e_{m}\text{ is within $\widetilde{\mathscr{S}}$-level $i$}\}. 
\end{align*}
Then define 
\begin{align*}
T_{i} := \{\text{Loops in $\widetilde{\mathcal{L}}^{i}_{\widetilde{\mathscr{S}}}$ intersecting the edge $e_{J(i)}$}\}.
\end{align*}
As discussed in \cite{ganguly2024ant}, $T_{i}$ is not empty. A remark here is that the recursive inequality obtained by inductive step will depend on the size of loops in $T_{i}$, and we will bound the contribution of levels $i$ such that the loops in $T_{i}$ are small or large separately.
For a large constant $L>0$ which will be chosen later, partition the set $I$ into 
\begin{align*}
    &I_{1}:= \{i \in I : |\widetilde{\Gamma}|<L , \forall \widetilde{\Gamma} \in T_{i}\},\\
    &I_{2}:=t\{i \in I : \exists \widetilde{\Gamma} \in T_{i}\text{ such that } |\widetilde{\Gamma}| \geq L\},
\end{align*}
where $|\widetilde{\Gamma}| : = |\textsf{Trace}(\widetilde{\Gamma})|$. In other words, $I_{1}$
 is the collection of levels in $I$ such that \textbf{no} loop in $\widetilde{\mathcal{L}}^{i}_{\widetilde{\mathscr{S}}}$ intersecting $e_{J(i)}$ is large. We need to define one final event stating that no loop is too large.
 \begin{align*}
    \mathcal{G}_{\widetilde{\mathscr{S}}}:= \{\text{Every loop in $\widetilde{\mathcal{L}}_{\widetilde{\mathscr{S}}}^{\leq 3^{k}}$ has a size at most $\frac{3^{k-1}}{2}-1$}\}.
 \end{align*}
Now by the discussion above, for any $\widetilde{\mathscr{S}}$, when $\{\partial B(0,3^{k};\widetilde{\mathcal{L}}_{\widetilde{\mathscr{S}}}) \neq \emptyset\}$ occurs, one of the following events must happen: 
\begin{itemize}
\item $\mathsf{C}_{1}$: $|\widetilde{\mathcal{C}}_{\widetilde{\mathscr{S}}}(0)| \geq \e 9^{k}$;
\item $\mathsf{C}_{2}$: $\widetilde{B}(0,3^{k};\widetilde{\mathcal{L}}_{\widetilde{\mathscr{S}}}) \cap \mathbb{B}_{  3^{k/2}}^{c} \neq \emptyset$;
\item $\mathsf{C}_{3}$: $\{|I_{1}| \geq 3^{k-1}/8\} \cap \{\partial B(0,3^{k};\widetilde{\mathcal{L}}_{\widetilde{\mathscr{S}}}) \neq \emptyset\} \cap \mathcal{G}_{\widetilde{\mathscr{S}}}$;
\item $\mathsf{C}_{4}$: $\{|I_{2}| \geq 3^{k-1}/8\} \cap \{\partial B(0,3^{k};\widetilde{\mathcal{L}}_{\widetilde{\mathscr{S}}}) \neq \emptyset\} \cap \mathcal{G}_{\widetilde{\mathscr{S}}}\cap \{\mathsf{C}_{5}$: $\widetilde{B}(0,3^{k};\widetilde{\mathcal{L}}_{\widetilde{\mathscr{S}}}) \subseteq \mathbb{B}_{ 3^{k/2}}\}$;
\item $\mathsf{C}_{5}$: $\mathcal{G}^{c}_{\widetilde{\mathscr{S}}}\cap \{\widetilde{B}(0,3^{k};\widetilde{\mathcal{L}}_{\widetilde{\mathscr{S}}}) \subseteq \mathbb{B}_{  3^{k/2}}\}$,
\end{itemize}
where $\e$ will be chosen later. To get a recursive inequality, we need to bound the probabilities of these five events.
The arguments to bound $\P(\mathsf{C}_{1})$, $\P(\mathsf{C}_{2})$ and $\P(\mathsf{C}_{5})$ are short and we include them.  The argument to bound $\P(\mathsf{C}_{3})$ is long but is exactly the same as from \cite{ganguly2024ant} which will be omitted. Bounding $\P(\mathsf{C}_{4})$ will involve the new idea of the extrinsic to intrinsic comparison and we will provide the argument in detail.   

\textbf{For $\mathsf{C}_{1}$,} we use a crucial bound on the cluster size. This bound for bond percolation was proved in \cite{barsky1991percolation}. Here we need the version for $\widetilde{\mathcal{L}}$ proved in \cite{cai2023one}. 

\begin{lemma}[Proposition 6.6 in \cite{cai2023one}] \label{cluster upper}
For $d>6$, there exists $C(d)>0$ such that for any $r \in \mathbb{N}$,
\begin{align*}
\widetilde{\mathbb{P}}(|\mathcal{C}(0)|\geq M) \leq \frac{C}{r^{1/2}}.
\end{align*}
\end{lemma}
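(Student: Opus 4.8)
\textbf{Proof proposal for Lemma \ref{cluster upper}.} The statement is the loop-soup analogue of the Aizenman--Barsky bound $\widetilde{\mathbb{P}}(|\mathcal{C}(0)| \geq r^2) \lesssim r^{-1/2}$, i.e. $\widetilde{\mathbb{P}}(|\mathcal{C}(0)| \geq M) \lesssim M^{-1/4}$, and since it is quoted verbatim from \cite[Proposition 6.6]{cai2023one} the right course of action is simply to cite it; nonetheless, let me sketch how its proof goes so the reader sees where it comes from. The starting point is the sharp two-point bound of Lemma \ref{lem 2.5}, $\widetilde{\mathbb{P}}(x \leftrightarrow y) \asymp |x-y|^{2-d}$, together with the triangle condition \eqref{triangle}, both of which are already available to us. The plan is to run the standard differential-inequality / second-moment machinery that in bond percolation gives $\mathbb{P}(|\mathcal{C}(0)| \geq n) \lesssim n^{-1/2}$: one controls the susceptibility truncated at cluster size $n$ and uses the triangle condition to close a differential inequality in the style of Aizenman--Newman \cite{aizenman1984tree} and Barsky--Aizenman \cite{barsky1991percolation}.

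Concretely, the key input is an upper bound on $\widetilde{\mathbb{E}}[|\mathcal{C}(0)|\,\mathds{1}_{|\mathcal{C}(0)|\le n}]$ or equivalently on the tail generating function, obtained by a BKR/tree-expansion argument: expanding the event $\{0\leftrightarrow x,\ 0\leftrightarrow y\}$ around a loop (Lemma \ref{ete}) and summing using the loop estimates of Lemmas \ref{loop two points}--\ref{loop tree estimate}, one gets that the second moment of the truncated cluster size is controlled by the first moment squared times a constant coming from the triangle sum \eqref{triangle}. Feeding this into the Aizenman--Barsky differential inequality for the magnetization-type quantity $M(\gamma) = \widetilde{\mathbb{E}}[1-e^{-\gamma|\mathcal{C}(0)|}]$ yields $M(\gamma)\lesssim \gamma^{1/2}$, and a Markov/Tauberian argument then converts this into $\widetilde{\mathbb{P}}(|\mathcal{C}(0)|\ge n)\lesssim n^{-1/2}$; rewriting with $n = r^2$ (so that $M=n$) gives the stated form. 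The adaptations needed to pass from bond percolation to the loop soup — replacing edges by glued loops, using the BKR inequality of Lemma \ref{bkr} in place of BK, and tree-expanding around loops rather than pivotal edges — are exactly those already carried out in \cite{werner2021clusters} and \cite{cai2023one}, so no new ideas are required.

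The main (and only) obstacle is bookkeeping: one must verify that the loop-soup tree expansion produces error terms that are genuinely summable in $d>6$, which is precisely what the triangle condition \eqref{triangle} guarantees, and that the BKR inequality can be applied to the relevant connection events despite their not being finitary — a point already handled by Lemma \ref{bkr}. Since all of these ingredients are in place and the full argument appears in \cite{cai2023one}, I will not reproduce it; I record the statement as Lemma \ref{cluster upper} with a reference and move on to using it to bound $\widetilde{\mathbb{P}}(\mathsf{C}_1)$.
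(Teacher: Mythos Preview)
Your approach matches the paper's: the paper does not prove this lemma but simply cites \cite[Proposition 6.6]{cai2023one} and immediately applies it to bound $\widetilde{\mathbb{P}}(\mathsf{C}_1)$, exactly as you propose. One caveat: the statement as written contains a typo (the variable $r$ on the right-hand side should be $M$), and the intended bound is $\widetilde{\mathbb{P}}(|\mathcal{C}(0)|\ge M)\le C M^{-1/2}$ --- this is consistent with your own derivation $\widetilde{\mathbb{P}}(|\mathcal{C}(0)|\ge n)\lesssim n^{-1/2}$ and with the application $\widetilde{\mathbb{P}}(|\mathcal{C}(0)|\ge \varepsilon 9^k)\le C/(\sqrt{\varepsilon}\,3^k)$ in the paper, so your opening interpretation $M^{-1/4}$ and the final ``rewriting with $n=r^2$'' step should be dropped.
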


Notice that the event $\{|\widetilde{\mathcal{C}}_{\widetilde{\mathscr{S}}}(0)| \geq M\}$ is monotone in the collection of loops $\widetilde{\mathscr{S}}$ (with respect to the inclusion relation), by Proposition \ref{cluster upper} the probability of $\mathsf{C}_{1}$ can be bounded by 
\begin{align}\label{C_{1}}
\widetilde{\mathbb{P}}(\mathsf{C}_{1}) \leq \widetilde{\mathbb{P}}(|\mathcal{C}(0)| \geq \e 9^{k}) \leq \frac{C}{\sqrt{\e}3^{k}}.
\end{align}

\textbf{For $\mathsf{C}_{2}$}, we notice that $\mathsf{C}_{2}$ implies that $\{0\leftrightarrow \partial \mathbb{B}_{  3^{k/2}}\}$ occurs, then by Lemma \ref{lem 2.3} we have 
\begin{align}\label{C_{2}}
\widetilde{\mathbb{P}}(\mathsf{C}_{2}) \leq 
\widetilde{\mathbb{P}}(0\leftrightarrow \partial \mathbb{B}_{  3^{k/2}}) \leq \frac{C}{  3^{k}}.
\end{align}
Introducing $\mathsf{C}_{2}$ is a novel idea in the proof. In \cite{ganguly2024ant, kozma2009alexander}, the upper bound of the intrinsic one-arm exponent relied on the bound for the volume of the intrinsic ball, which we don't have in our case. To fix that, we introduce $\mathsf{C}_{2}$ to connect the extrinsic metric and intrinsic metric, so that we can apply the estimates for the extrinsic metric to get the intrinsic one arm exponent.

Before establishing bounds for $\widetilde{\P}(\mathsf{C}_{3})$-$\widetilde{\P}(\mathsf{C}_{5})$, we need a few useful inputs. The first one is a simple fact that facilitates the application of the averaging argument for $\mathsf{C}_{4}$. For any events $A_{1},\cdots, A_{m},B$ and $n,m\in \mathbb{N}$ with $n \leq m$, 
\begin{align} \label{5.5}
\mathbb{P}(\{\text{At least $n$ events among the events $A_1,\cdots,A_m$ occur}\} \cap B) \leq \frac{1}{n}\sum_{i=1}^{m}\mathbb{P}(A_{i}\cap B).
\end{align}

The following lemma from \cite{ganguly2024ant} characterizes the measure induced on the loops by conditioning. We omit the proof and interested readers are referred to \cite[Lemma 5.9 and Lemma 5.10]{ganguly2024ant}.

\begin{lemma}[Lemma 5.9 and Lemma 5.10 in \cite{ganguly2024ant}] \label{lem 6.5}
    Let $\widetilde{\mathscr{S}}$ be a collection of loops satisfying Assumption \ref{topological assumption}. For any $i\in \N$ and {a collection $\widetilde{\mathscr{A}}$ of loops in the support of $\widetilde{\mathcal{L}}_{\widetilde{\mathscr{S}}}^{\le i}$,  let $\widetilde{\mathscr{A}}^{(i)}$ be the collection of  loops which intersect the closure of $  \widetilde B(0,i;\widetilde{\mathscr{A}})$} (as a subset of $\widetilde{\mathbb{Z}}^{d}$). Then, for any $v\in \Z^d$ and $j >i$,
\begin{align} \label{57}
  \widetilde{\mathbb{P}}  ( \partial B(v,j;    \widetilde{\mathcal{L}}_{\widetilde{\mathscr{S}}} \setminus 
 \widetilde{\mathcal{L}}_{\widetilde{\mathscr{S}}}^{\le i}     ) \neq \emptyset  \mid \widetilde{\mathcal{L}}_{\widetilde{\mathscr{S}}}^{\le i} = \widetilde{\mathscr{A}}) &=\widetilde{\mathbb{P}}  ( \partial B(v,j;         \widetilde{\mathcal{L}}_{\widetilde{\mathscr{S}}}  \setminus {\widetilde{\mathscr{A}}^{(i)}} ) \neq \emptyset ) \leq \Lambda(j).
\end{align}
\end{lemma}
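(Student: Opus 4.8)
\textbf{Proof proposal for Lemma \ref{lem 6.5} (Lemma 5.9 and Lemma 5.10 of \cite{ganguly2024ant}).}

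The plan is to reduce the statement to an application of the thinning/restriction property of Poisson processes together with the observation that, once we condition on $\widetilde{\mathcal{L}}_{\widetilde{\mathscr{S}}}^{\le i}$, the intrinsic ball $\widetilde B(0,i;\widetilde{\mathcal{L}}_{\widetilde{\mathscr{S}}})$ is a deterministic function of the conditioning. First I would observe that the event $\{\widetilde{\mathcal{L}}_{\widetilde{\mathscr{S}}}^{\le i} = \widetilde{\mathscr{A}}\}$ determines $\widetilde B(0,i;\widetilde{\mathcal{L}}_{\widetilde{\mathscr{S}}})$: indeed the intrinsic ball up to radius $i$ can only be reached using loops which themselves intersect $\widetilde B(0,i-1;\widetilde{\mathcal{L}}_{\widetilde{\mathscr{S}}})$, hence using loops in $\widetilde{\mathcal{L}}_{\widetilde{\mathscr{S}}}^{\le i}$; so $\widetilde B(0,i;\widetilde{\mathcal{L}}_{\widetilde{\mathscr{S}}}) = \widetilde B(0,i;\widetilde{\mathscr{A}})$ on this event, and likewise the set $\widetilde{\mathscr{A}}^{(i)}$ of loops meeting the closure of that ball is deterministic. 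The key point is then that $\{\widetilde{\mathcal{L}}_{\widetilde{\mathscr{S}}}^{\le i} = \widetilde{\mathscr{A}}\}$ is measurable with respect to the restriction of $\widetilde{\mathcal{L}}_{\widetilde{\mathscr{S}}}$ to $\widetilde{\mathscr{A}}^{(i)}$ (an exploration/stopping-domain argument: one reveals the loops level by level, at each step only querying loops that touch the already-explored ball, and this procedure terminates having revealed exactly $\widetilde{\mathscr{A}}^{(i)}$). This is the analog of the stopping domain argument for bond percolation and of Remark \ref{Markov}; Assumption \ref{topological assumption} guarantees that the exploration is well-defined, i.e. that the relevant $\sigma$-algebra decomposition holds.

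Next I would invoke the thinning property of the Poisson point process $\widetilde{\mathcal{L}}_{\widetilde{\mathscr{S}}}$: conditionally on the restriction $\widetilde{\mathcal{L}}_{\widetilde{\mathscr{S}}} \cap \widetilde{\mathscr{A}}^{(i)}$, the complementary restriction $\widetilde{\mathcal{L}}_{\widetilde{\mathscr{S}}} \setminus \widetilde{\mathscr{A}}^{(i)}$ is an independent Poisson point process with the same intensity restricted to the loops outside $\widetilde{\mathscr{A}}^{(i)}$, namely it is distributed as $\widetilde{\mathcal{L}}_{\widetilde{\mathscr{S}}} \setminus \widetilde{\mathscr{A}}^{(i)}$ without conditioning. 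Since the exploration terminates by step $i$, the complementary process $\widetilde{\mathcal{L}}_{\widetilde{\mathscr{S}}} \setminus \widetilde{\mathcal{L}}_{\widetilde{\mathscr{S}}}^{\le i}$ coincides (on the event $\{\widetilde{\mathcal{L}}_{\widetilde{\mathscr{S}}}^{\le i} = \widetilde{\mathscr{A}}\}$) with $\widetilde{\mathcal{L}}_{\widetilde{\mathscr{S}}} \setminus \widetilde{\mathscr{A}}^{(i)}$. This gives the equality
\begin{align*}
\widetilde{\mathbb{P}}\big( \partial B(v,j;\widetilde{\mathcal{L}}_{\widetilde{\mathscr{S}}} \setminus \widetilde{\mathcal{L}}_{\widetilde{\mathscr{S}}}^{\le i}) \neq \emptyset \,\big|\, \widetilde{\mathcal{L}}_{\widetilde{\mathscr{S}}}^{\le i} = \widetilde{\mathscr{A}}\big) = \widetilde{\mathbb{P}}\big( \partial B(v,j;\widetilde{\mathcal{L}}_{\widetilde{\mathscr{S}}} \setminus \widetilde{\mathscr{A}}^{(i)}) \neq \emptyset\big).
\end{align*}
Finally, for the inequality $\le \Lambda(j)$, I would note that $\widetilde{\mathcal{L}}_{\widetilde{\mathscr{S}}} \setminus \widetilde{\mathscr{A}}^{(i)}$ is itself a loop soup restricted to a collection of loops $\widetilde{\mathscr{S}} \setminus \widetilde{\mathscr{A}}^{(i)}$; one checks this collection still satisfies Assumption \ref{topological assumption} (removing a finite cluster's worth of fundamental and point loops does not destroy the structure required, and one only removes finitely many glued edge loops, if any — Assumption \ref{topological assumption} was tailored precisely to be stable under such removals), so by the definition of $\Lambda$ as a supremum over such collections and over the base vertex, the probability on the right is at most $\Lambda(j)$.

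The main obstacle I anticipate is the careful bookkeeping in the exploration/measurability step: one has to verify that revealing the intrinsic ball level by level never requires querying a loop outside $\widetilde{\mathscr{A}}^{(i)}$, and that the resulting stopped $\sigma$-algebra is exactly generated by $\widetilde{\mathcal{L}}_{\widetilde{\mathscr{S}}} \cap \widetilde{\mathscr{A}}^{(i)}$, handling the subtlety that glued loops of edge type can connect vertices without passing through a lattice point — this is where Assumption \ref{topological assumption} is used and is the reason the statement is phrased in terms of the closure of the ball as a subset of $\widetilde{\mathbb{Z}}^d$ rather than just its lattice vertices. Since this exact statement is proved in \cite[Lemma 5.9 and Lemma 5.10]{ganguly2024ant} I would ultimately just cite it, but the proposal above is the self-contained route.
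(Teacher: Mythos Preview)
Your proposal is correct and captures exactly the argument behind this lemma: the exploration/stopping-domain measurability of $\{\widetilde{\mathcal{L}}_{\widetilde{\mathscr{S}}}^{\le i}=\widetilde{\mathscr{A}}\}$ with respect to $\widetilde{\mathcal{L}}_{\widetilde{\mathscr{S}}}\cap\widetilde{\mathscr{A}}^{(i)}$, Poisson thinning to identify the conditional law of the complement, and stability of Assumption~\ref{topological assumption} under removal of $\widetilde{\mathscr{A}}^{(i)}$ to invoke the definition of $\Lambda(j)$. The paper itself does not prove this lemma at all --- it simply cites \cite[Lemmas 5.9 and 5.10]{ganguly2024ant} and omits the argument --- so your sketch is in fact more than what the paper provides, and matches the approach of the cited source.
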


The last result we need is the following proposition, which excludes the existence of a large discrete loop. This proposition is a Euclidean version of \cite[Lemma 5.5]{ganguly2024ant}, where we replace the intrinsic connection event $\{0\overset{r}{\leftrightarrow} x\}$ by the extrinsic connection event $\{0 \overset{\mathbb{B}_{r}}{\longleftrightarrow} x\}$. The proof is deferred to the end of this section.
\begin{proposition}\label{large loop 2}
For $d>6$, there exists $C(d)>0$ such that for any $r,L\in \mathbb{N}$ and $0\leq i < (d-4)/2$, 
\begin{align}\label{5.9}
    \widetilde{\mathbb{P}}(\exists \Gamma \text{ intersecting $\mathbb{B}_{r}$ such that $0\overset{\mathbb{B}_{r}}{\longleftrightarrow} \Gamma$ and }|\Gamma| \geq L) \leq Cr^{2}L^{1-d/2}
\end{align}
and
\begin{align}\label{5.10}
    \sum_{x\in \mathbb{B}_{r}} \sum_{ \substack{ d^{\text{ext}}( \Gamma,x) \le 1\\ |\Gamma|  \ge L }} |\Gamma|^i \widetilde{\mathbb{P}}(0\overset{\mathbb{B}_{r}}{\longleftrightarrow}x,\Gamma \in \mathcal{L})  \le  C r^{2} L^{i+2-d/2}.
\end{align}
\end{proposition}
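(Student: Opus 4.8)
\textbf{Plan for the proof of Proposition \ref{large loop 2}.}
The proof proceeds by a union bound followed by the discrete loop estimates from Section \ref{pre_bounds}. For \eqref{5.9}, observe that the event $\{\exists \Gamma \text{ intersecting }\mathbb{B}_r \text{ such that } 0\overset{\mathbb{B}_r}{\longleftrightarrow}\Gamma \text{ and } |\Gamma|\ge L\}$ is certified by the disjoint occurrence of $\{0\overset{\mathbb{B}_r}{\longleftrightarrow}v\}$ (for some $v\sim\Gamma$) off the glued loop $\overline{\Gamma}$, together with $\{\Gamma\in\mathcal{L}\}$. Thus the probability is bounded, via Lemma \ref{bkr} and a union bound over $v\in\mathbb{B}_r$ and over the lattice point $w$ of $\Gamma$ nearest to $v$ (so $|v-w|\le 1$), by
\begin{align*}
\sum_{v\in\mathbb{B}_r}\sum_{w\in\mathbb{B}_{r+1}}\widetilde{\mathbb{P}}(0\overset{\mathbb{B}_r}{\longleftrightarrow}v)\sum_{\substack{\Gamma\ni w\\ |\Gamma|\ge L}}\widetilde{\mathbb{P}}(\Gamma\in\mathcal{L})
\le \sum_{v\in\mathbb{B}_r}\sum_{w:|v-w|\le 1}\widetilde{\mathbb{P}}(0\leftrightarrow v)\cdot CL^{1-d/2},
\end{align*}
where the inner bound is Lemma \ref{loop one point} with $i=0$. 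Using the two-point estimate \eqref{two point} and the summation bound \eqref{2.4} with $a=d-2$ gives $\sum_{v\in\mathbb{B}_r}|v|^{2-d}\le Cr^2$, which yields $Cr^2L^{1-d/2}$ as claimed. (One should be slightly careful with the case $0\in\Gamma$ directly, but this is absorbed into the same estimate.)

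For \eqref{5.10}, the strategy is the same tree-expansion-style union bound but now retaining the factor $|\Gamma|^i$ and summing over $x\in\mathbb{B}_r$. On the event $\{0\overset{\mathbb{B}_r}{\longleftrightarrow}x,\ \Gamma\in\mathcal{L}\}$ with $d^{\text{ext}}(\Gamma,x)\le 1$, either the connection from $0$ to $x$ uses the glued loop $\overline{\Gamma}$ or it does not; in the latter case we simply bound by $\widetilde{\mathbb{P}}(0\overset{\mathbb{B}_r}{\longleftrightarrow}x)\widetilde{\mathbb{P}}(\Gamma\in\mathcal{L})$ after disjointifying, and in the former case the standard tree expansion (Lemma \ref{ete}, applied with the target being $x$ and the loop being $\Gamma$) produces points $v_1,v_2\sim\Gamma$ with $0\overset{\mathbb{B}_r}{\longleftrightarrow}v_1$ and $v_2\overset{\mathbb{B}_r}{\longleftrightarrow}x$ occurring disjointly off $\overline{\Gamma}$. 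Either way, bounding $\mathbb{B}_r$-restricted connections by unrestricted ones and applying Lemma \ref{bkr}, the sum is at most
\begin{align*}
C\sum_{x\in\mathbb{B}_r}\sum_{\substack{\Gamma:\ d^{\text{ext}}(\Gamma,x)\le 1\\ |\Gamma|\ge L}}\sum_{v_1,v_2\sim\Gamma}|\Gamma|^i\,\widetilde{\mathbb{P}}(0\leftrightarrow v_1)\widetilde{\mathbb{P}}(v_2\leftrightarrow x)\widetilde{\mathbb{P}}(\Gamma\in\mathcal{L}).
\end{align*}
Now fix $x$ and sum over $\Gamma$ and $v_1,v_2$: since $\Gamma$ passes near $x$, one of the loop's lattice points, say $w_3$, satisfies $|w_3-x|\le 1$, so we are summing $\sum_{\Gamma\ni w_3,\, v_1,v_2\sim\Gamma}|\Gamma|^i\widetilde{\mathbb{P}}(\Gamma\in\mathcal{L})\widetilde{\mathbb{P}}(0\leftrightarrow v_1)\widetilde{\mathbb{P}}(v_2\leftrightarrow x)$. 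Here I would invoke the three-point loop bound Lemma \ref{loop three points} together with a length restriction: a version of Lemma \ref{loop three points} (or Lemma \ref{loop two points}) that retains $|\Gamma|^i$ and the constraint $|\Gamma|\ge L$ will contribute the factor $L^{i+1-d/2}$ rather than a constant, times the spatial factor $|v_1-v_2|^{2-d}|v_2-w_3|^{2-d}|w_3-v_1|^{2-d}$ (the $i=0$ case with no length restriction is exactly Lemma \ref{loop three points}). Summing the spatial factors against $\widetilde{\mathbb{P}}(0\leftrightarrow v_1)\le C|v_1|^{2-d}$ and $\widetilde{\mathbb{P}}(v_2\leftrightarrow x)\le C|v_2-x|^{2-d}$ via the convolution bound \eqref{5:53} collapses everything to $C|0|^{2-d}|x|^{2-d}$-type behavior; more precisely one gets $C L^{i+1-d/2}|x|^{2-d}$ after the $v_1,v_2,w_3$ sums (using \eqref{2.1}/\eqref{5:53}), and then $\sum_{x\in\mathbb{B}_r}|x|^{2-d}\le Cr^2$ by \eqref{2.4}. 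This produces the claimed bound $Cr^2L^{i+2-d/2}$.

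\textbf{Main obstacle.} The genuinely delicate point is obtaining the length-weighted three-point loop estimate $\sum_{\Gamma\ni w,\,v_1,v_2\sim\Gamma}|\Gamma|^i\widetilde{\mathbb{P}}(\Gamma\in\mathcal{L})\le CL^{i+1-d/2}|v_1-v_2|^{2-d}|v_2-w|^{2-d}|w-v_1|^{2-d}$ valid for $|\Gamma|\ge L$ — this is the step that converts "loop of length at least $L$" into the decaying factor $L^{i+1-d/2}$, and it is where the hypothesis $i<(d-4)/2$ enters (so that $i+2-d/2<0$ and the bound is genuinely a decay). This should follow by decomposing the loop into segments between the three marked points and applying local-CLT-type heat kernel bounds as in the proofs cited from \cite[Section 2.3]{ganguly2024ant} and \cite[Lemma 4.2]{cai2023one} (indeed Lemma \ref{loop one point} already handles the one-point weighted version, and the three-point version is its natural strengthening of Lemma \ref{loop three points}); I would either cite such a statement from \cite{ganguly2024ant} directly or prove it by interpolating between Lemma \ref{loop one point} and Lemma \ref{loop three points}. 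Everything else is a routine application of \eqref{5:53}, \eqref{2.1}, \eqref{2.4}, the two-point bound \eqref{two point}, and the BKR inequality \ref{bkr}.
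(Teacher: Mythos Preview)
Your argument for \eqref{5.9} is essentially the paper's: find $v\sim\Gamma$ with $0\leftrightarrow v$ off $\overline{\Gamma}$ (this is Lemma~\ref{lem 6.7}), factor by independence, translate, and apply Lemma~\ref{loop one point} together with $\sum_{v\in\mathbb{B}_r}|v|^{2-d}\le Cr^2$.

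For \eqref{5.10}, however, the route you propose has a genuine gap. The length-weighted three-point bound you identify as the ``main obstacle,''
\[
\sum_{\substack{\Gamma\ni w,\ v_1,v_2\sim\Gamma\\ |\Gamma|\ge L}}|\Gamma|^i\,\widetilde{\mathbb{P}}(\Gamma\in\mathcal{L})\ \le\ CL^{i+1-d/2}\,|v_1-v_2|^{2-d}|v_2-w|^{2-d}|w-v_1|^{2-d},
\]
is \emph{false} as stated. Take $v_1,v_2,w$ at mutual distance $R$ and choose $L=R$. Any loop through all three points has length at least $3R>L$, so the constraint $|\Gamma|\ge L$ is vacuous and the left side is (by Lemma~\ref{loop three points}) of order $R^{3(2-d)}$ when $i=0$, while the right side is $CR^{1-d/2}R^{3(2-d)}\to 0$. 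For $i>0$ the discrepancy is worse, since the typical loop length is $\sim R^2$ and contributes an extra factor $R^{2i}$ on the left. So this lemma cannot simply be ``cited or interpolated.''

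The paper sidesteps this entirely by noticing that your second point $v_2$ is superfluous: since $d^{\text{ext}}(\Gamma,x)\le 1$ already, $x$ itself sits next to $\Gamma$, and one only needs a \emph{single} point $y\sim\Gamma$ with $0\leftrightarrow y$ off $\mathsf{Cont}(\Gamma)$ (again Lemma~\ref{lem 6.7}). After factoring and translating so that $\Gamma'=\Gamma-x$ is adjacent to the origin, the sum over $x\in\mathbb{B}_r$ of $\widetilde{\mathbb{P}}(-x\leftrightarrow y')$ produces the $r^2$, and the sum over $y'\sim\Gamma'$ contributes a factor $|\Gamma'|$. One is left with
\[
Cr^2\sum_{\substack{d^{\text{ext}}(\Gamma',0)\le 1\\ |\Gamma'|\ge L}}|\Gamma'|^{i+1}\,\widetilde{\mathbb{P}}(\Gamma'\in\mathcal{L})\ \le\ Cr^2 L^{i+2-d/2}
\]
by Lemma~\ref{loop one point} with exponent $i+1$ (and this is exactly where the hypothesis $i<(d-4)/2$, i.e.\ $i+2<d/2$, is used). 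No three-point estimate is needed.
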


Now we are ready to establish the bounds for $\widetilde{\P}(\mathsf{C}_{3})$-$\widetilde{\P}(\mathsf{C}_{5})$.

\textbf{For $\mathsf{C}_{3}$,} The bound is identical to the one presented in  \cite{ganguly2024ant}, so we will simply state the bound and omit the details. The bound is as follows:
\begin{align}\label{C_{3}}
\widetilde{\mathbb{P}}(\mathsf{C}_{3})\leq 
CL^{d-1}\e \cdot 3^{k}(\Lambda(3^{k-1}))^{2}.
\end{align}

Now it remains to bound the probabilities of $\mathsf{C}_{4}$ and $\mathsf{C}_{5}$. For these two events, we utilize the condition $\widetilde{B}(0,3^{k};\widetilde{\mathcal{L}}_{\mathscr{S}}) \subset \mathbb{B}_{   3^{k/2}}$, which helps us overcome the difficulty caused by the absence of the intrinsic volume bound.

\textbf{For $\mathsf{C}_{4}$,} we define 
\begin{align*}
\widetilde{\mathscr{S}}(r) := \{\widetilde{\Gamma} \in \widetilde{\mathscr{S}}:\widetilde{\Gamma} \text{ intersecting $\mathbb{B}_{r}$}\}.
\end{align*}  

As discussed in \cite[(122)]{ganguly2024ant}, if $\{i \in I_{2}\}\cap \{\partial B(0,3^{k};\widetilde{\mathcal{L}}_{\mathscr{S}}) \neq \emptyset\}\cap \mathscr{G}$ occurs, then the following event must occur:
\begin{center}
    $\exists$ discrete loop $\Gamma \in \mathcal{L}_{\widetilde{\mathscr{S}}}^{\leq i}$, $\exists u \in \Gamma $, $\exists v \in \mathbb{Z}^{d}$ with $d^{\text{ext}}(\Gamma,v)\leq 1$ s.t. 
\begin{align}\label{59}
    \{|\Gamma| \geq L\} \cap \{d(0,u;\widetilde{\mathcal{L}}_{\widetilde{\mathscr{S}}}) = i\} \cap 
    \{\partial B(v,3^{k-1}; \widetilde{\mathcal{L}}_{\widetilde{\mathscr{S}}} \setminus \widetilde{\mathcal{L}}_{\widetilde{\mathscr{S}}}^{\leq i} )\neq \emptyset\}.
\end{align}
\end{center}

Moreover, in $\mathsf{C}_{4}$, we have $\widetilde{B}(0,i;\widetilde{\mathcal{L}}_{\widetilde{\mathscr{S}}}^{\leq i}) \subseteq \mathbb{B}_{3^{k/2}}$, so it follows that $\widetilde{\mathcal{L}}_{\widetilde{\mathscr{S}}}^{\leq i} = \widetilde{\mathcal{L}}_{\widetilde{\mathscr{S}}(3^{k/2})}^{\leq i}$ and $\widetilde{B}(0,i; \widetilde{\mathcal{L}}_{\widetilde{\mathscr{S}}}^{\leq i})=\widetilde{B}(0,i; \widetilde{\mathcal{L}}_{\widetilde{\mathscr{S}}(3^{k/2})}^{\leq i})$. Furthermore, the inclusion $\widetilde{B}(0,i;\widetilde{\mathcal{L}}_{\widetilde{\mathscr{S}}}^{\leq i}) \subseteq \mathbb{B}_{3^{k/2}}$ implies that the $u$ we chose in (\ref{59}) actually satisfies $u\in \Gamma \cap \mathbb{B}_{3^{k/2}}$. This is crucial for our estimate.

We call the event in (\ref{59}) $\mathscr{G}_{i}$.
Hence, again by (\ref{5.5}) and a union bound, we have 
\begin{align}\label{45}
\widetilde{\mathbb{P}}(\mathsf{C}_{4}) &\leq  
\frac{8}{3^{k-1}}\sum_{i=3^{k-1}}^{3^{k}/2}\widetilde{\mathbb{P}}(\mathscr{G}_{i} \cap \{\widetilde{B}(0,i;\widetilde{\mathcal{L}}_{\widetilde{\mathscr{S}}}^{\leq i}) \subseteq \mathbb{B}_{3^{k/2}}\}) \\
&\overset{(\ref{59})}{\leq} \frac{8}{3^{k-1}}\sum_{i=3^{k-1}}^{3^{k}/2}\sum_{u\in \mathbb{B}_{3^{k/2}}} \sum_{\substack{\Gamma \ni u \\ |\Gamma| \geq L}}\sum_{\substack{v \in \mathbb{Z}^{d}\\ d^{\text{ext}}(\Gamma,v) \leq 1}}\widetilde{\mathbb{P}}\left(\right. d(0,u;\widetilde{\mathcal{L}}_{\widetilde{\mathscr{S}}} )=i, \nonumber\\   &\qquad \partial B(v, {     3^{k-1}      }; \widetilde{\mathcal{L}}_{\widetilde{\mathscr{S}}} \setminus \widetilde{\mathcal{L}}_{\widetilde{\mathscr{S}}}^{\leq i}) \neq \emptyset , \  \Gamma \in \mathcal{L}_{\widetilde{\mathscr{S}}}^{\leq i},\ \widetilde{B}(0,i;\widetilde{\mathcal{L}}_{\widetilde{\mathscr{S}}}^{\leq i}) \subset \mathbb{B}_{   3^{k/2}}\left.\right),
\end{align}
Note that the event 
\begin{align*}
    \{d(0,u;\widetilde{\mathcal{L}}_{\widetilde{\mathscr{S}}} )=i\} \cap  \{\Gamma \in \mathcal{L}_{\widetilde{\mathscr{S}}}^{\leq i}\}\cap \{\widetilde{B}(0,i;\widetilde{\mathcal{L}}_{\widetilde{\mathscr{S}}}^{\leq i}) \subset \mathbb{B}_{   3^{k/2}}\}
\end{align*}
is measurable with respect to $\widetilde{\mathcal{L}}_{\widetilde{\mathscr{S}}}^{\leq i}$, thus conditioning on $\widetilde{\mathcal{L}}_{\widetilde{\mathscr{S}}}^{\leq i}$ and applying (\ref{57}) again we got
\begin{align}\label{61}
&\widetilde{\mathbb{P}}\left(\right.\ d(0,u;\widetilde{\mathcal{L}}_{\widetilde{\mathscr{S}}} )=i,\; \partial B(v, {3^{k-1}}; \widetilde{\mathcal{L}}_{\widetilde{\mathscr{S}}} \setminus \widetilde{\mathcal{L}}_{\widetilde{\mathscr{S}}}^{\leq i}) \neq \emptyset , \  \Gamma \in \mathcal{L}_{\widetilde{\mathscr{S}}}^{\leq i},\ \widetilde{B}(0,i;\widetilde{\mathcal{L}}_{\widetilde{\mathscr{S}}}^{\leq i}) \subset \mathbb{B}_{   3^{k/2}}\left.\right) \nonumber \\ 
&=\widetilde{\mathbb{E}}\left[\widetilde{\mathbb{P}}(\partial B(v, {     3^{k-1}      }; \widetilde{\mathcal{L}}_{\widetilde{\mathscr{S}}} \setminus \widetilde{\mathcal{L}}_{\widetilde{\mathscr{S}}}^{\leq i}) \neq \emptyset \mid \widetilde{\mathcal{L}}_{\widetilde{\mathscr{S}}}^{\leq i}) \mathds{1}_{d(0,u;\widetilde{\mathcal{L}}_{\widetilde{\mathscr{S}}} )=i}\mathds{1}_{\Gamma \in \mathcal{L}_{\widetilde{\mathscr{S}}}^{\leq i}}\mathds{1}_{\widetilde{B}(0,i;\widetilde{\mathcal{L}}_{\widetilde{\mathscr{S}}}^{\leq i}) \subset \mathbb{B}_{   3^{k/2}}}\right] \nonumber\\ &\overset{(\ref{57})}{\leq} \Lambda(3^{k-1}) \widetilde{\mathbb{P}}(d(0,u;\widetilde{\mathcal{L}}_{\widetilde{\mathscr{S}}} )=i,\ \Gamma \in \mathcal{L}_{\widetilde{\mathscr{S}}}^{\leq i},\ \widetilde{B}(0,i;\widetilde{\mathcal{L}}_{\widetilde{\mathscr{S}}}^{\leq i}) \subset \mathbb{B}_{   3^{k/2}}).
\end{align}
It's straightforward to see that 
\begin{align}\label{62}
\{d(0,u;\widetilde{\mathcal{L}}_{\widetilde{\mathscr{S}}} )=i\} \cap \{\widetilde{B}(0,i;\widetilde{\mathcal{L}}_{\widetilde{\mathscr{S}}}^{\leq i})\subseteq  \B_{3^{k/2}}\} &\subseteq \{d(0,u;\widetilde{\mathcal{L}}_{\widetilde{\mathscr{S}}} )=i\} \cap  \{0\overset{\mathbb{B}_{3^{k/2}}}{\longleftrightarrow} u\};\\ 
\{\Gamma \in \mathcal{L}_{\widetilde{\mathscr{S}}}^{\leq i}\} &\subseteq \{\Gamma \in \mathcal{L}_{\widetilde{\mathscr{S}}}\}.
\end{align}
So far, we have introduced the extrinsic connection event $\{0\overset{\mathbb{B}_{3^{k/2}}}{\longleftrightarrow} u\}$, which will ultimately help us eliminate the intrinsic constraint. By (\ref{45}) and (\ref{61}) we have that 
\begin{align}\label{63}
    \widetilde{\mathbb{P}}(\mathsf{C}_{4})  & \overset{(\ref{45}), (\ref{61})}{\leq} 
    \frac{8}{3^{k-1}}\sum_{i=3^{k-1}}^{3^{k}/2}\sum_{u\in \mathbb{B}_{   3^{k/2}}} \sum_{\substack{\Gamma \ni u \\ |\Gamma| \geq L}}\sum_{\substack{v \in \mathbb{Z}^{d}\\ d^{\text{ext}}(\Gamma,v) \leq 1}} \Lambda(3^{k-1})\widetilde{\mathbb{P}}(d(0,u;\widetilde{\mathcal{L}}_{\widetilde{\mathscr{S}}} )=i, \ \Gamma \in \mathcal{L}_{\widetilde{\mathscr{S}}}^{\leq i},\ \widetilde{B}(0,i;\widetilde{\mathcal{L}}_{\widetilde{\mathscr{S}}}^{\leq i}) \subset \mathbb{B}_{   3^{k/2}}) \nonumber \\ &\leq 
    \frac{8}{3^{k-1}}\sum_{i=3^{k-1}}^{3^{k}/2}\sum_{u\in \mathbb{B}_{   3^{k/2}}} \sum_{\substack{\Gamma \ni u \\ |\Gamma| \geq L}}|\Gamma| \Lambda(3^{k-1})\widetilde{\mathbb{P}}(d(0,u;\widetilde{\mathcal{L}}_{\widetilde{\mathscr{S}}} )=i, \ \widetilde{B}(0,i;\widetilde{\mathcal{L}}_{\widetilde{\mathscr{S}}}^{\leq i}) \subset \mathbb{B}_{3^{k/2}},\ \Gamma \in \mathcal{L}_{\widetilde{\mathscr{S}}}^{\leq i})\nonumber\\
    &\overset{(\ref{62})}{\leq} 
    \frac{C}{3^{k-1}}\Lambda(3^{k-1})\sum_{u\in \mathbb{B}_{   3^{k/2}}} \sum_{\substack{\Gamma \ni u \\ |\Gamma| \geq L}}\sum_{i=3^{k-1}}^{3^{k}/2}|\Gamma|\widetilde{\mathbb{P}}(d(0,u;\widetilde{\mathcal{L}}_{\widetilde{\mathscr{S}}} )=i,\ 0 \overset{\mathbb{B}_{3^{k/2}}}{\longleftrightarrow} u,\ \Gamma \in \mathcal{L}_{\widetilde{\mathscr{S}}}).
\end{align}
Note that 
\begin{align}\label{64}
    &\sum_{i=3^{k-1}}^{3^{k}/2}\widetilde{\mathbb{P}}(d(0,u;\widetilde{\mathcal{L}}_{\widetilde{\mathscr{S}}} )=i,\ 0 \overset{\mathbb{B}_{3^{k/2}}}{\longleftrightarrow} u,\ \Gamma \in \mathcal{L}_{\widetilde{\mathscr{S}}}) \nonumber\\ 
    &= \widetilde{\mathbb{P}}(3^{k-1}\leq d(0,u;\widetilde{\mathcal{L}}_{\widetilde{\mathscr{S}}(3^{k/2})} ) \leq 3^{k}/2, \ 0 \overset{\mathbb{B}_{3^{k/2}}}{\longleftrightarrow} u,\ \Gamma \in \widetilde{\mathcal{L}}_{\widetilde{\mathscr{S}}}) \nonumber\\ &\leq \widetilde{\mathbb{P}}(0\overset{3^{k}/2}{\longleftrightarrow} u,\; 0 \overset{\mathbb{B}_{3^{k/2}}}{\longleftrightarrow} u,\; \Gamma \in \mathcal{L}) \leq \widetilde{\mathbb{P}}( 0 \overset{\mathbb{B}_{3^{k/2}}}{\longleftrightarrow} u,\; \Gamma \in \mathcal{L}).
\end{align}
Combining (\ref{63}) and (\ref{64}) we get 
\begin{align}\label{pre C_{4}}
    \widetilde{\mathbb{P}}(\mathsf{C}_{4}) \overset{(\ref{63}),(\ref{64})}{\leq} \frac{C}{3^{k-1}}\Lambda(3^{k-1}) \sum_{u\in \mathbb{B}_{   3^{k/2}}} \sum_{\substack{\Gamma \ni u \\ |\Gamma| \geq L}} \widetilde{\mathbb{P}}( 0 \overset{\mathbb{B}_{3^{k/2}}}{\longleftrightarrow} u,\; \Gamma \in \mathcal{L}).
\end{align}
Now applying (\ref{5.10}) to (\ref{pre C_{4}}), choosing $r=  3^{k/2}$ and $i=1$, gives
\begin{align}\label{C_{4}}
    \widetilde{\mathbb{P}}(\mathsf{C}_{4}) \leq \frac{C}{3^{k-1}}\Lambda(3^{k-1})\cdot   3^{k}L^{3-d/2} \leq C    L^{3-d/2}\Lambda(3^{k-1}).
\end{align}

\textbf{For $\mathsf{C}_{5}$, }when $\mathsf{C}_{5}$ happens, there exists $\Gamma \in \mathcal{L}$ such that $\Gamma$ intersects $\mathbb{B}_{3^{k/2}}$, $|\Gamma| \geq 3^{k-1}/2$ and $0\overset{\mathbb{B}_{r}}{\longleftrightarrow} \Gamma$. Then, if we apply (\ref{5.9}) choosing $r =    3^{k/2}$ and $L= 3^{k-1}/2$, we have 
\begin{align} \label{C_{5}}
\widetilde{\mathbb{P}}(\mathsf{C}_{5})\leq C    3^{k}\cdot (3^{k-1}/2)^{1-d/2} \leq C  (3^{k})^{2-d/2}.
\end{align}

\textbf{Conclusion: }combining the bounds for $\mathsf{C}_{1}$ to $\mathsf{C}_{5}$ and taking the supremum over all collection of loops $\widetilde{\mathscr{S}}$, we deduce that there exists a constant $\tilde{C}(d)>0$ such that for any $L,\e >0$ and $k\in \mathbb{N}$, 
\begin{align}\label{recursion}
\Lambda(3^{k}) &\leq \frac{\tilde{C}}{\sqrt{\e}3^{k}} + \frac{\tilde{C}}{  3^{k}}+ \tilde{C}L^{d-1}\e \cdot 3^{k}(\Lambda(3^{k-1}))^{2}+ \tilde{C}  L^{3-d/2}\Lambda(3^{k-1})+\tilde{C}  (3^{k})^{2-d/2}.
\end{align}
Set $L$ to be a large enough constant such that 
\begin{align*}
    3\tilde{C}L^{3-d/2}<\frac{1}{5},
\end{align*}
and then take a constant $D>0$ such that 
\begin{align*}
\sqrt{45}\tilde{C}^{3/2}D^{1/2}L^{(d-1)/2}<\frac{D}{5}, \ \tilde{C}<\frac{D}{5},
\end{align*}
finally, choose $\e = (45\tilde{C}DL^{d-1})^{-1}$. We claim that 
\begin{align*}
    \Lambda(3^{k}) \leq \frac{D}{3^{k}}.
\end{align*}
Assuming $\Lambda(3^{k-1})\leq \frac{D}{3^{k-1}}$, by (\ref{recursion}) and the fact $d>6$, 
\begin{align}
    \Lambda(3^{k}) \leq (\frac{\tilde{C}}{\sqrt{\e}} + \tilde{C}+ 9\tilde{C}D^{2}L^{d-1}\e + 3\tilde{C}D L^{3-d/2}+\tilde{C})\frac{1}{3^{k}} \leq \frac{D}{3^{k}},
\end{align}
where we use the conditions when choosing parameters. Thus, we prove the claim by induction. Finally, for any $3^{k-1}\leq r<3^{k}$,
\begin{align*}
    \Lambda(r) \leq \Lambda(3^{k-1}) \leq \frac{D}{3^{k-1}}\leq \frac{3D}{r}.
\end{align*}
\end{proof}

We finish this section by the proof of Proposition \ref{large loop 2}. To prove this proposition, we need the following connection property of a discrete loop $\Gamma \in \mathcal{L}$ to 0. We denote by $\mathsf{Cont}(\Gamma)$ the collection of loops $\widetilde{\Gamma}$ in $\widetilde{\mathcal{L}}$ such that $\mathsf{Trace}(\widetilde{\Gamma}) = \Gamma$ (recall that $\mathsf{Trace}(\widetilde{\Gamma})$ is the discrete loops with the same lattice trajectory of $\widetilde{\Gamma}$).
\begin{lemma}\label{lem 6.7}
    Let $r \in \mathbb{N}$ and $\Gamma$ be any discrete loops in $\mathcal{L}$ such that $0 \overset{\mathbb{B}_{r}}{\longleftrightarrow} \Gamma$, then there exists $x\in \mathbb{B}_{r}$ with $x \sim \Gamma$ such that $0\overset{\mathbb{B}_{r}, \widetilde{\mathcal{L}}\setminus \mathsf{Cont}(\Gamma)}{\longleftrightarrow} x$.
\end{lemma}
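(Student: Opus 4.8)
The proof is a deterministic path-surgery argument, carried out on a fixed realization of $\widetilde{\mathcal{L}}$: we take a path realizing the connection $0\overset{\mathbb{B}_r}{\longleftrightarrow}\Gamma$ and cut it the first time it touches the part of the cable graph occupied by loops of $\mathsf{Cont}(\Gamma)$. (Note $\mathsf{Cont}(\Gamma)\neq\emptyset$, since $\Gamma\in\mathcal{L}$ has a corresponding loop in $\widetilde{\mathcal{L}}$; also if $0\sim\Gamma$ the statement is immediate, so assume $0\notin\mathsf{VRange}(\Gamma)$.) Two elementary geometric observations about $\mathcal{R}:=\bigcup_{\widetilde{\Gamma}'\in\mathsf{Cont}(\Gamma)}\mathsf{Range}(\widetilde{\Gamma}')$ make this work. (i) Every loop in $\mathsf{Cont}(\Gamma)$ has trace $\Gamma$, so its range lies in the union of the closed edge-intervals $I_e$ over edges $e$ incident to a vertex of $\mathsf{VRange}(\Gamma)$, and it meets $\mathbb{Z}^d$ only in $\mathsf{VRange}(\Gamma)$; hence each endpoint $w$ of such an edge $e$ satisfies $d^{\text{ext}}(w,\Gamma)\le 1$, i.e.\ $w\sim\Gamma$. (ii) If the range of a fundamental- or point-type loop meets the interior of an edge-interval $I_e$, then it contains an endpoint of $e$ together with the whole sub-segment of $I_e$ from that endpoint to the point in question; this is immediate from the description of a range as the union of edges traversed by the underlying walk together with excursions at the vertices it visits.

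By definition of $\{0\overset{\mathbb{B}_r}{\longleftrightarrow}\Gamma\}$ there is a path $\ell$ in the cable graph, from $0$, staying within the edges contained in $\mathbb{B}_r$, lying in the occupied set of $\widetilde{\mathcal{L}}$, and ending on $\mathcal{R}$ (after possibly extending $\ell$ a little inside a loop of $\mathsf{Cont}(\Gamma)$ one may even assume it ends at a vertex of $\mathsf{VRange}(\Gamma)$). Let $\sigma$ be the first time $\ell$ reaches $\mathcal{R}$ and put $p:=\ell(\sigma)$. Every point of $\ell$ strictly before time $\sigma$ is occupied but lies outside $\mathcal{R}$, hence on the range of some loop \emph{not} in $\mathsf{Cont}(\Gamma)$; thus $\ell|_{[0,\sigma)}$ exhibits $0$ connected, within the edges of $\mathbb{B}_r$, to points arbitrarily close to $p$ using only loops of $\widetilde{\mathcal{L}}\setminus\mathsf{Cont}(\Gamma)$. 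If $p$ is a lattice point, it lies in $\mathsf{VRange}(\Gamma)$ by (i) and $x:=p$ works. Otherwise $p$ lies in the interior of a unique $I_{e'}$, which by (i) is an edge incident to $\mathsf{VRange}(\Gamma)$ and, since $p$ lies on $\ell$, is itself an edge contained in $\mathbb{B}_r$. Choose $t<\sigma$ with $\ell(t)$ in the interior of $I_{e'}$ and close to $p$; the cluster of $0$ in $\widetilde{\mathcal{L}}\setminus\mathsf{Cont}(\Gamma)$, restricted to the edges of $\mathbb{B}_r$, contains both the lattice point $0$ and the interior point $\ell(t)$ of $I_{e'}$, hence (tracing the chain of covering loop ranges back to the one through $0$) it contains a fundamental- or point-type loop $\widetilde{\Gamma}_0$ whose range meets the interior of $I_{e'}$. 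By (ii), $\widetilde{\Gamma}_0$ contains an endpoint $w$ of $e'$ together with the sub-segment of $I_{e'}$ joining $w$ to that interior point; by (i), $w\sim\Gamma$, and $w\in\mathbb{B}_r$. Concatenating $\ell|_{[0,t]}$ with this sub-segment gives $0\overset{\mathbb{B}_r,\,\widetilde{\mathcal{L}}\setminus\mathsf{Cont}(\Gamma)}{\longleftrightarrow}w$, so $x:=w$ completes the proof.

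The delicate point, and the one I would treat most carefully, is exactly this last step: after deleting $\mathsf{Cont}(\Gamma)$ the truncated path may only reach the \emph{interior} of an edge incident to $\Gamma$, and one must argue that the cluster of $0$ in $\widetilde{\mathcal{L}}\setminus\mathsf{Cont}(\Gamma)$ nonetheless carries the connection to a genuine lattice endpoint of that edge (which then automatically lies within distance one of $\Gamma$ and inside $\mathbb{B}_r$). This is where observations (i)--(ii) and the ``walk the covering chain back to a fundamental loop'' maneuver are used, and where the $\mathbb{B}_r$-restriction must be tracked. Everything else is routine bookkeeping about ranges in the metric graph, and since the statement is purely deterministic, none of the probabilistic estimates from the earlier sections are needed.
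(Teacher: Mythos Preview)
Your proof is correct and uses the same path-surgery idea as the paper, but the paper's argument is considerably simpler. The paper works with a \emph{lattice} path $\ell$ (the standing convention), takes $y$ to be the first lattice vertex of $\ell$ in $\mathsf{VRange}(\Gamma)$, and sets $x$ to be the immediately preceding lattice vertex. Every edge of the sub-path from $0$ to $x$ then has both endpoints outside $\mathsf{VRange}(\Gamma)$, hence is not incident to $\mathsf{VRange}(\Gamma)$, and therefore (by your observation (i)) is entirely disjoint from $\mathcal{R}$. That one line replaces all of your cable-graph case analysis: the truncated lattice path is automatically covered by $\widetilde{\mathcal{L}}\setminus\mathsf{Cont}(\Gamma)$, with no need to stop at the first entry into $\mathcal{R}$, split on whether $p$ is a lattice point, or ``walk back to a fundamental loop''.

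Your more elaborate route also works, but two points deserve care. First, Case~1 is stated too quickly: if $p=\ell(\sigma)$ is a lattice point you only know that $\ell|_{[0,\sigma)}$ avoids $\mathcal{R}$, which does not by itself give a connection to $p$ in $\widetilde{\mathcal{L}}\setminus\mathsf{Cont}(\Gamma)$; you should handle it exactly as in Case~2 (the edge along which $\ell$ approaches $p$ is incident to $\mathsf{VRange}(\Gamma)$, so both endpoints are $\sim\Gamma$, and the argument below applies). Second, the ``walk back to a fundamental loop'' maneuver can be replaced by a one-line topological observation: any connected subset of the cable graph containing a lattice point (here $0$) and an interior point of $I_{e'}$ must contain an endpoint of $e'$, since the interior of $I_{e'}$ is only accessible through its endpoints.
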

\begin{proof}
    Let $\ell$ be a path connecting 0 to $\Gamma$ which only uses edges in $\mathbb{B}_{r}$. Let $y$ be the first point on $\ell$ intersecting $\Gamma$ and let $x$ be the last point on $\ell$ before it reaches $y$. Then the connection 0 to $x$ cannot involve any loops in $\mathsf{Cont}(\Gamma)$,  since this would contradict the fact that $y$ is the first point on $\ell$ that intersects $\Gamma$. Furthermore, this provides a path from 0 to $x$
    that only uses edges within $\B_{r}$ Specifically, this path is simply the portion of $\ell$ before it passes through $y$ for the first time.
\end{proof}

\begin{proof}[Proof of Proposition \ref{large loop 2}]
By Lemma \ref{lem 6.7} the probability that $\exists \Gamma$ intersecting $\mathbb{B}_{r}$ such that $0\overset{\mathbb{B}_{r}}{\longleftrightarrow} \Gamma$ and $|\Gamma| \geq L$ can be bounded above by
\begin{align*}
\sum_{x\in \mathbb{B}_{r}}\sum_{\substack{ d^{\text{ext}}( \Gamma,x) \le 1 \\ |\Gamma|\geq L}}\widetilde{\mathbb{P}}(0\overset{\mathbb{B}_{r}, \widetilde{\mathcal{L}}\setminus \mathsf{Cont}(\Gamma)}{\longleftrightarrow} x, \; \Gamma \in \mathcal{L}) &\leq 
\sum_{x\in \mathbb{B}_{r}}\sum_{\substack{d^{\text{ext}}( \Gamma,x) \le 1 \\ |\Gamma|\geq L}}\widetilde{\mathbb{P}}(0\overset{\mathbb{B}_{r}, \widetilde{\mathcal{L}}\setminus \mathsf{Cont}(\Gamma)}{\longleftrightarrow} x) \mathbb{P}(\Gamma \in \mathcal{L})\\&\leq 
\sum_{x\in \mathbb{B}_{r}}\sum_{\substack{d^{\text{ext}}( \Gamma,x) \le 1 \\ |\Gamma|\geq L}}\widetilde{\mathbb{P}}(0\leftrightarrow x )\widetilde{\mathbb{P}}(\Gamma \in \mathcal{L}) \\ \text{By translation invariance} \qquad  &= 
\sum_{x\in \mathbb{B}_{r}}\sum_{\substack{d^{\text{ext}}( \Gamma^{\prime},0) \le 1 \\ |\Gamma^{\prime}|\geq L}}\widetilde{\mathbb{P}}(-x\leftrightarrow 0 )\widetilde{\mathbb{P}}(\Gamma^{\prime} \in \mathcal{L}) \\ &= 
\sum_{\substack{d^{\text{ext}}( \Gamma^{\prime},0) \le 1 \\ |\Gamma^{\prime}|\geq L}}\sum_{x\in \mathbb{B}_{r}}\widetilde{\mathbb{P}}(-x\leftrightarrow 0 )\widetilde{\mathbb{P}}(\Gamma^{\prime} \in \mathcal{L}) \\&\leq Cr^{2}\sum_{\substack{d^{\text{ext}}( \Gamma^{\prime},0) \le 1 \\ |\Gamma^{\prime}|\geq L}}\widetilde{\mathbb{P}}(\Gamma^{\prime} \in \mathcal{L}),
\end{align*}
where we used the estimates 
\begin{align*}
    \sum_{x\in \mathbb{B}_{r}}\mathbb{P}(0\leftrightarrow x) \leq C  \sum_{x\in \mathbb{B}_{r}} |x|^{2-d} \leq Cr^{2}.
\end{align*}
By Lemma \ref{loop one point} we know that or any $L\in \mathbb{N}$, there exists $C>0$ such that
\begin{align*}
\sum_{\substack{d^{\text{ext}}( \Gamma^{\prime},0) \le 1 \\ |\Gamma^{\prime}|= L}}\widetilde{\mathbb{P}}(\Gamma^{\prime} \in \mathcal{L}) \leq C L^{-d/2}, 
\end{align*}
then we can bound the LHS of (\ref{5.9}) by 
\begin{align*}
Cr^{2}\sum_{k \geq L}k^{-d/2} \leq Cr^{2}L^{1-d/2}.
\end{align*}
Next, we prove (\ref{5.10}). By the similar reason in Lemma \ref{lem 6.7}, $\{d^{\text{ext}}(\Gamma,x)\leq 1,\ 0\overset{\mathbb{B}_{r}}{\longleftrightarrow} x\}$ implies that there exists $y\in \mathbb{B}_{r}$ with $d^{\text{ext}}(\Gamma,y)\leq 1$ such that $0\overset{\mathbb{B}_{r}, \widetilde{\mathcal{L}}\setminus \mathsf{Cont}(\Gamma)}{\longleftrightarrow}y$ not using $\Gamma$. Hence, again by a union bound, 
\begin{align*}
\sum_{x\in \mathbb{B}_{r}} \sum_{ \substack{ d^{\text{ext}}( \Gamma,x) \le 1\\ |\Gamma|  \ge L }} |\Gamma|^i \widetilde{\mathbb{P}}(0\overset{\mathbb{B}_{r}}{\longleftrightarrow}x,\Gamma \in \mathcal{L}) &\leq \sum_{x\in \mathbb{B}_{r}} \sum_{ \substack{ d^{\text{ext}}( \Gamma,x) \le 1\\ |\Gamma|  \ge L }} \sum_{\substack{y\in\mathbb{B}_{r}\\ d^{\text{ext}}(\Gamma,y)\leq 1}} |\Gamma|^i \widetilde{\mathbb{P}}(0\overset{\mathbb{B}_{r}, \widetilde{\mathcal{L}}\setminus \mathsf{Cont}(\Gamma)}{\longleftrightarrow}y ,\ \Gamma \in \mathcal{L}) \\ &\leq 
\sum_{x\in \mathbb{B}_{r}} \sum_{ \substack{ d^{\text{ext}}( \Gamma^{\prime},0) \le 1\\ |\Gamma^{\prime}|  \ge L }} \sum_{\substack{y^{\prime}\in\mathbb{Z}^{d}\\ d^{\text{ext}}(\Gamma^{\prime},y^{\prime})\leq 1}} |\Gamma^{\prime}|^i \widetilde{\mathbb{P}}(-x\leftrightarrow y^{\prime})\widetilde{\mathbb{P}} (\Gamma^{\prime} \in \mathcal{L}) \\ &= 
\sum_{ \substack{ d^{\text{ext}}( \Gamma^{\prime},0) \le 1\\ |\Gamma^{\prime}|  \ge L }} \sum_{\substack{y^{\prime}\in\mathbb{Z}^{d}\\ d^{\text{ext}}(\Gamma^{\prime},y)\leq 1}} |\Gamma^{\prime}|^i \sum_{x\in \mathbb{B}_{r}}\widetilde{\mathbb{P}}(-x\leftrightarrow y^{\prime})\widetilde{\mathbb{P}} (\Gamma^{\prime} \in \mathcal{L}) \\&\leq 
Cr^{2}\sum_{ \substack{ d^{\text{ext}}( \Gamma^{\prime},0) \le 1\\ |\Gamma^{\prime}|  \ge L }} \sum_{\substack{y^{\prime}\in\mathbb{Z}^{d}\\ d^{\text{ext}}(\Gamma^{\prime},y)\leq 1}} |\Gamma^{\prime}|^i\widetilde{\mathbb{P}} (\Gamma^{\prime} \in \mathcal{L}) \\ 
&\leq 
Cr^{2}\sum_{ \substack{ d^{\text{ext}}( \Gamma^{\prime},0) \le 1\\ |\Gamma^{\prime}|  \ge L }}  |\Gamma^{\prime}|^{i+1}\widetilde{\mathbb{P}} (\Gamma^{\prime} \in \mathcal{L}) \\&\leq 
Cr^{2}\sum_{k \geq L}k^{i+1-d/2}\leq 
Cr^{2}L^{i+2-d/2}.
\end{align*}
Note that in the last sum above is summable because $i<(d-4)/2$ implies $i+1-d/2<-1$.
\end{proof}

\subsection{Proof of the lower bound in Theorem \ref{thm 1.4}}\label{lowertail}

We establish the lower bound of \eqref{1.6} using a second moment argument, which combines both of our comparison principles. The main strategy is to lower bound the chemical one arm probability by showing that on the event $0\leftrightarrow \partial \B_{r},$ with constant probability $d(0,\partial \B_{r})\gtrsim r^2.$ Thus, this part relies on the extrinsic-intrinsic comparison.

To do this we first use the comparison between the geodesic and its simple counterpart to reduce the analysis to the latter case. For the simple path, we use Lemma \ref{large loops on geo} to show large loops can be ignored. Proposition \ref{prop 5.11} then allows us to lower bound the expectation of the number of points landing only on small loops. We finally upper bound the second moment of the same using the BKR inequality. It is worth emphasizing that the fact that we only deal with small loops will be crucial in bounding the second moment.

\begin{proof}[Proof of the lower bound in Theorem \ref{thm 1.4}]
    Given a collection of loops $\widetilde{\mathscr{S}}$ such that $0\overset{\widetilde{\mathscr{S}}}{\longleftrightarrow} \partial \B_{r}$, one may find a deterministic rule to construct a $\widetilde{\mathscr{S}}$-geodesic from 0 to $\partial \B_{r}$. On the event $\{0\leftrightarrow \partial \B_{r}\}$, denote the resulting geodesic by $\gamma_{0\rightarrow \partial \B_{r}}$. Furthermore, as discussed in Section \ref{loopchain} and the proof of the lower bound in \eqref{1.4}, there exists a minimal glued loop sequence $\overline{L}$ associated to $\gamma_{0\rightarrow \partial \B_{r}}$, as well as a simple glued loop subsequence sequence $\overline{L}_{1}$ from 0 to $\partial \B_{r}$ only using glued loops in $\overline{L}$. One may then find a simple path $\hat{\gamma}_{0\rightarrow \partial \B_{r}}$ from 0 to $\partial \B_{r}$ associated with the simple chain $\overline{L}_{1}$. Given the loop soup realization, we can define deterministic rules to generate $\overline{L}$, $\overline{L}_{1}$ and $\hat{\gamma}_{0\rightarrow \partial \B_{r}}$. Thus, by fixing these rules, we ensure that these objects are well-defined and measurable with respect to $\widetilde{\mathcal{L}}$.
    
    Recalling that $\hat{\gamma}_{0\rightarrow \partial \B_{r}}(\alpha)$ for a fixed $\alpha \in (0,\frac{1}{3})$ denotes the portion of $\hat{\gamma}_{0\rightarrow \partial \B_{r}}$ before it exits $\B_{\alpha r}$,  we define the following random variables: 
    \begin{align*}
        X &: = |\{x\in \hat{\gamma}_{0\rightarrow \partial \B_{r}}: |\overline{\Gamma}|\leq K,\ \forall\, \overline{\Gamma} \in \mathsf{Set}(\overline{L}_{1}) \text{ wtih }x\in \overline{\Gamma}\}|,\\
        Y &: = |\{x\in \hat{\gamma}_{0\rightarrow \partial \B_{r}}(\alpha): |\overline{\Gamma}|\leq K,\ \forall \,\overline{\Gamma} \in \mathsf{Set}(\overline{L}_{1}) \text{ wtih }x\in \overline{\Gamma}\}|,\\
        Z&: = |\{x\in \hat{\gamma}_{0\rightarrow \partial \B_{r}}(\alpha): \exists\, \overline{\Gamma} \in \mathsf{Set}(\overline{L}_{1}) \text{ such that $x\in \overline{\Gamma}$ and $|\Gamma|>K$}\}|,
    \end{align*}
    where $K$ is a constant satisfying the condition in Lemma \ref{large loops on geo} for some $\e$, which will be fixed later. 
    Thus $X$ is the number of  points on $\hat{\gamma}_{0\rightarrow \partial \B_{r}}$ which are only incident on small loops in $\overline L_1$, whereas $Y$ denotes the number of them on  $\hat{\gamma}_{0\rightarrow \partial \B_{r}}(\alpha)$ and finally $Z$ denotes the number of points on $\hat{\gamma}_{0\rightarrow \partial \B_{r}}(\alpha)$ which is incident on at least one large loop in $\overline L_1$.
    
    Now we control $\widetilde{\E}[Y\mid 0\leftrightarrow \partial \B_{r}]$ and $\widetilde{\E}[Y^{2}\mid 0\leftrightarrow \partial \B_{r}]$ so that we can apply the second moment method. 

    We start with the conditional first moment of $Y$. By Proposition \ref{prop 5.11}, {$\widetilde{\E}[|\hat{\gamma}_{0\rightarrow \partial \B_{r}}(\alpha)| \mathds{1}_{0\leftrightarrow \partial \B_{r}}] \geq c\alpha^{4}$}. Additionally, by Lemma \ref{large loops on geo}, we obtain an upper bound for $\widetilde{\E}[Z \mathds{1}_{0\leftrightarrow \partial \B_{r}}]$ as 
    \begin{align*}
        \widetilde{\E}[Z \mathds{1}_{0\leftrightarrow \partial \B_{r}}] \leq  \sum_{z\in \mathbb{B}_{\alpha r}} \sum_{\substack{\Gamma \ni z\\|\Gamma| \geq K}}\widetilde{\mathbb{P}}(\Gamma\in \mathcal{L})\widetilde{\mathbb{P}}(0\overset{\overline{\mathcal{L}}\setminus \{\overline{\Gamma}\},\mathbb{B}_{r}}{\longleftrightarrow} \overline{\Gamma} \circ \overline{\Gamma} \overset{\overline{\mathcal{L}}\setminus \{\overline{\Gamma}\}}{\longleftrightarrow} \partial \mathbb{B}_{r}) \overset{\eqref{5-59}}{\leq} \e.
    \end{align*}
    Observe that $|\hat{\gamma}_{0\rightarrow \partial \B_{r}}(\alpha)| = Y+Z$. Setting $\e = \frac{c}{2}\alpha^{4}$ leads to 
    \begin{align}\label{1st moment}
        \widetilde{\E}[Y\mathds{1}_{0\leftrightarrow \partial \B_{r}}] \geq c\alpha^{4} -\frac{c}{2}\alpha^{4} = \frac{c}{2}\alpha^{4}.
    \end{align}
    
    Next, we bound the conditional second moment. Consider $u,v\in \{x\in \hat{\gamma}_{0\rightarrow \partial \B_{r}}(\alpha): |\overline{\Gamma}|\leq K,\ \forall\,  \overline{\Gamma} \in \mathsf{Set}(\overline{L}_{1}) \text{ with }x\in \overline{\Gamma}\}$. Without loss of generality, we assume that $\hat{\gamma}_{0\rightarrow \partial \B_{r}}$ passes $u$ before $v$. Then one of the following two events occurs: 
    \begin{enumerate}
        \item $A_{1}(u,v)$: there exists $\overline{\Gamma},\overline{\Gamma}^{\prime} \in \overline{L}_{1}$ with $|\overline{\Gamma}|,|\overline{\Gamma}^{\prime}|\leq K$ such that  $u\in \overline{\Gamma}$ and $v \in \overline{\Gamma}^{\prime}$, and 
        \begin{align*}
            0\overset{\overline{\mathcal{L}}\setminus \{\overline{\Gamma},\overline{\Gamma}^{\prime}\}}{\longleftrightarrow} \overline{\Gamma}\circ  \overline{\Gamma} \overset{\overline{\mathcal{L}}\setminus \{\overline{\Gamma},\overline{\Gamma}^{\prime}\}}{\longleftrightarrow}\overline{\Gamma}^{\prime}\circ \overline{\Gamma}^{\prime} \overset{\overline{\mathcal{L}}\setminus \{\overline{\Gamma},\overline{\Gamma}^{\prime}\}}{\longleftrightarrow} \partial \B_{r}.
        \end{align*}
        \item $A_{2}(u,v)$: there exists $\overline{\Gamma} \in \overline{L}_{1}$ with $|\overline{\Gamma}|\leq K$ such that $u,v \in \overline{\Gamma}$ and 
        \begin{align*}
            0\overset{\overline{\mathcal{L}}\setminus \{\overline{\Gamma}\}}{\longleftrightarrow} \overline{\Gamma}\circ \overline{\Gamma}\overset{\overline{\mathcal{L}}\setminus \{\overline{\Gamma}\}}{\longleftrightarrow} \partial \B_{r}.
        \end{align*}
    \end{enumerate}
    See Figure \ref{2 cases} for the illustrations.
    \begin{figure}[h]
        \centering
        \includegraphics[scale=.2]{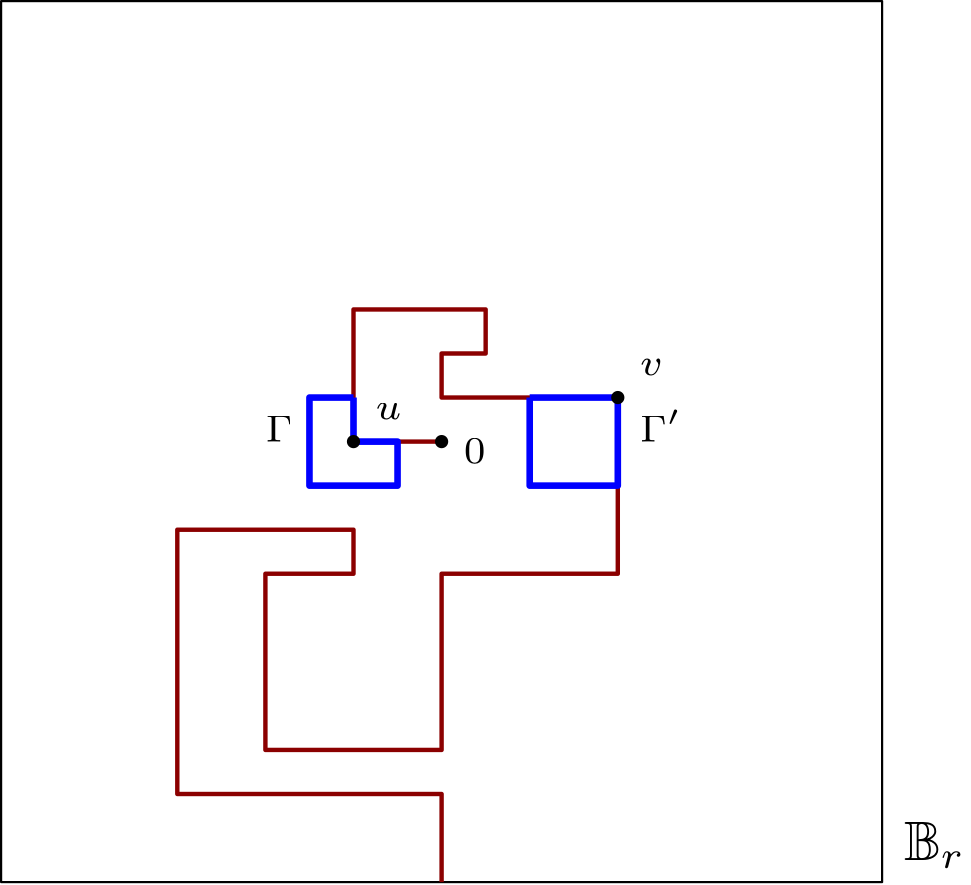}
        \includegraphics[scale=.2]{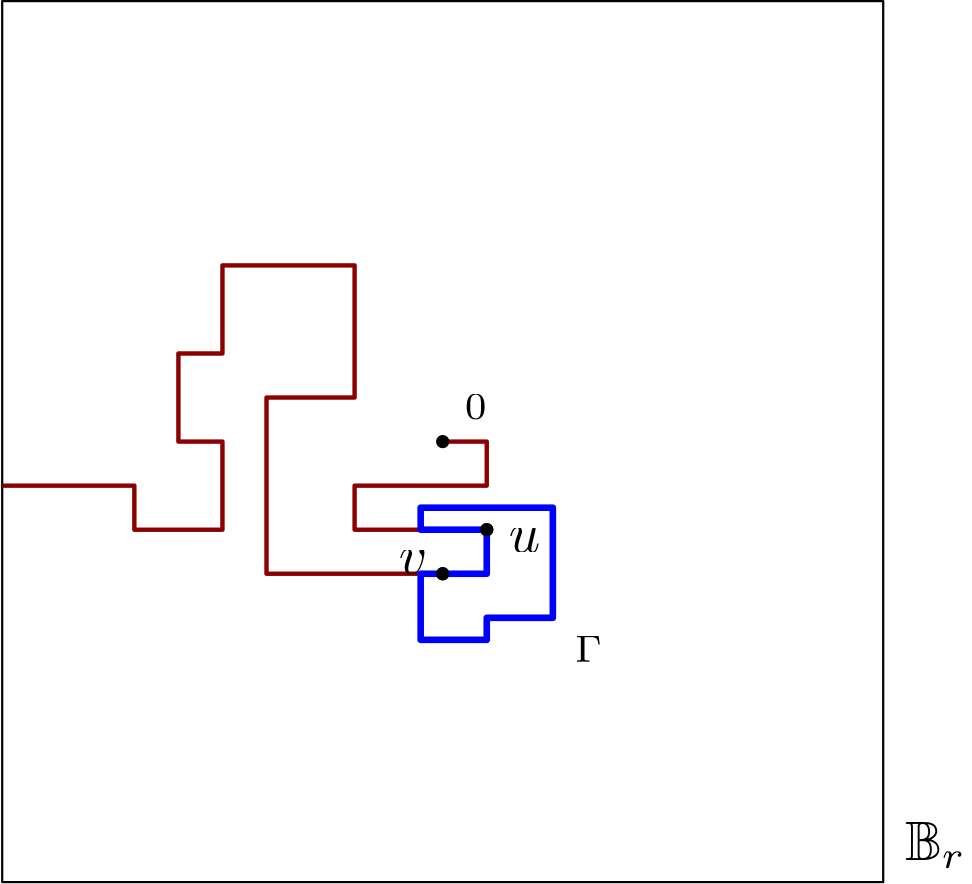}
        \caption{On the left is an illustration of $A_{1}(u,v)$, where $u$ and $v$ belong to two distinct loops $\Gamma$ and $\Gamma^{\prime}$ in $\mathsf{Set}(\overline{L}_{1})$. The right figure illustrates $A_{2}(u,v)$, where $u$ and $v$ are located on the same loop $\Gamma$.
        }
        \label{2 cases}
        \end{figure}
    Since in $A_{1}(u,v)$ and $A_{2}(u,v)$ we only deal with glued loops of lengths at most $K$, the standard BKR type argument suffices to show that 
    \begin{align*}
        \sum_{u,v \in \B_{\alpha r}}\widetilde{\P}(A_{1}(u,v)) \leq C \alpha^{4}r^{2},\
        \sum_{u,v \in \B_{\alpha r}}\widetilde{\P}(A_{2}(u,v)) \leq C \alpha^{2}.
    \end{align*}
    (If the length of $\overline{\Gamma}$ was unbounded, then the second event produces an unbounded contribution unless $d> 8$.)
    Using this, we derive an upper bound for the second moment: 
    \begin{align}\label{2nd moment}
        \widetilde{\E}[Y^{2}\mathds{1}_{0\leftrightarrow \partial \B_{r}}] \leq \sum_{u,v \in \B_{\alpha r}}\widetilde{\P}(A_{1}(u,v)) + \sum_{u,v \in \B_{\alpha r}}\widetilde{\P}(A_{2}(u,v)) \leq C\alpha^{4}r^{2} + C \alpha^{2} \leq C^{\prime} \alpha^{4}r^{2}.
    \end{align}
    Applying the  Paley–Zygmund inequality, along with \eqref{1st moment} and \eqref{2nd moment}, we conclude that for a fixed $c^{\prime} \leq \frac{c}{4C_{1}}\alpha^{4}$,
    \begin{align*}
        \widetilde{\P}(Y>c^{\prime}r^{2} \mid 0\leftrightarrow \partial \B_{r}) &\geq \widetilde{\P}(Y \geq \frac{1}{2}\widetilde{\E}[Y\mid 0\leftrightarrow \partial \B_{r}]\mid 0\leftrightarrow \partial \B_{r}) \nonumber\\
        &\geq \frac{1}{4} \cdot \frac{\widetilde{\E}[Y\mid 0\leftrightarrow \partial \B_{r}]^{2}}{\widetilde{\E}[Y^{2}\mid 0\leftrightarrow \partial \B_{r}]} \geq \frac{c^{2}c_{1}}{16C_{1}^{2}} \geq c^{\prime\prime}.
    \end{align*}
    Using this result, and noting that $X\geq Y$, we immediately have
    \begin{align}\label{tail x}
        \widetilde{\P}(X>c^{\prime}r^{2} \mid 0\leftrightarrow \partial \B_{r}) \geq c^{\prime\prime}.
    \end{align}
    Finally, by Lemma \ref{geometric lemma} and the discussion in the proof of the lower bound in \eqref{1.4}, we know that 
    \begin{align*}
        3d(0,\partial \B_{r}) = 3|\gamma| \geq |\mathsf{Set}(\overline{L}_{1})| \geq X/K.
    \end{align*}
    This combined with \eqref{tail x} gives that 
    \begin{align*}
        \widetilde{\P}(d(0,\partial \B_{r}) \geq \frac{c^{\prime}}{3K} r^{2} \mid 0\leftrightarrow \partial \B_{r}) \geq c^{\prime\prime}.
    \end{align*}
    Using this tail lower bound and the one-arm probability lower bound \eqref{2.6} we complete the proof as follows:
    \begin{align*}
        \widetilde{\P}(\partial B(0,r)\neq \emptyset)
        &\geq \widetilde{\P}(d(0,\partial \B_{\sqrt{\frac{3K}{c^{\prime}}r}}) \geq r,\ 0\leftrightarrow \partial \B_{\sqrt{\frac{3K}{c^{\prime}}r}}) \\
         &= \widetilde{\P}(d(0,\partial \B_{\sqrt{\frac{3K}{c^{\prime}}r}}) \geq r \mid 0\leftrightarrow \partial \B_{\sqrt{\frac{3K}{c^{\prime}}r}}) \cdot\pi_{1}\left(\sqrt{\frac{3K}{c^{\prime}}r}\right) \\
        &\overset{\eqref{2.6}}{\geq}c^{\prime\prime} \frac{c^{\prime}c_{1}}{3Kr} \geq \frac{c^{\prime\prime\prime}}{r}.
    \end{align*}
\end{proof}

\section{Large loops don't help connect}\label{largeloop}

In this section we prove Theorem \ref{werner12} as an application of Theorem \ref{lem 4.4}.

\begin{proof}[Proof of Theorem \ref{werner12}]
We begin by proving (\ref{large loop 3}) and (\ref{large loop 4}) will follow directly from (\ref{large loop 3}) and Theorem \ref{lem 4.4}. For any $L >0$ and $x \in \mathbb{B}_{r}$, if $0 \overset{\mathbb{B}_{\beta r}}{\longleftrightarrow}x$ and the connection must use some glued loop $\overline{\Gamma} \in \overline{\mathcal{L}}\setminus \overline{\mathcal{L}}_{\leq L}$, for the similar reasons as in the proof of Lemma \ref{lem 6.7}, there exists $u,v \in \mathbb{B}_{\beta r}$ with $u \sim \overline{\Gamma}$,  $v \sim \overline{\Gamma}$ such that $0 \overset{\overline{\mathcal{L}}\setminus \{\overline{\Gamma}\}}{\longleftrightarrow} u \circ v \overset{\overline{\mathcal{L}}\setminus \{\overline{\Gamma}\}}{\longleftrightarrow} x$. Then by BK inequality we can bound the difference $\widetilde{\mathbb{P}}(0 \overset{\mathbb{B}_{\beta r}}{\longleftrightarrow}x) - \widetilde{\mathbb{P}}(0 \overset{\mathbb{B}_{\beta r},\ \widetilde{\mathcal{L}}^{\leq L} }{\longleftrightarrow}x)$ as follows:
\begin{align*}
    \sum_{u \in \mathbb{B}_{\beta r}}\sum_{\substack{u \sim \Gamma \\ |\Gamma| \geq L}}\sum_{\substack{v \in \mathbb{B}_{\beta r}\\ v \sim \Gamma}} \widetilde{\mathbb{P}}(\Gamma \in \mathcal{L})\widetilde{\mathbb{P}}(0\leftrightarrow u)\widetilde{\mathbb{P}}(v\leftrightarrow x).
\end{align*}
Summing over $x \in \mathbb{B}_{r}$, we have that 
\begin{align*}
    \sum_{x\in \mathbb{B}_{r}}\widetilde{\mathbb{P}}(0 \overset{\mathbb{B}_{\beta r}}{\longleftrightarrow}x) - \sum_{x\in \mathbb{B}_{r}}\widetilde{\mathbb{P}}(0 \overset{\mathbb{B}_{\beta r},\ \overline{\mathcal{L}}_{\leq L} }{\longleftrightarrow}x) &\leq 
    \sum_{x\in \mathbb{B}_{r}}\sum_{u \in \mathbb{B}_{\beta r}}\sum_{\substack{u \sim \Gamma \\ |\Gamma| \geq L}}\sum_{\substack{v \in \mathbb{B}_{\beta r}\\ v \sim \Gamma}} \widetilde{\mathbb{P}}(\Gamma \in \mathcal{L})\widetilde{\mathbb{P}}(0\leftrightarrow u)\widetilde{\mathbb{P}}(v\leftrightarrow x) \\ &=
    \sum_{u \in \mathbb{B}_{\beta r}}\sum_{\substack{u \sim \Gamma \\ |\Gamma| \geq L}}\sum_{\substack{v \in \mathbb{B}_{\beta r}\\ v \sim \Gamma}}\sum_{x\in \mathbb{B}_{r}} \widetilde{\mathbb{P}}(\Gamma \in \mathcal{L})\widetilde{\mathbb{P}}(0\leftrightarrow u)\widetilde{\mathbb{P}} (v\leftrightarrow x) \\ &\leq 
    C r^{2} \sum_{u \in \mathbb{B}_{\beta r}}\sum_{\substack{u \sim \Gamma \\ |\Gamma| \geq L}}\sum_{ v \sim \Gamma} \widetilde{\mathbb{P}}(\Gamma \in \mathcal{L})\widetilde{\mathbb{P}}(0\leftrightarrow u) \\&=
    C r^{2} \sum_{u \in \mathbb{B}_{\beta r}}\sum_{\substack{u \sim \Gamma \\ |\Gamma| \geq L}} |\Gamma|\widetilde{\mathbb{P}}(\Gamma \in \mathcal{L})\widetilde{\mathbb{P}}(0\leftrightarrow u) \\ \text{By translation invariance,} \qquad &= 
    C r^{2} \sum_{u \in \mathbb{B}_{\beta r}}\sum_{\substack{0 \sim \Gamma \\ |\Gamma| \geq L}} |\Gamma|\widetilde{\mathbb{P}}(\Gamma \in \mathcal{L})\widetilde{\mathbb{P}}(0\leftrightarrow -u)
    \\&\leq 
    Cr^{4}\sum_{\substack{0 \sim \Gamma \\ |\Gamma| \geq L}} |\Gamma|\widetilde{\mathbb{P}}(\Gamma \in \mathcal{L}).
\end{align*}
Then by Lemma \ref{loop one point} we know
\begin{align} \label{6.3}
    \sum_{x\in \mathbb{B}_{r}}\widetilde{\mathbb{P}}(0 \overset{\mathbb{B}_{\beta r}}{\longleftrightarrow}x) - \sum_{x\in \mathbb{B}_{r}}\widetilde{\mathbb{P}}(0 \overset{\mathbb{B}_{\beta r},\ \overline{\mathcal{L}}_{\leq L} }{\longleftrightarrow}x) \leq Cr^{4} L^{-d/2+2},
\end{align}
Plugging $L =r^{b}$ for $b >\frac{4}{d-4}$ into (\ref{6.3}) yields the upper bound in (\ref{large loop 3}). The lower bound is trivial.
\end{proof}

\section{Appendix}\label{appendix}

In this section we provide the details for the proofs of several statements that were omitted in the main body of the paper. These are Lemma \ref{lem 5.8}, Lemma \ref{lem 5.4}, Lemma \ref{lem 5.1}, Lemma \ref{lem 5.7} and Lemma \ref{large loops on geo}. We start from the proof of Lemma \ref{lem 5.4}, which deals with the point to point connection.

\subsection{Estimates for point to point case}\label{appendix.1}
\begin{proof}[Proof of Lemma \ref{lem 5.4}]
    Recall that the aim is to prove that for any $\varepsilon$, there exists $K(\varepsilon)>0$ such that for any $x \in \mathbb{Z}^{d}$,
    \begin{align}\label{109}
        \sum_{\substack{\Gamma\ni 0 \\ |\Gamma| \geq K}}\sum_{w\in \Gamma}\widetilde{\mathbb{P}}(\Gamma \in \mathcal{L})\widetilde{\mathbb{P}}(w\leftrightarrow x) \leq \varepsilon |x|^{2-d}.
    \end{align}

We begin by considering the contribution of $w \in \mathbb{B}(x, 2^{l}) \setminus \mathbb{B}(x,2^{l-1})$, $1\leq l\leq \lceil\log_{2} (|x|/4)\rceil$. Using two-point function estimate \eqref{two point}, we know that
\begin{align*}
\widetilde{\mathbb{P}}(w \leftrightarrow x) \leq C2^{l(2-d)}. 
\end{align*}
For $k > |x|^{1.9}$, consider a simple random walk $\{X_{t}\}_{t=0,1,\cdots}$ on $\mathbb{Z}^{d}$. Denote by $\P_{x}$ and $\E_{x}$ the measure and expectation of $\{X_{t}\}_{t\geq 0}$ starting from $X_{0}=x$ and let $p_{t}(\cdot,\cdot)$ be the transition kernel at time $t$. By applying (\ref{loop prob}), we have
\begin{align*}
    \sum_{\substack{\Gamma \ni 0, w\\ |\Gamma|=k}}\widetilde{\mathbb{P}}(\Gamma \in \mathcal{L}) &\overset{(\ref{loop prob})}{\leq} \sum_{\substack{\Gamma \ni 0, w \\ |\Gamma|=k}}(2d)^{-|\Gamma|}  =\sum_{t=0}^{k}\P_{0}(X_{t}=w,\; X_{k}=0) = \sum_{t=0}^{k}p_{t}(0,w)p_{k-t}(w,0).
\end{align*}
From \cite[Theorem 1.2.1]{lawler2013intersections}, we know that
\begin{align*}
    c k^{-\frac{d}{2}} \leq p_{k}(0,0) \leq C k^{-\frac{d}{2}},\qquad 
    p_{t}(0,w) \leq C t^{-\frac{d}{2}}e^{-\frac{d|w|^{2}}{2t}} + C t^{-\frac{d+2}{2}}.
\end{align*}
Thus, for $k/2 \leq t\leq k$,
\begin{align*}
    p_{t}(0,w) \leq C p_{k}(0,0).
\end{align*}
Therefore, we can bound the sum for $k/2 \leq t\leq k$ as
\begin{align*}
    \sum_{k/2 \leq t \leq k} p_{t}(0,w)p_{k-t}(w,0) &\leq C p_{k(0,0)}\sum_{k/2 \leq t \leq k}p_{k-t}(w,0) 
    \leq Cp_{k(0,0)}\sum_{l=1}^{\infty}p_{l}(w,0) \\
    &=Cp_{k}(0,0) G(w,0) \leq Ck^{-\frac{d}{2}}|w|^{2-d},
\end{align*}
where $G(\cdot , \cdot)$ is the Green function for $\{X_{t}\}_{t\geq 0}$ and the last inequality is due to classic Green function estimate.
Similarly, for $1\leq t\leq k/2$, by the symmetry of $p_{t}(\cdot,\cdot)$ we know that, 
\begin{align*}
    \sum_{t= 1}^{k/2} p_{t}(0,w)p_{k-t}(w,0) \leq C k^{-\frac{d}{2}}|w|^{2-d}.
\end{align*}
Given $w \in \mathbb{B}(x,|x|/4)$, we know $|w| \geq |x|/4$. Thus, for $k>|x|^{1.9}$
\begin{align*}
    \sum_{\substack{\Gamma \ni 0, w\\ |\Gamma|=k}}\widetilde{\mathbb{P}}(\Gamma \in \mathcal{L})\leq C k^{-\frac{d}{2}}|x|^{2-d}.
\end{align*}
Now summing over $k>|x|^{1.9}$, we obtain
\begin{align}
   &\sum_{ \substack{\Gamma \ni 0\\ |\Gamma|\geq |x|^{1.9}}}\sum_{\substack{w\in \mathbb{B}(x,|x|/4) \\ w\ni \Gamma}}|\Gamma| \widetilde{\mathbb{P}}(\Gamma \in \mathcal{L})\widetilde{\mathbb{P}}(w\leftrightarrow x) \leq \sum_{k=|x|^{1.9}}^{\infty} \sum_{l=1}^{\lceil\log_{2}(|x|/4)\rceil}\sum_{w\in \mathbb{B}(x,2^{l}) \setminus \mathbb{B}(x,2^{l-1})} \sum_{\substack{\Gamma \sim 0,w \\ |\Gamma|=k}} |\Gamma| \widetilde{\mathbb{P}}(\Gamma \in \mathcal{L}) \mathbb{P}(w\leftrightarrow x) \nonumber\\ &\leq 
   \sum_{k=|x|^{1.9}}^{\infty} \sum_{l=1}^{\lceil\log_{2}(|x|/4)\rceil}\sum_{w\in \mathbb{B}(x,2^{l}) \setminus \mathbb{B}(x,2^{l-1})} k \cdot 2^{l(2-d)}\sum_{\substack{\Gamma \sim 0,w \\ |\Gamma|=k}} \widetilde{\mathbb{P}}(\Gamma \in \mathcal{L}) \nonumber\\
   &\leq C\sum_{k=|x|^{1.9}}^{\infty} \sum_{l=1}^{\lceil\log_{2}(|x|/4)\rceil} k \cdot 2^{l(2-d)} \cdot 2^{ld}\cdot k^{-\frac{d}{2}}|x|^{2-d} \nonumber\\ &\leq 
   C |x|^{2-d}\cdot 2^{2 \lceil \log_{2}(|x|/4)\rceil} \sum_{k=|x|^{1.9}}^{\infty}k^{1-\frac{d}{2}}\leq C|x|^{2-d}\cdot |x|^{2}\sum_{k = |x|^{1.9}}^{\infty}k^{1-\frac{d}{2}} \nonumber\\ 
   &\leq C |x|^{4-d} \cdot |x|^{1.9\cdot (2-d/2)} =C |x|^{2-d} \cdot |x|^{5.8 - 0.95d}.\label{110}
\end{align}
This is a small term compared with $|x|^{2-d}$ since $d\geq 7$ implies that $5.8 - 0.95d< 0$.

Next we bound the summation over $k < |x|^{1.9}$. Applying Lemma \ref{local clt} gives that for any $w \in \mathbb{B}(x,|x|/4)$,
\begin{align*}
    \sum_{\substack{\Gamma \ni 0,w \\ |\Gamma|\leq |x|^{1.9}}}\widetilde{\mathbb{P}}(\Gamma \in \mathcal{L}) \leq C e^{-|w|^{0.2}/4} \leq C e^{-C|x|^{0.2}}.
\end{align*}
Thus, 
\begin{align}
    &\sum_{ \substack{\Gamma \ni 0\\ |\Gamma|\leq |x|^{1.9}}}\sum_{\substack{w\in \mathbb{B}(x,|x|/2) \\ w\in \Gamma}}|\Gamma| \widetilde{\mathbb{P}}(\Gamma \in \mathcal{L})\widetilde{\mathbb{P}}(w\leftrightarrow x) \leq \sum_{w\in \mathbb{B}(x,|x|/2)}\sum_{\substack{\Gamma \ni 0,w \\ |\Gamma|\leq |x|^{1.9}}} |x|^{1.9}\widetilde{\mathbb{P}}(\Gamma \in \mathcal{L}) \nonumber\\ &\leq 
    C|x|^{1.9} \sum_{w\in \mathbb{B}(x,|x|/2)} e^{-C|x|^{0.2}} \leq C|x|^{1.9+d}e^{-C|x|^{0.2}}.\label{111}
\end{align}
This is exponentially small. Combining with (\ref{111}) gives that for $x$ with large enough $|x|$, 
\begin{align}
   &\sum_{ \substack{\Gamma \ni 0\\ |\Gamma|\geq K}}\sum_{\substack{w\in \mathbb{B}(x,|x|/2) \\ w\in \Gamma}}|\Gamma| \widetilde{\mathbb{P}}(\Gamma \in \mathcal{L})\widetilde{\mathbb{P}}(w\leftrightarrow x) \nonumber\\ 
   &\leq  \sum_{ \substack{\Gamma \ni 0\\ |\Gamma|\leq |x|^{1.9}}}\sum_{\substack{w\in \mathbb{B}(x,|x|/2) \\ w\in \Gamma}}|\Gamma| \widetilde{\mathbb{P}}(\Gamma \in \mathcal{L})\widetilde{\mathbb{P}}(w\leftrightarrow x) + \sum_{ \substack{\Gamma \ni 0\\ |\Gamma|\geq |x|^{1.9}}}\sum_{\substack{w\in \mathbb{B}(x,|x|/2) \\ w\in \Gamma}}|\Gamma| \widetilde{\mathbb{P}}(\Gamma \in \mathcal{L})\widetilde{\mathbb{P}}(w\leftrightarrow x) \nonumber\\ 
   &\leq  C|x|^{1.9+d}e^{-C|x|^{0.2}} + C |x|^{2-d} \cdot |x|^{5.8-0.95d} \leq \frac{\e}{2}\cdot |x|^{2-d}.\label{112}
\end{align}

Now we deal with the contribution of $w \notin \mathbb{B}(x, |x|/4)$. For such $w$, by two-point function estimate \eqref{two point}, $\widetilde{\mathbb{P}}(w\leftrightarrow x) \leq C|x|^{2-d}$. Therefore, we have 
\begin{align*}
    \sum_{\substack{\Gamma \sim 0 \\ |\Gamma| \geq K}}\sum_{\substack{w \notin \mathbb{B}(x,|x|/2)\\ w \sim \Gamma }} |\Gamma| \widetilde{\mathbb{P}}(\Gamma \in \mathcal{L})\widetilde{\mathbb{P}}(w \leftrightarrow x) &\leq C |x|^{2-d}\sum_{\substack{\Gamma \sim 0 \\ |\Gamma| \geq K}}\sum_{\substack{w \notin \mathbb{B}(x,|x|/2)\\ w \sim \Gamma }} |\Gamma| \widetilde{\mathbb{P}}(\Gamma \in \mathcal{L}) \\ &\leq C|x|^{2-d}\sum_{\substack{\Gamma \sim 0 \\ |\Gamma| \geq K}}|\Gamma|^{2} \widetilde{\mathbb{P}}(\Gamma \in \mathcal{L}).
\end{align*}
By applying Lemma \ref{loop one point}, we get
\begin{align*}
    \sum_{\substack{\Gamma \ni 0 \\ |\Gamma| \geq K}}|\Gamma|^{2} \widetilde{\mathbb{P}}(\Gamma \in \mathcal{L})\leq C K^{3-d/2}.
\end{align*}
Since $d>6$ implies that $3-d/2<0$, for sufficiently large $K$, 
\begin{align}
    \sum_{\substack{\Gamma \sim 0 \\ |\Gamma| \geq K}}\sum_{\substack{w \notin \mathbb{B}(x,|x|/2)\\ w \sim \Gamma }} |\Gamma| \widetilde{\mathbb{P}}(\Gamma \in \mathcal{L})\widetilde{\mathbb{P}}(w \leftrightarrow x) \leq C K^{3-\frac{d}{2}} |x|^{2-d} \leq \frac{\e}{2} \cdot |x|^{2-d}.\label{113}
\end{align}
Finally, combining (\ref{112}) and (\ref{113}) we get
\begin{align*}
    \sum_{\substack{\Gamma \ni 0 \\ |\Gamma| \geq K}}\sum_{\ w \in \Gamma } |\Gamma| \widetilde{\mathbb{P}}(\Gamma \in \mathcal{L})\widetilde{\mathbb{P}}(w \leftrightarrow x)  &\leq 
    \sum_{\substack{\Gamma \ni 0 \\ |\Gamma| \geq K}}\sum_{\substack{w \notin \mathbb{B}(x,|x|/2)\\ w \in \Gamma }} |\Gamma| \widetilde{\mathbb{P}}(\Gamma \in \mathcal{L})\widetilde{\mathbb{P}}(w \leftrightarrow x) \\& + \sum_{ \substack{\Gamma \ni 0\\ |\Gamma|\geq K}}\sum_{\substack{w\in \mathbb{B}(x,|x|/2) \\ w\in \Gamma}}|\Gamma| \widetilde{\mathbb{P}}(\Gamma \in \mathcal{L})\widetilde{\mathbb{P}}(w\leftrightarrow x) \\ &\leq 
    \frac{\e}{2}\cdot |x|^{2-d} + \frac{\e}{2}\cdot |x|^{2-d} \leq \e |x|^{2-d}.
\end{align*}
\end{proof}

\subsection{Estimates for point to boundary case}\label{appendix.2}

\begin{proof}[Proof of Lemma \ref{lem 5.8}]
    We divide our discussion into two cases:

    \textbf{Case 1: $w \in \mathbb{B}_{\frac{2}{3} r}$.} In this case, $d^{\text{ext}}(w,\partial \mathbb{B}_{r}) \geq \frac{1}{3} r$. If $\{\overline{\Gamma} \overset{\overline{\mathcal{L}}\setminus \{\overline{\Gamma}\},\mathbb{B}_{r}}{\longleftrightarrow} y\}$ occurs, there exists $u \in \mathbb{B}_{r}$ with $u\sim \Gamma$ such that $u \overset{\overline{\mathcal{L}}\setminus \{\overline{\Gamma}\},\mathbb{B}_{r}}{\longleftrightarrow} y$.
    
    First, consider the case $|\Gamma|\geq r^{1.9}$. By a union bound, we have.
    \begin{align*}
        &\sum_{\substack{\Gamma \sim w\\ |\Gamma| \geq r^{1.9}}} \widetilde{\mathbb{P}}(\Gamma \in \mathcal{L})\widetilde{\mathbb{P}}(\overline{\Gamma} \overset{\overline{\mathcal{L}}\setminus \{\overline{\Gamma}\},\mathbb{B}_{r}}{\longleftrightarrow} y\circ \overline{\Gamma} \overset{\overline{\mathcal{L}}\setminus \{\overline{\Gamma}\}}{\longleftrightarrow} \partial\mathbb{B}_{r}) \leq  \sum_{\substack{\Gamma \sim w\\ |\Gamma| \geq r^{1.9}}} \widetilde{\mathbb{P}}(\Gamma \in \mathcal{L})\widetilde{\mathbb{P}}(\overline{\Gamma} \overset{\overline{\mathcal{L}}\setminus \{\overline{\Gamma}\},\mathbb{B}_{r}}{\longleftrightarrow} y) \\ 
        &\leq \sum_{\substack{\Gamma \sim w\\ |\Gamma| \geq r^{1.9}}}\sum_{\substack{u\in \mathbb{B}_{r}\\ u \sim \Gamma}} \widetilde{\mathbb{P}}(\Gamma \in \mathcal{L})\widetilde{\mathbb{P}}(u \leftrightarrow y) \leq 
        \sum_{u \in \mathbb{B}_{r}}\widetilde{\mathbb{P}}(u\leftrightarrow y)\sum_{\substack{\Gamma \sim w,u\\ |\Gamma| \geq r^{1.9}}}\widetilde{\mathbb{P}}(\Gamma \in \mathcal{L}).
    \end{align*}
    Next, swapping the order of summation and applying Lemma \ref{loop two points}, we obtain 
    \begin{align*}
        \sum_{\substack{\Gamma \sim w,u\\ |\Gamma| \geq r^{1.9}}}\widetilde{\mathbb{P}}(\Gamma \in \mathcal{L}) \leq C r^{1.9 \cdot (1-d/2)} |u-w|^{2-d}.
    \end{align*}
    Substituting this into the summation, we get
    \begin{align*}
        &\sum_{u \in \mathbb{B}_{r}}\widetilde{\mathbb{P}}(u\leftrightarrow y)\sum_{\substack{\Gamma \sim w,u\\ |\Gamma| \geq r^{1.9}}}\widetilde{\mathbb{P}}(\Gamma \in \mathcal{L}) \leq Cr^{1.9 \cdot (1-d/2)}\sum_{u\in \mathbb{B}_{r}} |u-y|^{2-d}|u-w|^{2-d} \\ &\overset{(\ref{2.1})}{\leq}
        Cr^{1.9 \cdot (1-d/2)}|y-w|^{4-d} \leq C r^{1.9-0.95d +2} |y-w|^{2-d}.
    \end{align*}
    The last inequality is because $|y-w| \leq 3 \cdot \frac{1}{3} r=r$ implies $|y-w|^{4-d}\leq r^{2}|y-w|^{2-d}$. Notice that since $d \geq 7$, $1.9-0.95d +2 <-2$. Thus, we conclude that  
    \begin{align}
        &\sum_{\substack{\Gamma \sim w\\ |\Gamma| \geq r^{1.9}}} \widetilde{\mathbb{P}}(\Gamma \in \mathcal{L})\widetilde{\mathbb{P}}(\overline{\Gamma} \overset{\overline{\mathcal{L}}\setminus \{\overline{\Gamma}\},\mathbb{B}_{r}}{\longleftrightarrow} y\circ \overline{\Gamma} \overset{\overline{\mathcal{L}}\setminus \{\overline{\Gamma}\}}{\longleftrightarrow} \partial\mathbb{B}_{r}) \leq Cr^{1.9-0.95d +2} |y-w|^{2-d}\nonumber \\ 
        &\leq Cr^{-2} |y-w|^{2-d} \leq C |y-w|^{2-d}d^{\text{ext}}(w,\partial \mathbb{B}_{r})^{-2}.\label{114}
    \end{align}
    The last inequality is because $d^{\text{ext}}(w,\partial \mathbb{B}_{r}) \leq r$.

    Next, for $|\Gamma| \leq r^{1.9}$, if $\{\overline{\Gamma} \overset{\overline{\mathcal{L}}\setminus \{\overline{\Gamma}\},\mathbb{B}_{r}}{\longleftrightarrow} y\circ \overline{\Gamma} \overset{\overline{\mathcal{L}}\setminus \{\overline{\Gamma}\}}{\longleftrightarrow} \partial\mathbb{B}_{r}\}$ occurs, there exists $u,v\in \mathbb{B}_{r}$ with $u,v \sim \Gamma$ such that
    \begin{align*}
        y\overset{\overline{\mathcal{L}}\setminus \{\overline{\Gamma}\},\mathbb{B}_{r}}{\longleftrightarrow} u \circ v \overset{\overline{\mathcal{L}}\setminus \{\overline{\Gamma}\}}{\longleftrightarrow} \partial \mathbb{B}_{r}.
    \end{align*}
    Then using BKR inequality, we have
    \begin{align*}
        \sum_{\substack{\Gamma \sim w\\ |\Gamma| \leq r^{1.9}}} \widetilde{\mathbb{P}}(\Gamma \in \mathcal{L})\widetilde{\mathbb{P}}(\overline{\Gamma} \overset{\overline{\mathcal{L}}\setminus \{\overline{\Gamma}\},\mathbb{B}_{r}}{\longleftrightarrow} y\circ \overline{\Gamma} \overset{\overline{\mathcal{L}}\setminus \{\overline{\Gamma}\}}{\longleftrightarrow} \partial\mathbb{B}_{r}) \leq \sum_{\substack{\Gamma \sim w \\ |\Gamma| \leq r^{1.9}}}\sum_{\substack{u,v\in \mathbb{B}_{r}\\ u,v\sim \Gamma}}\widetilde{\mathbb{P}}(\Gamma \in \mathcal{L})\widetilde{\mathbb{P}}(y\leftrightarrow u) \widetilde{\mathbb{P}}(v\leftrightarrow \partial \mathbb{B}_{r}).
    \end{align*}
    When $v \in \mathbb{B}_{r} \setminus \mathbb{B}_{\frac{3}{4}r}$, $|u-v| \geq (3/4-2/3)r$. By Lemma \ref{local clt},
    \begin{align*}
        \sum_{\substack{w,v\sim \Gamma\\ |\Gamma| \leq |u-v|^{1.9}}}\widetilde{\mathbb{P}}(\Gamma \in \mathcal{L})  \leq C e^{-|u-v|^{0.1}/4} \leq Ce^{-Cr^{0.1}}.
    \end{align*}
    Thus, 
    \begin{align}
        &\sum_{\substack{\Gamma \sim w \\ |\Gamma| \leq r^{1.9}}}\sum_{\substack{u\in \mathbb{B}_{r}\\ v\in \mathbb{B}_{r}\setminus \mathbb{B}_{3/4 r}\\ u,v\sim \Gamma}}\widetilde{\mathbb{P}}(\Gamma \in \mathcal{L})\widetilde{\mathbb{P}}(y\leftrightarrow u) \widetilde{\mathbb{P}}(v\leftrightarrow \partial \mathbb{B}_{r}) \leq \sum_{v\in \mathbb{B}_{r}\setminus \mathbb{B}_{3/4 r}}\sum_{\substack{\Gamma \sim w,v \\ |\Gamma | \leq r^{1.9}}}\sum_{u\sim \Gamma}\widetilde{\mathbb{P}}(\Gamma \in \mathcal{L}) \nonumber\\ 
        &\leq \sum_{v\in \mathbb{B}_{r}\setminus \mathbb{B}_{3/4 r}}\sum_{\substack{\Gamma \sim w,v \\ |\Gamma | \leq r^{1.9}}} |\Gamma| \widetilde{\mathbb{P}}(\Gamma \in \mathcal{L}) \leq Cr^{1.9} \sum_{v\in \mathbb{B}_{r}\setminus \mathbb{B}_{3/4 r}}\sum_{\substack{\Gamma \sim w,v \\ |\Gamma | \leq r^{1.9}}} \widetilde{\mathbb{P}}(\Gamma \in \mathcal{L}) \nonumber\\ 
        &\leq Cr^{1.9} \sum_{v\in \mathbb{B}_{r}\setminus \mathbb{B}_{3\alpha r}}e^{-C r^{0.1}} \leq Cr^{1.9+d} e^{-C r^{0.1}}.\label{115}
    \end{align}
    For $v\in \mathbb{B}_{\frac{3}{4} r}$, $d^{\text{ext}}(v, \partial \mathbb{B}_{r}) \geq \frac{1}{4}r$ and hence $\widetilde{\mathbb{P}}(v\leftrightarrow \partial \mathbb{B}_{r}) \leq Cr^{-2}.$
    By Lemma \ref{loop three points},
    \begin{align}
        &\sum_{\substack{\Gamma \sim w \\ |\Gamma| \leq r^{1.9}}}\sum_{\substack{u\in \mathbb{B}_{r}\\ v\in \mathbb{B}_{r}\setminus \mathbb{B}_{3/4 r}\\ u,v\sim \Gamma}}\widetilde{\mathbb{P}}(\Gamma \in \mathcal{L})\widetilde{\mathbb{P}}(y\leftrightarrow u) \leq \sum_{u,v\in \Z^{d}}\sum_{\Gamma \sim u,v,w}\widetilde{\mathbb{P}}(\Gamma \in \mathcal{L}) \widetilde{\mathbb{P}}(y\leftrightarrow u)\nonumber\\ 
        &\leq 
        C\sum_{u,v\in \Z^{d}}|u-v|^{2-d}|v-w|^{2-d}|w-u|^{2-d} |y-u|^{2-d} \nonumber\\ 
        &\overset{(\ref{2.1})}{\leq} 
        C \sum_{u\in \Z^{d}} |u-w|^{6-2d}|u-y|^{2-d} \overset{(\ref{2.1})}{\leq} C|w-y|^{8-2d} \leq C|w-y|^{2-d}.\label{116}
    \end{align}
    Thus, combining (\ref{115}) and (\ref{116}) we know that
    \begin{align}
        &\sum_{\substack{\Gamma \sim w \\ |\Gamma| \leq r^{1.9}}}\sum_{\substack{u,v\in \mathbb{B}_{r}\\ u,v\sim \Gamma}}\widetilde{\mathbb{P}}(\Gamma \in \mathcal{L})\widetilde{\mathbb{P}}(y\leftrightarrow u) \widetilde{\mathbb{P}}(v\leftrightarrow \partial \mathbb{B}_{r}) \nonumber\\ &\leq Cr^{1.9+d} e^{-C r^{0.1}} + C|u-y|^{2-d}r^{-2} \leq C|u-y|^{2-d} r^{-2}.\label{117}
    \end{align}
    Summing (\ref{114}) and (\ref{117}) we get for $w\in \mathbb{B}_{2/3 r}$, 
    \begin{align*}
        \sum_{\Gamma \sim w} \widetilde{\mathbb{P}}(\Gamma \in \mathcal{L})\widetilde{\mathbb{P}}(\overline{\Gamma} \overset{\overline{\mathcal{L}}\setminus \{\overline{\Gamma}\},\mathbb{B}_{r}}{\longleftrightarrow} y\circ \overline{\Gamma} \overset{\overline{\mathcal{L}}\setminus \{\overline{\Gamma}\}}{\longleftrightarrow} \partial\mathbb{B}_{r}) \leq C |y-w|^{2-d}r^{-2} \leq C |y-w|^{2-d}d^{\text{ext}}(w,\partial \mathbb{B}_{r})^{-2}.
    \end{align*}

    \textbf{Case 2: $w \in \mathbb{B}_{r} \setminus \mathbb{B}_{2/3 r}$.} For $w \in \mathbb{B}_{r} \setminus \mathbb{B}_{2/3 r}$, $|w-y|\geq \frac{1}{3} r$. First by BKR inequality,
    \begin{align*}
        \sum_{\Gamma \sim w} \widetilde{\mathbb{P}}(\Gamma \in \mathcal{L})\widetilde{\mathbb{P}}(\overline{\Gamma} \overset{\overline{\mathcal{L}}\setminus \{\overline{\Gamma}\},\mathbb{B}_{r}}{\longleftrightarrow} y\circ \overline{\Gamma} \overset{\overline{\mathcal{L}}\setminus \{\overline{\Gamma}\}}{\longleftrightarrow} \partial\mathbb{B}_{r}) &\leq \sum_{\Gamma \sim w} \widetilde{\mathbb{P}}(\Gamma \in \mathcal{L})\widetilde{\mathbb{P}}(\overline{\Gamma} \overset{\overline{\mathcal{L}}\setminus \{\overline{\Gamma}\},\mathbb{B}_{r}}{\longleftrightarrow} y)\widetilde{\mathbb{P}}( \overline{\Gamma} \overset{\overline{\mathcal{L}}\setminus \{\overline{\Gamma}\}}{\longleftrightarrow} \partial\mathbb{B}_{r}) \\ &\leq 
        \sum_{k=1}^{\infty}\sum_{\substack{\Gamma \sim w \\ |\Gamma|=k}}\widetilde{\mathbb{P}}(\Gamma \in \mathcal{L})\widetilde{\mathbb{P}}(\overline{\Gamma} \overset{\overline{\mathcal{L}}\setminus \{\overline{\Gamma}\},\mathbb{B}_{r}}{\longleftrightarrow} y)\widetilde{\mathbb{P}}( \overline{\Gamma} \overset{\overline{\mathcal{L}}\setminus \{\overline{\Gamma}\}}{\longleftrightarrow} \partial\mathbb{B}_{r}).
    \end{align*}
    
    For $k >r^{1.9}$, we naively bound  $\widetilde{\mathbb{P}}( \overline{\Gamma} \overset{\overline{\mathcal{L}}\setminus \{\overline{\Gamma}\}}{\longleftrightarrow} \partial\mathbb{B}_{r})$ by 1, and hence
    \begin{align*}
        \sum_{\substack{\Gamma \sim w \\ |\Gamma|\geq r^{1.9}}} \widetilde{\mathbb{P}}(\Gamma \in \mathcal{L})\widetilde{\mathbb{P}}(\overline{\Gamma} \overset{\overline{\mathcal{L}}\setminus \{\overline{\Gamma}\},\mathbb{B}_{r}}{\longleftrightarrow} y)\widetilde{\mathbb{P}}( \overline{\Gamma} \overset{\overline{\mathcal{L}}\setminus \{\overline{\Gamma}\}}{\longleftrightarrow} \partial\mathbb{B}_{r}) \leq \sum_{\substack{\Gamma \sim w \\ |\Gamma|\geq r^{1.9}}}\widetilde{\mathbb{P}}(\Gamma \in \mathcal{L})\widetilde{\mathbb{P}}(\overline{\Gamma} \overset{\overline{\mathcal{L}}\setminus \{\overline{\Gamma}\},\mathbb{B}_{r}}{\longleftrightarrow} y).
    \end{align*}
    If $\{\overline{\Gamma} \overset{\overline{\mathcal{L}}\setminus \{\overline{\Gamma}\},\mathbb{B}_{r}}{\longleftrightarrow} y\}$ occurs, there exists $u\sim \Gamma$ such that $u\overset{\overline{\mathcal{L}}\setminus \{\overline{\Gamma}\}}{\longleftrightarrow} y$. Then by a union bound and Lemma \ref{loop two points}, we obtain
    \begin{align*}
        \sum_{\substack{\Gamma \sim w \\ |\Gamma|\geq r^{1.9}}}\widetilde{\mathbb{P}}(\Gamma \in \mathcal{L})\widetilde{\mathbb{P}}(\overline{\Gamma} \overset{\overline{\mathcal{L}}\setminus \{\overline{\Gamma}\},\mathbb{B}_{r}}{\longleftrightarrow} y) &\leq  \sum_{u\in \Z^{d}}\sum_{\substack{\Gamma\sim w,u \\ |\Gamma| \geq r^{1.9}}} \widetilde{\mathbb{P}}(\Gamma\in \mathcal{L})\widetilde{\mathbb{P}}(u\leftrightarrow y) \\ &\leq C\sum_{u\in \Z^{d}} r^{1.9 \cdot (1-d/2)}|u-w|^{2-d}|u-y|^{2-d} \\ 
        &\overset{(\ref{2.1})}{\leq}Cr^{1.9\cdot (1-d/2)}|w-y|^{4-d} \leq C r^{1.9-0.95d+2}|w-y|^{2-d}.
    \end{align*}
    The last inequality is because $|w-y|\leq 2r$. Since $d \geq 7$, $1.9-0.95d+2 <-2$. So this term can be further bounded by
    \begin{align}
        \sum_{\substack{\Gamma \sim w \\ |\Gamma|\geq r^{1.9}}} \widetilde{\mathbb{P}}(\Gamma \in \mathcal{L})\widetilde{\mathbb{P}}(\overline{\Gamma} \overset{\overline{\mathcal{L}}\setminus \{\overline{\Gamma}\},\mathbb{B}_{r}}{\longleftrightarrow} y\circ \overline{\Gamma} \overset{\overline{\mathcal{L}}\setminus \{\overline{\Gamma}\}}{\longleftrightarrow} \partial\mathbb{B}_{r}) \leq C d^{\text{ext}}(w,\partial \mathbb{B}_{r})^{-2}|w-y|^{2-d}. \label{118}
    \end{align}
    The last inequality is because $d^{\text{ext}}(w,\partial \mathbb{B}_{r}) \leq r$.

    For the contribution of $k<r^{1.9}$, we can employ the similar computation as in \eqref{111} to get
    \begin{align}\label{119}
        \sum_{\substack{\Gamma \sim w \\ |\Gamma|<r^{1.9}}} \widetilde{\mathbb{P}}(\Gamma \in \mathcal{L})\widetilde{\mathbb{P}}(\overline{\Gamma} \overset{\overline{\mathcal{L}}\setminus \{\overline{\Gamma}\},\mathbb{B}_{r}}{\longleftrightarrow} y\circ \overline{\Gamma} \overset{\overline{\mathcal{L}}\setminus \{\overline{\Gamma}\}}{\longleftrightarrow} \partial\mathbb{B}_{r}) \leq C d^{\text{ext}}(w,\partial \mathbb{B}_{r})^{-2}|w-y|^{2-d}.
    \end{align}

    Finally, combining \eqref{118} and \eqref{119} completes the proof for case 2, i.e. $w \in \mathbb{B}_{r} \setminus \mathbb{B}_{2/3 r}$.
\end{proof}

\begin{proof}[Proof of Lemma \ref{large loops on geo}]
First, using a similar computation as in (\ref{114}), we obtain
    \begin{align}\label{5-61}
        \sum_{z\in \mathbb{B}_{1/3 r}} \sum_{\substack{\Gamma \ni z\\|\Gamma|\geq r^{1.9}}}\widetilde{\mathbb{P}}(\Gamma\in \mathcal{L})\widetilde{\mathbb{P}}(0\overset{\overline{\mathcal{L}}\setminus \{\overline{\Gamma}\},\mathbb{B}_{r}}{\longleftrightarrow} \overline{\Gamma} \circ \overline{\Gamma}\overset{\overline{\mathcal{L}}\setminus \{\overline{\Gamma}\}}{\longleftrightarrow} \partial \mathbb{B}_{r}) \leq C\sum_{z\in \mathbb{B}_{1/3 r}}r^{3.9-0.95d}|z|^{2-d} \leq C r^{5.9-0.95d}.
    \end{align}
    Next, applying Lemma \ref{local clt}, we have
    \begin{align}\label{5-62}
        \sum_{z\in\mathbb{B}_{1/3 r}}\sum_{u \in \mathbb{B}_{r}\setminus \mathbb{B}_{2/3 r}}\sum_{\substack{\Gamma \in z\\ \Gamma \sim w \\ |\Gamma| < r^{1.9}}}\widetilde{\mathbb{P}}(\Gamma \in \mathcal{L}) \widetilde{\mathbb{P}}(0\overset{\overline{\mathcal{L}}\setminus \{\overline{\Gamma}\},\mathbb{B}_{r}}{\longleftrightarrow} \overline{\Gamma})\widetilde{\mathbb{P}}(u\leftrightarrow \partial \mathbb{B}_{r}) &\leq \sum_{z\in\mathbb{B}_{1/3 r}}\sum_{u \in \mathbb{B}_{r}\setminus \mathbb{B}_{2/3 r}}\sum_{\substack{\Gamma \in z\\ \Gamma \sim w \\ |\Gamma| < r^{1.9}}}\widetilde{\mathbb{P}}(\Gamma \in \mathcal{L}) \nonumber\\ 
        &\leq Cr^{2d}e^{-Cr^{0.1}}.
    \end{align}
    For $u\in \mathbb{B}_{\frac{2}{3} r}$, $d^{\text{ext}}(u,\partial \mathbb{B}_{r})\geq \frac{1}{3} r$, so $\widetilde{\mathbb{P}}(u\leftrightarrow \partial \mathbb{B}_{r}) \leq Cr^{-2}$. Thus, we have he following bound:
    \begin{align*}
        &\sum_{z\in \mathbb{B}_{1/3 r}}\sum_{\substack{\Gamma\ni z \\ K\leq |\Gamma|<r^{1.9}}}\sum_{\substack{u\in \mathbb{B}_{2/3 r}\\u\sim \Gamma}}\widetilde{\mathbb{P}}(\Gamma \in \mathcal{L})\widetilde{\mathbb{P}}(0\overset{\overline{\mathcal{L}}\setminus \{\overline{\Gamma}\},\mathbb{B}_{r}}{\longleftrightarrow} \overline{\Gamma})\widetilde{\mathbb{P}}(u\leftrightarrow \partial \mathbb{B}_{r}) \\ &\leq  Cr^{-2}\sum_{z\in \mathbb{B}_{\alpha r}}\sum_{\substack{\Gamma\ni z \\ K\leq |\Gamma|<r^{1.9}}}|\Gamma| \widetilde{\mathbb{P}}(\Gamma \in \mathcal{L}) \widetilde{\mathbb{P}}(0\overset{\overline{\mathcal{L}}\setminus \{\overline{\Gamma}\},\mathbb{B}_{r}}{\longleftrightarrow} \overline{\Gamma}).
    \end{align*}
    By Lemma \ref{lem 5.4}, for large $K$,
    \begin{align*}
        \sum_{\substack{\Gamma\ni z \\ K\leq |\Gamma|<r^{1.9}}}|\Gamma| \widetilde{\mathbb{P}}(\Gamma \in \mathcal{L}) \widetilde{\mathbb{P}}(0\overset{\overline{\mathcal{L}}\setminus \{\overline{\Gamma}\},\mathbb{B}_{r}}{\longleftrightarrow} \overline{\Gamma}) \leq \e |z|^{2-d}.
    \end{align*} 
    Thus, we get 
    \begin{align}\label{5-63}
        &\sum_{z\in \mathbb{B}_{1/3 r}}\sum_{\substack{\Gamma\ni z \\ K\leq |\Gamma|<r^{1.9}}}\sum_{\substack{u\in \mathbb{B}_{2/3 r}\\u\sim \Gamma}}\widetilde{\mathbb{P}}(\Gamma \in \mathcal{L})\widetilde{\mathbb{P}}(0\overset{\overline{\mathcal{L}}\setminus \{\overline{\Gamma}\},\mathbb{B}_{r}}{\longleftrightarrow} \overline{\Gamma})\widetilde{\mathbb{P}}(u\leftrightarrow \partial \mathbb{B}_{r}) \nonumber\\ &\leq C\e r^{-2}\sum_{z\in \mathbb{B}_{\alpha r}}|z|^{2-d}  \leq C\e r^{-2} \cdot r^{2} \leq C\e
    \end{align}
    Finally, combining (\ref{5-61})-(\ref{5-63}), we get 
    \begin{align*}
        &\sum_{z\in \mathbb{B}_{1/3 r}} \sum_{\substack{\Gamma \ni z\\|\Gamma| \geq K}}\widetilde{\mathbb{P}}(\Gamma\in \mathcal{L})\widetilde{\mathbb{P}}(0\overset{\overline{\mathcal{L}}\setminus \{\overline{\Gamma}\},\mathbb{B}_{r}}{\longleftrightarrow} \overline{\Gamma} \circ \overline{\Gamma}\overset{\overline{\mathcal{L}}\setminus \{\overline{\Gamma}\}}{\longleftrightarrow} \partial \mathbb{B}_{r}) \leq \sum_{z\in \mathbb{B}_{1/3 r}} \sum_{\substack{\Gamma \ni z\\|\Gamma|\geq r^{1.9}}}\widetilde{\mathbb{P}}(\Gamma\in \mathcal{L})\widetilde{\mathbb{P}}(0\overset{\overline{\mathcal{L}}\setminus \{\overline{\Gamma}\},\mathbb{B}_{r}}{\longleftrightarrow} \Gamma \circ \overset{\overline{\mathcal{L}}\setminus \{\overline{\Gamma}\}}{\longleftrightarrow} \partial \mathbb{B}_{r}) 
        \\ &+
        \sum_{z\in \mathbb{B}_{1/3 r}}\sum_{\substack{\Gamma \ni z\\ K <|\Gamma|\leq r^{1.9}}}\sum_{\substack{u \in \mathbb{B}_{r}\setminus \mathbb{B}_{2/3 r}\\ u\sim \Gamma}}\widetilde{\mathbb{P}}(\Gamma \in \mathcal{L})\widetilde{\mathbb{P}}(0\leftrightarrow \Gamma)\widetilde{\mathbb{P}}(u\leftrightarrow \mathbb{B}_{r})  \\ &+ \sum_{z\in \mathbb{B}_{1/3 r}}\sum_{\substack{\Gamma\ni z \\ K\leq |\Gamma|<r^{1.9}}}\sum_{\substack{u\in \mathbb{B}_{2/3 r}\\u\sim \Gamma}}\widetilde{\mathbb{P}}(\Gamma \in \mathcal{L})\widetilde{\mathbb{P}}(0\leftrightarrow \Gamma) \widetilde{\mathbb{P}}(u\leftrightarrow \partial \mathbb{B}_{r}) \\ &\leq 
        C r^{5.9 - 0.95d} + C r^{2d}e^{-Cr^{0.1}} +C\e.
    \end{align*}
    Since $d>6$ implies $5.95-0.95d <0$, we know that for $r$ large, the sum is bounded as
    \begin{align*}
        C r^{5.9 - 0.95d} + C r^{2d}e^{-Cr^{0.1}} +C\e \leq C^{\prime} \e,
    \end{align*} 
    which proves (\ref{5-59}).
    \end{proof}

\subsection{Proof of Lemma \ref{lem 5.1}}\label{appendix.3}
    An intrinsic version of this statement was proved in \cite[Section 4.1]{ganguly2024ant} for $d>20$. However, since our expressions only involve extrinsic constraints, we will be able to achieve the desired estimate for all $d>6$. Our proof strategy, nonetheless, will closely mimic that in \cite[Section 4.1]{ganguly2024ant}. Hence, we will be brief in our exposition, often referring the reader to \cite[Section 4.1]{ganguly2024ant} for omitted details.

    For $\widetilde{A}\in \widetilde{\mathbb{Z}}^{d}$, define $\widetilde{A}^{\text{int}}$ to be a subgraph of $(\mathbb{Z}^{d},E(\mathbb{Z}^{d}))$ obtained by deleting all partial edges in $\widetilde{A}$, i.e. edges $e$ such that $\widetilde{A} \cap I_{e}  \neq I_{e}$, recall that $I_{e}$ is the interval in $\mathbb{Z}^{d}$ corresponding to $e$. For any connected subgraph $S$ of $(\mathbb{Z}^{d},E(\mathbb{Z}^{d}))$, let $S^{\text{out}}$ be the subgraph of $(\mathbb{Z}^{d},E(\mathbb{Z}^{d}))$, obtained by adding all the edges in $E(\Z^{d})$ intersecting $S$. Then define $\mathsf{loop}(S)$ be the collection of all loops on $\widetilde{\Z}^{d}$ (not necessarily in $\widetilde{\mathcal{L}}$) intersecting $S^{\text{out}}$.
    
For any $x,y\in \mathbb{Z}^{d}$, we say $x\overset{\widetilde{A}}{\longleftrightarrow}y$ if there exists a lattice path $\ell$ from $x$ to $y$, only using full edges in $\widetilde{A}$. We say $x \overset{\widetilde{A}}{\nleftrightarrow}y$ otherwise.  By definition,
\begin{align*}
    x\overset{\widetilde{A}}{\longleftrightarrow}y \Leftrightarrow x\overset{\widetilde{A}^{\text{int}}}{\longleftrightarrow}y.
\end{align*}

    It will be convenient to work with $\widetilde{\mathbb{P}}(0\leftrightarrow y+u \circ y+v \leftrightarrow x)$ where $u=(-K,0,\cdots,0)$ and $v=(K,0,\cdots 0)$ for some large but fixed $K$. We claim that for any $K\in \mathbb{N}$, there exists $c(K)>0$ such that for any $x,y\in \mathbb{Z}^{d}$,
    \begin{align*}
        \widetilde{\mathbb{P}}(\mathcal{T}_{x}(y)) \geq c(K)\widetilde{\mathbb{P}}(0\leftrightarrow y+u \circ y+v \leftrightarrow x).
    \end{align*}
    This claim follows from a resampling argument carried out in \cite[Section 4.1]{ganguly2024ant}. 
    
    Next, we bound the probability that $0\leftrightarrow y+u$ and $y+v \leftrightarrow x$ and further that $y+u$ and $y+v$ are not connected.
    
    We say a subgraph $S$ of $(\mathbb{Z}^{d},E(\mathbb{Z}^{d}))$ is \textbf{$z$-admissible} for some $z\in \mathbb{Z}^{d}$ if $\widetilde{\mathbb{P}}(\widetilde{\mathcal{C}}^{\text{int}}(z)=S) > 0$, where we recall that $\widetilde{\mathcal{C}}(z)$ is the connected component in $\widetilde{\mathcal{L}}$ containing $z$. We then have the following inequality
    \begin{align}\label{5:54}
        \widetilde{\mathbb{P}}(0\leftrightarrow y+u \circ y+v \leftrightarrow x) &\geq \widetilde{\mathbb{P}}(0\leftrightarrow y+u, \; y+v \leftrightarrow x,\; y+v \notin \widetilde{\mathcal{C}}^{\text{int}}(y+u)) \nonumber\\ 
        &= \sum_{\substack{S:  y+u \text{-admissible}\\ 0\overset{S}{\longleftrightarrow}y+u,\; y+v \overset{S}{\nleftrightarrow}y+v}}\widetilde{\mathbb{P}}(y+v \leftrightarrow x\mid \widetilde{\mathcal{C}}^{\text{int}}(y+u) =S)\widetilde{\mathbb{P}}(\widetilde{\mathcal{C}}^{\text{int}}(y+u) =S).
    \end{align}
    Running a similar argument as in \cite[Section 4.1]{ganguly2024ant}, we arrive at 
    \begin{align}
       \widetilde{\mathbb{P}}(0\leftrightarrow y+u &\circ y+v \leftrightarrow x) \geq  \widetilde{\mathbb{P}}(0 \leftrightarrow y+u)\widetilde{\mathbb{P}}(y+v \leftrightarrow x) \nonumber\\
       &- \sum_{\substack{S:  y+u \text{-admissible}\\ 0\overset{S}{\longleftrightarrow}y+u}}\widetilde{\mathbb{P}}(y+v \leftrightarrow x \text{ only on } \mathsf{loop}(S))\widetilde{\mathbb{P}}(\widetilde{\mathcal{C}}^{\text{int}}(y+u) =S).\label{5-64}
    \end{align}
    
    The next step is to bound $\widetilde{\mathbb{P}}(y+v \leftrightarrow x \text{ only on } \mathsf{loop}(S))$. We note that, on the event $\{y+v \leftrightarrow x \text{ only on } \mathsf{loop}(S)\}$, there exists a path from $y+v$ to $x$ that uses a loop in $\mathsf{loop}(S)$, i.e. there exists a discrete loop $\Gamma\in \mathcal{L}$ intersecting $S^{\text{out}}$, and $w_{1},w_{2}\in \mathbb{Z}^{d}$ with $w_{i} \sim \Gamma$ such that 
    \begin{align*}
        w_{1} \overset{\overline{\mathcal{L}}\setminus \{\overline{\Gamma}\}}{\longleftrightarrow} y+v \circ w_{2} \overset{\overline{\mathcal{L}}\setminus \{\overline{\Gamma}\}}{\longleftrightarrow} x.
    \end{align*}
    Notice that the above connections occur disjointly. This is a consequence of Lemma \ref{simlpe chain lemma}, which guarantees the existence of a simple chain from $y+v$ to $x$. The glued loops along this simple chain exhibit this disjoint connection property. Applying the union bound and the BKR inequality, we obtain the following estimate
    \begin{align*}
        \widetilde{\mathbb{P}}(y+v \leftrightarrow x \text{ only on } \mathsf{loop}(S)) \leq \sum_{\Gamma \in \mathsf{loop}(S)}\sum_{w_{1},w_{2} \sim\Gamma}\widetilde{\mathbb{P}}(\Gamma \in \mathcal{L})\widetilde{\mathbb{P}}(w_{1}\leftrightarrow y+v)\widetilde{\mathbb{P}}(w_{2}\leftrightarrow x).
    \end{align*}
    Hence, for the second term, by interchanging the summation over $S$ and $\Gamma$, we get
    \begin{align*}
        &\sum_{\substack{S:  y+u \text{-admissible}\\ 0\overset{S}{\longleftrightarrow}y+u}}\widetilde{\mathbb{P}}(y+v \leftrightarrow x \text{ only on } \mathsf{loop}(S))\widetilde{\mathbb{P}}(\widetilde{\mathcal{C}}^{\text{int}}(y+u) =S) \\ &\leq \sum_{\Gamma}\sum_{w_{1},w_{2}\sim \Gamma}\widetilde{\mathbb{P}}(\Gamma \in \mathcal{L}) \widetilde{\mathbb{P}}(w_{1}\leftrightarrow y+v)\widetilde{\mathbb{P}}(w_{2}\leftrightarrow x) \sum_{\substack{S:  y+u \text{-admissible}\\ 0\overset{S}{\longleftrightarrow}y+u\\ \mathsf{loop}(S)\ni \Gamma}}\widetilde{\mathbb{P}}(\widetilde{\mathcal{C}}^{\text{int}}(y+u) =S)
    \end{align*}
    For the last summation over $S$, we have the following upper bound:
    \begin{align*}
       \sum_{\substack{S:  y+u \text{-admissible}\\ 0\overset{S}{\longleftrightarrow}y+u\\ \mathsf{loop}(S)\ni \Gamma}}\widetilde{\mathbb{P}}(\widetilde{\mathcal{C}}^{\text{int}}(y+u) =S) &\leq \widetilde{\mathbb{P}}( \exists w_{3} \sim \Gamma \text{ such that } 0\leftrightarrow y+u,\; 0\leftrightarrow w_{3}) \\ &\leq \sum_{w_{3}\sim\Gamma}\widetilde{\mathbb{P}}(0\leftrightarrow y+u,\; 0\leftrightarrow w_{3}).
    \end{align*}
    Using Lemma \ref{ete}, when $\{0\leftrightarrow y+u,\; 0\leftrightarrow w_{3}\}$ occurs, there exist a discrete loop $\Gamma^{\prime}$ and $s_{1},s_{2},s_{3}\in \mathbb{Z}^{d}$ with $s_{i}\sim \Gamma^{\prime}$ such that 
    \begin{align*}
        \Gamma^{\prime}\in \mathcal{L} \circ s_{1}\overset{\overline{\mathcal{L}}\setminus \{\overline{\Gamma}^{\prime}\}}{\longleftrightarrow }y+u\circ s_{2}  \overset{\overline{\mathcal{L}}\setminus \{\overline{\Gamma}^{\prime}\}}{\longleftrightarrow } 0 \circ s_{3}\overset{\overline{\mathcal{L}}\setminus \{\overline{\Gamma}^{\prime}\}}{\longleftrightarrow } w_{3}.
    \end{align*}
    By Lemma \ref{loop three points}, we can bound the sum over $\Gamma^{\prime}$ as follows:
    \begin{align*}
        \sum_{\substack{\Gamma^{\prime} \sim s_{i}\\ i=1,2,3}}\widetilde{\mathbb{P}}(\Gamma^{\prime} \in \mathcal{L}) \leq C |s_{1}-s_{2}|^{2-d}|s_{2}-s_{3}|^{2-d}|s_{3}-s_{1}|^{2-d}.
    \end{align*}
    Therefore, by applying the BKR inequality and a union bound, we get the following upper bound for the probability of $\{0\leftrightarrow y+u,\; 0\leftrightarrow w_{3}\}$:
    \begin{align*}
        &\widetilde{\mathbb{P}}(0\leftrightarrow y+u,\; 0\leftrightarrow w_{3}) \leq 
        \sum_{s_{1},s_{2},s_{3} \in \mathbb{Z}^{d}} \sum_{\substack{\Gamma^{\prime} \sim s_{i}\\ i=1,2,3}}\widetilde{\mathbb{P}}(\Gamma^{\prime} \in \mathcal{L}) \widetilde{\mathbb{P}}(s_{1}\leftrightarrow y+u) \widetilde{\mathbb{P}}(s_{2}\leftrightarrow 0) \widetilde{\mathbb{P}}(s_{3}\leftrightarrow w_{3}) \\ & \leq 
        C\sum_{s_{1},s_{2},s_{3} \in \mathbb{Z}^{d}}|s_{1}-s_{2}|^{2-d}|s_{2}-s_{3}|^{2-d}|s_{3}-s_{2}|^{2-d}|s_{1}-y-u|^{2-d}|s_{2}|^{2-d}|s_{3}-w_{3}|^{2-d}.
    \end{align*}
    Using the estimate (\ref{5:53}), we simplify the sum, which yields 
    \begin{align*}
        &\sum_{s_{1},s_{2} \in \mathbb{Z}^{d}}|s_{1}-s_{2}|^{2-d}|s_{2}-s_{3}|^{2-d}|s_{3}-s_{2}|^{2-d}|s_{1}-y-u|^{2-d}|s_{2}|^{2-d} \leq C|s_{3}|^{2-d}|s_{3}-y-u|^{2-d}.
    \end{align*}
    Thus, we can bound the full sum as follows 
    \begin{align*}
        &\sum_{\substack{S:  y+u \text{-admissible}\\ 0\overset{S}{\longleftrightarrow}y+u}}\widetilde{\mathbb{P}}(y+v \leftrightarrow x \text{ only on } \mathsf{loop}(S))\widetilde{\mathbb{P}}(\widetilde{\mathcal{C}}^{\text{int}}(y+u) =S) \nonumber\\ &\leq C\sum_{\Gamma}\sum_{w_{1},w_{2},w_{3}\sim \Gamma}\sum_{s_{3}\in \Z^{d}}\widetilde{\mathbb{P}}(\Gamma \in \mathcal{L})|w_{1}-y-v|^{2-d}|w_{2}-x|^{2-d}|s_{3}-w_{3}|^{2-d} |s_{3}|^{2-d}|s_{3}-y-u|^{2-d}.
    \end{align*}
    \begin{align}
       \qquad \cdot |w_{2}-x|^{2-d}|s_{3}-w_{3}|^{2-d} |s_{3}|^{2-d}|s_{3}-y-u|^{2-d}\label{5-65}
    \end{align}
    This can be rewritten as 
    \begin{align*}
        \sum_{w_{1},w_{2},w_{3}\in \Z^{d}}\sum_{\Gamma\sim w_{1},w_{2},w_{3}}\sum_{s_{3}\in \Z^{d}}\widetilde{\mathbb{P}}(\Gamma \in \mathcal{L})|w_{1}-y-v|^{2-d}|w_{2}-x|^{2-d}|s_{3}-w_{3}|^{2-d} |s_{3}|^{2-d}|s_{3}-y-u|^{2-d}.
    \end{align*}
    By Lemma \ref{loop three points} we obtain the further upper bound: 
    \begin{align*}
        \sum_{w_{1},w_{2},w_{3}\in \Z^{d}}\sum_{s_{3}\in \Z^{d}}&|w_{1}-w_{2}|^{2-d}|w_{2}-w_{3}|^{2-d}|w_{3}-w_{1}|^{2-d}\\
        &|w_{1}-y-v|^{2-d}|w_{2}-x|^{2-d}|s_{3}-w_{3}|^{2-d} |s_{3}|^{2-d}|s_{3}-y-u|^{2-d}.
    \end{align*}
    Next, summing over $w_1$ and $w_2$, and applying Lemma \eqref{5:53}, we obtain an upper bound for 
    \\
    $\sum_{\substack{S:  y+u \text{-admissible}\\ 0\overset{S}{\longleftrightarrow}y+u}}\widetilde{\mathbb{P}}(y+v \leftrightarrow x \text{ only on } \mathsf{loop}(S))\widetilde{\mathbb{P}}(\widetilde{\mathcal{C}}^{\text{int}}(y+u) =S)$ as:  
    \begin{align*}
        \sum_{s_{3},w_{3}\in \Z^{d}}|s_{3}|^{2-d}|s_{3}-y-u|^{2-d}|s_{3}-w_{3}|^{2-d}|w_{3}-y-v|^{2-d}|w_{3}-x|^{2-d}.
    \end{align*} 
    Finally, summing over $y$, we have
    \begin{align*}
        &\sum_{y\in \Z^{d}}\sum_{\substack{S:  y+u \text{-admissible}\\ 0\overset{S}{\longleftrightarrow}y+u}}\widetilde{\mathbb{P}}(y+v \leftrightarrow x \text{ only on } \mathsf{loop}(S))\widetilde{\mathbb{P}}(\widetilde{\mathcal{C}}^{\text{int}}(y+u) =S) \\
        &\leq C\sum_{y,s_{3},w_{3}\in \Z^{d}}|s_{3}|^{2-d}|s_{3}-y-u|^{2-d}|s_{3}-w_{3}|^{2-d}|w_{3}-y-v|^{2-d}|w_{3}-x|^{2-d}.
    \end{align*}
    By (\ref{bond computation 1}), this sum can be further bounded by $\varepsilon |x|^{4-d}$.
    Substituting this into (\ref{5-64}) and summing over $y$, we get
    \begin{align*}
        \sum_{y\in \mathbb{Z}^{d}}\widetilde{\mathbb{P}}(0 \leftrightarrow y+u \circ y+v \leftrightarrow x) \geq  c\sum_{y\in \Z^{d}}|y+u|^{2-d}|y+v-x|^{2-d} - \e |x|^{4-d} \geq c|x|^{4-d}.
    \end{align*}
    \qed

    \subsection{Proof of Lemma \ref{lem 5.7}}\label{appendix.4}
  {The proof of Lemma \ref{lem 5.7} is quite similar to the proof of Lemma \ref{lem 5.1}. Therefore, we provide only a sketch, highlighting how the proof of Lemma \ref{lem 5.1} can be adapted by replacing the two-point connection with the one-arm event.}

    Similar with what was done in the proof of Lemma \ref{lem 5.1}, we divide the point $z$, for which $\mathcal{T}_{r}^{\alpha}(z)$ occurs, into two points with distance $K$ and consider the event $\{0 \overset{\mathbb{B}_{\alpha r}}{\longleftrightarrow} z \circ z+v \leftrightarrow \partial \mathbb{B}_{r}\}$, where $v= (K,0\cdots,0)$.  Due to the same resampling argument in \cite[Section 4.1]{ganguly2024ant}, it suffices to bound the new events.

    Next, instead of considering $\widetilde{\mathcal{C}}(0)$ as in the proof of Lemma \ref{lem 5.1}, we now consider $\widetilde{\mathcal{C}}_{\B_{\alpha r}}(0)$, which denotes the cluster containing $0$ and only using edges in $\B_{\alpha r}$. This allows us to keep track of the local connection constraint. Then, following the same argument in Lemma \ref{lem 5.1} while maintaining the local connection constraint, we obtain a lower bound that corresponds to \eqref{5-64}:
    \begin{align}\label{98}
        \sum_{z\in \B_{\alpha^{2} r}}\widetilde{\mathbb{P}}(0 \overset{\mathbb{B}_{\alpha r}}{\longleftrightarrow} z &\circ z+v \leftrightarrow \partial \mathbb{B}_{r}) \geq \sum_{z\in \B_{\alpha^{2} r}}\widetilde{\mathbb{P}}(0\overset{\mathbb{B}_{\alpha r}}{\longleftrightarrow}z)\widetilde{\mathbb{P}}(z+v \leftrightarrow \partial\mathbb{B}_{r}) \nonumber\\ - &\sum_{z\in \B_{\alpha^{2} r}}\sum_{\substack{S\text{: $z$ admissible}\\ 0\overset{S}{\longleftrightarrow}z }}\widetilde{\mathbb{P}}(z+v \leftrightarrow \partial\mathbb{B}_{r}\text{ only on $\mathsf{loop}(S)$})\widetilde{\mathbb{P}}(\widetilde{\mathcal{C}}_{\mathbb{B}_{\alpha r}}^{\text{int}}(z) =S).
    \end{align}
    Using Theorem \ref{lem 4.4} and the one-arm estimate from Lemma \ref{lem 2.3}, the first term in \eqref{98} can be bounded from below as 
    \begin{align*}
        \sum_{z\in \B_{\alpha^{2} r}}\widetilde{\mathbb{P}}(0\overset{\mathbb{B}_{\alpha r}}{\longleftrightarrow}z)\widetilde{\mathbb{P}}(z+v \leftrightarrow \partial\mathbb{B}_{r}) \geq cr^{-2}\sum_{r\in \B_{\alpha^{2}r}}\widetilde{\mathbb{P}}(0\overset{\mathbb{B}_{\alpha r}}{\longleftrightarrow}z) \geq  cr^{-2} \cdot \alpha^{4}r^{2} = c\alpha^{4}.
    \end{align*} 
    For the second term, we can use the same tree expansion argument, while maintaining the local connection constraint, along with a similar computation as in the proof of Proposition \ref{prop 5.11}. This gives the following bound: for any $\e>0$, there exists $K(\e)>0$ such that
    \begin{align*}
        \sum_{z\in \B_{\alpha^{2} r}}\sum_{\substack{S\text{: $z$ admissible}\\ 0\overset{S}{\longleftrightarrow}z }}\widetilde{\mathbb{P}}(z+v \leftrightarrow \partial\mathbb{B}_{r}\text{ only on $\mathsf{loop}(S)$})\widetilde{\mathbb{P}}(\widetilde{\mathcal{C}}_{\mathbb{B}_{\alpha r}}^{\text{int}}(z) =S)\leq C \alpha^{2}\e.
    \end{align*}
    Finally, we choose $\e = \alpha^{3}$ and select $\alpha$ sufficiently small such that $c\alpha^{4} - C\alpha^{5} \geq \frac{c}{2}\alpha^{4}$. Then Substituting the above bounds into \eqref{98} yields 
    \begin{align*}
        \sum_{z\in \B_{\alpha^{2} r}}\widetilde{\mathbb{P}}(0\overset{\mathbb{B}_{\alpha r}}{\longleftrightarrow}z)\widetilde{\mathbb{P}}(z+v \leftrightarrow \partial\mathbb{B}_{r}) \geq \frac{c}{2} \alpha^{4}.
    \end{align*}
    \qed

\bibliographystyle{plain}
 \bibliography{references}

\end{document}